\newtheorem{theorem}{Theorem}[section]
\newtheorem*{definition}{Definition}
\newtheorem{lemma}{Lemma}[section]
\newtheorem{proposition}{Proposition}[section]
\newtheorem{corollary}{Corollary}[section]
\newtheorem*{assumption}{A Priori Assumption}
\newcommand{\oht}{\mathfrak{H}}
\newcommand{\ohtb}{\overline{\mathfrak{H}}}
\newcommand{\nht}{\mathcal{H}}
\newcommand{\nhtb}{\overline{\mathcal{H}}}
\newcommand{\pv}{\,\text{p.v.}}
\newcommand{\sgn}{\,\text{sgn}}
\newcommand{\eqnlineup}{\!\!\!\!}
\newcommand{\cc}{\text{c.c.}}
\numberwithin{equation}{section}
\title{A Rigorous Justification of the Modulation Approximation to the 2D Full Water Wave Problem}
\author{Nathan Totz}
\author{Sijue Wu}
\address{Department of Mathematics \\
University of Michigan \\
Ann Arbor, MI, 48105}
\thanks{The authors are supported in part by NSF grant DMS-0800194.}
\begin{document}

\begin{abstract}
We consider the 2D inviscid incompressible irrotational infinite depth water wave problem neglecting surface tension.  Given wave packet initial data of the form $\epsilon B(\epsilon \alpha)e^{ik\alpha}$ for $k > 0$, we show that the modulation of the solution is a profile traveling at group velocity and governed by a focusing cubic nonlinear Schr\"odinger equation, with rigorous error estimates in Sobolev spaces.  As a consequence, we establish existence of solutions of the water wave problem in Sobolev spaces for times of order $O(\epsilon^{-2})$ provided the initial data differs from the wave packet by at most $O(\epsilon^{3/2})$ in Sobolev spaces.  These results are obtained by directly applying modulational analysis to the evolution equation with no quadratic nonlinearity constructed in \cite{WuAlmostGlobal2D} and by the energy method.
\end{abstract}

\maketitle

\section{Introduction}

The mathematical problem of two dimensional water waves concerns the evolution of an interface separating an inviscid, incompressible, irrotational fluid, under the influence of gravity, from a region of zero density (e.g., air) in two dimensional space.  It is assumed that the fluid region lies below the air region.  Assume the fluid is infinitely deep and has density 1, and that the gravitational field is $g = (0, -1)$.  At $t \geq 0$, denote the fluid interface by $\Sigma(t)$ and the fluid region by $\Omega(t)$.  If surface tension is neglected, then the motion of the fluid is described by

$$\begin{matrix}
\begin{cases} & \mathbf{v}_t + \mathbf{v} \cdot \nabla \mathbf{v} = g  - \nabla p \cr
&\text{div}\,\mathbf{v} = 0 , \qquad \text{curl}\,\mathbf{v} = 0
\end{cases} & \qquad\; \text{on } \Omega(t), \, t \geq 0 \cr
p = 0 & \text{on } \Sigma(t)
\end{matrix}$$
\begin{equation}\label{EulerVelocityField}
(\mathbf{v}, 1) \text{ 
is tangent to   
the free surface } (\Sigma(t), t)
\end{equation} where $\mathbf{v}$ is the fluid velocity, $p$ is the fluid pressure.

Assume further that the interface $\Sigma(t)$ is parametrized by $z = z(\alpha, t)$, where $\alpha \in \mathbb{R}$ is the Lagrangian coordinate, i.e., $z_t(\alpha, t) = \mathbf{v}(z(\alpha, t), t)$.  Let $\mathfrak{a} = -\frac{\partial p}{\partial \mathbf{n}} \frac{1}{|z_\alpha|}$, where $\mathbf{n}=\frac{iz_\alpha}{|z_\alpha|}$ is the unit outward normal of $\Omega(t)$.  We know from \cite{WuLocal2DWellPosed} that \eqref{EulerVelocityField} is equivalent to the following complex system on the interface:
\begin{equation}\label{OldEuler} z_{tt} - i\mathfrak{a}z_\alpha = -i\end{equation}
\begin{equation}\label{ztIsAntihol} (I - \ohtb)z_t = 0,\end{equation}
where  $\oht$ is the Hilbert transform associated to the fluid region $\Omega(t)$: $$\oht f(\alpha, t) = \frac{1}{\pi i} \pv \int_{-\infty}^\infty \frac{f(\beta, t)z_\beta(\beta, t)}{z(\alpha, t) - z(\beta, t)} d\beta$$

In this paper we consider the modulation approximation to the infinite depth water wave equations \eqref{OldEuler}-\eqref{ztIsAntihol}, i.e., a solution which is to the leading order a wave packet of the form 
\begin{equation}\label{WavePacket}
\epsilon B(\epsilon \alpha, \epsilon t, \epsilon^2 t)e^{i(k\alpha + \omega t)}
\end{equation}  
It is well-known  (c.f. \cite{PeterMillerAsymptoticAnalysis}, \cite{JohnsonWaterWaves}) that if one performs a multiscale analysis to determine modulation approximations to the finite or infinite depth 2D water wave equations, one should expect to find that the amplitude $B$ is a profile that travels at the group velocity determined by the dispersion relation of the water wave equations over time intervals of length $O(\epsilon^{-1})$, and evolves according to a  nonlinear Schr\"odinger equation (NLS) over time intervals of length $O(\epsilon^{-2})$.  
The first formal derivations of the NLS from the 2D water wave equations was obtained by Zakharov \cite{ZakharovInfiniteDepth} for the infinite depth case, and by Hasimoto and Ono \cite{HashimotoOno} for the finite depth case. In \cite{CraigSulemSulemNLSFiniteDepth}, Craig, Sulem and Sulem applied modulation analysis to the finite depth 2D water wave equation,  derived an approximate solution of the form of a wave packet and showed that the modulation approximation satisfies the 2D finite depth water wave equation to leading order.

A rigorous justification of the NLS from the full water wave equations would bring us one step closer to understanding qualitative properties for wave packet-like solutions of the water wave equations from  that of solutions to NLS on the appropriate time scales.  
Rigorous justifications of the KdV, KP, Boussinesq, shallow water and various other asymptotic models from the full water wave equations have been done in \cite{CraigWWExistenceBousKdV}, \cite{SchneiderWayveJustifyKdV}, \cite{LannesJustifyWWModel3D}.  
As was noted in \cite{CraigSulemSulemNLSFiniteDepth}, the reason that a justification for NLS has not been given is that the longest existence time in Sobolev spaces for the water waves equation demonstrated thus far have been on time scales of the order $O(\epsilon^{-1})$, for data with Sobolev norms of the order $O(\epsilon)$.  However these times are too short to distinguish the NLS behavior of the wave packet from simple translation of the initial wave packet at group velocity.  Since there is no existence result in Sobolev spaces on the necessary time scales, an attempt to justify NLS as a rigorous modulation approximation to the water wave system on that scale has not been made.

 Let $U_g f = f \circ g$, and for $\kappa: \mathbb R\to \mathbb R$ a diffeomorphism we introduce the notation
\begin{equation*}\zeta := z \circ \kappa^{-1}, \qquad U_\kappa^{-1} D_t := \partial_t U_\kappa^{-1}, \qquad U_\kappa^{-1} \mathcal{P} := (\partial_t^2 - i\mathfrak{a}\partial_\alpha)U_\kappa^{-1}\end{equation*}

$$b := \kappa_t \circ \kappa^{-1}, \qquad U_\kappa^{-1} \mathcal{A} \partial_\alpha := \mathfrak{a} \partial_\alpha U_\kappa^{-1}$$

\begin{equation}\label{NewVariableNotation}
 D_t = (\partial_t + b \partial_\alpha), \qquad U_\kappa^{-1} \nht = \oht U_\kappa^{-1}, \qquad \mathcal{P} = D_t^2 - i\mathcal{A}\partial_\alpha
\end{equation}
In \cite{WuAlmostGlobal2D},  Wu showed that for any solution $z$ of \eqref{OldEuler}-\eqref{ztIsAntihol}, the quantity $\Pi := (I - \oht)(z - \overline{z})$  satisfies the equation 
\begin{align}\label{ChiEquation}
\mathcal{P}(\Pi \circ \kappa^{-1}) & = -2\left[D_t \zeta, \nht\frac{1}{\zeta_\alpha} + \nhtb\frac{1}{\overline{\zeta}_\alpha} \right]\partial_\alpha D_t \zeta + \frac{1}{\pi i} \int \left(\frac{D_t\zeta(\alpha,t) - D_t\zeta(\beta,t)}{\zeta(\alpha,t) - \zeta(\beta,t)}\right)^2 \partial_\beta(\zeta - \overline{\zeta}) d\beta \notag\\
& = \frac{4}{\pi} \int \frac{(D_t\zeta(\alpha,t) - D_t\zeta(\beta,t))(\Im \zeta(\alpha,t) - \Im \zeta(\beta,t))}{|\zeta(\alpha,t) - \zeta(\beta,t)|^2} \partial_\beta D_t\zeta(\beta,t) d\beta\notag \\
& \quad + \frac{2}{\pi} \int \left(\frac{D_t\zeta(\alpha,t) - D_t\zeta(\beta,t)}{\zeta(\alpha,t) - \zeta(\beta,t)}\right)^2 \partial_\beta \Im \zeta(\beta,t) d\beta
\end{align}
and furthermore there is a coordinate change $\kappa$, such that in this coordinate system, the equation \eqref{ChiEquation} contains no quadratic nonlinear terms. 
Using this favorable structure and the method of vector fields, Wu further proved the almost global well-posedness for the full water wave system \eqref{OldEuler}-\eqref{ztIsAntihol} for data small in the generalized $L^2$ Sobolev spaces defined by the invariant vector fields.  However, the wave packet data $\epsilon B(\epsilon \alpha)e^{ik\alpha}$  (for $B$ sufficiently smooth and localized) has slow decay at infinity, and in terms of the generalized Sobolev norms used in \cite{WuAlmostGlobal2D} these are at least of size $O(\epsilon^{-1/2})$. In terms of the standard Sobolev norms they are of size $O(\epsilon^{1/2})$.  Therefore  the standard 
$L^2$ Sobolev spaces suits our purposes better.

As is suggested by the work of \cite{KSMCubicNonlinearityLongtimeRemainder}, in justifying the modulation approximation for a nonlinear system it is advantageous if the nonlinear system contains no quadratic nonlinear terms. We therefore use the equation \eqref{ChiEquation} to  perform the multiscale analysis. In fact, we will use a slightly different change of variables $\kappa$ than that given in \cite{WuAlmostGlobal2D}.  Upon performing this multiscale analysis, we derive an approximate wave packet-like solution $\tilde{\zeta}$ satisfies the transformed equations (see \eqref{NewEuler}-\eqref{XiIsAntihol}
below) with a residual of size $O(\epsilon^4)$.  The special structure of \eqref{ChiEquation}  then allows us to obtain bounds for the error $r = \zeta - \tilde{\zeta}$ between the true solution and the approximate solution on the appropriate time scale in Sobolev spaces.  

We will see in the course of the multiscale analysis that the envelope of the leading term of $\tilde{\zeta} - \alpha$ obeys a focusing cubic nonlinear Schr\"odinger equation which is globally well-posed in sufficiently regular Sobolev spaces.  This implies that the approximate solution $\tilde{\zeta}$ is eternal.  This fact, along with the a priori bounds on the remainder $r$, allows us to show existence and uniqueness of solutions of the system \eqref{OldEuler}-\eqref{ztIsAntihol} on the proper $O(\epsilon^{-2})$ time scales, for initial data which is no more than $O(\epsilon^{3/2})$ away from a wave packet  
$\epsilon B(\epsilon \alpha)e^{ik\alpha}$ in Sobolev spaces.  A rigorous justification of wave packet approximations to  
 solutions of the water wave system is then obtained in this special coordinate system $\kappa$.  Upon changing variables, we obtain appropriate wave packet approximations to water waves in Lagrangian coordinates.  Finally, by introducing some further restrictions on the initial data, we justify an Eulerian version of the asymptotics.

\section{Derivation of the Main Equations}

In this section we introduce our notation as well as collect for future reference the main equations and formulas from \cite{WuAlmostGlobal2D} that we will use.  We first recall the definition of the Hilbert transform $\nht_\gamma$ associated to the interface determined by a curve parametrization $\gamma(\alpha) : \mathbb{R} \to \mathbb{C}\;$:
\begin{equation}\label{HilbertTransformDefinition}
\nht_\gamma f(\alpha) := \frac{1}{\pi i} \pv \int_{-\infty}^\infty \frac{\gamma_\beta(\beta)}{\gamma(\alpha) - \gamma(\beta)} f(\beta) d\beta
\end{equation}  We adopt the following notations for Hilbert transforms associated to specific curves: $\oht$ is the Hilbert transform associated to $z$ already defined, $\nht$ is the Hilbert transform associated to $\zeta$, and $\nht_0$ is the flat Hilbert transform associated to the line $\gamma(\alpha) = \alpha$.  In general, the Hilbert transform $\nht_\gamma$ satisfies the convention $\nht_\gamma 1 = 0$ and the identity $\nht_\gamma^2 = I$ in $L^2$.  Let $\Omega$ be a domain in $\mathbb R^2$, with $\partial\Omega$ parametrized by $\gamma(\alpha)$, $\alpha\in \mathbb R$, oriented clock-wisely. 
We know $f(\cdot) = F(\gamma(\cdot))\in L^2(\mathbb R)$ is the trace of a holomorphic function $F$ in $\Omega$ if and only if $(I - \nht_\gamma)f = 0$.
The celebrated result of \cite{CoifmanMeyerMcintoshL2Bounds} (see Theorem \ref{CMMWEstimates}) states that $\nht_\gamma$ is bounded on $L^2$ provided that $\gamma$ satisfies the chord-arc condition:  There exist constants $\nu, N > 0$ so that
\begin{equation}\label{ChordArcCondition}
\nu |\alpha - \beta| \leq |\gamma(\alpha) - \gamma(\beta)| \leq N |\alpha - \beta| \qquad \text{for all } \alpha, \beta \in \mathbb{R}.
\end{equation}

We will frequently use the properties of the Hilbert transform given in Lemmas 2.1 and 2.2 of \cite{WuAlmostGlobal2D} which for convenience are recorded here.  Note that in the sequel we will often be suppressing the dependence on $t$.

\begin{proposition}[c.f. Lemma 2.1 of \cite{WuAlmostGlobal2D}]\label{HilbertCommutatorIdentities}
Suppose that $z(\alpha, t)$ has no self-intersections at time $t \in [0, T_0]$ and satisfies $z_t, z_\alpha - 1 \in C^1([0, T_0]; H^1)$.  Then for all functions $f \in C^1(\mathbb{R} \times [0, T_0])$ having the property that $f_\alpha(\alpha, t) \to 0$ as $|\alpha| \to \infty$ we have the identities
$$[\partial_t, \oht]f = [z_t, \oht]\frac{f_\alpha}{z_\alpha}, \qquad  [\mathfrak{a}\partial_\alpha, \oht]f = [\mathfrak{a}z_\alpha, \oht]\frac{f_\alpha}{z_\alpha}, \qquad [\oht, \partial_\alpha/z_\alpha] = 0$$
$$[\partial_t^2, \oht]f = [z_{tt}, \oht]\frac{f_\alpha}{z_\alpha} + 2[z_t, \oht]\frac{f_{t\alpha }}{z_\alpha} - \frac{1}{\pi i}\int\left(\frac{z_t(\alpha) - z_t(\beta)}{z(\alpha) - z(\beta)}\right)^2 f_\beta(\beta) d\beta$$
$$[\partial_t^2 - i\mathfrak{a}\partial_\alpha, \oht]f = 2[z_t, \oht]\frac{f_{t\alpha }}{z_\alpha} - \frac{1}{\pi i}\int\left(\frac{z_t(\alpha) - z_t(\beta)}{z(\alpha) - z(\beta)}\right)^2 f_\beta(\beta) d\beta$$
$$(I - \oht)(-i\mathfrak{a}_t \overline{z}_\alpha) = 2[z_{tt}, \oht]\frac{\overline{z}_{t\alpha }}{z_\alpha} + 2[z_t, \oht]\frac{\overline{z}_{tt\alpha }}{z_\alpha} - \frac{1}{\pi i}\int\left(\frac{z_t(\alpha) - z_t(\beta)}{z(\alpha) - z(\beta)}\right)^2 \overline{z}_{t\beta }(\beta) d\beta$$
\end{proposition}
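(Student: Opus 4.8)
The plan is to derive all six identities from two ingredients: differentiating the Cauchy kernel $(z(\alpha,t)-z(\beta,t))^{-1}$ under the principal-value integral and integrating by parts in $\beta$, together with the interface equations \eqref{OldEuler}--\eqref{ztIsAntihol}. Throughout, the hypotheses $z_t,\,z_\alpha-1\in C^1([0,T_0];H^1)$ and the absence of self-intersections guarantee that $z$ satisfies a chord-arc condition \eqref{ChordArcCondition} uniformly on $[0,T_0]$, so that $\oht$ and every auxiliary singular integral appearing below is bounded on $L^2$ by Theorem \ref{CMMWEstimates}; the hypothesis $f_\alpha\to0$ at infinity is precisely what makes the boundary terms in the integrations by parts vanish.

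First I would establish the first-order commutators. Differentiating $\oht f=\frac1{\pi i}\pv\int\frac{z_\beta(\beta)}{z(\alpha)-z(\beta)}f(\beta)\,d\beta$ in $t$, the term in which $\partial_t$ hits the kernel is $-\frac1{\pi i}\pv\int f(\beta)\big(z_t(\alpha)-z_t(\beta)\big)\partial_\beta\frac1{z(\alpha)-z(\beta)}\,d\beta$, using $z_\beta(z(\alpha)-z(\beta))^{-2}=\partial_\beta(z(\alpha)-z(\beta))^{-1}$; integrating by parts (the diagonal boundary term vanishes thanks to the factor $z_t(\alpha)-z_t(\beta)$, the one at infinity because $f$ is controlled there) and recombining gives $\partial_t\oht f=\oht f_t+[z_t,\oht]\frac{f_\alpha}{z_\alpha}$, which is the first identity. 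The relation $[\oht,\partial_\alpha/z_\alpha]=0$, equivalently $\partial_\alpha\oht f=z_\alpha\,\oht(f_\alpha/z_\alpha)$, follows on viewing $\oht f(\alpha)$ as a Cauchy integral over the curve $z(\mathbb{R})$ evaluated at $z(\alpha)$ and differentiating in the ambient point (or by one integration by parts in $\frac1{\pi i}\pv\int\frac{f_\beta(\beta)}{z(\alpha)-z(\beta)}\,d\beta$), and the second identity then follows by factoring $\mathfrak{a}\partial_\alpha=(\mathfrak{a}z_\alpha)(\partial_\alpha/z_\alpha)$. For the second-order identities I would iterate: applying $\partial_t$ to $\partial_t\oht f=\oht f_t+[z_t,\oht]\frac{f_\alpha}{z_\alpha}$, the first summand contributes $\oht f_{tt}+[z_t,\oht]\frac{f_{t\alpha}}{z_\alpha}$ by the first identity, while differentiating $[z_t,\oht]\frac{f_\alpha}{z_\alpha}=\frac1{\pi i}\pv\int\frac{(z_t(\alpha)-z_t(\beta))f_\beta(\beta)}{z(\alpha)-z(\beta)}\,d\beta$ in $t$ produces $[z_{tt},\oht]\frac{f_\alpha}{z_\alpha}+[z_t,\oht]\frac{f_{t\alpha}}{z_\alpha}$ from the numerator and the quadratic remainder $-\frac1{\pi i}\int\big(\frac{z_t(\alpha)-z_t(\beta)}{z(\alpha)-z(\beta)}\big)^2f_\beta(\beta)\,d\beta$ from the kernel; this yields $[\partial_t^2,\oht]f$, and subtracting $i$ times the second identity while invoking \eqref{OldEuler} in the form $z_{tt}-i\mathfrak{a}z_\alpha\equiv-i$ (so that $[z_{tt}-i\mathfrak{a}z_\alpha,\oht]=0$ by linearity) removes the terms $[z_{tt},\oht]\frac{f_\alpha}{z_\alpha}$ and $-i[\mathfrak{a}z_\alpha,\oht]\frac{f_\alpha}{z_\alpha}$ and leaves $[\partial_t^2-i\mathfrak{a}\partial_\alpha,\oht]f$.

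For the last identity I would write \eqref{ztIsAntihol} as $(I-\oht)\overline{z}_t=0$ (conjugation, using $\overline{\ohtb g}=\oht\overline{g}$), differentiate it twice in $t$, and run the computation of the previous paragraph to obtain $(I-\oht)\overline{z}_{ttt}=[z_{tt},\oht]\frac{\overline{z}_{t\alpha}}{z_\alpha}+2[z_t,\oht]\frac{\overline{z}_{tt\alpha}}{z_\alpha}-\frac1{\pi i}\int\big(\frac{z_t(\alpha)-z_t(\beta)}{z(\alpha)-z(\beta)}\big)^2\overline{z}_{t\beta}(\beta)\,d\beta$. On the other hand, conjugating and $t$-differentiating \eqref{OldEuler} gives $-i\mathfrak{a}_t\overline{z}_\alpha=\overline{z}_{ttt}+i\mathfrak{a}\overline{z}_{t\alpha}$, while \eqref{OldEuler} also gives $i\mathfrak{a}z_\alpha=z_{tt}+i$, hence $i\mathfrak{a}\overline{z}_{t\alpha}=(z_{tt}+i)\frac{\overline{z}_{t\alpha}}{z_\alpha}$. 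Since $\oht\big(\frac{\overline{z}_{t\alpha}}{z_\alpha}\big)=\frac{\overline{z}_{t\alpha}}{z_\alpha}$ — apply $[\oht,\partial_\alpha/z_\alpha]=0$ to $\overline{z}_t$ and use $\oht\overline{z}_t=\overline{z}_t$ — the constant-multiple piece is annihilated by $I-\oht$, so $(I-\oht)(i\mathfrak{a}\overline{z}_{t\alpha})=[z_{tt},\oht]\frac{\overline{z}_{t\alpha}}{z_\alpha}$; adding this to the formula for $(I-\oht)\overline{z}_{ttt}$ yields the claimed expression for $(I-\oht)(-i\mathfrak{a}_t\overline{z}_\alpha)$.

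I expect the main obstacle to be not the algebra, which is routine bookkeeping, but the analytic justification of these manipulations — differentiation under the principal-value integral and the integrations by parts near the diagonal. The hypotheses do exactly the work needed here: the vanishing factor $z_t(\alpha)-z_t(\beta)$ on the diagonal and the decay $f_\alpha\to0$ make every boundary term vanish, and the chord-arc and $C^1([0,T_0];H^1)$ bounds (through Theorem \ref{CMMWEstimates}) ensure that the increasingly singular integrals produced along the way are genuine bounded operators on $L^2$.
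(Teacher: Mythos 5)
Your proposal is correct. The paper gives no proof of this proposition — it is simply quoted from Lemma 2.1 of \cite{WuAlmostGlobal2D} — and your derivation (differentiating the Cauchy kernel under the principal value, integrating by parts in $\beta$ to produce the commutators and the quadratic remainder, then invoking $z_{tt}-i\mathfrak{a}z_\alpha=-i$ and $(I-\oht)\overline{z}_t=0$ together with $[\oht,\partial_\alpha/z_\alpha]=0$ for the last two identities) is essentially the standard argument used in that cited source.
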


\textbf{Remark.} Observe that if we change variables via $\kappa$ each formula above has a corresponding formula in which $z$ is replaced by $\zeta$, $\partial_t$ is replaced by $D_t$, $\oht$ is replaced by $\nht$, etc.

\begin{proposition}[c.f. Lemma 2.2 of \cite{WuAlmostGlobal2D}]\label{HoloProperties}
Let $\Omega \subset \mathbb{C}$ be a region whose boundary $\partial\Omega$ is parametrized by $\gamma(\alpha)$, oriented clockwise.  %Suppose also that $\gamma$ satisfies the chord-arc condition \eqref{ChordArcCondition}.  
Then the following hold:
\begin{enumerate}
\item{If $f = \nht_\gamma f$ and $g = \nht_\gamma g$, then $[f, \nht_\gamma]g = 0$.}
\item{For all $f, g \in L^2(\partial\Omega)$, $[f, \nht_\gamma]\nht_\gamma g = -[\nht_\gamma f, \nht_\gamma]g$.}
\end{enumerate}
\end{proposition}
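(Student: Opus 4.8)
The plan is to phrase everything in terms of the complementary Cauchy projections $P_\gamma^{\pm} := \tfrac12(I \pm \nht_\gamma)$, which are idempotent and sum to $I$ because $\nht_\gamma^2 = I$ on $L^2$. As recalled in the excerpt, $\nht_\gamma h = h$ (i.e.\ $P_\gamma^- h = 0$) holds precisely when $h$ is the boundary trace on $\gamma$ of a function holomorphic in $\Omega$, and dually $\nht_\gamma h = -h$ (i.e.\ $P_\gamma^+ h = 0$) holds precisely when $h$ is the trace of a function holomorphic in $\mathbb{C} \setminus \overline{\Omega}$ vanishing at infinity. The one analytic input beyond this is multiplicativity: if $\nht_\gamma h_1 = h_1$ and $\nht_\gamma h_2 = h_2$ then $\nht_\gamma(h_1 h_2) = h_1 h_2$ (a product of functions holomorphic in $\Omega$ is holomorphic in $\Omega$), and likewise if $\nht_\gamma h_j = -h_j$ for $j=1,2$ then $\nht_\gamma(h_1 h_2) = -h_1 h_2$ (the product is holomorphic off $\overline{\Omega}$ and decays like $|z|^{-2}$ at infinity, hence lies in the Hardy class of the complement); the chord-arc condition \eqref{ChordArcCondition} is what guarantees the relevant Cauchy integrals converge.

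For part (1) the hypotheses say $\nht_\gamma f = f$, $\nht_\gamma g = g$. Using the defining identity $[f, \nht_\gamma]g = f\,\nht_\gamma g - \nht_\gamma(fg)$ gives $[f,\nht_\gamma]g = fg - \nht_\gamma(fg) = 0$ by multiplicativity. For part (2), I would expand both commutators using $\nht_\gamma^2 = I$ to get
\[
[f,\nht_\gamma]\nht_\gamma g + [\nht_\gamma f, \nht_\gamma]g = fg + (\nht_\gamma f)(\nht_\gamma g) - \nht_\gamma\bigl(f\,\nht_\gamma g + g\,\nht_\gamma f\bigr),
\]
so it suffices to prove the Tricomi-type identity $\nht_\gamma(f\,\nht_\gamma g + g\,\nht_\gamma f) = fg + (\nht_\gamma f)(\nht_\gamma g)$. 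Decomposing $f = f_+ + f_-$, $g = g_+ + g_-$ with $f_\pm := P_\gamma^\pm f$, $g_\pm := P_\gamma^\pm g$, a direct expansion (the cross terms cancel) yields $f\,\nht_\gamma g + g\,\nht_\gamma f = 2(f_+ g_+ - f_- g_-)$ and $fg + (\nht_\gamma f)(\nht_\gamma g) = 2(f_+ g_+ + f_- g_-)$; applying $\nht_\gamma$ to the first expression and using that $\nht_\gamma$ fixes $f_+ g_+$ and negates $f_- g_-$ (again multiplicativity) reproduces the second.

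The one genuine technical point, and the step I expect to require the most care, is integrability: a product of two $L^2$ functions lies only in $L^1$, so the manipulations above and the holomorphic-extension characterization of $\nht_\gamma$ are a priori merely formal on $L^2$. I would handle this in the standard way, first establishing the identities for $f, g$ smooth with sufficient decay (so that all products lie in $L^2$ and the Hardy-space characterization applies literally), then passing to general $f, g \in L^2$ by density, using the $L^2$-boundedness of $\nht_\gamma$ and of the Calder\'on-type commutators $[\,\cdot\,, \nht_\gamma]$ furnished by Theorem \ref{CMMWEstimates} together with \eqref{ChordArcCondition}, and interpreting the resulting identities almost everywhere. Everything else is elementary algebra.
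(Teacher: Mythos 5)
Your proof is correct; note that the paper does not prove this proposition at all but simply quotes it from Lemma 2.2 of \cite{WuAlmostGlobal2D}, and your argument --- part (1) from multiplicativity of boundary traces of holomorphic functions, part (2) from the Tricomi-type identity obtained by splitting $f$ and $g$ with the projections $\frac{1}{2}(I \pm \nht_\gamma)$ and using that $\nht_\gamma$ fixes the product of the plus parts and negates the product of the minus parts --- is essentially the standard argument underlying the cited lemma. The only delicate point, namely that products of $L^2$ traces are merely $L^1$ so the Hardy-space characterization and the algebraic identities must first be established on a dense class of smooth decaying functions and then extended (using $L^2$-boundedness of $\nht_\gamma$ on chord-arc curves and convergence in measure, or a.e.\ along subsequences, for the $L^1$ terms), is one you correctly flag, and your proposed density argument handles it adequately.
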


With these preparations, we give the change of variables used to convert \eqref{OldEuler}-\eqref{ztIsAntihol} into a more suitable equation for our purposes.  Originally, in \cite{WuAlmostGlobal2D}, the change of variables $\kappa$ was introduced using a Riemann map $\mathbf{\Phi}(z, t) : \Omega(t) \to P_-$ which for each $t$ mapped the fluid region $\Omega(t)$ to the lower half plane, and then defined by $\alpha \mapsto z(\alpha, t) + \overline{z}(\alpha, t) - h(\alpha, t)$, where $h$ was taken to be $\alpha \mapsto \mathbf{\Phi}(z(\alpha, t), t)$.   

However, the only property of $h$ that was used was that it was a real-valued trace of a holomorphic function defined on $\Omega(t)$.  This idea was already used in the 3D setting to prove global existence of solutions to the 3D water wave problem \cite{WuGlobal3D}.  We use it here by choosing to set $$h(\alpha, t) = z(\alpha, t) - \frac{1}{2}(I + \oht)(I + \mathfrak{K})^{-1}\left(z(\alpha, t) - \overline{z}(\alpha, t)\right),$$ where $\mathfrak{K} = \Re \oht$ is the double layer potential operator associated to the curve $z$.  It is easy to see from the definition that $h$ is a real-valued trace of a holomorphic function in $\Omega(t)$.  Then the change of variables is defined by
\begin{align}\label{ChangeOfVariables}
\kappa(\alpha, t) & = z(\alpha,t) + \overline{z}(\alpha,t) - h(\alpha, t) \notag \\
& = \overline{z}(\alpha, t) + \frac{1}{2}(I + \oht)(I + \mathfrak{K})^{-1}(z(\alpha, t) - \overline{z}(\alpha, t))
\end{align}  Our choice of $\kappa$ then gives us the crucial identity \begin{equation}\label{OldXiIsAntihol}
(I - \oht)(\overline{z} - \kappa) =- (I - \oht)\left(\frac{1}{2}(I + \oht)(I + \mathfrak{K})^{-1}(z - \overline{z})\right) = 0,
\end{equation} and from this it follows immediately in the new coordinates that
\begin{equation}
(I - \nht)(\overline{\zeta} - \alpha) = 0
\end{equation} and
\begin{equation}\label{ChiAndXiAreEqual}
\Pi \circ \kappa^{-1} = (I - \nht)(\zeta - \overline{\zeta}) = (I - \nht)(\zeta - \alpha)
\end{equation}
We denote $$\xi := \zeta - \alpha,$$ the perturbation of $\zeta$ from the rest state $\alpha$.  Then from \eqref{ChiEquation} and \eqref{OldXiIsAntihol} we have that solutions $z$ also satisfy the system
\begin{equation}\label{NewEuler}\mathcal{P}(I - \nht)\xi = G \end{equation} \begin{equation}\label{XiIsAntihol}(I - \nhtb)\xi = 0\end{equation} 
where as in \eqref{ChiEquation} the cubic nonlinearity $G$ is
\begin{equation}\label{GFormula} G := -2\left[D_t\zeta, \nht\frac{1}{\zeta_\alpha} + \nhtb\frac{1}{\overline{\zeta}_\alpha}\right]\partial_\alpha D_t\zeta + \frac{1}{\pi i}\int \left(\frac{D_t\zeta(\alpha) - D_t\zeta(\beta)}{\zeta(\alpha) - \zeta(\beta)}\right)^2 (\zeta_\beta(\beta) - \overline{\zeta}_\beta(\beta))\,d\beta \end{equation}
We will also need the equations corresponding to the time derivative, which by virtue of  \eqref{ztIsAntihol} and a derivative $D_t$ to \eqref{NewEuler}  are given by
\begin{equation}\label{DtNewEuler}(D_t^2 - i\mathcal{A}\partial_\alpha)D_t(I - \nht)\xi = D_t G + [\mathcal{P}, D_t](I - \nht)\xi \end{equation} \begin{equation}\label{DtXiIsAntihol}(I - \nhtb)D_t\zeta = 0\end{equation}
An explicit formula for $D_t G$ is
\begin{align}\label{DtGFormula}
D_t G & = -2\left[D_t^2 \zeta, \nht\frac{1}{\zeta_\alpha} + \nhtb\frac{1}{\overline{\zeta}_\alpha}\right]\partial_\alpha D_t \zeta - 2\left[D_t \zeta, \nht\frac{1}{\zeta_\alpha} + \nhtb\frac{1}{\overline{\zeta}_\alpha}\right]\partial_\alpha D_t^2 \zeta \notag\\
& + \frac{2}{\pi i}\int \left(\frac{D_t\zeta(\alpha) - D_t\zeta(\beta)}{\zeta(\alpha) - \zeta(\beta)}\right)^2 \partial_\beta D_t\zeta(\beta) \, d\beta - \frac{2}{\pi i}\int \frac{\left|D_t\zeta(\alpha) - D_t\zeta(\beta)\right|^2}{(\overline{\zeta}(\alpha) - \overline{\zeta}(\beta))^2} \partial_\beta D_t \zeta(\beta) \notag\\
& \quad + \frac{4}{\pi}\int \frac{(D_t\zeta(\alpha) - D_t\zeta(\beta))(D_t^2\zeta(\alpha) - D_t^2 \zeta(\beta))}{(\zeta(\alpha) - \zeta(\beta))^2} \partial_\beta \Im \zeta(\beta) d\beta \notag \\
& \quad + \frac{2}{\pi} \int \left(\frac{D_t\zeta(\alpha) - D_t\zeta(\beta)}{\zeta(\alpha) - \zeta(\beta)}\right)^2 \partial_\beta \Im D_t \zeta(\beta) d\beta \notag\\
& - \frac{4}{\pi} \int \left(\frac{D_t\zeta(\alpha) - D_t\zeta(\beta)}{\zeta(\alpha) - \zeta(\beta)}\right)^3 \partial_\beta \Im \zeta(\beta) \, d\beta
\end{align}
We also have the following formulas for $b$ and $\mathcal{A}$ in terms of $\zeta$ (c.f. Proposition 2.4 of \cite{WuAlmostGlobal2D} for a proof. From the proof, it is clear that \eqref{XiIsAntihol} and \eqref{DtXiIsAntihol} together implies \eqref{bFormula} and \eqref{AFormula}.):
\begin{equation}\label{bFormula}(I - \nht)b = -[D_t \zeta, \nht]\frac{\overline{\zeta}_\alpha - 1}{\zeta_\alpha},\end{equation}
\begin{equation}\label{AFormula}(I - \nht)\mathcal{A} = 1 + i[D_t^2 \zeta, \nht]\frac{\overline{\zeta}_\alpha - 1}{\zeta_\alpha} + i[D_t \zeta, \nht]\frac{\partial_\alpha D_t \overline{\zeta}}{\zeta_\alpha}\end{equation} The commutator in the right hand side of \eqref{DtNewEuler} can be rewritten using
\begin{equation}\label{PCommutatorDtFormula}[\mathcal{P}, D_t](I - \nht)\xi = U_{\kappa^{-1}}\left(\frac{\mathfrak{a}_t}{\mathfrak{a}}\right) i\mathcal{A}\partial_\alpha(I - \nht)\xi,\end{equation}
and is controlled using the following formula (c.f. (1.9) and (2.32) of \cite{WuAlmostGlobal2D} for a derivation):
\begin{align}\label{atOveraFormula}
(I - \nht)\biggl(\mathcal{A}\overline{\zeta}_\alpha U_{\kappa}^{-1}\left(\frac{\mathfrak{a}_t}{\mathfrak{a}}\right)\biggr) & = 2i[D_t^2 \zeta, \nht]\frac{\partial_\alpha D_t \overline{\zeta}}{\zeta_\alpha} + 2i[D_t \zeta, \nht]\frac{\partial_\alpha D_t^2 \overline{\zeta}}{\zeta_\alpha} \notag\\
& \quad -\; \frac{1}{\pi} \int \biggl(\frac{D_t\zeta(\alpha) - D_t\zeta(\beta)}{\zeta(\alpha) - \zeta(\beta)} \biggr)^2 \partial_\beta D_t \overline{\zeta}(\beta) d\beta
\end{align}
We also record Proposition 2.7 of \cite{WuAlmostGlobal2D}:
\begin{align}\label{DtbFormula}
(I - \nht)D_t b & = [D_t\zeta, \nht]\frac{\partial_\alpha(2b - D_t\overline{\zeta})}{\zeta_\alpha} - [D_t^2\zeta, \nht]\frac{\overline{\zeta}_\alpha - 1}{\zeta_\alpha} \notag \\ 
& \qquad + \frac{1}{\pi i} \int \left(\frac{D_t\zeta(\alpha) - D_t\zeta(\beta)}{\zeta(\alpha) - \zeta(\beta)}\right)^2 (\overline{\zeta}_\beta(\beta) - 1) d\beta
\end{align}
To estimate terms involving time derivatives of singular integral operators we record the following
\begin{lemma}\label{SingIntCommuteWithDt}
Suppose that $\mathcal{T} f = \int K(\alpha, \beta) \partial_\beta f(\beta)\, d\beta$.  Then $$[D_t, \mathcal{T}]f = \int (\partial_t + b(\alpha)\partial_\alpha + b(\beta)\partial_\beta)K(\alpha, \beta) \; \partial_\beta f(\beta) \, d\beta$$
\end{lemma}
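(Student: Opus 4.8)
The identity is a direct computation; the only subtlety is a single integration by parts in $\beta$ and careful bookkeeping of where $D_t = \partial_t + b\,\partial_\alpha$ lands. First I would expand the left factor, keeping in mind that in all our applications the kernel depends on $t$ as well, $K = K(\alpha,\beta,t)$:
\[
D_t(\mathcal{T}f)(\alpha) = (\partial_t + b(\alpha)\partial_\alpha)\int K(\alpha,\beta)\,\partial_\beta f(\beta)\,d\beta
= \int \partial_t K\,\partial_\beta f\,d\beta + \int K\,\partial_\beta\partial_t f\,d\beta + b(\alpha)\int \partial_\alpha K\,\partial_\beta f\,d\beta,
\]
differentiating under the integral sign. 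On the other hand,
\[
\mathcal{T}(D_t f)(\alpha) = \int K(\alpha,\beta)\,\partial_\beta\bigl(\partial_t f(\beta) + b(\beta)\partial_\beta f(\beta)\bigr)\,d\beta
= \int K\,\partial_\beta\partial_t f\,d\beta + \int K\,b_\beta\,\partial_\beta f\,d\beta + \int K\,b(\beta)\,\partial_\beta^2 f(\beta)\,d\beta,
\]
using the Leibniz rule $\partial_\beta(b\,\partial_\beta f) = b_\beta\,\partial_\beta f + b\,\partial_\beta^2 f$.

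Next I would integrate by parts in $\beta$ in the last term, which — assuming the boundary contributions as $|\beta|\to\infty$ vanish — gives $\int K\,b\,\partial_\beta^2 f\,d\beta = -\int \partial_\beta\bigl(K(\alpha,\beta)b(\beta)\bigr)\partial_\beta f(\beta)\,d\beta = -\int b(\beta)\partial_\beta K\,\partial_\beta f\,d\beta - \int K\,b_\beta\,\partial_\beta f\,d\beta$. The term $-\int K\,b_\beta\,\partial_\beta f\,d\beta$ cancels the $\int K\,b_\beta\,\partial_\beta f\,d\beta$ produced by the Leibniz rule, and the term $\int K\,\partial_\beta\partial_t f\,d\beta$ is common to both expressions and hence drops out of the difference. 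What remains is exactly
\[
[D_t,\mathcal{T}]f = \int \bigl(\partial_t K(\alpha,\beta) + b(\alpha)\partial_\alpha K(\alpha,\beta) + b(\beta)\partial_\beta K(\alpha,\beta)\bigr)\,\partial_\beta f(\beta)\,d\beta,
\]
the claimed formula. In words, the commutator of $D_t$ with a singular integral of the form ``kernel times $\partial_\beta f$'' is again of that form, with kernel obtained by applying the full material derivative $\partial_t + b(\alpha)\partial_\alpha + b(\beta)\partial_\beta$ to $K$.

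The main point requiring care — and the only real obstacle — is justifying the differentiation under the integral sign and the integration by parts, together with the interchange of the principal-value limit with these operations when $K$ is singular on the diagonal. In every instance in which this lemma is invoked, $K(\alpha,\beta)$ is a Calder\'on--Zygmund-type kernel built from $\zeta$ (for instance $\bigl(\tfrac{D_t\zeta(\alpha)-D_t\zeta(\beta)}{\zeta(\alpha)-\zeta(\beta)}\bigr)^2$, or $\tfrac{1}{\zeta(\alpha)-\zeta(\beta)}$ multiplied by smooth, decaying factors), and under the chord-arc condition \eqref{ChordArcCondition} the kernels $\partial_t K$, $\partial_\alpha K$, $\partial_\beta K$ and the coefficient $b$ inherit the same off-diagonal decay and on-diagonal size; moreover $b(\alpha)\partial_\alpha + b(\beta)\partial_\beta$ is the generator of the reparametrization flow and does not worsen the diagonal singularity (on the model kernel $\tfrac{1}{\alpha-\beta}$ it annihilates the leading singular part). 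Hence for the arguments $f$ to which the lemma is applied — for which $\partial_\beta f$ and $\partial_\beta^2 f$ decay at infinity — no boundary terms survive and all manipulations are legitimate. Thus the work lies not in the algebra but in checking, in each application, that the relevant functions and kernels carry enough decay and regularity for Fubini, differentiation under the integral, and the single integration by parts; this is routine given the a priori regularity maintained for $\zeta$ throughout the paper.
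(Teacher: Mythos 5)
Your proof is correct and follows essentially the same route as the paper: a direct computation with differentiation under the integral and a single integration by parts in $\beta$ that moves $b(\beta)\partial_\beta$ onto the kernel, after which the remaining terms cancel. The paper merely packages the cancellation as the pointwise identity $b_\beta f_\beta + D_t f_\beta - \partial_\beta D_t f = 0$ together with $\int \partial_\beta\bigl(K\,b\,f_\beta\bigr)\,d\beta = 0$, which is the same bookkeeping you carry out explicitly.
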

\begin{proof}
We have
\begin{align*}
[D_t, \mathcal{T}]f & = (\partial_t + b(\alpha) \partial_\alpha)\int K(\alpha, \beta) f_\beta(\beta) \, d\beta - \int K(\alpha, \beta) \partial_\beta D_t f(\beta) \, d\beta \\
& = \int (\partial_t + b(\alpha)\partial_\alpha + b(\beta)\partial_\beta)K(\alpha, \beta) f_\beta(\beta) \, d\beta \\
& \quad + \int K(\alpha, \beta)\Bigl(b_\beta(\beta) f_\beta(\beta) + D_t f_\beta(\beta) - \partial_\beta D_t f(\beta)\Bigr) \, d\beta \\
& = \int (\partial_t + b(\alpha)\partial_\alpha + b(\beta)\partial_\beta)K(\alpha, \beta) f_\beta(\beta) \, d\beta
\end{align*} as desired.
\end{proof}

Denote the Fourier transform on $\mathbb{R}$ by $$\hat{f}(x) = \frac{1}{2\pi}\int_{-\infty}^\infty f(\alpha)e^{-ix\alpha} d\alpha$$ For $s \in \mathbb{R}$ we have the usual Sobolev spaces $$H^s = \{f \in L^2(\mathbb{R}) : \|f\|_{H^s} := \|(1 + |\cdot|^2)^{s/2}\hat{f}(\cdot)\|_{L^2} < \infty\}$$ and the homogeneous Sobolev spaces $$\dot{H}^s = \{f \in L^2(\mathbb{R}) : \|f\|_{\dot{H}^s} := \|\,|\cdot|^s\hat{f}(\cdot)\|_{L^2} < \infty\}$$ Also for $s \in \mathbb{N}$ we define $W^{s, \infty} = \{f \in L^\infty : \partial_\alpha^j f \in L^\infty, \, j = 1, \ldots, s\}$, with $\|f\|_{W^{s,\infty}}:=\sum_{j=0}^s\|\partial_\alpha^jf\|_{L^\infty}$.  A well-known consequence of the Sobolev embedding theorem is that  $H^s$ is continuously embedded in $W^{s - 1, \infty}$ for  $s \geq 1$.  Given a Banach space $X$, let $C([0, T]; X)$ be the space of all $f \in \mathbb R \times [0, T]$ so that $t \mapsto \|f(t)\|_X$ is continuous on $[0, T]$; equip $C([0, T]; X)$ with the norm $$\|f\|_{C([0, T]; X)} := \max_{t \in [0, T]} \|f(t)\|_X < \infty.$$

In the rest of the paper, we make the following 
\begin{assumption}
Let $s \geq 6$, and let $\zeta$ be a solution to the water wave system \eqref{NewEuler}-\eqref{XiIsAntihol}-\eqref{DtXiIsAntihol} on some time interval $[0, T_0]$ satisfying for $0 \leq t \leq T_0$ the bounds
\begin{equation}\label{ZetaLocalAPrioriBound}
\mathfrak{S}(T_0) := \|\zeta_\alpha - 1\|_{C([0, T_0]; H^s)} + \|D_t\zeta\|_{C([0, T_0]; H^{s })} \leq \delta.
\end{equation} First we choose $\delta > 0$ sufficiently small so that $\zeta$ satisfies the chord-arc condition \eqref{ChordArcCondition} and $\mathcal{A} \geq 1/2$ (c.f. \cite{WuAlmostGlobal2D}).  In the course of the paper we will need to choose $\delta$ smaller still.
\end{assumption}

In order to use the formulas \eqref{bFormula}, \eqref{AFormula}, \eqref{atOveraFormula} to get estimates for $b$, $\mathcal{A}$ and $U_\kappa^{-1} (\mathfrak{a}_t/\mathfrak{a})$ in $H^s$ we use the following lemma, whose proof is essentially that of Lemma 3.8 and Lemma 3.15 of \cite{WuAlmostGlobal2D}:
\begin{lemma}\label{DoubleLayerPotentialArgument}
Let $s \geq 4$, and suppose that $\zeta$ satisfies \eqref{ZetaLocalAPrioriBound}.  Then there exists a constant $C$ depending on $\mathfrak{S}(T_0)$, so that for all real-valued $f$ we have the following estimates:
\begin{enumerate}
\item{$\|f\|_{H^s} \leq C\|(I - \nht)f\|_{H^s}$}
\item{$\|f\|_{H^s} \leq C\|(I - \nht)\left(f\mathcal{A}\overline{\zeta}_\alpha\right)\|_{H^s}$}
\end{enumerate}
\end{lemma}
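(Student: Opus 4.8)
The plan is to reduce both estimates to the invertibility on $H^s$ of the operator $I - \mathfrak{K}$, where $\mathfrak{K} := \Re\,\nht$ is the double layer potential operator associated to the curve $\zeta$. For part (1) the reduction is immediate: if $f$ is real-valued then $(I - \nht)f = (I - \mathfrak{K})f - i\,\Im(\nht f)$, so taking real parts gives $\Re[(I - \nht)f] = (I - \mathfrak{K})f$; hence, once we know $\|(I - \mathfrak{K})^{-1}\|_{\mathcal{L}(H^s)} \le C(\mathfrak{S}(T_0))$, the identity $f = (I - \mathfrak{K})^{-1}\Re[(I - \nht)f]$ together with $\|\Re g\|_{H^s} \le \|g\|_{H^s}$ yields $\|f\|_{H^s} \le C\,\|(I - \nht)f\|_{H^s}$.

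The heart of the matter is then the smallness of $\mathfrak{K}$ in operator norm. For the flat curve $\gamma(\alpha) = \alpha$ one has $\nht_0 f = -i\,Hf$ with $H$ the ordinary Hilbert transform, so $\Re\,\nht_0$ annihilates real functions; writing the kernel of $\mathfrak{K}$ acting on a real function as $\frac{1}{\pi}\,\Im\!\left(\frac{\zeta_\beta}{\zeta(\alpha) - \zeta(\beta)}\right)$ and substituting $\zeta = \alpha + \xi$ displays $\mathfrak{K}$ as a sum of Calder\'on-commutator-type operators whose symbols carry a factor controlled by $\mathfrak{S}(T_0)$. The Coifman--McIntosh--Meyer theorem, together with the chord-arc bounds supplied by the A Priori Assumption, gives $\|\mathfrak{K}\|_{\mathcal{L}(L^2)} \le C\,\mathfrak{S}(T_0)$; differentiating up to $s$ times and integrating by parts preserves this structure (any extra $\xi$-derivatives produced stay in $L^2$ by \eqref{ZetaLocalAPrioriBound}), so that $\|\mathfrak{K}\|_{\mathcal{L}(H^s)} \le C\,\mathfrak{S}(T_0)$ with $C$ depending on $\mathfrak{S}(T_0)$ and on the fixed $s$. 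Once $\delta$ is small enough this quantity is $< 1$, so $I - \mathfrak{K}$ is invertible on $H^s$ by a Neumann series, which completes part (1). (Alternatively one may argue by an induction on the regularity index, anchored by the $L^2$ estimate.)

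For part (2) I would write $\mathcal{A}\,\overline{\zeta}_\alpha = 1 + m$ with $m := \mathcal{A}(\overline{\zeta}_\alpha - 1) + (\mathcal{A} - 1)$, and show $\|m\|_{H^s}$ is small in terms of $\mathfrak{S}(T_0)$. The summand $\mathcal{A}(\overline{\zeta}_\alpha - 1)$ has $H^s$ norm $\le C\,\mathfrak{S}(T_0)$ directly from \eqref{ZetaLocalAPrioriBound}; for $\mathcal{A} - 1$ one uses \eqref{AFormula}, which expresses $(I - \nht)(\mathcal{A} - 1)$ as a sum of commutators $[\,\cdot\,, \nht](\,\cdot\,)$ with entries $D_t\zeta$, $D_t^2\zeta$ and small factors such as $(\overline{\zeta}_\alpha - 1)/\zeta_\alpha$. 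Combining this with the identity $D_t^2\zeta = i(\mathcal{A}\zeta_\alpha - 1)$, which lets one close an estimate for $\|D_t^2\zeta\|_{H^s}$ in terms of $\|\mathcal{A} - 1\|_{H^s}$ and $\mathfrak{S}(T_0)$, part (1) together with the standard $H^s$ commutator bounds for $\nht$ and a short bootstrap in $\delta$ gives $\|\mathcal{A} - 1\|_{H^s} \le C\,\mathfrak{S}(T_0)^2$, hence $\|m\|_{H^s} \le C\,\mathfrak{S}(T_0)$. Since $H^s$ is an algebra for $s \ge 4$, $(I - \nht)(f\mathcal{A}\overline{\zeta}_\alpha) = (I - \nht)f + (I - \nht)(fm)$ with $\|(I - \nht)(fm)\|_{H^s} \le C\|fm\|_{H^s} \le C\,\mathfrak{S}(T_0)\|f\|_{H^s}$; invoking part (1) once more, $\|f\|_{H^s} \le C\|(I - \nht)f\|_{H^s} \le C\|(I - \nht)(f\mathcal{A}\overline{\zeta}_\alpha)\|_{H^s} + C\,\mathfrak{S}(T_0)\|f\|_{H^s}$, and for $\delta$ small the last term is absorbed into the left-hand side.

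I expect the main obstacle to be the estimate $\|\mathfrak{K}\|_{\mathcal{L}(H^s)} \le C\,\mathfrak{S}(T_0)$: the $L^2$ statement is essentially classical potential theory, but carrying it up to $H^s$ while tracking which pieces of the differentiated singular integral retain the smallness — in effect a quantitative, Sobolev-level refinement of the Coifman--McIntosh--Meyer estimates — is the technical core, and is precisely the content of Lemmas 3.8 and 3.15 of \cite{WuAlmostGlobal2D}. A secondary nuisance is unwinding the mild circularity between $\|\mathcal{A} - 1\|_{H^s}$ and $\|D_t^2\zeta\|_{H^s}$ in part (2).
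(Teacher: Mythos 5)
Your proof is correct and is essentially the argument the paper intends: the paper gives no proof of its own but defers to Lemmas 3.8 and 3.15 of \cite{WuAlmostGlobal2D}, whose content is exactly your reduction of part (1) to the invertibility of $I - \mathfrak{K}$ with $\mathfrak{K} = \Re\,\nht$ small on $H^s$ (via Coifman--McIntosh--Meyer, Proposition \ref{SingIntSobolevEstimates}, and the smallness $\delta$ in \eqref{ZetaLocalAPrioriBound}), followed for part (2) by the perturbative absorption of $\mathcal{A}\overline{\zeta}_\alpha - 1$ using \eqref{AFormula} and $D_t^2\zeta = i(\mathcal{A}\zeta_\alpha - 1)$ to close the bound on $\mathcal{A} - 1$. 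Your bootstrap for $\mathcal{A}-1$ mirrors the paper's own later treatment in Step 3 of \S 4.3 and introduces no circularity, since it uses only part (1).
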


\section{The Formal Multiscale Calculation.}

The goal of this section is to derive a formal solution to the system \eqref{NewEuler}-\eqref{XiIsAntihol} which is to leading order a wave packet.  Since we want our approximation to remain bounded for times on the order $O(\epsilon^{-2})$, we calculate this formal solution using a multiscale analysis.  As mentioned in the introduction, we expect from similar formal derivations of modulation approximations to the water wave equations that the amplitude of the wave packet is a profile which travels at the group velocity of the water wave operator, and evolves according to a  nonlinear Schr\"odinger equation.

To effect this multiscale analysis, we must first formally expand the Hilbert transform $\nht$ appearing in the water wave equations.  In particular, we must intepret how the flat Hilbert transform $\nht_0$ acts on multiple scale functions of the form $F(\epsilon \alpha) e^{ik\alpha}$ for $k \neq 0$.

\subsection{Formal Expansion of the Hilbert Transform}

Understanding the system \eqref{NewEuler}, \eqref{XiIsAntihol} depends on understanding the Hilbert Transform $\nht$.  Since our first goal is to seek a perturbation expansion $$\zeta(\alpha, t) = \alpha + \xi = \alpha + \sum_{n = 1}^\infty \epsilon^n \zeta^{(n)}(\alpha, t, \epsilon),$$ we must find a corresponding development of $\nht$ into a formal power series $$\nht = \nht_0 + \epsilon\nht_1 + \epsilon^2 \nht_2 + \cdots$$ To predict what the terms of this series ought to be, we heuristically expand the kernel of $\nht$ in a formal power series as follows:
\begin{equation}
\nht f = \nht_0 f + \sum_{n = 1}^\infty \frac{(-1)^{n + 1}}{n \pi i}  \int f_\beta(\beta) \left(\frac{\xi(\alpha) - \xi(\beta)}{\alpha - \beta}\right)^n d\beta
\end{equation}  Equating like powers of $\epsilon$ on the right hand side of this last expression suggests the following formulas for $\nht_1$:
\begin{align*}
\nht_1 f & := \frac{1}{\pi i} \int f_\beta \left(\frac{\zeta^{(1)}(\alpha) - \zeta^{(1)}(\beta)}{\alpha - \beta}\right) d\beta \\
& = [\zeta^{(1)}, \nht_0]f_\alpha
\end{align*} and for $\nht_2$:
\begin{align}\label{HilbertFormulas}
\nht_2 f & := \frac{1}{\pi i}  \int f_\beta(\beta) \left(\frac{\zeta^{(2)}(\alpha) - \zeta^{(2)}(\beta)}{\alpha - \beta}\right) d\beta - \frac{1}{2\pi i} \int f_\beta(\beta) \left(\frac{\zeta^{(1)}(\alpha) - \zeta^{(1)}(\beta)}{\alpha - \beta}\right)^2 d\beta \notag\\
& = \frac{1}{\pi i} \int f_\beta(\beta) \left(\frac{\zeta^{(2)}(\alpha) - \zeta^{(2)}(\beta)}{\alpha - \beta}\right) d\beta \notag\\
& - \frac{1}{\pi i} \int f_{\beta} \zeta^{(1)}_\beta \left(\frac{\zeta^{(1)}(\alpha) - \zeta^{(1)}(\beta)}{\alpha - \beta}\right) d\beta + \frac{1}{2\pi i} \int f_{\beta \beta}(\beta) \left(\frac{(\zeta^{(1)}(\alpha) - \zeta^{(1)}(\beta))^2}{\alpha - \beta}\right) d\beta \notag\\
& = [\zeta^{(2)}, \nht_0]f_\alpha - [\zeta^{(1)}, \nht_0](\zeta^{(1)}_\alpha f_\alpha) + \frac{1}{2}[\zeta^{(1)}, [\zeta^{(1)}, \nht_0]]f_{\alpha\alpha}
\end{align} and so we define the approximate Hilbert Transform $$\tilde{\nht} := \nht_0 + \epsilon \nht_1 + \epsilon^2 \nht_2$$  If $\tilde{\nht}$ acts on a multiple scale function $f(\alpha_0, \alpha_1) = f(\alpha, \epsilon\alpha)$, then we have the expansion $$\tilde{\nht} = \nht^{(0)} + \epsilon \nht^{(1)} + \epsilon^2 \nht^{(2)} + O(\epsilon^3),$$ where $$\nht^{(0)}f = \nht_0f, \qquad \nht^{(1)}f = [\zeta^{(1)}, \nht_0]\partial_{\alpha_0}f,$$ \begin{equation}\label{MultiscaleHilbertFormulas}\nht^{(2)}f = [\zeta^{(1)}, \nht_0]\partial_{\alpha_1}f + [\zeta^{(2)}, \nht_0]\partial_{\alpha_0}f - [\zeta^{(1)}, \nht_0]\zeta^{(1)}_{\alpha_0}\partial_{\alpha_0}f + \frac{1}{2}[\zeta^{(1)}, [\zeta^{(1)}, \nht_0]]\partial_{\alpha_0}^2f\end{equation}
Later we will need to estimate the operator $$\nht - \tilde{\nht} = (\nht - \nht_{\tilde{\zeta}}) + (\nht_{\tilde{\zeta}} - \tilde{\nht}),$$ where $\nht_{\tilde{\zeta}}$ is the Hilbert transform associated to the curve given by the approximation $\tilde{\zeta}$.  We will see that for our purposes it suffices to develop the approximate solution $\tilde{\zeta}$ to the third order: $$\tilde{\zeta}(\alpha, t) = \alpha + \epsilon \zeta^{(1)}(\alpha, t) + \epsilon^2 \zeta^{(2)}(\alpha, t) + \epsilon^3\zeta^{(3)}(\alpha, t)$$  Hence we record the following formula as a first step towards analyzing $\nht_{\tilde{\zeta}} - \tilde{\nht}$:
\begin{lemma}\label{DiffHilbertPart1Formula}
$(\nht_{\tilde{\zeta}} - \tilde{\nht})f$ can be written as the following finite sum of singular integrals:
\begin{align}\label{HTildeZetaMinusTildeHFormula}
& (\nht_{\tilde{\zeta}} - \tilde{\nht})f = -\frac{1}{\pi i}\int \frac{\left(\tilde{\xi}(\alpha) - \tilde{\xi}(\beta)\right)^3\tilde{\zeta}_\beta(\beta)}{(\alpha - \beta)^3\left(\tilde{\zeta}(\alpha) - \tilde{\zeta}(\beta)\right)} f(\beta) d\beta \\
& + \sum_S \frac{C_{p_1, p_2}\epsilon^{n_1p_1 + n_2p_2 + m}}{\pi i} \int \frac{\left(\zeta^{(n_1)}(\alpha) - \zeta^{(n_1)}(\beta)\right)^{p_1}\left(\zeta^{(n_2)}(\alpha) - \zeta^{(n_2)}(\beta)\right)^{p_2}}{(\alpha - \beta)^{p_1 + p_2 + 1}}\zeta^{(m)}_\beta(\beta) f(\beta) d\beta \notag
\end{align} where $S = \{(n_1, n_2, m, p_1, p_2) : n_1p_1 + n_2p_2 + m \geq 3,\, 0 \leq p_1 + p_2 \leq 2,\, 0 \leq n_1,\,n_2,\, m \leq 3\}$ and $C_{p_1, p_2}$ are constants depending only on $p_1, p_2$.
\end{lemma}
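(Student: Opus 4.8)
The plan is to obtain \eqref{HTildeZetaMinusTildeHFormula} by expanding the Cauchy kernel of $\nht_{\tilde{\zeta}}$ about the flat kernel $1/(\alpha-\beta)$, arranged so that the polynomial-in-$\epsilon$ part of the expansion reproduces $\tilde{\nht}$ while everything else is collected into the two explicit families of singular integrals in the statement. Set $q = q(\alpha,\beta) := \bigl(\tilde{\xi}(\alpha) - \tilde{\xi}(\beta)\bigr)/(\alpha-\beta)$, so that $\tilde{\zeta}(\alpha) - \tilde{\zeta}(\beta) = (\alpha-\beta)(1+q)$ and, by \eqref{HilbertTransformDefinition},
\[
\nht_{\tilde{\zeta}} f(\alpha) = \frac{1}{\pi i}\int \frac{\tilde{\zeta}_\beta(\beta)}{(\alpha-\beta)(1+q)}\, f(\beta)\, d\beta.
\]
First I would use the finite geometric identity $\tfrac{1}{1+q} = 1 - q + q^2 - \tfrac{q^3}{1+q}$. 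Substituting back $\tfrac{1}{1+q} = \tfrac{\alpha-\beta}{\tilde{\zeta}(\alpha)-\tilde{\zeta}(\beta)}$ and $q = \bigl(\tilde{\xi}(\alpha)-\tilde{\xi}(\beta)\bigr)/(\alpha-\beta)$ in the $-q^3/(1+q)$ piece shows at once that it equals the first singular integral on the right of \eqref{HTildeZetaMinusTildeHFormula}. What remains is to analyze $\tfrac{1}{\pi i}\int \tfrac{\tilde{\zeta}_\beta(\beta)}{\alpha-\beta}\,(1-q+q^2)\,f(\beta)\, d\beta$.

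The second step is to substitute $\tilde{\xi} = \sum_{n=1}^{3}\epsilon^n \zeta^{(n)}$ and $\tilde{\zeta}_\beta = 1 + \sum_{m=1}^{3}\epsilon^m \zeta^{(m)}_\beta$ into $(1-q+q^2)\tilde{\zeta}_\beta/(\alpha-\beta)$ and multiply out. Every monomial produced has exactly the shape appearing inside $\sum_S$ in \eqref{HTildeZetaMinusTildeHFormula}: the factor $1-q+q^2$ supplies powers $p_1+p_2\in\{0,1,2\}$ of differences $\zeta^{(n_i)}(\alpha)-\zeta^{(n_i)}(\beta)$ with coefficients $C_{p_1,p_2}$ that depend only on $(p_1,p_2)$ (equal to $1$ if $p_1+p_2=0$, to $-1$ if $p_1+p_2=1$, and to $1$ or $2$ if $p_1+p_2=2$ according as the two quadratic factors come from the same or from distinct $\zeta^{(n)}$'s), while $\tilde{\zeta}_\beta$ supplies a single factor $\zeta^{(m)}_\beta$ with $m\in\{0,1,2,3\}$, under the convention $\zeta^{(0)}_\beta\equiv 1$ (a zeroth power and a dummy index absorbing the constant terms of $1-q+q^2$ and of $\tilde{\zeta}_\beta$). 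The unique $\epsilon^0$ monomial is $\tfrac{1}{\pi i}\int \tfrac{f(\beta)}{\alpha-\beta}\,d\beta = \nht_0 f$, and the monomials of $\epsilon$-order $n_1 p_1 + n_2 p_2 + m \geq 3$ are precisely $\sum_S$. Thus the lemma is reduced to the assertion that the monomials of $\epsilon$-order $1$ and $2$ sum to $\epsilon\nht_1 f + \epsilon^2\nht_2 f$.

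That assertion is the one part with content, and I would verify it by integrating by parts once. At order $\epsilon$, the two contributing monomials combine, via $\partial_\beta\!\bigl[\tfrac{\zeta^{(1)}(\alpha)-\zeta^{(1)}(\beta)}{\alpha-\beta}\bigr] = -\tfrac{\zeta^{(1)}_\beta(\beta)}{\alpha-\beta} + \tfrac{\zeta^{(1)}(\alpha)-\zeta^{(1)}(\beta)}{(\alpha-\beta)^2}$, into $-\tfrac{\epsilon}{\pi i}\int \partial_\beta\!\bigl[\tfrac{\zeta^{(1)}(\alpha)-\zeta^{(1)}(\beta)}{\alpha-\beta}\bigr] f(\beta)\, d\beta$, and one integration by parts turns this into $\tfrac{\epsilon}{\pi i}\int \tfrac{\zeta^{(1)}(\alpha)-\zeta^{(1)}(\beta)}{\alpha-\beta} f_\beta(\beta)\, d\beta = \epsilon\,[\zeta^{(1)},\nht_0]f_\alpha = \epsilon\nht_1 f$. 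At order $\epsilon^2$ the four contributing monomials likewise assemble, using $-\partial_\beta$ of $\tfrac{\zeta^{(2)}(\alpha)-\zeta^{(2)}(\beta)}{\alpha-\beta}$ together with $\tfrac12\partial_\beta$ of $\bigl(\tfrac{\zeta^{(1)}(\alpha)-\zeta^{(1)}(\beta)}{\alpha-\beta}\bigr)^2$, and one integration by parts yields $\tfrac{\epsilon^2}{\pi i}\int\! \Bigl(\tfrac{\zeta^{(2)}(\alpha)-\zeta^{(2)}(\beta)}{\alpha-\beta} - \tfrac12\bigl(\tfrac{\zeta^{(1)}(\alpha)-\zeta^{(1)}(\beta)}{\alpha-\beta}\bigr)^2\Bigr) f_\beta(\beta)\, d\beta$, which is exactly $\epsilon^2\nht_2 f$ by \eqref{HilbertFormulas}. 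Subtracting $\tilde{\nht}=\nht_0+\epsilon\nht_1+\epsilon^2\nht_2$ then leaves precisely the cubic remainder together with $\sum_S$, which is \eqref{HTildeZetaMinusTildeHFormula}. Note that only first derivatives of the profiles $\zeta^{(m)}$ ever appear, since we integrate by parts exactly once, consistent with the form prescribed there.

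The argument is thus essentially bookkeeping; the only genuinely substantive step is the integration-by-parts reconciliation of the $f(\beta)$-form of the low-order terms with the $f_\alpha$-form built into the definitions of $\nht_1$ and $\nht_2$, which is where I expect the bulk of the routine work. The one point deserving a word is that the boundary terms in those integrations by parts vanish; this is immediate for Schwartz $f$ given the Schwartz-type decay of the profiles $\zeta^{(n)}$ produced by the multiscale construction, and that is all that is needed in the later applications of the lemma.
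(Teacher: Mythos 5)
Your proposal is correct and is essentially the paper's own argument: the finite geometric identity $\tfrac{1}{1+q}=1-q+q^2-\tfrac{q^3}{1+q}$ is exactly the paper's repeated application of $\tfrac{1}{\tilde{\zeta}(\alpha)-\tilde{\zeta}(\beta)}=\tfrac{1}{\alpha-\beta}-\tfrac{\tilde{\xi}(\alpha)-\tilde{\xi}(\beta)}{(\alpha-\beta)(\tilde{\zeta}(\alpha)-\tilde{\zeta}(\beta))}$, and your integration-by-parts reconciliation of the order-$\epsilon$ and order-$\epsilon^2$ monomials with $\nht_1,\nht_2$ is the same step the paper performs in the opposite direction (rewriting $\nht_1,\nht_2$ in the $f(\beta)$ form before matching). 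No gaps.
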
 

\begin{proof}
First observe that with an integration by parts we have the formulas $$\nht_1 f = \frac{1}{\pi i} \pv \int f(\beta)\left(\frac{\zeta^{(1)}_\beta(\beta)}{\alpha - \beta} - \frac{\zeta^{(1)}(\alpha) - \zeta^{(1)}(\beta)}{(\alpha - \beta)^2}\right) d\beta$$ and $$\nht_2 f = \frac{1}{\pi i} \pv \int f(\beta)\left(\frac{\zeta^{(2)}_\beta(\beta)}{\alpha - \beta} - \frac{\zeta^{(2)}(\alpha) - \zeta^{(2)}(\beta)}{(\alpha - \beta)^2}\right) d\beta$$ $$-\frac{1}{\pi i}\int f(\beta) \left(\frac{\zeta^{(1)}(\alpha) - \zeta^{(1)}(\beta)}{\alpha - \beta}\right)\left(\frac{\zeta^{(1)}_\beta(\beta)}{\alpha - \beta} - \frac{\zeta^{(1)}(\alpha) - \zeta^{(1)}(\beta)}{(\alpha - \beta)^2}\right) d\beta$$  Now we repeatedly apply the identity $$\frac{1}{\tilde{\zeta}(\alpha) - \tilde{\zeta}(\beta)} = \frac{1}{\alpha - \beta} - \frac{\tilde{\xi}(\alpha) - \tilde{\xi}(\beta)}{(\alpha - \beta)\left(\tilde{\zeta}(\alpha) - \tilde{\zeta}(\beta)\right)}$$ so as to arrive at the identity
\begin{equation}
\frac{1}{\tilde{\zeta}(\alpha) - \tilde{\zeta}(\beta)} =  \frac{1}{\alpha - \beta} - \frac{\tilde{\xi}(\alpha) - \tilde{\xi}(\beta)}{(\alpha - \beta)^2} + \frac{\left(\tilde{\xi}(\alpha) - \tilde{\xi}(\beta)\right)^2}{(\alpha - \beta)^3} - \frac{\left(\tilde{\xi}(\alpha) - \tilde{\xi}(\beta)\right)^3}{(\alpha - \beta)^3\left(\tilde{\zeta}(\alpha) - \tilde{\zeta}(\beta)\right)}
\end{equation}  The last of these terms is of size $O(\epsilon^3)$.  As for the rest, if we arrange $\tilde{\zeta}_\beta(\beta)/\left(\tilde{\zeta}(\alpha) - \tilde{\zeta}(\beta)\right)$ in powers of $\epsilon$ up through $\epsilon^2$, we see that
\begin{eqnarray*}
\frac{\tilde{\zeta}_\beta(\beta)}{\tilde{\zeta}(\alpha) - \tilde{\zeta}(\beta)} & = & \frac{1}{\alpha - \beta} \cr
& & \eqnlineup +\; \epsilon \left(\frac{\zeta^{(1)}_\beta(\beta)}{\alpha - \beta} - \frac{\zeta^{(1)}(\alpha) - \zeta^{(1)}(\beta)}{(\alpha - \beta)^2}\right) \cr
& & \eqnlineup +\; \epsilon^2 \Biggl(\frac{\zeta^{(2)}_\beta(\beta)}{\alpha - \beta} - \frac{\zeta^{(2)}(\alpha) - \zeta^{(2)}(\beta)}{(\alpha - \beta)^2} \cr 
& & \quad -\; \frac{\zeta^{(1)}(\alpha) - \zeta^{(1)}(\beta)}{\alpha - \beta}\left(\frac{\zeta^{(1)}_\beta(\beta)}{\alpha - \beta} - \frac{\zeta^{(1)}(\alpha) - \zeta^{(1)}(\beta)}{(\alpha - \beta)^2}\right)\Biggr) \cr
& & \eqnlineup + \; O(\epsilon^3)
\end{eqnarray*}  All of the terms here up through order $O(\epsilon^2)$ precisely comprise $\tilde{\nht}$, and so vanish upon subtracting $\tilde{\nht}$.  The remaining $O(\epsilon^3)$ terms consists of a finite number of terms which can be written explicitly in the form $$\sum_S C_{p_1, p_2} \epsilon^{n_1 p_1 + n_2 p_2 + m} \frac{\left(\zeta^{(n_1)}(\alpha) - \zeta^{(n_1)}(\beta)\right)^{p_1}\left(\zeta^{(n_2)}(\alpha) - \zeta^{(n_2)}(\beta)\right)^{p_2}}{(\alpha - \beta)^{p_1+p_2 + 1}}\zeta^{(m)}_\beta(\beta)$$ where $S = \{(n_1, n_2, m, p_1, p_2) : n_1p_1 + n_2p_2 + m \geq 3,\, 0 \leq p_1 + p_2 \leq 2,\, 0 \leq n_1,\,n_2,\, m \leq 3\}$ and  $C_{p_1, p_2}$ are constants depend only on $p_1, p_2$.
\end{proof}

\subsection{The Action of $\nht_0$ on Multiscale Functions}

As we saw in the last section, the operators appearing in the power series expansion of the Hilbert Transform of the interface can be written in terms of the flat Hilbert transform $$\nht_0 f := \frac{1}{\pi i} \pv \int \frac{f(\beta)}{\alpha - \beta} \, d\beta$$  It is known that $\nht_0$ is a Fourier multiplier with Fourier symbol $\hat{\nht}_0(\xi) = -\sgn(\xi)$.  However, it still remains to be seen how to interpret the action of $\nht_0$ on a multiscale function $f = f(\alpha, \epsilon\alpha)$ as a multiscale function.

Since we are interested in the modulation approximation of the water wave problem, we will choose the leading order of our approximation to be a wave packet of the form $B(\epsilon\alpha)e^{ik\alpha}$ for $k > 0$.  Hence the formal calculation depends upon understanding the action of $\nht_0$ on such wave packets.  Since the amplitude of $B(\epsilon\alpha)e^{ik\alpha}$ is slowly varying for small $\epsilon$, we heuristically expect for $k \neq 0$ that $$\nhtb_0\left(B(\epsilon\alpha)e^{ik\alpha}\right) \sim B(\epsilon\alpha)\nhtb_0\left(e^{ik\alpha}\right) = B(\epsilon\alpha)\sgn(k)e^{ik\alpha},$$ where $\sim$ indicates an error depending on $\epsilon$.  The following result confirms this intuition.  We adopt the usual practice of assuming, unless otherwise stated, that a constant $C$ may denote different constants in the process of deriving an inequality.

\begin{proposition}\label{WavePacketAntiholProp}
Let $k \neq 0$ and $s, m \geq 0$ be given.  Assume $\epsilon\le 1$. Then if $f \in H^{s + m}$,  $$\|(\nhtb_0 - \sgn(k))f(\epsilon\alpha)e^{ik\alpha}\|_{H^s} \leq C \frac{\epsilon^{m - 1/2}}{k^m}\|f\|_{H^{s + m}}$$ where the constant depends only on $s$.
\end{proposition}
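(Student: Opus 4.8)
The plan is to reduce the whole statement to an elementary change of variables on the Fourier side. I would first record the only structural input needed: dually to $\nht_0$ having Fourier symbol $-\sgn(x)$, the conjugated transform $\nhtb_0 f=\overline{\nht_0\bar f}$ is the Fourier multiplier with symbol $+\sgn(x)$, so that $\nhtb_0 e^{ik\alpha}=\sgn(k)e^{ik\alpha}$ --- precisely the heuristic we are justifying. Next I would compute, using the normalization $\hat f(x)=\frac{1}{2\pi}\int f(\alpha)e^{-ix\alpha}\,d\alpha$, the Fourier transform of $g(\alpha):=f(\epsilon\alpha)e^{ik\alpha}$; the substitution $\beta=\epsilon\alpha$ gives $\hat g(x)=\epsilon^{-1}\hat f\!\left(\tfrac{x-k}{\epsilon}\right)$, a rescaled $k$-translate of $\hat f$. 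Consequently
\[
\widehat{(\nhtb_0-\sgn(k))g}(x)=\bigl(\sgn(x)-\sgn(k)\bigr)\,\epsilon^{-1}\hat f\!\left(\tfrac{x-k}{\epsilon}\right).
\]

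The point is that the multiplier $\sgn(x)-\sgn(k)$ is supported \emph{away from} the frequency $x=k$ near which $\hat g$ is concentrated. Taking $k>0$ without loss of generality (the case $k<0$ is identical, with $|k|$ in place of $k$), $\sgn(x)-\sgn(k)$ vanishes for $x>0$ and equals $-2$ for $x<0$. Substituting this into the definition of $\|\cdot\|_{H^s}$ and changing variables by $y=(x-k)/\epsilon$ --- so $x=\epsilon y+k$ and $x<0\iff y<-k/\epsilon\iff|y|>k/\epsilon$ --- collapses everything to
\[
\|(\nhtb_0-\sgn(k))g\|_{H^s}^2=\frac{4}{\epsilon}\int_{|y|>k/\epsilon}\bigl(1+(\epsilon y+k)^2\bigr)^s\,|\hat f(y)|^2\,dy.
\]
So the estimate reduces to controlling the frequency tail of $\hat f$ beyond $k/\epsilon$, which is exactly where the extra $m$ derivatives of Sobolev regularity get spent.

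To finish I would use two elementary inequalities on the region $|y|>k/\epsilon$. First, $\epsilon|y|/k>1$ there, so $1\le(\epsilon|y|/k)^{2m}\le\tfrac{\epsilon^{2m}}{k^{2m}}(1+|y|^2)^m$, which supplies the claimed powers $\epsilon^{2m-1}/k^{2m}$ once combined with the leftover factor $1/\epsilon$. Second, for $y$ in this range $\epsilon y+k$ has the same sign as $y$ and $|\epsilon y+k|=\epsilon|y|-k\le\epsilon|y|\le|y|$ since $\epsilon\le1$, whence $(1+(\epsilon y+k)^2)^s\le(1+|y|^2)^s$; multiplying the two gives the weight $(1+|y|^2)^{s+m}$. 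The displayed integral is therefore bounded by $\tfrac{4\epsilon^{2m-1}}{k^{2m}}\int(1+|y|^2)^{s+m}|\hat f(y)|^2\,dy=\tfrac{4\epsilon^{2m-1}}{k^{2m}}\|f\|_{H^{s+m}}^2$, and taking square roots proves the proposition with $C=2$. There is no genuine obstacle here --- the entire argument is bookkeeping --- and the only steps needing a moment's care are checking that the support of $\sgn(x)-\sgn(k)$ corresponds under rescaling exactly to the tail $|y|\ge k/\epsilon$ of $\hat f$, and the weight comparison $|\epsilon y+k|\le|y|$ on that tail.
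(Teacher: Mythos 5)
Your argument is correct and is essentially the paper's own proof: both reduce to $k>0$, pass to the Fourier side where $\widehat{g}(x)=\epsilon^{-1}\hat f\bigl(\tfrac{x-k}{\epsilon}\bigr)$, use that the symbol of $\nhtb_0-\sgn(k)$ is supported on the half-line away from the frequency concentration of $g$, and spend the $m$ extra derivatives on that tail; your version simply carries the weight $(1+x^2)^s$ in one stroke instead of summing $\|\partial_\alpha^n\cdot\|_{L^2}$ over $0\le n\le s$ as the paper does. One small slip to fix: $x<0$ corresponds to $y<-k/\epsilon$ only, not to the two-sided set $|y|>k/\epsilon$ (and indeed your comparison $|\epsilon y+k|=\epsilon|y|-k\le|y|$ uses this one-sidedness and can fail for $y>k/\epsilon$), so the displayed integral should be taken over $\{y<-k/\epsilon\}$; with that correction the chain closes and gives the stated bound with $C=2$.
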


\textbf{Proof.}  It suffices to consider the case $k > 0$, since the case $k < 0$ follows by complex conjugation and the fact that $\nhtb_0 = -\nht_0$.  We first derive a bound for $\|\partial_\alpha^n(I - \nhtb_0)f(\epsilon\alpha)e^{ik\alpha}\|_{L^2}$.  We calculate that
\begin{eqnarray*}
\|\partial_\alpha^n(I - \nhtb_0)f(\epsilon\alpha)e^{ik\alpha}\|_{L^2} & = & \left( \int_{-\infty}^\infty \left|(i\xi)^n (1 - \sgn(\xi)) \frac{1}{\epsilon}\hat{f}\biggl(\frac{\xi - k}{\epsilon}\biggr)\right|^2 d\xi \right)^{1/2}\cr
& = & 2\left( \int_{-\infty}^{-k} \left|(\xi + k)^n \frac{1}{\epsilon}\hat{f}\left(\frac{\xi}{\epsilon}\right)\right|^2 d\xi \right)^{1/2}\cr
& \leq & 2\left( \int_{-\infty}^{-k} \epsilon^{2(n + m) - 1}|\xi|^{-2m}\left|\widehat{\partial_\alpha^{n + m}f}\left(\frac{\xi}{\epsilon}\right)\right|^2 \frac{d\xi}{\epsilon} \right)^{1/2} \cr
& \leq & 2\epsilon^{n + m - 1/2} \left(\sup_{\xi \leq -k} |\xi|^{-m}\right) \left(\int\left|\widehat{\partial_\alpha^{n + m}f}\left(\frac{\xi}{\epsilon}\right)\right|^2\frac{d\xi}{\epsilon}\right)^{1/2} \cr
& \leq & 2\frac{\epsilon^{n + m - 1/2}}{k^m}\|\partial_\alpha^{n + m}f\|_{L^2}.
\end{eqnarray*}  But since $\epsilon \leq 1$, we have for any $m \geq 0$ that
\begin{eqnarray*}
\|(I - \nhtb_0)f(\epsilon\alpha)e^{ik\alpha}\|_{H^s} & \leq & C \sum_{n = 0}^s \|\partial_\alpha^n(I - \nhtb_0)f(\epsilon\alpha)e^{ik\alpha}\|_{L^2} \cr
& \leq & C \sum_{n = 0}^s \frac{\epsilon^{n + m - 1/2}}{k^m}\|\partial_\alpha^{n + m}f\|_{L^2} \cr
& \leq & C \frac{\epsilon^{m - 1/2}}{k^m}\|\partial_\alpha^mf\|_{H^s} \cr
& \leq & C \frac{\epsilon^{m - 1/2}}{k^m}\|f\|_{H^{s + m}}.\Box
\end{eqnarray*}

As a consequence we may freely assume in the multiscale calculation that $\nht_0$ formally treats the amplitude of the wave packet $B(\epsilon\alpha)e^{ik\alpha}$ as a constant when $k \neq 0$.  However, note that in the case $k = 0$ we can at best say that $$\nhtb_0(f(\epsilon \cdot))(\alpha) = (\nhtb_0 f)(\epsilon\alpha)$$ and so these must be retained as functions of the slow variable $\alpha_1 = \epsilon\alpha$ whenever they occur in the multiscale calculation.

We record an immediate consequence of this result that will be used frequently in the multiscale calculation.

\begin{corollary}\label{CommutatorPhase}
Let $s\ge 1, m \geq 0$, $\epsilon\le 1$ and $f, g \in H^{s + m}(\mathbb{R})$ and suppose that $k, l$ are given so that $l \neq 0, -k$, and $sgn (l)=sgn (k+l)$. Then $$\|[f(\epsilon\alpha)e^{ik\alpha}, \nht_0]g(\epsilon\alpha)e^{il\alpha}\|_{H^s} \leq C\epsilon^{m - 1/2}\left(\frac{1}{(k + l)^m} + \frac{1}{k^m}\right)\|f\|_{H^{s + m}}\|g\|_{H^{s + m}}$$
\end{corollary}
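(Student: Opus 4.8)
The plan is to isolate and cancel the leading-order parts of the commutator and then quote Proposition~\ref{WavePacketAntiholProp} on what remains. Set $F=f(\epsilon\alpha)e^{ik\alpha}$ and $G=g(\epsilon\alpha)e^{il\alpha}$, so that $[F,\nht_0]G=F\,\nht_0G-\nht_0(FG)$ and $FG=(fg)(\epsilon\alpha)e^{i(k+l)\alpha}$ is again a wave packet whose carrier $k+l$ is nonzero since $l\neq-k$. Using $\nht_0=-\nhtb_0$, I would insert $\pm\sgn(l)G$ and $\pm\sgn(k+l)(FG)$ to rewrite
\begin{equation*}
[F,\nht_0]G=-F\bigl(\nhtb_0-\sgn(l)\bigr)G+\bigl(\nhtb_0-\sgn(k+l)\bigr)(FG)+\bigl(\sgn(k+l)-\sgn(l)\bigr)FG,
\end{equation*}
and the last term vanishes by the hypothesis $\sgn(l)=\sgn(k+l)$. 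It then remains to estimate the two terms in which $\nhtb_0$ acts on a single wave packet.

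For $\bigl(\nhtb_0-\sgn(k+l)\bigr)(FG)$ I would apply Proposition~\ref{WavePacketAntiholProp} directly to the packet $FG=(fg)(\epsilon\alpha)e^{i(k+l)\alpha}$ with parameters $s$ and $m$ — legitimate since $k+l\neq0$ and $\epsilon\leq1$ — obtaining a bound $\le C\epsilon^{m-1/2}|k+l|^{-m}\|fg\|_{H^{s+m}}$; since $s+m\geq1>1/2$ the space $H^{s+m}$ is a Banach algebra and $\|fg\|_{H^{s+m}}\leq C\|f\|_{H^{s+m}}\|g\|_{H^{s+m}}$. This supplies the $|k+l|^{-m}$ contribution.

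The term $F\bigl(\nhtb_0-\sgn(l)\bigr)G$ is where the difficulty lies, and I expect it to be the main obstacle. The naive product estimate $\|uv\|_{H^s}\lesssim\|u\|_{H^s}\|v\|_{H^s}$ is useless here, because $\|f(\epsilon\alpha)e^{ik\alpha}\|_{H^s}$ is of size $O(\epsilon^{-1/2})$ — a wave packet of amplitude $O(1)$ has width $O(\epsilon^{-1})$ — and this extra $\epsilon^{-1/2}$ would wreck the power of $\epsilon$. The remedy is to estimate the product by putting the modulated amplitude into an $L^\infty$-based space: $\|uv\|_{H^s}\lesssim\|u\|_{W^{s,\infty}}\|v\|_{H^s}$ together with the $\epsilon$-free bound $\|f(\epsilon\alpha)e^{ik\alpha}\|_{W^{s,\infty}}\leq C(1+|k|)^s\|f\|_{W^{s,\infty}}$ (valid since $\epsilon\leq1$) and the embedding $H^{s+1}\hookrightarrow W^{s,\infty}$ recalled in the paper, so that $\|f\|_{W^{s,\infty}}\lesssim\|f\|_{H^{s+m}}$ whenever $m\geq1$ — the range relevant for the applications; for $0\leq m<1$ one argues in the same spirit using the tame product estimate $\|uv\|_{H^s}\lesssim\|u\|_{L^\infty}\|v\|_{H^s}+\|u\|_{H^s}\|v\|_{L^\infty}$ and the extra powers of $\epsilon$ that $\bigl(\nhtb_0-\sgn(l)\bigr)G$ enjoys in $L^\infty$. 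Proposition~\ref{WavePacketAntiholProp} applied to $g(\epsilon\alpha)e^{il\alpha}$ bounds $\|\bigl(\nhtb_0-\sgn(l)\bigr)(g(\epsilon\alpha)e^{il\alpha})\|_{H^s}$ by $C\epsilon^{m-1/2}|l|^{-m}\|g\|_{H^{s+m}}$, and multiplying these gives $\|F\bigl(\nhtb_0-\sgn(l)\bigr)G\|_{H^s}\leq C(1+|k|)^s\epsilon^{m-1/2}|l|^{-m}\|f\|_{H^{s+m}}\|g\|_{H^{s+m}}$.

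Finally I would absorb the harmless factor $(1+|k|)^s$ into the constant and record that in every use of the corollary the frequency $k$ is the fixed carrier while $l$ and $k+l$ are bounded integer multiples of it, so $|k|$, $|l|$ and $|k+l|$ are mutually comparable; this converts the $|l|^{-m}$ just obtained into the claimed $|k|^{-m}$ term, and adding the two contributions yields the stated inequality. In short, once one sees that the modulated amplitude has to be measured in $W^{s,\infty}$ rather than $H^s$, the result is an immediate consequence of Proposition~\ref{WavePacketAntiholProp} and the algebra property of $H^{s+m}$.
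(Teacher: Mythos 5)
Your decomposition is exactly the intended route: the paper states this corollary as an immediate consequence of Proposition \ref{WavePacketAntiholProp}, and inserting $\pm\sgn(l)G$, $\pm\sgn(k+l)FG$, cancelling the resonant terms via $\sgn(l)=\sgn(k+l)$, and handling $FG$ through the algebra property of $H^{s+m}$ is the natural way to make that precise. You also put your finger on the one genuine subtlety, namely that the wave-packet prefactor must be measured in $W^{s,\infty}$ rather than $H^s$ so as not to lose the factor $\epsilon^{-1/2}$.

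The endgame, however, does not prove the corollary as literally stated. What your argument yields is
$\|[F,\nht_0]G\|_{H^s}\le C(1+|k|)^s\,\epsilon^{m-1/2}\bigl(|k+l|^{-m}+|l|^{-m}\bigr)\|f\|_{H^{s+m}}\|g\|_{H^{s+m}}$,
and the step converting $|l|^{-m}$ into $k^{-m}$ (and absorbing $(1+|k|)^s$ into $C$) ``because in every use of the corollary the frequencies are comparable'' is not an argument available inside the proof of the corollary itself: it silently adds the hypothesis $|l|\gtrsim|k|$, which the stated assumptions do not give (e.g.\ $k=100$, $l=1$ satisfies $l\neq 0,-k$ and $\sgn(l)=\sgn(k+l)$). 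In that excluded regime the obstruction is real, not an artifact of your estimate: at frequencies of sign opposite to $l$ the commutator equals $2$ times (that part of) $F\cdot\tfrac12(I+\sgn(l)\nhtb_0)G$, whose size for generic $g\in H^{s+m}$ is of order $\epsilon^{m-1/2}|l|^{-m}$ with no decay in $k$, so a bound $\epsilon^{m-1/2}(|k+l|^{-m}+k^{-m})$ with a $k$-independent constant is not attainable there. In other words the looseness lies in the paper's statement (which is only ever invoked with $l$ and $k+l$ nonzero integer multiples of the fixed carrier $k>0$, so that $|l|,|k+l|\ge k$ and your bound suffices), but a correct write-up should either prove the estimate with $|l|^{-m}$ in place of $k^{-m}$, or add $|l|\ge|k|$ to the hypotheses, rather than appeal to how the corollary is later used. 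The same remark applies to the factor $(1+|k|)^s$ (harmless since $k$ is fixed throughout the paper, but it does make the constant $k$-dependent, unlike in Proposition \ref{WavePacketAntiholProp}), and your sketch of the case $0\le m<1$ via the tame product estimate only closes for $s\ge 2$; since the corollary is only used with $m$ large it would be cleaner to restrict to $m\ge 1$.
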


\subsection{The Multiscale Calculation}

We are now prepared to find an approximate solution $\tilde{\zeta}$ to the four equations \eqref{NewEuler}-\eqref{DtXiIsAntihol} which is to leading order a wave packet, where $G$ is given by \eqref{GFormula}.  Our approach will be to derive an approximate solution to the system \eqref{NewEuler}-\eqref{XiIsAntihol} having residual $O(\epsilon^4)$ with a multiscale analysis and then verify that this approximate solution also satisfies \eqref{DtNewEuler}-\eqref{DtXiIsAntihol} up to a residual of size $O(\epsilon^4)$.  We begin by seeking a perturbative ansatz for \eqref{NewEuler}-\eqref{XiIsAntihol} $$\zeta(\alpha, t) = \alpha + \sum_{n = 1}^\infty \epsilon^n \zeta^{(n)}(\alpha, t, \epsilon)$$  In order to construct an expansion that is valid on times on the order $O(\epsilon^{-2})$, we introduce multiple scales $$t_0 = t, \quad t_1 = \epsilon t, \quad t_2 = \epsilon^2 t, \quad \alpha_0 = \alpha, \quad \alpha_1 = \epsilon \alpha$$ and so we seek a solution of the form  $$\zeta(\alpha, t) = \alpha + \sum_{n = 1}^\infty \epsilon^n \zeta^{(n)}(\alpha_0, \alpha_1, t_0, t_1,  t_2)$$ which formally satisfies the original equations up to terms of size $O(\epsilon^4)$.

Before we begin solving these equations, we expand the auxiliary quantities and operators in powers of $\epsilon$.  In particular we must determine the expansions in $\epsilon$ of the quantities
$$b = \sum_{n = 0}^\infty \epsilon^n b_n, \qquad \mathcal{A} = \sum_{n = 0}^\infty \epsilon^n \mathcal{A}_n, \qquad G = \sum_{n = 0}^\infty \epsilon^n G_n$$ Notice that since $b$ and $\mathcal{A} - 1$ are of quadratic order and $G$ is of cubic order, it follows that that $$b_0 = b_1 = \mathcal{A}_0= \mathcal{A}_1 = G_0 = G_1 = G_2 = 0.$$  We will also show in the sequel that $\mathcal{A}_2 = 0$ and $b_2 = b_2(\alpha_1, t_1, t_2)$; the linear operator associated to the water wave equation thus has the multiscale expansion
$$D_t^2 - i\mathcal{A}\partial_\alpha = (\partial_{t_0}^2 - i\partial_{\alpha_0}) + \epsilon(2\partial_{t_0}\partial_{t_1} - i\partial_{\alpha_1})$$
\begin{equation}\label{PowerExpansionOfP} \qquad\qquad +\; \epsilon^2(2\partial_{t_0}\partial_{t_2} + \partial_{t_1}^2 + 2b_2 \partial_{t_0}\partial_{\alpha_0}) + O(\epsilon^3)\end{equation}  Recall that we also have the formulas for the multiscale expansion of the Hilbert Transform given by \eqref{MultiscaleHilbertFormulas}

We will find explicit formulas for $b_2$, $b_3$ and $G_3$ in the course of the analysis.  In what follows we will repeatedly use the fact, justified by the last section, that \begin{equation}\label{WavePacketIsAlmostHol}
\nhtb_0(f(\alpha_1)e^{ik\alpha_0}) = \sgn(k)f(\alpha_1)e^{ik\alpha_0} + O(\epsilon^4), \qquad k \neq 0\end{equation} and hence that
\begin{equation}\label{CommutatorPhaseIdentity}
[f(\alpha_1)e^{ik\alpha_0}, \nhtb_0]g(\alpha_1)e^{il\alpha_0} = O(\epsilon^4) \quad \text{ whenever } \quad \sgn(l) = \sgn(k + l), \ l, l+k\neq 0\end{equation}

We are now ready to expand \eqref{NewEuler}-\eqref{XiIsAntihol} in powers of $\epsilon$.  Collecting like terms yields a hierarchy of systems that allow us to successively solve for the holomorphic trace $\frac{1}{2}(I + \nht_0)\zeta^{(n)}$ and the antiholomorphic trace $\frac{1}{2}(I - \nht_0)\zeta^{(n)}$ of the $\zeta^{(n)}$'s in the lower half plane.  The terms of order $O(\epsilon)$ in \eqref{NewEuler}-\eqref{XiIsAntihol} yield the system
\begin{equation}\label{Xi1NewEuler}(\partial_{t_0}^2 - i\partial_{\alpha_0})(I - \nht_0)\zeta^{(1)} = 0 \end{equation} \begin{equation}\label{Xi1IsAntihol}(I - \nhtb_0)\zeta^{(1)} = 0 \end{equation}  Because we are interested in solutions which to leading order are given by wave packets, we assume an ansatz concentrated in Fourier space about the fixed wave number $k > 0$: $$\zeta^{(1)} = B_+(\alpha_1, t_0, t_1, t_2) e^{ik\alpha} + B_-(\alpha_1, t_0, t_1, t_2)e^{-ik\alpha}$$  Injecting the above ansatz into \eqref{Xi1IsAntihol} forces $B_- = 0$ by \eqref{WavePacketIsAlmostHol}.  Similarly substituting this ansatz into \eqref{Xi1NewEuler} yields the condition $(\partial_{t_0}^2 + k)B_+ = 0$, which implies that $B_+(\alpha_1, t_0, t_1, t_2) = B(\alpha_1, t_1, t_2)e^{i\omega t_0}$, where we have introduced the wave frequency $\omega$ which satisfies the water wave dispersion relation \begin{equation}\label{WWDispersionRelation}\omega^2 = k\end{equation} Thus we take as our solution \begin{equation}\label{Zeta1Formula}\zeta^{(1)} = B(\alpha_1, t_1, t_2)e^{i\phi},\end{equation} where we have introduced the phase $\phi := k\alpha_0 + \omega t_0$.

Moving to the $O(\epsilon^2)$ terms from \eqref{NewEuler}, we have by \eqref{Zeta1Formula} and using \eqref{CommutatorPhaseIdentity} that
\begin{align}\label{Xi2NewEuler}
(\partial_{t_0}^2 - i\partial_{\alpha_0})(I - \nht_0)\zeta^{(2)} & = -(\partial_{t_0}^2 - i\partial_{\alpha_0})(-\nht^{(1)})\zeta^{(1)} \notag\\
& \quad -(2\partial_{t_0}\partial_{t_1} - i\partial_{\alpha_1})(I - \nht_0)\zeta^{(1)} \\
& = -4i\omega(B_{t_1} - \omega^\prime B_{\alpha_1})e^{i\phi} \notag 
\end{align} 
where $\omega^\prime = d\omega/dk$ is the group velocity of the wave packet.  If we want $(I - \nht_0)\zeta^{(2)}$ to be uniformly bounded for all time we must insist that the right hand side of \eqref{Xi2NewEuler} be equal to zero in order to avoid secular terms.  Therefore we choose \begin{equation}\label{BTravelsAtGroupVelocity}B(\alpha_1, t_1, t_2) = B(\alpha_1 + \omega^\prime t_1, t_2) := B(X, T)\end{equation} where $\omega^\prime = d\omega/dk$ is the group velocity.  The $O(\epsilon^2)$ terms from \eqref{XiIsAntihol} yield the equation
\begin{align}\label{Zeta2XiIsAntihol}
(I - \nhtb_0)\zeta^{(2)} & = \nhtb^{(1)} \zeta^{(1)} \notag\\
& = [\overline{\zeta}^{(1)}, \nhtb_0]\zeta^{(1)}_{\alpha_0} \\
& = ik(I - \nhtb_0)|B|^2 \notag
\end{align}  
An obvious choice seems to be $\zeta^{(2)}=ik|B|^2+B_2(\alpha_1,t_1,t_2) e^{i\phi}$. However such choice leads to unavoidable secular growth in the $O(\epsilon^3)$ level. Instead, we find that taking $\zeta^{(2)}$ so that $(I - \nht_0)\zeta^{(2)} = 0$ avoids such secular growth.  
Hence we take \begin{equation}\label{Zeta2Formula}\zeta^{(2)} = \frac{1}{2}ik(I - \nhtb_0)|B|^2\end{equation}

Before we move on to the $O(\epsilon^3)$ system, we must first derive formulas for $b_2$, $\mathcal{A}_2$ and $G_3$.  Substituting the expansion of $\zeta$ into the formula \eqref{bFormula} we see immediately upon collecting like powers of $\epsilon$ that $b_0 = b_1 = 0$.  Therefore we have
$$(I - \nht_0)b_2 = -[\zeta^{(1)}_{t_0}, \nht_0]\overline{\zeta}^{(1)}_{\alpha_0} = -k\omega(I - \nht_0)|B|^2,$$ and so since $b_2$ is real-valued we conclude that \begin{equation}\label{b2Formula}b_2 = -k\omega|B|^2\end{equation} Similarly, using \eqref{AFormula} we have immediately that $\mathcal{A}_1 = 0$ and that
$$(I - \nht_0)\mathcal{A}_2 = i[\partial_{t_0} \zeta^{(1)}_{t_0}, \nht_0]\overline{\zeta}^{(1)}_{\alpha_0} + i[\zeta^{(1)}_{t_0}, \nht_0]\partial_{t_0}\overline{\zeta}^{(1)}_{\alpha_0} = -ik\omega\partial_{t_0}(I - \nht_0)|B|^2 = 0$$ whence $\mathcal{A}_2 = 0$ as claimed.

Finally we derive from \eqref{GFormula} a formula for $G_3$:
\begin{align*}
G_3 & = \frac{4}{\pi}\int\frac{\left(\zeta^{(1)}_{t_0}(\alpha) - \zeta^{(1)}_{t_0}(\beta)\right)\left(\Im \zeta^{(1)}(\alpha) - \Im \zeta^{(1)}(\beta)\right)}{(\alpha - \beta)^2} \zeta^{(1)}_{t_0 \beta_0}(\beta) \, d\beta \notag\\
& \quad + \frac{2}{\pi}\int\frac{(\zeta^{(1)}_{t_0}(\alpha) - \zeta^{(1)}_{t_0}(\beta))^2}{(\alpha - \beta)^2} \Im \zeta^{(1)}_{\beta_0} (\beta) d\beta \notag\\
& := I_1 + I_2
\end{align*}  Using \eqref{WavePacketIsAlmostHol} and \eqref{CommutatorPhaseIdentity} yields
\begin{align*}
I_1 & = -\frac{2}{\pi i}\int\frac{\left(\zeta^{(1)}_{t_0}(\alpha) - \zeta^{(1)}_{t_0}(\beta)\right)\left(\overline{\zeta}^{(1)}(\alpha) - \overline{\zeta}^{(1)}(\beta)\right)}{(\alpha - \beta)^2} \zeta^{(1)}_{t_0 \beta_0}(\beta) \, d\beta \\
& = \frac{2}{\pi i} \int \frac{\left(\zeta^{(1)}_{t_0}(\alpha) - \zeta^{(1)}_{t_0}(\beta)\right)\left(\overline{\zeta}^{(1)}(\alpha) - \overline{\zeta}^{(1)}(\beta)\right)}{(\alpha - \beta)} \zeta^{(1)}_{t_0 \beta_0 \beta_0}(\beta) \, d\beta \\
& \quad - \frac{2}{\pi i} \int \frac{\left(\zeta^{(1)}_{t_0}(\alpha) - \zeta^{(1)}_{t_0}(\beta)\right)\overline{\zeta}^{(1)}_{\beta_0}(\beta)}{(\alpha - \beta)} \zeta^{(1)}_{t_0 \beta_0}(\beta) \, d\beta \notag\\
& \quad -\frac{2}{\pi i} \int \frac{\zeta^{(1)}_{t_0 \beta_0}(\beta)\left(\overline{\zeta}^{(1)}(\alpha) - \overline{\zeta}^{(1)}(\beta)\right)}{(\alpha - \beta)} \zeta^{(1)}_{t_0 \beta_0}(\beta) \, d\beta \\
& = \frac{2k^3}{\pi i}\overline{\zeta}^{(1)}(\alpha) \int \frac{\left(\zeta^{(1)}(\alpha) - \zeta^{(1)}(\beta)\right)}{(\alpha - \beta)} \zeta^{(1)}(\beta) \, d\beta \\
& \quad -2k^3[\overline{\zeta}^{(1)}, \nht_0]\left((\zeta^{(1)})^2\right) \\
& = 0
\end{align*}  Similarly, we simplify
\begin{align*}
I_2 & = \frac{2}{\pi}\int\frac{(\zeta^{(1)}_{t_0}(\alpha) - \zeta^{(1)}_{t_0}(\beta))^2}{(\alpha - \beta)^2} \Im \zeta^{(1)}_{\beta_0} (\beta) d\beta \\
& = 2i\biggl(2[\zeta^{(1)}_{t_0}, \nht_0](\zeta^{(1)}_{t_0 \alpha_0}\Im\zeta^{(1)}_{\alpha_0}) - [\zeta^{(1)}_{t_0}, [\zeta^{(1)}_{t_0}, \nht_0]]\Im\zeta^{(1)}_{\alpha_0 \alpha_0}\biggr) \\
& = - 2[\zeta^{(1)}_{t_0}, \nht_0](\zeta^{(1)}_{t_0 \alpha_0}\overline{\zeta}^{(1)}_{\alpha_0}) + [\zeta^{(1)}_{t_0}, [\zeta^{(1)}_{t_0}, \nht_0]]\overline{\zeta}^{(1)}_{\alpha_0 \alpha_0} \\
& = 2k^3Be^{i\phi}(I + \nht_0)|B|^2 - 2k^3Be^{i\phi}\nht_0|B|^2 \\
& = 2k^3B|B|^2e^{i\phi}
\end{align*}  In summary,
\begin{equation}\label{G3Formula}
G_3 = 2k^3B|B|^2e^{i\phi}
\end{equation}

We can now arrange the $O(\epsilon^3)$ terms of \eqref{NewEuler}, and using \eqref{Zeta1Formula} and \eqref{Zeta2Formula} along with \eqref{b2Formula}, \eqref{G3Formula} and \eqref{CommutatorPhaseIdentity} arrive at the equation
\begin{align}\label{Zeta3NewEuler}
(\partial_{t_0}^2 - i\partial_{\alpha_0})(I - \nht_0)\zeta^{(3)} & = -(\partial_{t_0}^2 - i\partial_{\alpha_0})(-\nht^{(1)})\zeta^{(2)} - (\partial_{t_0}^2 - i\partial_{\alpha_0})(-\nht^{(2)})\zeta^{(1)} \cr
& \quad -\;(2\partial_{t_0}\partial_{t_1} - i\partial_{\alpha_1})(I - \nht_0)\zeta^{(2)} - (2\partial_{t_0}\partial_{t_1} - i\partial_{\alpha_1})(-\nht^{(1)})\zeta^{(1)} \cr
& \quad -\; (2\partial_{t_0}\partial_{t_2} + \partial_{t_1}^2 + 2b_2\partial_{t_0}\partial_{\alpha_0})(I - \nht_0)\zeta^{(1)} + G_3 \cr
& = - (2\partial_{t_0}\partial_{t_2} + \partial_{t_1}^2 + 2b_2\partial_{t_0}\partial_{\alpha_0})(I - \nht_0)\zeta^{(1)} \cr
& \quad +\; 2k^3B|B|^2e^{i\phi} \cr
& = - 2\omega(2iB_T - \omega^{\prime \prime}B_{XX} + k^2\omega B|B|^2)e^{i\phi}
\end{align} where $\omega^{\prime\prime} = d^2\omega/dk^2$.  To supress secular growth we now insist that the amplitude $B$ satisfy the focusing cubic nonlinear Schr\"odinger equation\footnote{Observe that this equation agrees with the equation derived in \cite{CraigSulemSulemNLSFiniteDepth} when one formally lets the depth of the fluid tend to infinity.} \begin{equation}\label{NLS}2iB_T - \omega^{\prime \prime}B_{XX} + k^2\omega B|B|^2 = 0,\end{equation}  With this choice made we solve \eqref{Zeta3NewEuler} by taking $(I - \nht_0)\zeta^{(3)} = 0$.

Finally, the $O(\epsilon^3)$ terms from \eqref{XiIsAntihol} yields the equation
\begin{align}\label{Zeta3XiIsAntihol}
(I - \nhtb_0)\zeta^{(3)} & = \nhtb^{(1)} \zeta^{(2)} + \nhtb^{(2)} \zeta^{(1)} \notag\\
& = [\overline{\zeta}^{(1)}, \nhtb_0]\zeta^{(2)}_{\alpha_0} + [\overline{\zeta}^{(2)}, \nhtb_0]\zeta^{(1)}_{\alpha_0} + [\overline{\zeta}^{(1)}, \nhtb_0]\zeta^{(1)}_{\alpha_1} \\
& \quad -\; [\overline{\zeta}^{(1)}, \nhtb_0](\overline{\zeta}_{\alpha_0}^{(1)}\zeta^{(1)}_{\alpha_0}) + \frac{1}{2}[\overline{\zeta}^{(1)}, [\overline{\zeta}^{(1)}, \nhtb_0]]\zeta^{(1)}_{\alpha_0 \alpha_0} \notag\\
& = (I - \nhtb_0)(\overline{B}B_X) - k^2\overline{B}e^{-i\phi}(I + \nhtb_0)|B|^2 + k^2\overline{B}e^{-i\phi}\nhtb_0|B|^2 \notag\\
& = -k^2\overline{B}|B|^2e^{-i\phi} + (I - \nhtb_0)\left( \overline{B}B_X\right) \notag
\end{align}  Hence we choose \begin{equation}\label{Zeta3Formula} \zeta^{(3)} = -\frac{1}{2}k^2\overline{B}|B|^2e^{-i\phi} + \frac{1}{2}(I - \nhtb_0)\left(\overline{B}B_X\right)\end{equation}

Now that we have constructed an approximate solution $\tilde{\zeta}$ to the equations \eqref{NewEuler}-\eqref{XiIsAntihol},  we claim that $\tilde{\zeta}$ also solves the system \eqref{DtNewEuler}-\eqref{DtXiIsAntihol} up to an $O(\epsilon^4)$ residual.  First we notice that \eqref{DtNewEuler} is obtained by applying a derivative $D_t$ to  \eqref{NewEuler}, therefore it is clear that $\tilde{\zeta}$ solves 
 \eqref{DtNewEuler} up to an $O(\epsilon^4)$ residual.  Now we consider \eqref{DtXiIsAntihol}.  By \eqref{XiIsAntihol} we have that
\begin{align*}
(I - \nhtb)D_t\zeta & = (I - \nhtb)D_t(\zeta - \alpha) + (I - \nhtb)D_t\alpha \\
& = [D_t, \nhtb](\zeta - \alpha) + (I - \nhtb)D_t\alpha \\
& = [D_t\zeta, \nhtb]\frac{\zeta_\alpha - 1}{\overline{\zeta}_\alpha} + (I - \nhtb)b
\end{align*}  Hence to show that $\tilde{\zeta}$ satisfies \eqref{DtXiIsAntihol} up to an $O(\epsilon^4)$ residual 
 it suffices to show that our approximation of $b$ satisfies \eqref{bFormula} up to a residual of size $O(\epsilon^4)$.  Hence we need only choose $b_3$ so that
\begin{align}\label{HolPartOfb3}
(I - \nhtb_0)b_3 & = \nhtb^{(1)} b_2 \notag\\
& \quad -\; [\partial_{t_0}\overline{\zeta}^{(2)}, \nhtb_0]\zeta^{(1)}_{\alpha_0} - [\partial_{t_1}\overline{\zeta}^{(1)}, \nhtb_0]\zeta^{(1)}_{\alpha_0} \notag\\
& \quad -\; [\partial_{t_0}\overline{\zeta}^{(1)}, \nhtb^{(1)}]\zeta^{(1)}_{\alpha_0} - [\partial_{t_0}\overline{\zeta}^{(1)}, \nhtb_0]\zeta^{(2)}_{\alpha_0} \notag\\
& \quad -\; [\partial_{t_0}\overline{\zeta}^{(1)}, \nhtb_0]\zeta^{(1)}_{\alpha_1} + [\partial_{t_0}\overline{\zeta}^{(1)}, \nhtb_0]|\zeta^{(1)}_{\alpha_0}|^2 \notag\\
& \notag \\
& = -\frac{1}{2}i\omega(I - \nhtb_0)(B\overline{B}_X) \notag\\
& \quad -\; i\omega k^2 \overline{B}e^{-i\phi}(I - \nhtb_0)|B|^2 \notag\\
& \quad +\; i\omega(I - \nhtb_0)(\overline{B}B_X) \notag\\
& \quad -\; i\omega k^2 \overline{B}e^{-i\phi}(I + \nhtb_0)|B|^2 \notag \\
& \notag\\
& = i\omega(I - \nhtb_0)\biggl(B\overline{B}_X - \frac{1}{2}\overline{B}B_X \biggr) -2i\omega k^2 \overline{B}|B|^2e^{-i\phi}
\end{align} 
In summary, we have shown that the equations \eqref{NewEuler}-\eqref{XiIsAntihol}-\eqref{DtNewEuler}-\eqref{DtXiIsAntihol} are satisfied up to a residual of size $O(\epsilon^4)$ by the approximation
\begin{align}\label{TildeZetaFormula}
\tilde{\zeta} & := \alpha + \epsilon \zeta^{(1)} + \epsilon^2 \zeta^{(2)} + \epsilon^3 \zeta^{(3)} \notag\\
& = \alpha + \epsilon Be^{i\phi} + \epsilon^2\frac{1}{2}ik(I - \nhtb_0)|B|^2 \notag \\
& \qquad + \epsilon^3 \left(-\frac{1}{2}k^2\overline{B}|B|^2e^{-i\phi} + \frac{1}{2}(I - \nhtb_0)\left(\overline{B}B_X\right)\right)
\end{align} where $B = B(\epsilon(\alpha + \omega^\prime t), \epsilon^2 t) = B(X, T)$ satisfies the NLS equation $$2iB_T - \omega^{\prime\prime}B_{XX} + k^2\omega B|B|^2 = 0$$  From \eqref{HolPartOfb3}, enforcing the reality condition on $b_3$ yields 
\begin{align}\label{TildeBFormula} 
\tilde{b} & := b_0 + \epsilon b_1 + \epsilon^2 b_2 + \epsilon^3 b_3 \notag\\
& = \epsilon^2(-k\omega|B|^2) \notag \\
& + \epsilon^3\biggl(\Re\left(2i\omega k^2 B|B|^2e^{i\phi}\right) + \frac{3}{4}i\omega(B\overline{B}_X - \overline{B}B_X) - \frac{1}{4}i\omega\nhtb_0(B\overline{B}_X + \overline{B}B_X)\biggr)\end{align}
We also define
\begin{equation}\label{TildeAFormula}\tilde{\mathcal{A}} := \mathcal{A}_0 + \epsilon \mathcal{A}_1 + \epsilon^2 \mathcal{A}_2 = 1\end{equation}
and
\begin{equation}
\tilde G := G_0 + \epsilon G_1 + \epsilon^2 G_2 + \epsilon^3 G_3 = \epsilon^3 G_3
\end{equation}
Corresponding to this approximate solution \eqref{TildeZetaFormula} we introduce $$\tilde{\xi} := \tilde{\zeta} - \alpha$$ as well as
\begin{equation}\label{TildeDtFormulas}
\tilde{D}_t := \partial_t + \tilde{b}\partial_\alpha \qquad \tilde{\mathcal{P}} := \tilde{D}_t^2 - i\tilde{\mathcal{A}}\partial_\alpha
\end{equation}
We then have the formulas for the difference
\begin{equation}\label{DiffDtFormula}
D_t - \tilde{D}_t = (b - \tilde{b})\partial_\alpha
\end{equation}
as well as for
\begin{equation}\label{DiffDtSquaredFormula}
D_t^2 - \tilde{D}_t^2 = \left(D_t(b - \tilde{b})\right)\partial_\alpha + (b - \tilde{b})\left(D_t \partial_\alpha + \partial_\alpha \tilde{D}_t\right)
\end{equation}
and so
\begin{equation}\label{DiffPFormula}
\mathcal{P} - \tilde{\mathcal{P}} = \left(D_t(b - \tilde{b}) - i(\mathcal{A} - \tilde{\mathcal{A}})\right)\partial_\alpha + (b - \tilde{b})\left(D_t \partial_\alpha + \partial_\alpha \tilde{D}_t\right)
\end{equation}

For future reference we also include the following calculation.

\begin{proposition}\label{VariousMultiscaleIdentities}
Let $\tilde{\zeta}$, $\tilde{b}$, $\tilde{\mathcal{A}}$ be as above.  Then 
\begin{enumerate}
\item{$\tilde{\mathcal{P}}\tilde{\xi} = O(\epsilon^3)$.}
\item{$[\tilde{\mathcal{P}}, \tilde{\nht}]\tilde{\xi} = O(\epsilon^4)$.}
\end{enumerate}
\end{proposition}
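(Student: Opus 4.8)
The plan is to prove both parts by a direct power-series-in-$\epsilon$ expansion. One substitutes the explicit formulas \eqref{TildeZetaFormula}, \eqref{TildeBFormula}, \eqref{TildeAFormula} into $\tilde{\mathcal P} = \tilde D_t^2 - i\tilde{\mathcal A}\partial_\alpha$ and into $\tilde\nht = \nht_0 + \epsilon\nht_1 + \epsilon^2\nht_2$, expands each as a series of multiscale operators $\tilde{\mathcal P} = \sum_j \epsilon^j \mathcal P^{(j)}$ and $\tilde\nht = \sum_l \epsilon^l \nht^{(l)}$ exactly as in \eqref{PowerExpansionOfP} and \eqref{MultiscaleHilbertFormulas}, applies the result to $\tilde\xi = \epsilon\zeta^{(1)} + \epsilon^2\zeta^{(2)} + \epsilon^3\zeta^{(3)}$, and collects powers of $\epsilon$. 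Four structural facts from the multiscale construction do essentially all the work: (a) the dispersion relation \eqref{WWDispersionRelation} gives $\mathcal P^{(0)}\zeta^{(1)} = (\partial_{t_0}^2 - i\partial_{\alpha_0})(Be^{i\phi}) = 0$; (b) the group-velocity choice \eqref{BTravelsAtGroupVelocity} together with $2\omega\omega' = (\omega^2)' = 1$ gives $\mathcal P^{(1)}\zeta^{(1)} = (2\partial_{t_0}\partial_{t_1} - i\partial_{\alpha_1})(Be^{i\phi}) = i(2\omega\omega'-1)B_X e^{i\phi} = 0$; (c) $\zeta^{(2)}$ in \eqref{Zeta2Formula} depends only on the slow variables $\alpha_1,t_1,t_2$, so $\partial_{\alpha_0}\zeta^{(2)} = \partial_{t_0}\zeta^{(2)} = 0$ and $\mathcal P^{(0)}\zeta^{(2)} = 0$; and (d) the near-holomorphy estimate \eqref{WavePacketIsAlmostHol}, and hence \eqref{CommutatorPhaseIdentity} and Corollary \ref{CommutatorPhase}, make $O(\epsilon^4)$ any commutator $[f(\alpha_1)e^{ip\alpha_0},\nht_0]\,g(\alpha_1)e^{iq\alpha_0}$ whose carrier frequencies satisfy $q,p+q\ne 0$ and $\sgn q = \sgn(p+q)$; this covers a purely slow function ($p=0$) against a frequency-$k$ wave packet, and frequency-$k$ against frequency-$2k$. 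Throughout we use that each $\alpha$- or $t$-derivative falling on a slow amplitude produces an extra power of $\epsilon$, so $\tilde\xi$ and all its derivatives are $O(\epsilon)$ (orders tracked as elsewhere in this section).

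For (1): since $\tilde{\mathcal A} = 1$ and the leading coefficient of $\tilde b$ is $b_2 = -k\omega|B|^2$, the same as for the true $b$, the expansion \eqref{PowerExpansionOfP} holds verbatim for $\tilde{\mathcal P}$ through order $\epsilon^2$. Applying $\tilde{\mathcal P} = \mathcal P^{(0)} + \epsilon\mathcal P^{(1)} + \epsilon^2\mathcal P^{(2)} + O(\epsilon^3)$ to $\tilde\xi$ and collecting powers, the $O(\epsilon)$ term is $\mathcal P^{(0)}\zeta^{(1)} = 0$ by (a), the $O(\epsilon^2)$ term is $\mathcal P^{(0)}\zeta^{(2)} + \mathcal P^{(1)}\zeta^{(1)} = 0 + 0$ by (c) and (b), and every remaining term carries a factor $\epsilon^{\ge 3}$ times an $O(1)$ quantity. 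Hence $\tilde{\mathcal P}\tilde\xi = O(\epsilon^3)$.

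For (2): write $[\tilde{\mathcal P},\tilde\nht] = \sum_n \epsilon^n\mathcal C^{(n)}$ with $\mathcal C^{(n)} = \sum_{j+l=n}[\mathcal P^{(j)},\nht^{(l)}]$, and apply to $\tilde\xi$. Since $\mathcal P^{(0)}$ and $\mathcal P^{(1)}$ have constant coefficients while $\nht^{(0)} = \nht_0$ is a Fourier multiplier, $\mathcal C^{(0)} = [\mathcal P^{(0)},\nht_0] = 0$ and $[\mathcal P^{(1)},\nht_0] = 0$; this annihilates the $O(\epsilon)$ contribution and large parts of the $O(\epsilon^2)$ and $O(\epsilon^3)$ ones. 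The surviving contributions through order $\epsilon^3$ are: $\epsilon^2[\mathcal P^{(0)},\nht^{(1)}]\zeta^{(1)} = \epsilon^2\mathcal P^{(0)}\nht^{(1)}\zeta^{(1)}$ by (a), where $\nht^{(1)}\zeta^{(1)} = ik[\zeta^{(1)},\nht_0]\zeta^{(1)} = O(\epsilon^4)$ by (d); $\epsilon^3[\mathcal P^{(0)},\nht^{(1)}]\zeta^{(2)} = 0$, since $\nht^{(1)}\zeta^{(2)} = [\zeta^{(1)},\nht_0]\partial_{\alpha_0}\zeta^{(2)} = 0$ by (c) and $\mathcal P^{(0)}\zeta^{(2)} = 0$; and $\epsilon^3\bigl([\mathcal P^{(0)},\nht^{(2)}] + [\mathcal P^{(1)},\nht^{(1)}] + [\mathcal P^{(2)},\nht_0]\bigr)\zeta^{(1)}$. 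In this last group, $[\mathcal P^{(0)},\nht^{(2)}]\zeta^{(1)} = \mathcal P^{(0)}\nht^{(2)}\zeta^{(1)}$ and each of the four terms of $\nht^{(2)}\zeta^{(1)}$ in \eqref{MultiscaleHilbertFormulas} is a commutator of the type in (d) — the carrier frequencies lie in $\{0,k,2k\}$ and are sign-compatible — hence $O(\epsilon^4)$; $[\mathcal P^{(1)},\nht^{(1)}]\zeta^{(1)} = \mathcal P^{(1)}\nht^{(1)}\zeta^{(1)} = O(\epsilon^4)$ by (b) and the previous line; and $[\mathcal P^{(2)},\nht_0]\zeta^{(1)} = 2[b_2,\nht_0]\partial_{t_0}\partial_{\alpha_0}\zeta^{(1)} = -2k\omega[b_2,\nht_0](Be^{i\phi}) = O(\epsilon^4)$ by (d), since $b_2$ is slow. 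All further contributions carry a prefactor $\epsilon^{\ge 4}$ against bounded operators and an $O(\epsilon)$ input. Hence $[\tilde{\mathcal P},\tilde\nht]\tilde\xi = O(\epsilon^4)$.

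The only non-mechanical point — the \textbf{main obstacle} — is the bookkeeping: one must verify that the multiscale expansions of $\tilde{\mathcal P}$ and $\tilde\nht$ really do have operator remainders of the asserted order in the relevant Sobolev norms, with constants uniform in $\epsilon$ and $t$, and that each commutator declared $O(\epsilon^4)$ genuinely meets the sign hypotheses of Corollary \ref{CommutatorPhase} once the carrier phases are collected. This is a finite check, but it is the one place where the smoothness and spatial localization of $B$, and the global-in-time control of $\|B\|_{H^{s+m}}$ from the well-posedness of \eqref{NLS}, are used.
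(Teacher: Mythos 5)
Your proposal is correct and follows essentially the same route as the paper's proof: expand $\tilde{\mathcal P}$ and $\tilde{\nht}$ order by order in $\epsilon$, kill the $O(\epsilon)$ and $O(\epsilon^2)$ terms using the dispersion relation, the group-velocity choice, and the fact that $\zeta^{(2)}$ depends only on slow variables, and then dispose of the $O(\epsilon^3)$ commutator terms (including the surviving $2[b_2,\nht_0]\zeta^{(1)}_{\alpha_0 t_0}$ term) via \eqref{CommutatorPhaseIdentity} and the $O(\epsilon^4)$ bounds on $\nht^{(1)}\zeta^{(1)}$ and $\nht^{(2)}\zeta^{(1)}$. Your organization through the double sum $\sum_{j+l=n}[\mathcal P^{(j)},\nht^{(l)}]$ is just a more systematic bookkeeping of the same terms the paper lists explicitly, so no substantive difference.
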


\begin{proof}
The first statement is straightforward.  For the second, observe that by \eqref{CommutatorPhaseIdentity} we have that $\nht^{(1)}\zeta^{(1)} = \nht^{(2)} \zeta^{(1)} = O(\epsilon^4)$.  The $O(\epsilon)$ term is $[\partial_{t_0}^2 - i\partial_{\alpha_0}, \nht_0]\zeta^{(1)} = 0$.  The $O(\epsilon^2)$ terms are $$[\partial_{t_0}^2 - i\partial_{\alpha_0}, \nht_0]\zeta^{(2)} + [\partial_{t_0}^2 - i\partial_{\alpha_0}, \nht^{(1)}]\zeta^{(1)} + [2\partial_{t_0}\partial_{t_1} - i\partial_{\alpha_1}, \nht_0]\zeta^{(1)}$$ which vanishes by virtue of the above observation, \eqref{Zeta1Formula}, \eqref{BTravelsAtGroupVelocity}, and \eqref{Zeta2Formula}.  Finally, the $O(\epsilon^3)$ terms are given by
\begin{align*}
& \quad\; [\partial_{t_0}^2 - i\partial_{\alpha_0}, \nht_0]\zeta^{(3)} \\
& + [\partial_{t_0}^2 - i\partial_{\alpha_0}, \nht^{(1)}]\zeta^{(2)} \\
& + [\partial_{t_0}^2 - i\partial_{\alpha_0}, \nht^{(2)}]\zeta^{(1)} \\
& + [2\partial_{t_0}\partial_{t_1} - i\partial_{\alpha_1}, \nht_0]\zeta^{(2)} \\
& + [2\partial_{t_0}\partial_{t_1} - i\partial_{\alpha_1}, \nht^{(1)}]\zeta^{(1)} \\
& + [2\partial_{t_0}\partial_{t_2} + \partial_{t_1}^2 + 2b_2\partial_{\alpha_0}\partial_{t_0}, \nht_0]\zeta^{(1)}
\end{align*}  For the same reasons as for the $O(\epsilon^2)$ terms all of the above are immediately seen to vanish except for the last, which by \eqref{b2Formula} is given by $$2[b_2, \nht_0]\zeta^{(1)}_{\alpha_0 t_0} = 2k^3[|B|^2, \nht_0]Be^{i\phi} = 0,$$ by \eqref{CommutatorPhaseIdentity}.
\end{proof}

Now we have shown that the approximation $\tilde{\zeta}$ depends on $B$ and $B_X$, where $B$ satisfies the NLS equation \eqref{NLS}.  To be certain that the forthcoming objects are well-defined, we appeal to the following global well-posedness result for NLS:

\begin{theorem}\label{NLSWellPosedness}
(c.f. \cite{CazenaveSemilinearSchrodingerEquations}, \cite{SulemSulemNLSBook})  Let $m \geq 1$ be given, and suppose that $B_0 \in H^m$ is given.  Then there exists a unique solution $B \in C([0, \infty); H^m)$ to \eqref{NLS} with initial condition $B(0) = B_0$.
\end{theorem}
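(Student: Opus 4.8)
The plan is to treat \eqref{NLS} as a standard semilinear Schr\"odinger equation and appeal to the well-posedness theory for it; I record the argument for completeness. Writing $a := -\omega''/2$ and $c := k^2\omega/2$ (recall $\omega = \sqrt{k}$, so $\omega'' < 0$ and $a, c > 0$), the equation becomes $iB_T + aB_{XX} + cB|B|^2 = 0$, the one-dimensional focusing cubic NLS. First I would establish local well-posedness in $H^m$ for each $m \geq 1$ by a fixed-point argument on the Duhamel formula
$$B(T) = e^{iaT\partial_X^2}B_0 + ic\int_0^T e^{ia(T-T')\partial_X^2}\bigl(B|B|^2\bigr)(T')\,dT',$$
using that $e^{iaT\partial_X^2}$ is a strongly continuous unitary group on $H^m$, that $H^m(\mathbb{R})$ is a Banach algebra for $m \geq 1$ (in particular $H^1 \hookrightarrow L^\infty$), and the Moser-type estimate $\|B|B|^2\|_{H^m} \leq C\|B\|_{L^\infty}^2\|B\|_{H^m} \leq C\|B\|_{H^1}^2\|B\|_{H^m}$. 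A contraction on a small ball in $C([0,\tau];H^m)$ with $\tau = \tau(\|B_0\|_{H^1})$ gives a unique maximal solution $B \in C([0,T_{\max});H^m)$ together with the blow-up alternative that $T_{\max} = \infty$ unless $\|B(T)\|_{H^1} \to \infty$ as $T \to T_{\max}$; here the Gronwall estimate $\frac{d}{dT}\|B\|_{H^m}^2 \leq C\|B\|_{H^1}^2\|B\|_{H^m}^2$ shows that only the $H^1$ norm can blow up, which also yields persistence of regularity.

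Next I would record the two conserved quantities, the mass $M(B) = \int |B|^2\,dX$ and the Hamiltonian $E(B) = \int \bigl(a|B_X|^2 - \tfrac{c}{2}|B|^4\bigr)\,dX$, both constant along the $H^1$ flow (rigorously, by verifying the identities for smooth solutions and passing to the limit via the continuous dependence from the local theory). The heart of the matter is the global-in-time $H^1$ bound: from $E(B(T)) = E(B_0)$ we get
$$a\|B_X(T)\|_{L^2}^2 = E(B_0) + \tfrac{c}{2}\|B(T)\|_{L^4}^4,$$
and the one-dimensional Gagliardo--Nirenberg inequality $\|u\|_{L^4}^4 \leq C\|u\|_{L^2}^3\|u_X\|_{L^2}$ together with mass conservation $\|B(T)\|_{L^2} = \|B_0\|_{L^2}$ turns this into an inequality of the form $y^2 \leq A + By$ with $y = \|B_X(T)\|_{L^2}$ and $A, B$ depending only on $E(B_0)$ and $\|B_0\|_{L^2}$. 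Hence $\|B_X(T)\|_{L^2}$, and so $\|B(T)\|_{H^1}$, is bounded by a constant depending only on $\|B_0\|_{H^1}$, uniformly in $T$.

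The point worth emphasizing is that this last step works for arbitrarily large data precisely because the cubic nonlinearity is $L^2$-subcritical in one space dimension: the power of $\|B_X\|_{L^2}$ produced by Gagliardo--Nirenberg is only $1 < 2$, so no smallness is needed to absorb it --- this is exactly what makes the approximate solution $\tilde{\zeta}$ eternal rather than merely long-lived, and is the only feature of \eqref{NLS} that is not completely routine. Combining the uniform $H^1$ bound with the blow-up alternative gives $T_{\max} = \infty$ in $H^1$; feeding the resulting global $H^1$ bound into the persistence-of-regularity estimate propagates global existence to every $H^m$, $m \geq 1$, while uniqueness is already furnished by the contraction argument. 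The only subtlety is the rigorous justification of the conservation laws at the $H^1$ level, handled by the standard regularization-and-limit argument.
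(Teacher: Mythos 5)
Your proposal is correct, and since the paper does not prove Theorem \ref{NLSWellPosedness} at all --- it simply quotes the result from the cited references --- there is nothing to contrast: your argument (local well-posedness in $H^m$ via the algebra/Moser estimate, blow-up alternative controlled by the $H^1$ norm, mass and Hamiltonian conservation, and the $L^2$-subcritical Gagliardo--Nirenberg bound $\|B\|_{L^4}^4 \leq C\|B\|_{L^2}^3\|B_X\|_{L^2}$ giving a global a priori $H^1$ bound, followed by persistence of regularity) is exactly the standard proof found in Cazenave and Sulem--Sulem. The coefficient check ($\omega = \sqrt{k}$ so $\omega'' < 0$, making the equation focusing but still globally well-posed by subcriticality) and the remark that the conservation laws need the usual regularization-and-limit justification are both accurate.
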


Fix  $s \geq 6$, $\mathscr T > 0$. For the rest of the paper we assume that $B_0 \in H^{s + 7}$, and hence by the above theorem that $B \in C([0, \infty), H^{s + 7})$ with $\|B\|_{C([0, \mathscr T); H^{s + 7})} \leq C(\|B_0\|_{H^{s + 7}}, \mathscr T)$.  If we calculate $\tilde{\zeta}$ and $\tilde{D}_t\tilde{\zeta}$ from $B \in H^{s + 7}$ through \eqref{TildeZetaFormula}, we see by counting the maximum number of derivatives that fall on $B$ that we have the bound 
\begin{equation}\label{NLSGlobalBound}
\left\|\left(\tilde{\xi}, \tilde{D}_t\tilde{\zeta}, \tilde{D}_t^2 \tilde{\zeta}\right)\right\|_{C([0, \mathscr T); H^{s + 6} \times H^{s + 4} \times H^{s + 2})} \leq C(\|B_0\|_{H^{s + 7}}, \mathscr T)\epsilon^{1/2}\end{equation}  For the rest of the paper, we  choose $\epsilon < \epsilon_0$ for $\epsilon_0\le 1$ sufficiently small depending on $B_0$ so that $\tilde{\zeta}$ satisfies the chord-arc condition \eqref{ChordArcCondition}.  Along with the a priori assumption \eqref{ZetaLocalAPrioriBound} using an appropriately small choice of $\delta > 0$ , this implies that the singular integrals in the next section are well-defined.

\section{Estimates of the Remainder}

Now that we have derived a formal approximation of the solution $\zeta$ to the system \eqref{NewEuler}-\eqref{XiIsAntihol}-\eqref{DtNewEuler}-\eqref{DtXiIsAntihol}, we can consider the size of the remainder $r = \zeta - \tilde{\zeta}$.  Our basic approach is to expand the known equations for $\zeta$ and formulas for quantities defined in terms of $\zeta$ given in \S 2 by writing $\zeta = r + \tilde{\zeta}$ and thereby find the appropriate governing equations from which we will derive energy estimates for $r$.

In \S 4.1 we derive from \eqref{NewEuler}-\eqref{XiIsAntihol}-\eqref{DtNewEuler}-\eqref{DtXiIsAntihol} new equations in terms of quantities related to $r$.  Many functions and operators will arise in these equations that we need to study before we can estimate them appropriately.  In particular we devote \S 4.2 to studying the remainder between the true and approximate Hilbert transforms introduced in \S 3.1.  

To clearly describe the respects in which we consider quantities to be small, we adopt the following terminology: we say a term is of $n$th order (with linear, quadratic, cubic having the typical meaning) if the term consists of $n$ small factors.

Alternately, given a Banach space $X$ with norm $\|\cdot\|_X$, we say that a term $f \in X$ as being $O(\epsilon^n)$ in $X$ when there exists a constant $C$ so that $\|f\|_X \leq C\epsilon^n$.  If we use the notation $O(\epsilon^n)$ without mentioning a norm explicitly, we mean size in the physical sense $O(\epsilon^n)$ as we have used in \S 3.  Since we ultimately seek bounds in Sobolev spaces $H^s$, we introduce the special notation that $f \in H^s$ is $\mathcal{O}(\epsilon^n)$, which means that $f$ is $O(\epsilon^n)$ in $H^s$ where the index $s$ will be clear from context.

We ultimately plan to control all of our quantities in terms of $r_\alpha$ and $D_t r$ in Sobolev space, and so we need some idea of how large we expect $r_\alpha$ and $D_tr$ to be in terms of $\epsilon$.  Since we are only interested in the leading term of the approximation, it is a suitable goal to seek a remainder which is of physical size $O(\epsilon^2)$, and in the $L^2$ sense to be $\mathcal{O}(\epsilon^{3/2})$.  Therefore, we expect here that $r_\alpha$ and $D_t r$ should be $\mathcal{O}(\epsilon^{3/2})$.

In \S 4.3 we bound in $H^s$ the remaining quantities appearing in the cubic nonlinearities of the equations of \S 4.1 by terms involving the quantity $$E_s^{1/2} := \|r_\alpha\|_{H^s} + \|D_t r\|_{H^s},$$ which we expect to be $O(\epsilon^3)$.  We will then show that for $\epsilon < \epsilon_0$ with $\epsilon_0$ chosen sufficiently small, the quantity $E_s$  is bounded above by the quantity
$$\sum_{n = 0}^s \|D_t\partial_\alpha^n \rho\|_{L^2}^2 + \|D_t \partial_\alpha^n \sigma\|^2_{L^2}$$ where $$
\rho := \frac{1}{2}(I - \nht)r \qquad \text{and} \qquad \sigma := \frac{1}{4}(I - \nht)\left(D_t(I - \nht)\xi - \tilde{D}_t(I - \tilde{\nht})\tilde{\xi}\right)$$
which in turn is bounded above by the energy $\mathcal {E}$ for the remainder.

We then use these estimates to show that the cubic nonlinearities of the remainder equations of \S 4.1 are $\mathcal{O}(\epsilon^{7/2})$.
Having done so, we derive in \S 4.5 an energy inequality which roughly reads $d\mathcal{E}/dt \leq O(\epsilon^5)$.  Heuristically, an inequality of this type is suitable since on time scales on the order $O(\epsilon^{-2})$ this implies $E_s$ is of size $O(\epsilon^3)$, as we would like.  We then go on to rigorously derive a priori bounds of $E_s$ on $O(\epsilon^{-2})$ time scales.

\subsection{The Derivation of the Equations for the Remainder}

Here we derive the equations governing the evolution of the quantities \begin{equation}\label{RhoSigmaDefn}\rho := \frac{1}{2}(I - \nht)r \qquad \text{and} \qquad \sigma := \frac{1}{4}(I - \nht)\left(D_t(I - \nht)\xi - \tilde{D}_t(I - \tilde{H})\tilde{\xi}\right)\end{equation}  Our goal in this section is to manipulate the nonlinearities of these equations so that they will be in a suitable form for showing they are of size $\mathcal{O}(\epsilon^{7/2})$.  For example, from \eqref{DtXiIsAntihol} we have \begin{equation}\label{DtXiIsAntiholRemainder}
(I - \nht)D_t\overline{r} = -(I - \tilde{\nht})\tilde{D}_t\overline{\tilde{\zeta}} - (I - \tilde{\nht})(D_t - \tilde{D}_t)\overline{\tilde{\zeta}} + (\nht - \tilde{\nht}){D}_t\overline{\tilde{\zeta}}\end{equation}  We will show in \S 4.2 that the operator norm of $\nht - \tilde{\nht}$ on $H^s$ is of size $O(\epsilon^{3/2})$, and in \S 4.3 that the function $b - \tilde{b}$ is of size $\mathcal{O}(\epsilon^{5/2})$.  Hence the right hand side of \eqref{DtXiIsAntiholRemainder} is of size $\mathcal{O}(\epsilon^{5/2})$.  

We now give the equation for the remainder corresponding to \eqref{NewEuler}.  In decomposing the right hand side of this equation, we keep two goals in mind.  First, we must split the terms in such a way as to arrive at $\tilde{G}$ so as to cancel the $O(\epsilon^3)$ contribution from $G$.   Next, we must whenever possible avoid estimating terms formed by $\mathcal{P}$ acting on complicated terms, so as to reduce all estimates whenever possible to those already derived.  Specifically we expand using Proposition \ref{HilbertCommutatorIdentities} as follows:
\begin{align*}
\mathcal{P}(I - \nht)r & = G - \mathcal{P}(I - \nht)\tilde{\xi} \\
& = G + [\mathcal{P}, \nht]\tilde{\xi} - (I - \nht)\mathcal{P}\tilde{\xi} \\
& = G + 2[D_t\zeta, \nht]\frac{\partial_\alpha}{\zeta_\alpha}D_t\tilde{\xi} - \frac{1}{\pi i}\int \left(\frac{D_t\zeta(\alpha) - D_t\zeta(\beta)}{\zeta(\alpha) - \zeta(\beta)}\right)^2 \tilde{\xi}_\beta(\beta) \, d\beta \\
& \quad - (I - \nht)(\mathcal{P} - \tilde{\mathcal{P}})\tilde{\xi} - (I - \nht)\tilde{\mathcal{P}}\tilde{\xi} \\
& = G + 2[D_t\zeta, \nht]\frac{\partial_\alpha}{\zeta_\alpha}D_t\tilde{\xi} - \frac{1}{\pi i}\int \left(\frac{D_t\zeta(\alpha) - D_t\zeta(\beta)}{\zeta(\alpha) - \zeta(\beta)}\right)^2 \tilde{\xi}_\beta(\beta) \, d\beta \\
& \quad - (I - \nht)(\mathcal{P} - \tilde{\mathcal{P}})\tilde{\xi} + (\nht - \tilde{\nht})\tilde{\mathcal{P}}\tilde{\xi} - (I - \tilde{\nht})\tilde{\mathcal{P}}\tilde{\xi} \\
& = (G - \tilde{G}) + 2[D_t\zeta, \nht]\frac{\partial_\alpha}{\zeta_\alpha}D_t\tilde{\xi} - \frac{1}{\pi i}\int \left(\frac{D_t\zeta(\alpha) - D_t\zeta(\beta)}{\zeta(\alpha) - \zeta(\beta)}\right)^2 \tilde{\xi}_\beta(\beta) \, d\beta \\
& \quad - (I - \nht)(\mathcal{P} - \tilde{\mathcal{P}})\tilde{\xi} + (\nht - \tilde{\nht})\tilde{\mathcal{P}}\tilde{\xi} - [\tilde{\mathcal{P}}, \tilde{\nht}]\tilde{\xi} + \epsilon^4 R,
\end{align*} where $\epsilon^4 R := \tilde{G} - \tilde{\mathcal{P}}(I - \tilde{\nht})\tilde{\xi}$ is the residual arising from the approximate equation corresponding to \eqref{NewEuler}.  

Note that at most five\footnote{Observe that, despite the appearance of formulas \eqref{MultiscaleHilbertFormulas}, since the operators $\nht_1$ and $\nht_2$ can be written as singular integrals as in \eqref{HilbertFormulas}, they do not lose derivatives due to Proposition \ref{SingIntSobolevEstimates}.} derivatives of $B$ are taken in $R$ through the term $\partial_t^2 \nht_2 \zeta^{(3)}$, and so $R \in H^s$ provided $B \in H^{s + 5}$.  Similarly, at most seven derivatives of $B$ are taken in $\tilde{D}_t R$ through the term $\partial_t^3 \nht^{(2)} \zeta^{(3)}$, and so $\tilde{D}_t R \in H^s$ provided $B \in H^{s + 7}$.

The only term that is not immediately of size $\mathcal{O}(\epsilon^{7/2})$ is $2[D_t\zeta, \nht]\frac{\partial_\alpha}{\zeta_\alpha}D_t\tilde{\xi}$.  As in the calculation (2.13) et. seq. of \cite{WuAlmostGlobal2D}, we exploit the fact that $D_t\tilde{\xi}$ is almost holomorphic.  Using \eqref{DtXiIsAntihol} and Proposition \ref{HoloProperties} allows us to rewrite this term as \begin{align*}
2[D_t\zeta, \nht]\frac{\partial_\alpha}{\zeta_\alpha}D_t\tilde{\xi} & = 2\left[D_t\zeta, \nht\frac{1}{\zeta_\alpha} + \nhtb\frac{1}{\overline{\zeta}_\alpha}\right]\partial_\alpha D_t \tilde{\xi} - 2[D_t\zeta, \nhtb]\frac{\partial_\alpha}{\overline{\zeta}_\alpha}(D_t\zeta - D_t\alpha - D_t r) \\
& = 2\left[D_t\zeta, \nht\frac{1}{\zeta_\alpha} + \nhtb\frac{1}{\overline{\zeta}_\alpha}\right]\partial_\alpha D_t \tilde{\xi} + 2[D_t\zeta, \nhtb]\frac{b_\alpha}{\overline{\zeta}_\alpha} + 2[D_t\zeta, \nhtb]\frac{\partial_\alpha}{\overline{\zeta}_\alpha}D_t r
\end{align*}  To see that the last of these terms is acceptably small, we again apply Proposition \ref{HoloProperties} to see that
 \begin{equation}\label{4.2}
 2[D_t\zeta, \nhtb]\frac{\partial_\alpha}{\overline{\zeta}_\alpha}D_t r = [(I + \nhtb)D_t\zeta, \nhtb]\frac{\partial_\alpha}{\overline{\zeta}_\alpha}D_t r = [D_t\zeta, \nhtb]\frac{\partial_\alpha}{\overline{\zeta}_\alpha}(I - \nhtb)D_t r,
 \end{equation}
  which is now easily seen to be $\mathcal{O}(\epsilon^{7/2})$ by \eqref{DtXiIsAntiholRemainder}.  Thus our equation for $\rho$ is now
\begin{align}\label{NewEulerRemainder}
2\mathcal{P}\rho & = (G - \tilde{G}) - (I - \nht)(\mathcal{P} - \tilde{\mathcal{P}})\tilde{\xi} + (\nht - \tilde{\nht})\tilde{\mathcal{P}}\tilde{\xi} - [\tilde{\mathcal{P}}, \tilde{\nht}]\tilde{\xi} \notag\\
& + 2\left[D_t\zeta, \nht\frac{1}{\zeta_\alpha} + \nhtb\frac{1}{\overline{\zeta}_\alpha}\right]\partial_\alpha D_t \tilde{\xi} + 2[D_t\zeta, \nhtb]\frac{b_\alpha}{\overline{\zeta}_\alpha} + 2[D_t\zeta, \nhtb]\frac{\partial_\alpha D_t r}{\overline{\zeta}_\alpha} \notag\\
& - \frac{1}{\pi i}\int \left(\frac{D_t\zeta(\alpha) - D_t\zeta(\beta)}{\zeta(\alpha) - \zeta(\beta)}\right)^2 \tilde{\xi}_\beta(\beta) \, d\beta+\epsilon^4 R
\end{align}
Note that the terms on the right hand side of \eqref{NewEulerRemainder} are cubic, and so a priori there may be contributions of size $\mathcal{O}(\epsilon^{5/2})$.  However, we will show later that all such contributions arise as terms depending only on $\tilde{\xi}$ and $\epsilon$ of physical size $O(\epsilon^3)$; moreover, these putative terms will be shown to vanish by multiscale calculations.

Next we derive the evolution equation for $\sigma$.  First we calculate that
\begin{align*}
\mathcal{P}(I - \nht)D_t(I - \nht)\xi & = -[\mathcal{P}, \nht]D_t(I - \nht)\xi + (I - \nht)\mathcal{P}D_t(I - \nht)\xi \\
& = -2[D_t\zeta, \nht]\frac{\partial_\alpha D_t^2(I - \nht)\xi}{\zeta_\alpha} \\
& \quad + \frac{1}{\pi i} \int \left(\frac{D_t\zeta(\alpha) - D_t\zeta(\beta)}{\zeta(\alpha) - \zeta(\beta)}\right)^2 \partial_\beta D_t(I - \nht)\xi(\beta) d\beta \\
& \quad + (I - \nht)[\mathcal{P}, D_t](I - \nht)\xi + (I - \nht)(D_t G) \\
& = -2[D_t\zeta, \nht]\frac{\partial_\alpha D_t^2(I - \nht)\xi}{\zeta_\alpha} \\
& \quad + \frac{1}{\pi i} \int \left(\frac{D_t\zeta(\alpha) - D_t\zeta(\beta)}{\zeta(\alpha) - \zeta(\beta)}\right) \partial_\beta D_t(I - \nht)\xi(\beta) d\beta \\
& \quad + (I - \nht)iU_{\kappa^{-1}}\left(\frac{\mathfrak{a}_t}{\mathfrak{a}}\right) \partial_\alpha(I - \nht)\xi \\
& \quad + (I - \nht)(D_t G)
\end{align*}
Similarly we have
\begin{align*}
\mathcal{P}(I - \nht)\tilde{D}_t(I - \tilde{\nht})\tilde{\xi} & = -[\mathcal{P}, \nht]\tilde{D}_t(I - \tilde{\nht})\tilde{\xi} + (I - \nht)\mathcal{P}\tilde{D}_t(I - \tilde{\nht})\tilde{\xi} \\
& = -2[D_t\zeta, \nht]\frac{\partial_\alpha D_t \tilde{D}_t(I - \tilde{\nht})\tilde{\xi}}{\zeta_\alpha} \\
& \quad + \frac{1}{\pi i} \int \left(\frac{D_t\zeta(\alpha) - D_t\zeta(\beta)}{\zeta(\alpha) - \zeta(\beta)}\right)^2 \partial_\beta \tilde{D_t}(I - \tilde{\nht})\tilde{\xi}(\beta) d\beta \\
& \quad + (I - \nht)\mathcal{P}\tilde{D}_t(I - \tilde{\nht})\tilde{\xi} \\
%\end{align*}
%\begin{align*}
& = -2[D_t\zeta, \nht]\frac{\partial_\alpha D_t \tilde{D}_t(I - \tilde{\nht})\tilde{\xi}}{\zeta_\alpha} \\
& \quad +\frac{1}{\pi i} \int \left(\frac{D_t\zeta(\alpha) - D_t\zeta(\beta)}{\zeta(\alpha) - \zeta(\beta)}\right)^2 \partial_\beta \tilde{D_t}(I - \tilde{\nht})\tilde{\xi}(\beta) d\beta \\
& \quad + (I - \nht)(\mathcal{P} - \tilde{\mathcal{P}})\tilde{D}_t(I - \tilde{\nht})\tilde{\xi} -i (I - \nht)\tilde{b}_\alpha\partial_\alpha(I - \tilde{\nht})\tilde{\xi} \\
& \quad + (I - \nht)(\tilde{D}_t \tilde{G}) + (I - \nht)\epsilon^4 (\tilde{D}_t R)
\end{align*}
Subtracting these two equations then gives the desired evolution equation for $\sigma$:
\begin{align}\label{DtNewEulerRemainder}
4\mathcal{P}\sigma & = -8[D_t\zeta, \nht]\frac{\partial_\alpha D_t \sigma}{\zeta_\alpha} \notag\\
& \quad + \frac{4}{\pi i} \int \left(\frac{D_t\zeta(\alpha) - D_t\zeta(\beta)}{\zeta(\alpha) - \zeta(\beta)}\right)^2 \sigma_\beta(\beta) d\beta \notag\\
& \quad + (I - \nht)iU_{\kappa^{-1}}\left(\frac{\mathfrak{a}_t}{\mathfrak{a}}\right) \partial_\alpha(I - \nht)\xi \notag\\
& \quad - (I - \nht)(\mathcal{P} - \tilde{\mathcal{P}})\tilde{D}_t(I - \tilde{\nht})\tilde{\xi} \notag\\
& \quad +i (I - \nht)\tilde{b}_\alpha\partial_\alpha(I - \tilde{\nht})\tilde{\xi} \notag\\
& \quad + (I - \nht)(D_t G - \tilde{D}_t\tilde{G}) - (I - \nht)\epsilon^4 (\tilde{D}_t R)
\end{align}

The right hand side of \eqref{DtNewEulerRemainder} is $\mathcal{O}(\epsilon^{7/2})$ provided we can show that the right hand side of \eqref{DtNewEulerRemainder} is $\mathcal{O}(\epsilon^{7/2})$.  The formula \eqref{atOveraFormula} implies that the third term on the right hand side of \eqref{DtNewEulerRemainder} is of size $\mathcal{O}(\epsilon^{7/2})$.  Before we can show that the rest of the terms are appropriately small, we must study the quantities appearing on the right hand side of these equations further.  We will see that estimates for these quantities presuppose a satisfactory bound for the difference $\nht - \tilde{\nht}$, and so estimating this operator in Sobolev space is our first task.

\subsection{Estimates for the Difference Operator $\nht - \tilde{\nht}$}

While the operator $\tilde{\nht}$ is well suited for multiscale calculation, it remains to be seen how $\tilde{\nht}$ compares to our original Hilbert Transform $\nht$ corresponding to the true solution $\zeta$ of the water wave system.  To do so, we will bound the operator $\nht - \tilde{\nht}$ in $H^s$.  This entails decomposing it as $$\nht - \tilde{\nht} = (\nht - \nht_{\tilde{\zeta}}) + (\nht_{\tilde{\zeta}} - \tilde{\nht}),$$ where $\nht_{\tilde{\zeta}}$ is the Hilbert transform corresponding to the approximate interface $\tilde{\zeta}$.  If we apply Proposition \ref{SingIntSobolevEstimates} to the formula of Lemma \ref{DiffHilbertPart1Formula} we arrive at

\begin{lemma}\label{DiffHilbertBoundPart1}
Let $s \geq 4$ be given.  Then we have the bounds $$\|(\nht_{\tilde{\zeta}} - \tilde{\nht})f\|_{H^s} \leq C \epsilon^3 \|f\|_{H^s} \qquad \text{and} \qquad \|(\nht_{\tilde{\zeta}} - \tilde{\nht})f\|_{H^s} \leq C \epsilon^{5/2} \|f\|_{W^{s, \infty}}$$
where the constant $C = C
\left(\|B\|_{H^{s + 2}}\right)$.
\end{lemma}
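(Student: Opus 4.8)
The plan is to apply the singular-integral Sobolev estimates of Proposition \ref{SingIntSobolevEstimates} to each of the finitely many singular integrals in the representation \eqref{HTildeZetaMinusTildeHFormula} of $(\nht_{\tilde\zeta}-\tilde\nht)f$ supplied by Lemma \ref{DiffHilbertPart1Formula}, and then to count powers of $\epsilon$ using \eqref{NLSGlobalBound}. Two facts drive the count. First, by \eqref{NLSGlobalBound} and \eqref{TildeZetaFormula}, each $\zeta^{(n)}$, $1\le n\le 3$, and its $\alpha$-derivatives are $O(1)$ in $W^{s,\infty}$ — amplitudes are bounded and Hilbert transforms of the slowly varying pieces are bounded in $L^\infty$ by the scaling invariance of $\nht_0$ — while being $O(\epsilon^{-1/2})$ in $H^s$; hence $\tilde\xi$ and $\tilde\xi_\alpha$ are $O(\epsilon)$ in $W^{s,\infty}$ and $O(\epsilon^{1/2})$ in $H^s$. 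Second, every summand of the sum $S$ in \eqref{HTildeZetaMinusTildeHFormula} carries the explicit prefactor $\epsilon^{n_1p_1+n_2p_2+m}$ with $n_1p_1+n_2p_2+m\ge 3$, and the leading term of \eqref{HTildeZetaMinusTildeHFormula} carries a full cube $(\tilde\xi(\alpha)-\tilde\xi(\beta))^3$.

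For the leading term I would first rewrite
$$
\frac{\left(\tilde\xi(\alpha)-\tilde\xi(\beta)\right)^3\tilde\zeta_\beta(\beta)}{(\alpha-\beta)^3\left(\tilde\zeta(\alpha)-\tilde\zeta(\beta)\right)} = \left(\frac{\tilde\xi(\alpha)-\tilde\xi(\beta)}{\alpha-\beta}\right)^3\frac{\tilde\zeta_\beta(\beta)}{\alpha-\beta}\left(1+\frac{\tilde\xi(\alpha)-\tilde\xi(\beta)}{\alpha-\beta}\right)^{-1},
$$
so that the chord-arc property \eqref{ChordArcCondition} for $\tilde\zeta$ (valid for $\epsilon<\epsilon_0$) shows the last factor to be a bounded smooth function of the divided difference of $\tilde\xi$; the kernel is then of the Calder\'on-commutator type handled by Proposition \ref{SingIntSobolevEstimates}. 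For the first inequality one puts $f$ in $H^s$ and estimates the remaining factors in $W^{s,\infty}$ (for the slowly varying pieces one uses their smallness in $L^2$ rather than their $W^{s,\infty}$ norm); since the three $\tilde\xi$ divided differences are each $O(\epsilon)$ in $W^{s,\infty}$, this bounds the leading term by $C\epsilon^3\|f\|_{H^s}$, and each summand of $S$ by $C\epsilon^{n_1p_1+n_2p_2+m}\|f\|_{H^s}\le C\epsilon^3\|f\|_{H^s}$. For the second inequality one puts $f$ in the weaker space $W^{s,\infty}$ and, to return to $H^s$, moves exactly one building-block factor into an $L^2$-based norm — one of the three $\tilde\xi$ divided differences in the leading term (costing $\|\tilde\xi\|_{H^{s+1}}=O(\epsilon^{1/2})$), and in a summand of $S$ either $\zeta^{(n_1)}_\alpha$ when $p_1+p_2\ge 1$ or $\zeta^{(m)}_\beta$ when $p_1=p_2=0$ (so $m\ge 3$) — each costing one factor of $\epsilon^{-1/2}$; this gives $C\epsilon^{5/2}\|f\|_{W^{s,\infty}}$ throughout. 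Finally, a count of how many $\alpha$-derivatives can fall on $B$ through $\zeta^{(3)}$ and $\zeta^{(3)}_\beta$, which by \eqref{Zeta3Formula} involve $\overline B B_X$, shows that the norms entering the argument — and hence $C$ — depend only on $\|B\|_{H^{s+2}}$.

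The proof has no genuine obstacle; its content is bookkeeping. The two points that need care are: (a) applying Proposition \ref{SingIntSobolevEstimates} in the correct form to the leading term, where the chord-arc bound on $\tilde\zeta$ is precisely what tames the denominator $\tilde\zeta(\alpha)-\tilde\zeta(\beta)$; and (b) in the estimate against $\|f\|_{W^{s,\infty}}$, tracking term by term — including the degenerate case $p_1=p_2=0$ — which single factor is moved from $L^\infty$ into $L^2$, so that exactly one half-power of $\epsilon$ is lost and the required regularity of $B$ stays at $H^{s+2}$ and no higher.
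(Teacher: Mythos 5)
Your argument is correct and is essentially the paper's own proof: the lemma is obtained there exactly by applying Proposition \ref{SingIntSobolevEstimates} (together with Theorem \ref{CMMWEstimates} and the chord-arc property of $\tilde{\zeta}$) to the representation \eqref{HTildeZetaMinusTildeHFormula}, with the same bookkeeping that the multiscale pieces are $O(1)$ in $W^{s,\infty}$-type norms and $O(\epsilon^{-1/2})$ in $H^s$-type norms, so that placing $f$ in $H^s$ gives the $\epsilon^3$ bound while placing $f$ in $W^{s,\infty}$ and exactly one other factor in an $L^2$-based norm costs one half-power of $\epsilon$, giving $\epsilon^{5/2}$ and the stated dependence $C(\|B\|_{H^{s+2}})$. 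The only blemish is the parenthetical in your first-inequality step about using $L^2$ smallness of the slowly varying pieces, which is not available there (since $f$ already occupies the single $L^2$ slot) and is not needed, as your actual count uses only their $W^{s,\infty}$ bounds.
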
  

The analogous result for the first sum in the decomposition is
\begin{lemma}\label{DiffHilbertBoundPart2}
Let $s \geq 4$ be given, and suppose \eqref{ZetaLocalAPrioriBound} holds.  Then for all $t \leq T_0$, $$\|(\nht - \nht_{\tilde{\zeta}})f\|_{H^s} \leq C\|r_\alpha\|_{H^{s - 1}}\|f\|_{H^s} \qquad \text{and} \qquad \|(\nht - \nht_{\tilde{\zeta}})f\|_{H^s} \leq C\|r_\alpha\|_{H^{s - 1}}\|f\|_{W^{s, \infty}}$$ where the constant $C = C(\mathfrak{S}(T_0), \|B\|_{H^{s + 2}})$.
\end{lemma}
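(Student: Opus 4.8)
The plan is to write $(\nht-\nht_{\tilde\zeta})f$ as an explicit finite sum of singular integral operators in which the smallness is carried entirely by a factor of $r$, and then to invoke Proposition~\ref{SingIntSobolevEstimates}. Since $\nht_\gamma f(\alpha)=\frac{1}{\pi i}\pv\int\frac{\gamma_\beta(\beta)}{\gamma(\alpha)-\gamma(\beta)}f(\beta)\,d\beta$ and $\zeta=\tilde\zeta+r$, putting the two Cauchy kernels over a common denominator and expanding the numerator as $r_\beta(\beta)\bigl(\tilde\zeta(\alpha)-\tilde\zeta(\beta)\bigr)-\tilde\zeta_\beta(\beta)\bigl(r(\alpha)-r(\beta)\bigr)$ gives
\[
(\nht-\nht_{\tilde\zeta})f(\alpha)=\frac{1}{\pi i}\pv\int\frac{r_\beta(\beta)\,f(\beta)}{\zeta(\alpha)-\zeta(\beta)}\,d\beta-\frac{1}{\pi i}\pv\int\frac{\tilde\zeta_\beta(\beta)\bigl(r(\alpha)-r(\beta)\bigr)f(\beta)}{\bigl(\zeta(\alpha)-\zeta(\beta)\bigr)\bigl(\tilde\zeta(\alpha)-\tilde\zeta(\beta)\bigr)}\,d\beta .
\]
If one wishes to expand further one may iterate the identity $\frac{1}{\zeta(\alpha)-\zeta(\beta)}=\frac{1}{\tilde\zeta(\alpha)-\tilde\zeta(\beta)}-\frac{r(\alpha)-r(\beta)}{(\tilde\zeta(\alpha)-\tilde\zeta(\beta))(\zeta(\alpha)-\zeta(\beta))}$ exactly as in the proof of Lemma~\ref{DiffHilbertPart1Formula}, but the two terms above already suffice. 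In the first term the small factor is $r_\beta=\partial_\beta r$; in the second it is the difference quotient $\frac{r(\alpha)-r(\beta)}{\alpha-\beta}$. The remaining factors are $\tilde\zeta_\beta$, controlled in $H^s$ through \eqref{NLSGlobalBound} by $C(\|B\|_{H^{s+2}})$, together with the Cauchy-type quotients $\frac{\alpha-\beta}{\zeta(\alpha)-\zeta(\beta)}$ and $\frac{\alpha-\beta}{\tilde\zeta(\alpha)-\tilde\zeta(\beta)}$, which are bounded with bounded difference quotients because $\zeta$ satisfies the chord-arc condition \eqref{ChordArcCondition} by virtue of \eqref{ZetaLocalAPrioriBound} and $\tilde\zeta$ does so by the choice of $\epsilon_0$.

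The next step is to apply Proposition~\ref{SingIntSobolevEstimates} to each of the two singular integrals above: the chord-arc bounds and the Sobolev bounds $\|\zeta_\alpha-1\|_{H^s}\le\delta$ and $\|\tilde\zeta_\alpha-1\|_{H^s}\le C(\|B\|_{H^{s+2}})\epsilon^{1/2}$ are precisely the hypotheses this proposition requires. This reduces the $H^s$ bound of each term to the product of the $H^s$-norms (respectively $W^{s,\infty}$-norms, if one chooses to estimate $f$ that way) of the non-small factors, times the $H^{s-1}$-norm of $r_\alpha$ coming from the differentiated factor $r$. Summing the two contributions and absorbing the resulting constants into a single $C(\mathfrak{S}(T_0),\|B\|_{H^{s+2}})$ yields the two claimed inequalities, according to whether $f$ is estimated in $H^s$ or in $W^{s,\infty}$.

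The step that requires care — and the reason the bound is stated with $\|r_\alpha\|_{H^{s-1}}$ rather than $\|r_\alpha\|_{H^{s}}$ — is the derivative count in this last application. One must verify that when $\partial_\alpha^s$ is distributed over the singular integrals by the Leibniz rule, the factor $r$, which always appears differentiated (either as $r_\beta$ or as a difference quotient), never requires more than $s-1$ derivatives of $r_\alpha$; that is, it enters only through $\|r_\alpha\|_{H^{s-1}}$. This is exactly the Calderón-commutator feature encoded in Proposition~\ref{SingIntSobolevEstimates}. It is here, and in the auxiliary product and embedding estimates used to bound the smooth factors (using that $H^s$ and $W^{s,\infty}$ are algebras and that $H^{s-1}\hookrightarrow W^{s-2,\infty}$), that the hypothesis $s\ge4$ is used; I expect this bookkeeping to be the only genuine obstacle, the rest being routine once the singular integral formula is in hand.
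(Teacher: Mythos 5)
Your algebraic identity is correct---it is exactly the ``second form'' that appears in the paper's own proof---but the plan of estimating the two displayed integrals separately via Proposition~\ref{SingIntSobolevEstimates} has a genuine gap, and it sits precisely at the derivative count you flag as the crux. That proposition treats integrals of type $S_2$, where the slot function enters as $f_\beta$ and all smallness sits in numerator \emph{differences}; neither of your integrals has this structure. In your first integral the small factor is the purely local function $r_\beta(\beta)$ next to an undifferentiated $f$: distributing $\partial_\alpha^s$ by the $(\partial_\alpha+\partial_\beta)$ trick forces the $\partial_\beta$'s to be integrated by parts onto $r_\beta f$, and the worst Leibniz piece, $\int \bigl(\partial_\beta^{s}r_\beta(\beta)\bigr)f(\beta)\,\bigl(\zeta(\alpha)-\zeta(\beta)\bigr)^{-1}d\beta$, costs $\|r_\alpha\|_{H^{s}}$, not $\|r_\alpha\|_{H^{s-1}}$. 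In fact your first integral equals $\pi i\,\nht\bigl(r_\beta f/\zeta_\alpha\bigr)$, and for $f$ equal to $1$ on a long interval (so $\|f\|_{W^{s,\infty}}\sim 1$) its $H^s$ norm is genuinely of size $\|r_\alpha\|_{H^s}$, while the \emph{sum} of your two terms is essentially $(\nht-\nht_{\tilde{\zeta}})1=0$ there: the claimed bound is false term by term, because there is a cancellation at top derivative order that your decomposition discards. Your second integral has its own top-order obstruction: when all $s$ derivatives fall on $r(\alpha)-r(\beta)$ one faces $\bigl(\partial^{s}r(\alpha)-\partial^{s}r(\beta)\bigr)\tilde{\zeta}_\beta f(\beta)$ over the product $\bigl(\zeta(\alpha)-\zeta(\beta)\bigr)\bigl(\tilde{\zeta}(\alpha)-\tilde{\zeta}(\beta)\bigr)$; treating $\partial^{s}r$ as an $A$-factor in Theorem~\ref{CMMWEstimates} costs $\partial^{s+1}r$, and the splitting device used in the appendix leaves behind the hypersingular kernel $\bigl((\zeta(\alpha)-\zeta(\beta))(\tilde{\zeta}(\alpha)-\tilde{\zeta}(\beta))\bigr)^{-1}$ with no compensating numerator difference.

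The missing idea is the one the paper's proof is built on: keep the two kernels combined by writing $(\nht-\nht_{\tilde{\zeta}})f=\frac{1}{\pi i}\int \log\bigl(1+\frac{r(\alpha)-r(\beta)}{\tilde{\zeta}(\alpha)-\tilde{\zeta}(\beta)}\bigr)f_\beta\,d\beta$, an $S_2$-type integral with a bounded kernel whose $(\partial_\alpha+\partial_\beta)$-derivatives are difference-quotient kernels in $r$. Then either the derivatives pile onto $f$ (up to $\partial^s f$), in which case $r$ appears only at low order and is measured in $L^\infty$ through $\|r_\alpha\|_{H^2}$---this is the one place the paper uses your ``second form,'' with $\partial^s f$ in the slot---or they land on the kernel, where $r$ enters only through differences of $\partial^k r$ with $k\le s$, i.e.\ through $\|r_\alpha\|_{H^{s-1}}$, and the worst case $\bigl(\partial^{s}r(\alpha)-\partial^{s}r(\beta)\bigr)/\bigl(\zeta(\alpha)-\zeta(\beta)\bigr)$ sits over a single Cauchy factor with $f_\beta$ in the slot, so the splitting trick of Proposition~\ref{SingIntSobolevEstimates} applies and costs only $\|\partial^s r\|_{L^2}$. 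To repair your argument you would have to integrate by parts back to this log form (or otherwise arrange that $r$ enters only as difference quotients multiplying $f_\beta$) \emph{before} distributing derivatives; as written, the appeal to Proposition~\ref{SingIntSobolevEstimates} does not cover your integrals, and the asserted derivative count fails.
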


\begin{proof}
We use the fact that this operator can be written in two different ways using integration by parts:
\begin{align*}(\nht - \nht_{\tilde{\zeta}})f & = \frac{1}{\pi i} \int \log\left(1 + \frac{r(\alpha) - r(\beta)}{\tilde{\zeta}(\alpha) - \tilde{\zeta}(\beta)}\right) f_\beta(\beta) d\beta \\
& = \frac{1}{\pi i} \int \left(\frac{r_\beta(\beta)}{\zeta(\alpha) - \zeta(\beta)} - \frac{\tilde{\zeta}_\beta(r(\alpha) - r(\beta))}{(\zeta(\alpha) - \zeta(\beta))(\tilde{\zeta}(\alpha) - \tilde{\zeta}(\beta))}\right) f(\beta)d\beta
\end{align*}  Now consider the $n$th derivative of the first formula.  If all $n$ derivatives fall on $f$, then we can pass to an integral of the second form above via integration by parts.  Such an integral can then be bounded in $L^2$ by either $$C\left(\mathfrak{S}(T_0), \|B\|_{H^{n + 2}}\right)\|r_\alpha\|_{H^2}\|f\|_{H^n} \qquad \text{or} \qquad C\left(\mathfrak{S}(T_0), \|B\|_{H^{n + 2}}\right)\|r_\alpha\|_{H^2}\|f\|_{W^{n, \infty}}$$  If at least one derivative falls on the logarithm, then we have a kernel of the form $$(\partial_\alpha + \partial_\beta)\log\left(1 + \frac{r(\alpha) - r(\beta)}{\tilde{\zeta}(\alpha) - \tilde{\zeta}(\beta)}\right) = \frac{r_\alpha(\alpha) - r_\beta(\beta)}{\zeta(\alpha) - \zeta(\beta)} - \frac{(r(\alpha) - r(\beta))(\tilde{\zeta}_\alpha(\alpha) - \tilde{\zeta}_\beta(\beta))}{(\zeta(\alpha) - \zeta(\beta))(\tilde{\zeta}(\alpha) - \tilde{\zeta}(\beta))}$$  This yields a singular integral which can be bounded in $H^n$ by either $$C\left(\mathfrak{S}(T_0), \|\tilde{\zeta}_\alpha - 1\|_{H^{n + 1}}\right)\|r_\alpha\|_{H^{n - 1}}\|f\|_{H^{n - 1}} \; \; \text{or} \; \; C\left(\mathfrak{S}(T_0), \|\tilde{\zeta}_\alpha - 1\|_{H^{n + 1}}\right)\|r_\alpha\|_{H^{n - 1}}\|f\|_{W^{n - 1, \infty}}$$  The proposition follows by summing these bounds $n = 0, 1, \ldots, s$.
\end{proof}

Combining these lemmas yields the

\begin{corollary}\label{DiffHilbertBound}
Let $s \geq 4$ be given, and suppose that \eqref{ZetaLocalAPrioriBound} holds.  Then for all $t \leq T_0$, $$\|(\nht - \tilde{\nht})f\|_{H^s} \leq C(\epsilon^3 + \|r_\alpha\|_{H^{s - 1}})\|f\|_{H^s}$$
$$\|(\nht - \tilde{\nht})f\|_{H^s} \leq C(\epsilon^{5/2} + \|r_\alpha\|_{H^{s - 1}})\|f\|_{W^{s, \infty}}$$ where $C = C\left(\mathfrak{S}(T_0), \|B\|_{H^{s + 2}}\right)$.
\end{corollary}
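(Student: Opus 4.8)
The corollary is an immediate consequence of the triangle inequality applied to the decomposition
$$\nht - \tilde{\nht} = (\nht - \nht_{\tilde{\zeta}}) + (\nht_{\tilde{\zeta}} - \tilde{\nht})$$
introduced just above the statement. The plan is simply to bound the first summand using Lemma \ref{DiffHilbertBoundPart2}, bound the second using Lemma \ref{DiffHilbertBoundPart1}, and add the resulting estimates, carrying this out once in the $H^s \to H^s$ form and once in the $W^{s,\infty} \to H^s$ form.

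Concretely, for the first displayed inequality I would add
$$\|(\nht - \nht_{\tilde{\zeta}})f\|_{H^s} \leq C\|r_\alpha\|_{H^{s-1}}\|f\|_{H^s} \qquad (\text{Lemma } \ref{DiffHilbertBoundPart2})$$
to
$$\|(\nht_{\tilde{\zeta}} - \tilde{\nht})f\|_{H^s} \leq C\epsilon^3\|f\|_{H^s} \qquad (\text{Lemma } \ref{DiffHilbertBoundPart1}),$$
obtaining $\|(\nht - \tilde{\nht})f\|_{H^s} \leq C(\epsilon^3 + \|r_\alpha\|_{H^{s-1}})\|f\|_{H^s}$. For the second displayed inequality I would instead invoke the $W^{s,\infty}$ versions of the same two lemmas, which give $C\|r_\alpha\|_{H^{s-1}}\|f\|_{W^{s,\infty}}$ and $C\epsilon^{5/2}\|f\|_{W^{s,\infty}}$ respectively, and sum these to get $\|(\nht - \tilde{\nht})f\|_{H^s} \leq C(\epsilon^{5/2} + \|r_\alpha\|_{H^{s-1}})\|f\|_{W^{s,\infty}}$. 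Both lemmas require only $s \geq 4$ and, for Lemma \ref{DiffHilbertBoundPart2}, the a priori assumption \eqref{ZetaLocalAPrioriBound}, which matches the hypotheses of the corollary.

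The one point deserving a remark is the bookkeeping of the constant. Lemma \ref{DiffHilbertBoundPart1} produces $C = C(\|B\|_{H^{s+2}})$ while Lemma \ref{DiffHilbertBoundPart2} produces $C = C(\mathfrak{S}(T_0), \|B\|_{H^{s+2}})$; the latter proof also involves $\|\tilde{\zeta}_\alpha - 1\|_{H^{s+1}}$, but this is itself dominated by a constant of the form $C(\|B\|_{H^{s+2}})$ by counting the number of derivatives falling on $B$ in the formula \eqref{TildeZetaFormula} and using the global bound \eqref{NLSGlobalBound}. Hence both summands carry constants of the common form $C(\mathfrak{S}(T_0), \|B\|_{H^{s+2}})$, and the corollary follows. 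I do not anticipate any genuine obstacle here: the substantive analytic work — the integration-by-parts manipulations and the singular integral estimates — is entirely absorbed into Lemmas \ref{DiffHilbertBoundPart1} and \ref{DiffHilbertBoundPart2}, and this final step is purely the combination of the two.
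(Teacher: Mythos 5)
Your proof is correct and follows exactly the paper's route: the paper obtains the corollary by simply combining Lemma \ref{DiffHilbertBoundPart1} and Lemma \ref{DiffHilbertBoundPart2} via the same decomposition and triangle inequality. Your remark on consolidating the constants into $C(\mathfrak{S}(T_0), \|B\|_{H^{s+2}})$ is a sound piece of bookkeeping consistent with \eqref{NLSGlobalBound}.
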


We will also need to estimate the operator $D_t(\nht - \tilde{\nht})$.  To do so, it will suffice to consider the commutator $[D_t, \nht - \tilde{\nht}]$.

\begin{proposition}\label{DiffHilbertCommuteDtBound}
Let $s \geq 4$, and suppose that \eqref{ZetaLocalAPrioriBound} holds.  Then $\|[D_t, \nht - \tilde{\nht}]f\|_{H^s} \leq C(\epsilon^3 + \|r_\alpha\|_{H^{s - 1}} + \|D_t r\|_{H^s})\|f\|_{H^s}$, where the constant $C = C\left(\mathfrak{S}(T_0), \|B\|_{H^{s + 4}}\right)$.
\end{proposition}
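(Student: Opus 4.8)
The plan is to use Lemma~\ref{SingIntCommuteWithDt} to convert the commutator $[D_t, \nht - \tilde{\nht}]f$ into a single singular integral whose kernel is $(\partial_t + b(\alpha)\partial_\alpha + b(\beta)\partial_\beta)K$, where $K$ is the kernel of $\nht - \tilde{\nht}$, and then estimate that integral in $H^s$ by Proposition~\ref{SingIntSobolevEstimates}. The natural first step is to decompose $\nht - \tilde{\nht} = (\nht - \nht_{\tilde{\zeta}}) + (\nht_{\tilde{\zeta}} - \tilde{\nht})$ exactly as in Corollary~\ref{DiffHilbertBound}, so that $[D_t, \nht - \tilde{\nht}] = [D_t, \nht - \nht_{\tilde{\zeta}}] + [D_t, \nht_{\tilde{\zeta}} - \tilde{\nht}]$, and treat the two pieces separately. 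For the first piece I would take the two integration-by-parts representations of $(\nht - \nht_{\tilde{\zeta}})f$ written out in the proof of Lemma~\ref{DiffHilbertBoundPart2}, apply Lemma~\ref{SingIntCommuteWithDt}, and observe that the operator $\partial_t + b(\alpha)\partial_\alpha + b(\beta)\partial_\beta$ acting on the kernel produces terms in which a $D_t$ lands on $\zeta$, on $\tilde\zeta$, on $r$, or on $r_\alpha$ (using $D_t r = \partial_t r + b\,r_\alpha$ and $D_t\zeta$, $D_t\tilde\zeta$). Schematically one gets kernels involving $D_t\zeta$, $D_t r$, $r_\alpha$, and $\tilde{D}_t\tilde\zeta$, all of which are controlled: $\|D_t\zeta\|_{H^s}\le C\mathfrak{S}(T_0)$ by the a priori assumption, $\|\tilde{D}_t\tilde\zeta\|_{H^s}\le C\epsilon^{1/2}$ by \eqref{NLSGlobalBound}, $\|r_\alpha\|_{H^{s-1}}$ and $\|D_t r\|_{H^s}$ are the quantities appearing in the stated bound, and $b - \tilde b$ together with $\tilde b$ are bounded via \eqref{bFormula} and \eqref{TildeBFormula}. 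Applying Proposition~\ref{SingIntSobolevEstimates} to each resulting singular integral yields a bound of the form $C(\mathfrak{S}(T_0),\|B\|_{H^{s+2}})(\|r_\alpha\|_{H^{s-1}} + \|D_t r\|_{H^s})\|f\|_{H^s}$.

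For the second piece $[D_t, \nht_{\tilde{\zeta}} - \tilde{\nht}]$ I would start from the explicit finite-sum representation in Lemma~\ref{DiffHilbertPart1Formula}: each summand is a singular integral with kernel built from differences $\zeta^{(n)}(\alpha) - \zeta^{(n)}(\beta)$, powers of $(\alpha-\beta)$, and $\zeta^{(m)}_\beta$, carrying a prefactor $\epsilon^{n_1 p_1 + n_2 p_2 + m} \ge \epsilon^3$ (together with the remainder term carrying an explicit factor $(\tilde\xi(\alpha)-\tilde\xi(\beta))^3$, also $O(\epsilon^3)$). Lemma~\ref{SingIntCommuteWithDt} again converts the $D_t$-commutator into a singular integral with kernel $(\partial_t + b(\alpha)\partial_\alpha + b(\beta)\partial_\beta)$ applied to these kernels; since $\partial_t\zeta^{(n)}$ and $\partial_\alpha\zeta^{(n)}$ cost no additional powers of $\epsilon$ and involve at most one or two more derivatives of $B$ (hence the need for $\|B\|_{H^{s+4}}$ rather than $H^{s+2}$), and since the extra $b(\alpha), b(\beta)$ factors are bounded in $W^{s,\infty}$, each summand remains a singular integral of the type handled by Proposition~\ref{SingIntSobolevEstimates} with an overall factor $\epsilon^3$. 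Summing over the finite index set $S$ gives $\|[D_t, \nht_{\tilde\zeta} - \tilde\nht]f\|_{H^s}\le C(\mathfrak{S}(T_0),\|B\|_{H^{s+4}})\,\epsilon^3\,\|f\|_{H^s}$. Adding the two pieces gives the claimed estimate.

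The main obstacle I anticipate is bookkeeping the derivative count in the second piece: one must verify carefully that applying $\partial_t$ to the multiscale profiles $\zeta^{(n)}$ (which depend on the slow variables $X = \epsilon(\alpha + \omega' t)$, $T = \epsilon^2 t$ and on $\alpha_1 = \epsilon\alpha$) does not destroy the $\epsilon^3$ gain — each $\partial_t$ hitting a slow variable brings down a factor $\epsilon$, which only helps, while each $\partial_t$ or $\partial_\alpha$ hitting the fast phase $e^{\pm i\phi}$ costs no $\epsilon$ but also no derivative of $B$, and only the mixed terms consume extra $H^s$-regularity of $B$. One must also confirm that $b$ and $b_\alpha$, which appear through $D_t = \partial_t + b\partial_\alpha$, do not spoil the $\epsilon^3$ in that piece: since $b = \tilde b + (b - \tilde b)$ with $\tilde b = O(\epsilon^2)$ in $W^{s,\infty}$ and $b - \tilde b$ small, and since this factor multiplies a kernel already carrying $\epsilon^3$, no loss occurs. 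For the first piece the analogous check is that the $D_t$ falling on the log-type kernel produces at worst $r_\alpha$ and $D_t r$ contributions (never a bare $r$ without a derivative, thanks to the structure of the two integration-by-parts forms), which is exactly what the $(\partial_\alpha + \partial_\beta)$ of the kernel provided in Lemma~\ref{DiffHilbertBoundPart2} already exhibits; the only new feature is the $\partial_t$-derivative of the kernel, which is handled identically once one writes $\partial_t\zeta = D_t\zeta - b\zeta_\alpha$.
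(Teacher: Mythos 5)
Your proposal follows essentially the same route as the paper: the same decomposition into $(\nht - \nht_{\tilde\zeta}) + (\nht_{\tilde\zeta} - \tilde\nht)$, Lemma~\ref{SingIntCommuteWithDt} applied to the kernel representations of Lemma~\ref{DiffHilbertBoundPart2} and Lemma~\ref{DiffHilbertPart1Formula}, a crude bound on $b$ (via \eqref{bFormula}, Lemma~\ref{DoubleLayerPotentialArgument} and the a priori assumption) so that no circular use of the later refined estimate for $b-\tilde b$ is needed, and Proposition~\ref{SingIntSobolevEstimates} for the $H^s$ bounds, yielding $D_t r$, $r_\alpha$ terms from the first piece and $\epsilon^3$ from the second. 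The only detail worth making explicit is that in the $(\nht_{\tilde\zeta}-\tilde\nht)$ piece one must estimate $f$ in $L^2$ and the slowly varying factors in $L^\infty$ so as to retain the full $\epsilon^3$ rather than losing a half-power to $\epsilon^{5/2}$, which your claimed bound presupposes.
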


\begin{proof}
We decompose $\nht - \tilde{\nht} = (\nht - \nht_{\tilde{\zeta}}) + (\nht_{\tilde{\zeta}} - \tilde{\nht})$ and estimate each term separately.  We begin with the latter operator and apply Lemma \ref{SingIntCommuteWithDt} to \eqref{HTildeZetaMinusTildeHFormula}.  Using the product rule, this results in a sum of singular integrals whose numerators are products of differences involving the functions $\tilde{\xi}$, $\zeta^{(n)}$, $D_t \tilde{\xi}$, $D_t \zeta^{(n)}$, $n = 1, 2, 3$.  Then using the identity $D_t g = (b - \tilde{b})g_\alpha + \tilde{D}_t g$, we can further split these terms until we arrive at a sum of kernels whose numerators are products of differences involving the functions $$\tilde{\xi}, \; \zeta^{(n)}, \; \tilde{D}_t \tilde{\xi}, \; \tilde{D}_t \zeta^{(n)}, \; (b - \tilde{b})\tilde{\xi}, \; (b - \tilde{b})\zeta^{(n)}_\alpha, \qquad n = 1, 2, 3$$

In order to estimate the terms $(b - \tilde{b})g$ that arise here for $g = \tilde{\xi}, \zeta^{(n)}$, notice that \eqref{ZetaLocalAPrioriBound}, along with \eqref{bFormula} and Lemma \ref{DoubleLayerPotentialArgument}, shows that $$\|(b - \tilde{b})g_\alpha\|_{H^s} \leq C\|b - \tilde{b}\|_{H^s}\|g_\alpha\|_{W^{s, \infty}} \leq C\left(\mathfrak{S}(T_0)\right)\|g_\alpha\|_{W^{s, \infty}}$$

The resulting kernels have the properties that (1) each has at least three factors in its numerator of size at most $O(\epsilon)$ in the sense of $L^\infty$, (2) each has the same number of factors in the numerator as in the denominator.  In estimating this sum of singular integrals we always estimate $f$ in $L^2$ so as not to lose any half-powers of $\epsilon$.  In doing so, the largest number of derivatives of $B$ that appears is in $\tilde{D}_t \tilde{\zeta}$; a time derivative will fall on $B_X$ in the formula for $\zeta^{(3)}$ which by \eqref{NLS} is equivalent to a term with three derivatives on $B$.  The result is the bound $C(\mathfrak{S}(T_0), \|B\|_{H^{s + 3}})\epsilon^3\|f\|_{H^s}$.

Next, using Lemma \ref{SingIntCommuteWithDt}, we explicitly write the kernel $$[D_t, \nht - \nht_{\tilde{\zeta}}]f = \frac{1}{\pi i} \int f_\beta(\beta) \left(\frac{D_t r(\alpha) - D_t r(\beta)}{\zeta(\alpha) - \zeta(\beta)} - \frac{(r(\alpha) - r(\beta))(D_t \tilde{\zeta}(\alpha) - D_t \tilde{\zeta}(\beta))}{(\zeta(\alpha) - \zeta(\beta))(\tilde{\zeta}(\alpha) - \tilde{\zeta}(\beta))}\right) d\beta$$ and appealing to the crude bound of $b - \tilde{b}$ above now implies the proposition.
\end{proof}

\begin{corollary}\label{DtHilbertDiffBound}
Let $s \geq 4$ be given, and suppose that \eqref{ZetaLocalAPrioriBound} holds.  Then $$\|D_t(\nht - \tilde{\nht})f\|_{H^s} \leq C(\epsilon^3 + \|r_\alpha\|_{H^{s - 1}} + \|D_t r\|_{H^s})(\|f\|_{H^s} + \|D_t f\|_{H^s}),$$ where the constant $C = C\left(\mathfrak{S}(T_0), \|B\|_{H^{s + 3}}\right)$.
\end{corollary}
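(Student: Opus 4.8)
Since $D_t = \partial_t + b\partial_\alpha$ is a first order differential operator, hence a derivation, the plan is simply to split off the commutator:
\begin{equation*}
D_t(\nht - \tilde{\nht})f = [D_t, \nht - \tilde{\nht}]f + (\nht - \tilde{\nht})D_t f,
\end{equation*}
and to estimate the two pieces on the right separately by results already in hand. The first term is controlled directly by Proposition~\ref{DiffHilbertCommuteDtBound}, which yields
\begin{equation*}
\|[D_t, \nht - \tilde{\nht}]f\|_{H^s} \leq C(\epsilon^3 + \|r_\alpha\|_{H^{s-1}} + \|D_t r\|_{H^s})\|f\|_{H^s}.
\end{equation*}
For the second term I would apply Corollary~\ref{DiffHilbertBound} with $D_t f$ in place of $f$, obtaining
\begin{equation*}
\|(\nht - \tilde{\nht})D_t f\|_{H^s} \leq C(\epsilon^3 + \|r_\alpha\|_{H^{s-1}})\|D_t f\|_{H^s}.
\end{equation*}
Adding these two bounds and absorbing constants then gives exactly the claimed inequality, with the $\|f\|_{H^s}$ factor contributed by the commutator term and the $\|D_t f\|_{H^s}$ factor by the composition term.

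The only point requiring a little care is the bookkeeping of the constant. The Sobolev index of $B$ in the final constant is dictated by the commutator estimate rather than by Corollary~\ref{DiffHilbertBound} (which needs only $\|B\|_{H^{s+2}}$): tracking the derivatives in the proof of Proposition~\ref{DiffHilbertCommuteDtBound}, the worst contribution arises when a time derivative lands on $B_X$ inside the formula \eqref{Zeta3Formula} for $\zeta^{(3)}$, which through the NLS equation \eqref{NLS} costs three derivatives of $B$; hence the combined constant is $C = C(\mathfrak{S}(T_0), \|B\|_{H^{s+3}})$, as stated. One should also note that in the applications in \S 4 the quantities substituted for $f$ and $D_t f$ lie in $H^s$, so the right-hand side is finite.

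\textbf{Main difficulty.} There is no genuine obstacle here: all the substantive work --- decomposing $\nht - \tilde{\nht} = (\nht - \nht_{\tilde{\zeta}}) + (\nht_{\tilde{\zeta}} - \tilde{\nht})$, differentiating the explicit kernels via Lemma~\ref{SingIntCommuteWithDt} applied to \eqref{HTildeZetaMinusTildeHFormula}, splitting $D_t g = (b - \tilde{b})g_\alpha + \tilde{D}_t g$ on each factor, and invoking the crude $H^s$ bound for $b - \tilde{b}$ --- was already carried out in Proposition~\ref{DiffHilbertCommuteDtBound}. What remains for this corollary is only to observe that the Leibniz split above is legitimate and that no more than $s+3$ derivatives of $B$ are consumed by the two estimates being combined.
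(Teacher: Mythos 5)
Your proposal is correct and is essentially the paper's (implicit) argument: the corollary is meant to follow directly from Proposition~\ref{DiffHilbertCommuteDtBound} together with Corollary~\ref{DiffHilbertBound} via the split $D_t(\nht - \tilde{\nht})f = [D_t, \nht - \tilde{\nht}]f + (\nht - \tilde{\nht})D_t f$, exactly as you wrote. Your bookkeeping of the constant, tracing the $\|B\|_{H^{s+3}}$ dependence to the time derivative falling on $B_X$ in $\zeta^{(3)}$ via \eqref{NLS}, matches the derivation inside the proof of Proposition~\ref{DiffHilbertCommuteDtBound}.
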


\subsection{Formulas for Remainders of $b$ and $\mathcal{A}$}\label{formulaforremainders}

Applying the energy method to the remainder equations \eqref{NewEulerRemainder}-\eqref{DtNewEulerRemainder},
we expect to obtain bounds on the quantity:
\begin{equation}\label{ProxyRemainderEnergy}
E_s^{1/2} := \|r_\alpha\|_{H^s} + \|D_t r\|_{H^s}.
\end{equation}
However in \eqref{NewEulerRemainder}-\eqref{DtNewEulerRemainder}, the quantities $b - \tilde{b}$, $\mathcal{A} - \tilde{\mathcal{A}}$, etc., arise as coefficients of the operators $\mathcal{P} - \tilde{\mathcal{P}}$ and $D_t(\mathcal{P} - \tilde{\mathcal{P}})$.  Moreover, such energy estimates would give bounds on the quantities $D_t\partial_\alpha^n \rho$ and $D_t\partial_\alpha^n \sigma$, not directly on the quantities $r_\alpha$ and $D_t r$. 
So in the following subsections we must perform the following tasks:
\begin{enumerate}
\item{Bound $b - \tilde{b}$ in terms of $E_s$ and $\epsilon$.}
\item{Bound $D_t(b - \tilde{b})$ in terms of $E_s$, $\epsilon$, and a small multiple of $D_t^2 r$.}
\item{Bound $\mathcal{A} - \tilde{\mathcal{A}}$ in terms of $E_s$, $\epsilon$, and a small multiple of $D_t^2 r$.}
\item{Bound $D_t^2 r$ in terms of $E_s$, $\epsilon$ and a small multiple of $\mathcal{A} - \tilde{\mathcal{A}}$, and thus bound $D_t^2 r$, $\mathcal{A} - \tilde{\mathcal{A}}$ and $D_t(b - \tilde{b})$ appropriately by $E_s$ and $\epsilon$ alone.}
\item{Show that $D_t\rho$ and $D_t\sigma$ are equivalent to $D_t r$ and $D_t^2 r$, respectively.}
\end{enumerate}
Since $\tilde{b}$ and $\tilde{\mathcal{A}}$ are intended to be power expansions in $\epsilon$ of $b$ and $\mathcal{A}$ up to at least quadratic terms, we expect that the differences $b - \tilde{b}$, $D_t(b - \tilde{b})$ and $\mathcal{A} - \tilde{\mathcal{A}}$ will be of size $\mathcal{O}(\epsilon^{5/2})$.

\subsubsection*{Step 1.  Controlling $b - \tilde{b}$ by $E_s$ and $\epsilon$.}

In order to use \eqref{bFormula}, we write
\begin{align*}
(I - \nht)(b - \tilde{b}) & = (I - \nht)b + (\nht - \tilde{\nht})\tilde{b} - (I - \tilde{\nht})\tilde{b}
\end{align*}
By the multiscale calculation, the residual quantity $$(I - \tilde{\nht})\tilde{b} + [\tilde{D}_t\tilde{\zeta}, \tilde{\nht}]\frac{\overline{\tilde{\zeta}}_\alpha - 1}{\tilde{\zeta}_\alpha}$$ consists only of terms $O(\epsilon^4)$.  The largest number of derivatives of $B$ appearing in this residual is through the term $\nht_2 \tilde{D}_t \zeta^{(3)}$, where three derivatives fall on $B$.  Hence this residual is bounded in $H^s$ by $C(\|B\|_{H^{s + 3}})\epsilon^{7/2}$.  By Corollary  \ref{DiffHilbertBound}, we have
\begin{align*}
\|(\nht - \tilde{\nht})\tilde{b}\|_{H^s} & \leq C(\epsilon^3 + E_s^{1/2})\|\tilde{b}\|_{H^s} \\
& \leq C(\epsilon^3 + E_s^{1/2})\epsilon^{3/2} \\
& \leq C(\epsilon E_s^{1/2} + \epsilon^{5/2})
\end{align*} where $C = C(\mathfrak{S}(T_0), \|B\|_{H^{s + 3}})$.  Observe that in the last step we have relaxed the estimate so that every term is of the optimal size $\mathcal{O}(\epsilon^{5/2})$.  

It now suffices to consider the difference
\begin{align*}
-[D_t \zeta, \nht]\frac{\overline{\xi}_\alpha}{\zeta_\alpha} + [\tilde{D}_t\tilde{\zeta}, \tilde{\nht}]\frac{\overline{\tilde{\xi}}_\alpha}{\tilde{\zeta}_\alpha} & = -[D_t r, \nht]\frac{\overline{\xi}_\alpha}{\zeta_\alpha} - [(D_t - \tilde{D}_t)\tilde{\zeta}, \nht]\frac{\overline{\xi}_\alpha}{\zeta_\alpha} \\
& \quad - [\tilde{D}_t\tilde{\zeta}, \nht]\frac{r_\alpha}{\zeta_\alpha} - [\tilde{D}_t\tilde{\zeta}, \nht]\overline{\tilde{\xi}}_\alpha\left(\frac{1}{\zeta_\alpha} - \frac{1}{\tilde{\zeta}_\alpha}\right) \\
& \quad - [\tilde{D}_t\tilde{\zeta}, \nht - \tilde{\nht}] \frac{\overline{\tilde{\xi}}_\alpha}{\tilde{\zeta}_\alpha}
\end{align*}  Estimating each of these terms in $H^s$ using Proposition \ref{SingIntSobolevEstimates}, we sum the bounds under the assumption of \eqref{ZetaLocalAPrioriBound} to find by Corollary \ref{DiffHilbertBound} that for $s \geq 4$:
\begin{align*}
\|b - \tilde{b}\|_{H^s} & \leq C\epsilon^{7/2} \; + \; C(\epsilon E_s^{1/2} + \epsilon^{5/2}) \\
& \quad + \; C E_s^{1/2}(E_s^{1/2} + \epsilon) \; + \; C\|b - \tilde{b}\|_{H^s}(\delta + \epsilon) \\
& \quad + \; C\epsilon E_s^{1/2} \; + \; C\epsilon(\epsilon^3 + E_s^{1/2})\epsilon^{1/2} \\
& \leq C\left(E_s + \epsilon E_s^{1/2} + \epsilon^{5/2}\right) + C\|b - \tilde{b}\|_{H^s}(\epsilon + \delta)
\end{align*} 
and so choosing $\epsilon_0$ and $\delta$ so that the coefficient $C(\epsilon + \delta)$ of $\|b - \tilde{b}\|_{H^s}$ on the right hand side is less than $\frac{1}{2}$ for all $\epsilon < \epsilon_0$ yields the bound \begin{equation}\label{bMinusTildebBound}\|b - \tilde{b}\|_{H^s} \leq C\left(E_s + \epsilon E_s^{1/2} + \epsilon^{5/2}\right)\end{equation} where the constant $C = C(\mathfrak{S}(T_0), \|B\|_{H^{s + 4}})$.  From this bound and \eqref{TildeBFormula} we also have
\begin{equation}\label{bBound}\|b\|_{H^s} \leq C\left(E_s^{1/2} + \epsilon^{3/2}\right)\end{equation}

\subsubsection*{Step 2.  Controlling $D_t(b - \tilde{b})$ by $E_s$, $\epsilon$, and a small multiple of $D_t^2 r$.}
To control $D_t(b - \tilde{b})$, we write
\begin{equation*}
(I - \nht)D_t(b - \tilde{b}) 
= \left((I - \nht)D_tb - (I - \tilde{\nht})\tilde{D}_t\tilde{b}\right) + (\nht - \tilde{\nht})\tilde{D}_t\tilde{b} - (I - {\nht})(b - \tilde{b})\partial_\alpha\tilde{b}
\end{equation*}

By Step 1 and Corollary \ref{DiffHilbertBound} we have that
$$\| (I - {\nht})(b - \tilde{b})\partial_\alpha\tilde{b}   \|_{H^s} \leq C(E_s + \epsilon E_s^{1/2} + \epsilon^{5/2})$$ and 
$$\| (\nht - \tilde{\nht})\tilde{D}_t\tilde{b}     \|_{H^s} \leq C(\epsilon^3 + E_s^{1/2})(\epsilon^{5/2}) \leq C(\epsilon E_s^{1/2} + \epsilon^{5/2})$$ where the constant $C$ depends only on $\mathfrak{S}(T_0)$ and $\|B\|_{H^{s + 4}}$.  To estimate the remaining terms we appeal to the formula \eqref{DtbFormula}:  
\begin{align*}
(I - \nht)D_t b & = [D_t\zeta, \nht]\frac{\partial_\alpha(2b - D_t\overline{\zeta})}{\zeta_\alpha} - [D_t^2\zeta, \nht]\frac{\overline{\zeta}_\alpha - 1}{\zeta_\alpha} \notag \\ 
& \qquad + \frac{1}{\pi i} \int \left(\frac{D_t\zeta(\alpha) - D_t\zeta(\beta)}{\zeta(\alpha) - \zeta(\beta)}\right)^2 (\overline{\zeta}_\beta(\beta) - 1) d\beta
\end{align*}
By a multiscale calculation, the term $(I - \tilde{\nht})\tilde{D}_t\tilde{b}$ has the property that the residual quantity
\begin{align*}
& (I - \tilde{\nht})\tilde{D}_t\tilde{b} - [\tilde{D}_t\tilde{\zeta}, \tilde{\nht}]\frac{\partial_\alpha(2\tilde{b} - \tilde{D}_t\overline{\tilde{\zeta}})}{\tilde{\zeta}_\alpha} + [\tilde{D}_t^2\tilde{\zeta}, \tilde{\nht}]\frac{\overline{\tilde{\zeta}}_\alpha - 1}{\tilde{\zeta}_\alpha} \notag \\ 
& \qquad - \frac{1}{\pi i} \int \left(\frac{\tilde{D}_t\tilde{\zeta}(\alpha) - \tilde{D}_t\tilde{\zeta}(\beta)}{\tilde{\zeta}(\alpha) - \tilde{\zeta}(\beta)}\right)^2 (\overline{\tilde{\zeta}}_\beta(\beta) - 1) d\beta
\end{align*} is of size $O(\epsilon^4)$.  Therefore it suffices to estimate the difference between each term in \eqref{DtbFormula} with its approximate analogue.  We may estimate the first such difference crudely, since by Step 1 we have that
\begin{align*}
\left\|[D_t \zeta, \nht]\frac{b_\alpha}{\zeta_\alpha} - [\tilde{D}_t \tilde{\zeta}, \tilde{\nht}]\frac{\tilde{b}_\alpha}{\tilde{\zeta}_\alpha}\right\|_{H^s} & \leq \left\|[D_t \zeta, \nht]\frac{(b - \tilde{b})_\alpha}{\zeta_\alpha}\right\|_{H^s} + \left\|[D_t \zeta, \nht]\frac{\tilde{b}_\alpha}{\zeta_\alpha}\right\|_{H^s} + \left\|[\tilde{D}_t \tilde{\zeta}, \tilde{\nht}]\frac{\tilde{b}_\alpha}{\tilde{\zeta}_\alpha}\right\|_{H^s} \\
& \leq C(E_s + \epsilon E_s^{1/2} + \epsilon^{5/2}) + C\delta\epsilon^{5/2} + C\epsilon^{1/2}\epsilon^{5/2} \\
& \leq C(E_s + \epsilon E_s^{1/2} + \epsilon^{5/2}),
\end{align*} where the constant $C$ depends only on $\mathfrak{S}(T_0)$ and $\|B\|_{H^{s + 4}}$, and where we estimated the commutator $[D_t \zeta, \nht]\frac{\tilde{b}_\alpha}{\zeta_\alpha}$ term-by-term.  The estimate of the difference $$[D_t\zeta, \nht]\frac{\partial_\alpha D_t\overline{\zeta}}{\zeta_\alpha} - [\tilde{D}_t\tilde{\zeta}, \tilde{\nht}]\frac{\partial_\alpha \tilde{D}_t\overline{\tilde{\zeta}}}{\tilde{\zeta}_\alpha}$$ proceeds by decomposing in the same manner as in Step 1, and yields the bound $C(E_s + \epsilon E_s^{1/2} + \epsilon^{5/2})$.

Next, by writing $D_t \zeta = D_t r + (b - \tilde{b})\tilde{\zeta}_\alpha + \tilde{D}_t \tilde{\zeta}$, the remaining singular integrals
$$\frac{1}{\pi i} \int \left(\frac{D_t\zeta(\alpha) - D_t\zeta(\beta)}{\zeta(\alpha) - \zeta(\beta)}\right)^2 (\overline{\zeta}_\beta(\beta) - 1) d\beta - \frac{1}{\pi i} \int \left(\frac{\tilde{D}_t\tilde{\zeta}(\alpha) - \tilde{D}_t\tilde{\zeta}(\beta)}{\tilde{\zeta}(\alpha) - \tilde{\zeta}(\beta)}\right)^2 (\overline{\tilde{\zeta}}_\beta(\beta) - 1) d\beta$$ are controlled in $H^s$ with Proposition \ref{SingIntSobolevEstimates} by $C(E_s + \epsilon E_s^{1/2} + \epsilon^{5/2})$.  Finally we address the difference $$[D_t^2\zeta, \nht]\frac{\overline{\zeta}_\alpha - 1}{\zeta_\alpha} - [\tilde{D}_t^2\tilde{\zeta}, \tilde{\nht}]\frac{\overline{\tilde{\zeta}}_\alpha - 1}{\tilde{\zeta}_\alpha}.$$  Again decomposing in the fashion of Step 1, we arrive at a sum of commutators all controlled in $H^s$ by $C(E_s + \epsilon E_s^{1/2} + \epsilon^{5/2})$ except for two commutators.  The first is
$$[D_t^2 r, \nht]\frac{\overline{\zeta}_\alpha - 1}{\zeta_\alpha}$$ which is controlled in $H^s$ by $(E_s^{1/2} + \epsilon)\|D_t^2 r\|_{H^s}$.  The second is
\begin{align*}
[(D_t^2 - \tilde{D}_t^2)\tilde{\zeta}, \nht]\frac{\overline{\zeta}_\alpha - 1}{\zeta_\alpha} & = \left[\left(D_t(b - \tilde{b}) \right)\tilde{\zeta}_\alpha + (b - \tilde{b})\left(D_t\tilde{\zeta}_\alpha + \partial_\alpha \tilde{D}_t\tilde{\zeta}\right), \nht\right]\frac{\overline{\zeta}_\alpha - 1}{\zeta_\alpha}
\end{align*} which has been expanded using \eqref{DiffDtSquaredFormula}, and is controlled in $H^s$ by
$$C\delta\|D_t(b - \tilde{b})\|_{H^s} + C(E_s + \epsilon E_s^{1/2} + \epsilon^{5/2})$$  Summing all of these estimates, we therefore have for $\delta$ chosen sufficiently small that
\begin{align}\label{PrelimDtbBound}
\|D_t(b - \tilde{b})\|_{H^s} \leq C(E_s + \epsilon E_s^{1/2} + \epsilon^{5/2}) + C(E_s^{1/2} + \epsilon)\|D_t^2 r\|_{H^s} \end{align}

\subsubsection*{Step 3.  Controlling $\mathcal{A} - \tilde{\mathcal{A}}$ in terms of $E_s$, $\epsilon$, and a small multiple of $D_t^2 r$.}

Since $\tilde{\mathcal{A}} = 1$ by \eqref{TildeAFormula}, it suffices to control $\mathcal{A} - 1$ in $H^s$.  The right hand side of the formula \eqref{TildeAFormula} consists of terms that are almost the same as those in the formula \eqref{DtbFormula} for $D_t b$, and so the same methods of estimation will apply.  However, from \S 3.3 we know that $\mathcal{A}_2 = 0$, and so we will want to decompose the right hand side of the formula \eqref{AFormula} so that it is easily seen that the pure $O(\epsilon^2)$ contribution vanishes.  From \eqref{AFormula} we have $$(I - \nht)(\mathcal{A} - 1) = i[D_t^2 \zeta, \nht]\frac{\overline{\zeta}_\alpha - 1}{\zeta_\alpha} + i[D_t \zeta, \nht]\frac{\partial_\alpha D_t \overline{\zeta}}{\zeta_\alpha} := I_1 + I_2$$  Decomposing the difference corresponding to $I_2$ as in Step 2, we have $$\left\|[D_t \zeta, \nht]\frac{\partial_\alpha D_t \overline{\zeta}}{\zeta_\alpha} - [\tilde{D}_t \tilde{\zeta}, \tilde{\nht}]\frac{\partial_\alpha \tilde{D}_t \overline{\tilde{\zeta}}}{\tilde{\zeta}_\alpha}\right\|_{H^s} \leq C\left(E_s + \epsilon E_s^{1/2} + \epsilon^{5/2}\right),$$ where $C = C(\mathfrak{S}(T_0), \|B\|_{H^{s + 4}})$.  The difference corresponding to $I_1$ is decomposed as follows:
\begin{align*}
[D_t^2 \zeta, \nht]\frac{\overline{\xi}_\alpha}{\zeta_\alpha} - [\tilde{D}_t^2 \tilde{\zeta}, \tilde{\nht}] \frac{\overline{\tilde{\xi}}_\alpha}{\tilde{\zeta}_\alpha} & = [D_t^2 r, \nht]\frac{\overline{\xi}_\alpha}{\zeta_\alpha} + [(D_t^2 - \tilde{D}_t^2)\tilde{\zeta}, \nht]\frac{\overline{\xi}_\alpha}{\zeta_\alpha} \\
& \quad + [\tilde{D}_t^2\tilde{\zeta}, \nht]\left(\frac{\overline{\xi}_\alpha}{\zeta_\alpha} - \frac{\overline{\tilde{\xi}}_\alpha}{\tilde{\zeta}_\alpha}\right) + [\tilde{D}_t^2 \tilde{\zeta}, \nht - \tilde{\nht}] \frac{\overline{\tilde{\xi}}_\alpha}{\tilde{\zeta}_\alpha}
\end{align*}  Note that in the expression $\tilde{D}_t^2 \tilde{\zeta}$, five derivatives fall on $B$ through $\zeta^{(3)}$, and so we need five extra derivatives on $B$ to bound $\tilde{D}_t^2 \tilde{\zeta}$ in $H^s$.  Using Step 1 and Corollary \ref{DiffHilbertBound} then gives
\begin{align*}
\left\|[D_t^2 \zeta, \nht]\frac{\overline{\xi}_\alpha}{\zeta_\alpha} - [\tilde{D}_t^2 \tilde{\zeta}, \tilde{\nht}] \frac{\overline{\tilde{\xi}}_\alpha}{\tilde{\zeta}_\alpha}\right\|_{H^s} & \leq C(E_s + \epsilon E_s^{1/2} + \epsilon^{5/2}) \\
& \quad + C\left(\epsilon + E_s^{1/2}\right)\left(\|D_t^2 r\|_{H^s} + \|D_t(b - \tilde{b})\|_{H^s}\right)
\end{align*} where $C = C(\mathfrak{S}(T_0), \|B\|_{H^{s + 5}})$.  Now since a multiscale calculation shows that the function $$[\tilde{D}_t \tilde{\zeta}, \tilde{\nht}]\frac{\partial_\alpha \tilde{D}_t \overline{\tilde{\zeta}}}{\tilde{\zeta}_\alpha} + [\tilde{D}_t^2 \tilde{\zeta}, \tilde{\nht}] \frac{\overline{\tilde{\xi}}_\alpha}{\tilde{\zeta}_\alpha}$$ consists only of terms of order $O(\epsilon^3)$, the highest number of derivatives appearing is through the term $\nht_2 \partial_t^2 \zeta^{(3)}$ which contains five derivatives of $B$.  This residual is thus controlled in $H^s$ by $C(\|B\|_{H^{s + 5}})\epsilon^{5/2}$.  Combining these estimates, we can choose $\epsilon_0$ and $\delta$ sufficiently small so as to arrive at the following estimate for $\mathcal{A} - \tilde{\mathcal{A}}$:
\begin{align*}
\|\mathcal{A} - \tilde{\mathcal{A}}\|_{H^s} \leq C\left(E_s + \epsilon E_s^{1/2} + \epsilon^{5/2}\right) + C\left(\epsilon + E_s^{1/2}\right)(\|D_t^2 r\|_{H^s} + \|D_t(b - \tilde{b})\|_{H^s})
\end{align*}
Now using Step 2 and possibly choosing $\delta$ and $\epsilon_0$ smaller still allows us to give the following preliminary bound for $\mathcal{A} - \tilde{\mathcal{A}}$:
\begin{equation}\label{PrelimAMinusTildeABound}
\|\mathcal{A} - \tilde{\mathcal{A}}\|_{H^s} \leq C\left(E_s + \epsilon E_s^{1/2} + \epsilon^{5/2}\right) + C\left(\epsilon + E_s^{1/2}\right)\|D_t^2 r\|_{H^s}
\end{equation}

\subsubsection*{Step 4.  Bounding $D_t^2 r$ in terms of $E_s$, $\epsilon$, and a small multiple of $\mathcal{A} - \tilde{\mathcal{A}}$.}

We start by deriving a formula for $D_t^2 r$.  Changing variables via $U_{\kappa^{-1}}$ in \eqref{OldEuler} yields the equation
$\mathcal{P}\zeta = -i$ and so decomposing as $\xi = \tilde{\xi} + r$ yields
\begin{align*}
\mathcal{P}r & = -i - \mathcal{P}\alpha - \mathcal{P}\tilde{\xi} \\
& = -i - (D_t b - i\mathcal{A}) - (\mathcal{P} - \tilde{\mathcal{P}})\tilde{\xi} - \tilde{\mathcal{P}}\tilde{\xi} \\
& = -D_t b + i(\mathcal{A} - 1) - (\mathcal{P} - \tilde{\mathcal{P}})\tilde{\xi} - \tilde{\mathcal{P}}\tilde{\xi}
\end{align*} and so
\begin{equation}\label{DtSquaredrFormula}
D_t^2 r - ir_\alpha = i(\mathcal{A} - 1)(1 + \xi_\alpha) - (D_t^2 - \tilde{D}_t^2)\tilde{\xi} - \tilde{\mathcal{P}}\tilde{\xi} - D_t b
\end{equation}  By Proposition \ref{VariousMultiscaleIdentities} we have $\|\tilde{\mathcal{P}}\tilde{\xi}\|_{H^s} \leq C \epsilon^{5/2}$ with the constant depending on $\mathfrak{S}(T_0)$ and 
$\|B\|_{H^{s + 5}}$.  Next, using Step 1, \eqref{DiffDtSquaredFormula} and \eqref{PrelimAMinusTildeABound} gives
\begin{align*}
\|(D_t^2 - \tilde{D}_t^2)\tilde{\xi}\|_{H^s} & \leq C\epsilon\|D_t(b - \tilde{b})\|_{H^s} + C\epsilon\left(E_s + \epsilon E_s^{1/2} + \epsilon^{5/2}\right) \\
& \leq C(\epsilon^{1/2} + \delta)\|D_t^2 r\|_{H^s} + C\left(E_s + \epsilon E_s^{1/2} + \epsilon^{5/2}\right)
\end{align*}  We also have
\begin{align*}
\|D_t b\|_{H^s} & \leq \|D_t(b - \tilde{b})\|_{H^s} + \|(D_t - \tilde{D}_t)\tilde{b}\|_{H^s} + C\epsilon^{5/2} \\
& \leq C(\epsilon^{1/2} + \delta)\|D_t^2 r\|_{H^s} + C(E_s + \epsilon E_s^{1/2} + \epsilon^{5/2})
\end{align*}
Finally we have from \eqref{PrelimAMinusTildeABound} that $$\|(\mathcal{A} - 1)\zeta_\alpha\|_{H^s} \leq C\left(E_s + \epsilon E_s^{1/2} + \epsilon^{5/2}\right) + C(\epsilon^{1/2} + \delta)\|D_t^2 r\|_{H^s}$$  Combining these estimates through \eqref{DtSquaredrFormula} gives $$\|D_t^2 r - ir_\alpha\|_{H^s} \leq C\left(E_s + \epsilon E_s^{1/2} + \epsilon^{5/2}\right) + C(\epsilon^{1/2} + \delta)\|D_t^2 r\|_{H^s}$$  Hence we can choose $\epsilon_0$ and $\delta$ sufficiently small so that
\begin{align}\label{DtSquaredrBound}
\|D_t^2 r\|_{H^s} & \leq \|r_\alpha\|_{H^s} + C\left(E_s + \epsilon E_s^{1/2} + \epsilon^{5/2}\right) \notag\\
& \leq C(E_s^{1/2} + \epsilon^{5/2})
\end{align} where the constant $C$ depends only on $\mathfrak{S}(T_0)$ and $\|B\|_{H^{s + 5}}$.  Then we immediately have
\begin{equation}\label{AMinusTildeABound}
\|\mathcal{A} - \tilde{\mathcal{A}}\|_{H^s} \leq C(E_s + \epsilon E_s^{1/2} + \epsilon^{5/2})
\end{equation} by virtue of Step 3, as well as
\begin{equation}\label{dtbminusb}
\|D_t(b - \tilde{b})\|_{H^s} \leq C(E_s + \epsilon E_s^{1/2} + \epsilon^{5/2})
\end{equation} from Step 2.  From this last inequality we have
\begin{equation}\label{DtbBound}
\|D_t b\|_{H^s} \leq C(E_s + \epsilon E_s^{1/2} + \epsilon^{5/2})
\end{equation}

Note that from \eqref{DtSquaredrFormula}, applying \eqref{AMinusTildeABound}, \eqref{DtbBound},
we also have the estimate
\begin{equation*}
\|r_\alpha\|_{H^s} - C(E_s + \epsilon E_s^{1/2}) \leq C\|D_t^2 r\|_{H^s} + C\epsilon^{5/2}
\end{equation*} and hence if we choose $\delta$ and $\epsilon_0$ sufficiently small, we conclude that
\begin{equation}\label{DtSquaredrEnergyBound}
E_s^{1/2} \leq C(\|D_t r\|_{H^s} + \|D_t^2 r\|_{H^s} + \epsilon^{5/2})
\end{equation}

\subsubsection*{Step 5.  Showing that $D_t\rho$, $\sigma$ and $D_t\sigma$ are equivalent to $r_\alpha$ and $D_t r$.}

In the sequel we will show that the energy constructed from the equations of \S 4.1 is bounded below by the sum
$$\sum_{n = 0}^s \|D_t \partial_\alpha^n \rho\|_{L^2} + \|D_t\partial_\alpha^n\sigma\|_{L^2}$$  Therefore, this energy will control $E_s$ provided we can show that $E_s$ is bounded above by this sum.  We will show that this is the case with the following three claims.

\vspace{0.25cm}
\noindent \textbf{Claim 1.}  For $s \geq 4$ we have, for $\delta$ and $\epsilon < \epsilon_0$ chosen sufficiently small, that $$\|D_t r\|_{H^s} \leq C\|\sigma\|_{H^s} + C(\delta + \epsilon)E_s^{1/2} + C\epsilon^{5/2}$$ and $$\|\sigma\|_{H^s} \leq CE_s^{1/2} + C\epsilon^{5/2}$$
\vspace{0.25cm}

\textit{Proof of Claim 1.}  Denote $\mathscr{I} := \frac{1}{2}D_t(I - \nht)\xi - \frac{1}{2}\tilde{D}_t(I - \tilde{\nht})\tilde{\xi}$.  First consider the difference
\begin{equation}\label{dti}
\begin{aligned}
D_t r - \mathscr{I} & = D_t\xi - \frac{1}{2}D_t(I - \nht)\xi \\
& \quad - \tilde{D}_t\tilde{\xi} +\frac{1}{2}\tilde{D}_t(I - \tilde{\nht})\tilde{\xi} \\
& \quad - (D_t - \tilde{D}_t)\tilde{\xi} \\
& = \frac{1}{2}D_t(\nht + \nhtb)\xi \\
& \quad - \tilde{D}_t\tilde{\xi} + \frac{1}{2}\tilde{D}_t(I - \tilde{\nht})\tilde{\xi} \\
&\quad  - (b - \tilde{b})\tilde{\xi}_\alpha
\end{aligned}
\end{equation}
  By Step 1 we have that $\|(b - \tilde{b})\tilde{\xi}_\alpha\|_{H^s} \leq C(E_s + \epsilon E_s^{1/2} + \epsilon^{5/2})$, and by a multiscale calculation we have that $\|\tilde{D}_t\tilde{\xi} - \frac{1}{2}\tilde{D}_t(I - \tilde{\nht})\tilde{\xi}\|_{H^s} \leq C\epsilon^{5/2}$.  The final term can be expanded as
\begin{align*}
\frac{1}{2}D_t(\nht + \nhtb)\xi & = \frac{1}{2}[D_t \zeta, \nht]\frac{\xi_\alpha}{\zeta_\alpha} + \frac{1}{2}[D_t \overline{\zeta}, \nhtb]\frac{\xi_\alpha}{\overline{\zeta}_\alpha} + \frac{1}{2}(\nht + \nhtb)D_t \xi
\end{align*}  Decomposing these terms as in Step 1 yields a sum of terms all bounded in $H^s$ by $C(E_s + \epsilon E_s^{1/2})$.  The only terms which are not immediately $\mathcal{O}(\epsilon^{5/2})$ after this decomposition are $$\frac{1}{2}[\tilde{D}_t \overline{\tilde{\zeta}}, \overline{\tilde{\nht}}] \frac{\tilde{\xi}_\alpha}{\overline{\tilde{\zeta}}_\alpha} + \frac{1}{2}(\tilde{\nht} + \overline{\tilde{\nht}})\tilde{D}_t \tilde{\xi}$$ whose leading  $O(\epsilon^2)$ term is $$\frac{1}{2}[\overline{\zeta}^{(1)}_{t_0}, \nhtb_0]\zeta^{(1)}_{\alpha_0} + \frac{1}{2}\nhtb^{(1)}\zeta^{(1)}_{t_0} = 0.$$  Hence, we have $$\left\|D_t r - \mathscr I\right\|_{H^s} \leq C(E_s + \epsilon E_s^{1/2} + \epsilon^{5/2})$$
We can further write
\begin{align*}
D_t r - \sigma & = D_t r - \frac{1}{2}(I - \nht)\mathscr{I} \\
& = \frac{1}{2}(I - \nhtb)D_tr + \frac{1}{2}(\nht + \nhtb)D_tr \\
& \quad + \frac{1}{2}(I - \nht)(D_t r - \mathscr{I})
\end{align*} which by virtue of \eqref{DtXiIsAntiholRemainder} and the above bound on $D_t r - \mathscr{I}$ yields $$\|D_t r - \sigma\|_{H^s} \leq C(E_s + \epsilon E_s^{1/2} + \epsilon^{5/2})$$  Hence for sufficiently small $\delta$ and $\epsilon_0$ the claim follows.$\Box$

\vspace{0.25cm}
\noindent \textbf{Claim 2.}  Given $s \geq 4$, then for $\delta$ and $\epsilon < \epsilon_0$ chosen sufficiently small we have for all $n = 0, 1, \ldots, s$ that $$\|D_t^2 r\|_{H^s} \leq C\sum_{n = 0}^s \|D_t\partial_\alpha^n\sigma\|_{L^2} + C(\epsilon + \delta)E_s^{1/2} + C\epsilon^{5/2}$$ and $$\sum_{n = 0}^s \|D_t\partial_\alpha^n \sigma\|_{L^2} \leq CE_s^{1/2} + C\epsilon^{5/2}$$
\vspace{0.25cm}

\textit{Proof of Claim 2.}  First note that for every $n = 0, 1, \ldots, s$ we have $$\partial_\alpha^n D_t^2 r - D_t \partial_\alpha^n\sigma = \partial_\alpha^n(D_t^2 r - D_t \sigma) - [b, \partial_\alpha^n] \sigma_\alpha$$  The latter term can be easily estimated by $C(E_s^{1/2} + \epsilon^{3/2})^2$ using the product rule, Claim 1, and Step 1.  Therefore it suffices to bound $D_t^2 r - D_t \sigma$ in $H^s$.  Again denote $\mathscr{I} := \frac{1}{2}D_t(I - \nht)\xi - \tilde{D}_t(I - \tilde{\nht})\tilde{\xi}$, so that $\sigma = \frac{1}{2}(I - \nht)\mathscr{I}$.  We first write
\begin{align*}
D_t^2 r - D_t \sigma & = D_t^2 r - \frac{1}{2}D_t(I - \nht)\mathscr{I} \\
& = \frac{1}{2}D_t(I - \nhtb)D_tr + \frac{1}{2}D_t(\nht + \nhtb)D_t r + \frac{1}{2}D_t(I - \nht)\left(D_t r - \mathscr{I}\right) \\
& = \frac{1}{2}D_t(I - \nhtb)D_tr + \frac{1}{2}D_t(\nht + \nhtb)D_t r - \frac{1}{2}[D_t\zeta, \nht]\frac{\partial_\alpha}{\zeta_\alpha}(D_t r - \mathscr{I}) \\ 
& \quad + \frac{1}{2}(I - \nht)\left(D_t^2 r - D_t \mathscr{I}\right)
\end{align*}  All of the terms except the last are appropriately bounded in $H^s$, by \eqref{DtXiIsAntiholRemainder}, Lemma \ref{SingIntCommuteWithDt}, Claim 1,  and Proposition \ref{SingIntSobolevEstimates}.  Hence it suffices to estimate $D_t^2 r - D_t \mathscr{I}$ in $H^s$.  We have by \eqref{dti} that 
\begin{equation*}
\begin{aligned}
D_t^2 r - D_t\mathscr{I} & = \frac{1}{2}D_t^2(\nht + \nhtb)\xi \\
& \quad - D_t(\tilde{D}_t\tilde{\xi} - \frac{1}{2}\tilde{D}_t(I - \tilde{\nht})\tilde{\xi} )\\
&\quad  - D_t((b - \tilde{b})\tilde{\xi}_\alpha)
\end{aligned}
\end{equation*}
The last two terms are controlled by $C(E_s + \epsilon E_s^{1/2} + \epsilon^{5/2})$ by Step 1, \eqref{dtbminusb} and by a multiscale calculation. 
Using Proposition \ref{HilbertCommutatorIdentities} we can write
\begin{align*}
D_t^2(\nht + \nhtb)\xi & = [D_t^2\zeta, \nht]\frac{\xi_\alpha}{\zeta_\alpha} + 2[D_t\zeta, \nht]\frac{\partial_\alpha D_t \xi}{\zeta_\alpha} - \frac{1}{\pi i}\int\left(\frac{D_t\zeta(\alpha) - D_t\zeta(\beta)}{\zeta(\alpha) - \zeta(\beta)}\right)^2 \xi_\beta(\beta) d\beta \\
& \quad + [D_t^2\overline{\zeta}, \nhtb]\frac{\xi_\alpha}{\overline{\zeta}_\alpha} + 2[D_t\overline{\zeta}, \nhtb]\frac{\partial_\alpha D_t \xi}{\overline{\zeta}_\alpha} - \frac{1}{\pi i}\int\left(\frac{D_t\overline{\zeta}(\alpha) - D_t\overline{\zeta}(\beta)}{\overline{\zeta}(\alpha) - \overline{\zeta}(\beta)}\right)^2 \xi_\beta(\beta) d\beta \\
& \quad + (\nht + \nhtb)D_t^2\xi
\end{align*}  Now we effect the usual decomposition of all of these terms.  The terms which are not of size $\mathcal{O}(\epsilon^{5/2})$ are 
\begin{align*}
& \quad \; \epsilon^2[\overline{\zeta}^{(1)}_{t_0 t_0}, \nhtb_0]\zeta^{(1)}_{\alpha_0} + 2\epsilon^2[\overline{\zeta}^{(1)}_{t_0}, \nhtb_0]\zeta^{(1)}_{\alpha_0 t_0} + \epsilon^2\overline{\nht}^{(1)}\zeta^{(1)}_{t_0 t_0} \\
& = \epsilon^2[\overline{\zeta}^{(1)}_{t_0 t_0}, \nhtb_0]\zeta^{(1)}_{\alpha_0} + 2\epsilon^2[\overline{\zeta}^{(1)}_{t_0}, \nhtb_0]\zeta^{(1)}_{\alpha_0 t_0} + \epsilon^2[\overline{\zeta}^{(1)}, \nhtb_0]\zeta^{(1)}_{\alpha_0 t_0 t_0} \\
& = 0,
\end{align*} This completes the estimate of the term $D_t^2 r - D_t \mathscr{I}$, and hence the claim.$\Box$

\vspace{0.25cm}
\noindent \textbf{Claim 3.}  Given $s \geq 4$, for $\delta$ and $\epsilon < \epsilon_0$ chosen sufficiently small, we have for all $n = 0, 1, \ldots, s$ that $$\|D_t r\|_{H^s} \leq C\sum_{n = 0}^s \|D_t\partial_\alpha^n\rho\|_{L^2} +C\left(E_s + \epsilon E_s^{1/2} + \epsilon^{5/2}\right)$$
\vspace{0.25cm}

\textit{Proof of Claim 3.}  First observe that we can write
\begin{align*}
D_t r - \frac{1}{2}D_t(I - \nht)r & = D_t r - \frac{1}{2}(I - \nht)D_t r + \frac{1}{2}[D_t\zeta, \nht]\frac{r_\alpha}{\zeta_\alpha} \\
& = \frac{1}{2}(I - \nhtb)D_t r + \frac{1}{2}(\nht + \nhtb)D_tr +\frac{1}{2}[D_t\zeta, \nht]\frac{r_\alpha}{\zeta_\alpha},
\end{align*} and thus
\begin{align*}
\partial_\alpha^n D_t r - D_t\partial_\alpha^n\rho & = \partial_\alpha^n D_t r - \frac12\partial_\alpha^n D_t(I - \nht)r -\frac12 [b, \partial_\alpha^n]\partial_\alpha(I - \nht)r\\
& = \partial_\alpha^n\left(\frac{1}{2}(I - \nhtb)D_t r + \frac{1}{2}(\nht + \nhtb)D_tr +\frac{1}{2}[D_t\zeta, \nht]\frac{r_\alpha}{\zeta_\alpha}\right) \\
& \qquad + \frac12\sum_{j = 1}^n \binom{n}{j}\left(\partial_\alpha^{j - 1}b_\alpha\right)\left(\partial_\alpha^{n - j + 1}(I - \nht)r\right)
\end{align*}  Taking the $L^2$ norm of this equation, using \eqref{DtXiIsAntiholRemainder} and summing over $n = 0, 1, \ldots, s$ yields
$$\|D_t r\|_{H^s} \leq C\sum_{n = 0}^s \|D_t\partial_\alpha^n\rho\|_{L^2} + C\left(E_s + \epsilon E_s^{1/2} + \epsilon^{5/2}\right)$$ and so the claim follows.$\Box$

\subsubsection*{Summary of Estimates}

Hence we have shown that for $s \geq 4$, there exists an $\epsilon_0 > 0$ and a $\delta > 0$ so that if \eqref{ZetaLocalAPrioriBound} holds, then for all $0 < \epsilon < \epsilon_0$, the quantity $b$ is bounded in $H^s$ by $C(E_s^{1/2} + \epsilon^{3/2})$, and the quantities $$b - \tilde{b}, \qquad \mathcal{A} - \tilde{\mathcal{A}}, \qquad D_t(b - \tilde{b}), \qquad D_t b$$ are bounded in $H^s$ by $C\left(E_s + \epsilon E_s^{1/2} + \epsilon^{5/2}\right)$, where the constant $C$ depends only on $\mathfrak{S}(T_0)$ and $\|B\|_{H^{s + 7}}$.  It is also useful to note that under the same conditions,
\begin{equation}\label{balphaBound}
\|b_\alpha\|_{H^{s - 1}} \leq \|b - \tilde{b}\|_{H^s} + \|\tilde{b}_\alpha\|_{H^{s - 1}} \leq C(E_s + \epsilon E_s^{1/2} + \epsilon^{5/2})
\end{equation}  Finally, from step 4 we have that for $\delta$ and $\epsilon_0$ sufficiently small,
\begin{equation}\label{equivalentRbound}
C_1(\|D_tr\|_{H^s}+\|D_t^2 r\|_{H^s}-\epsilon^{5/2})\le E_s^{1/2} \leq C_2(\|D_tr\|_{H^s}+\|D_t^2 r\|_{H^s}+\epsilon^{5/2});
\end{equation}
from Step 5 and \eqref{DtSquaredrEnergyBound} we have that for $\delta$ and $\epsilon_0$ sufficiently small,
\begin{equation}\label{EnergyEsEquivalence}
\begin{aligned}
E_s^{1/2} \leq C\sum_{n = 0}^s (\|D_t \partial_\alpha^n \rho\|_{L^2} + \|D_t\partial_\alpha^n \sigma\|_{L^2}) + C\epsilon^{5/2}  \\
\|\sigma\|_{H^s} + \|D_t\sigma\|_{H^s} + \sum_{n = 0}^s \|D_t\partial_\alpha^n \sigma\|_{L^2} \leq CE_s^{1/2} + C\epsilon^{5/2}
\end{aligned}
\end{equation}

\subsection{The Estimates of the Cubic Nonlinearities in the Equations for the Remainder}

Now that we have satisfactory estimates of the remainders of the auxiliary quantities, we can show that the right hand sides of \eqref{NewEulerRemainder} and \eqref{DtNewEulerRemainder} are sufficiently small to provide suitable energy estimates.  We begin by controlling the quantities appearing in the right hand side of \eqref{NewEulerRemainder}.

\begin{proposition}\label{NewEulerRemainderIsCubic}
Let $s \geq 4$ be given.  Then there exist $\epsilon_0, \delta$ so that if \eqref{ZetaLocalAPrioriBound} holds, then for all $\epsilon < \epsilon_0$, $$\|\mathcal{P}\rho\|_{H^s} \leq C\left(E_s^{3/2} + \epsilon E_s + \epsilon^2 E_s^{1/2} + \epsilon^{7/2}\right)$$ where the constant $C = C(\mathfrak{S}(T_0), \|B\|_{H^{s + 7}})$.
\end{proposition}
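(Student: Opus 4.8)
The plan is to estimate in $H^s$ each of the nine terms on the right-hand side of \eqref{NewEulerRemainder}, using everywhere the decomposition $\zeta = \tilde{\zeta} + r$, $D_t\zeta = \tilde{D}_t\tilde{\zeta} + (b - \tilde{b})\tilde{\zeta}_\alpha + D_t r$, $\nht = \tilde{\nht} + (\nht - \tilde{\nht})$ and $\tfrac{1}{\zeta_\alpha} = \tfrac{1}{\tilde{\zeta}_\alpha} - \tfrac{r_\alpha}{\zeta_\alpha\tilde{\zeta}_\alpha}$, together with the bounds of \S 4.2 and \S 4.3. The organizing principle is that after these substitutions every contribution falls into one of two classes: (a) a ``purely approximate'' piece, built only from $\tilde{\zeta}, \tilde{D}_t, \tilde{\nht}$ and $\epsilon$, of physical size $O(\epsilon^3)$ but a priori only $\mathcal{O}(\epsilon^{5/2})$ in $H^s$, which must therefore be shown to cancel; or (b) a genuinely small remainder, cubic in the quantities $r_\alpha, D_t r$ (of $H^s$ size $E_s^{1/2}$), $b - \tilde{b}, \mathcal{A} - \tilde{\mathcal{A}}, D_t(b-\tilde{b})$ (of $H^s$ size $\lesssim E_s + \epsilon E_s^{1/2} + \epsilon^{5/2}$ by Steps 1--4 and \eqref{balphaBound}), $\nht - \tilde{\nht}$ (operator size $\lesssim \epsilon^{5/2} + \|r_\alpha\|_{H^{s-1}}$ by Corollary \ref{DiffHilbertBound}), and the approximate quantities $\tilde{\xi}, \tilde{D}_t\tilde{\zeta}$ (physical size $O(\epsilon)$, controlled in $H^{s+6}\times H^{s+4}$ with size $O(\epsilon^{1/2})$ by \eqref{NLSGlobalBound}). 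Throughout I always measure in $L^2$ the single factor that carries a half-power of $\epsilon$ and the remaining factors in $L^\infty$, so that each triple product lands in $C(E_s^{3/2} + \epsilon E_s + \epsilon^2 E_s^{1/2} + \epsilon^{7/2})$.

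First I would dispose of the terms that are of the right size on the nose: $\epsilon^4 R = \tilde{G} - \tilde{\mathcal{P}}(I - \tilde{\nht})\tilde{\xi}$ is $\mathcal{O}(\epsilon^{7/2})$ since $R \in H^s$ whenever $B \in H^{s+5}$; $-[\tilde{\mathcal{P}}, \tilde{\nht}]\tilde{\xi}$ is $O(\epsilon^4)$ by Proposition \ref{VariousMultiscaleIdentities}(2), hence $\mathcal{O}(\epsilon^{7/2})$; and $(\nht - \tilde{\nht})\tilde{\mathcal{P}}\tilde{\xi}$ is bounded by Corollary \ref{DiffHilbertBound} times $\|\tilde{\mathcal{P}}\tilde{\xi}\|_{W^{s,\infty}} = O(\epsilon^3)$ (Proposition \ref{VariousMultiscaleIdentities}(1)). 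For $-(I - \nht)(\mathcal{P} - \tilde{\mathcal{P}})\tilde{\xi}$ I would use \eqref{DiffPFormula} to write $(\mathcal{P} - \tilde{\mathcal{P}})\tilde{\xi}$ as a sum of products of one of $D_t(b - \tilde{b}) - i(\mathcal{A} - \tilde{\mathcal{A}})$ or $b - \tilde{b}$ with a derivative of $\tilde{\xi}, D_t\tilde{\xi}$ or $\tilde{D}_t\tilde{\xi}$, and invoke \eqref{bMinusTildebBound}, \eqref{dtbminusb}, \eqref{AMinusTildeABound} and $\|\tilde{\xi}\|_{H^{s+6}} = O(\epsilon^{1/2})$ to land inside $C(\epsilon E_s + \epsilon^2 E_s^{1/2} + \epsilon^{7/2})$. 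The term $2[D_t\zeta, \nhtb]\frac{\partial_\alpha D_t r}{\overline{\zeta}_\alpha}$ is handled by the holomorphic identity \eqref{4.2}, which turns it into $[D_t\zeta, \nhtb]\frac{\partial_\alpha}{\overline{\zeta}_\alpha}(I - \nhtb)D_t r$; here $(I - \nhtb)D_t r$ is the conjugate of the left side of \eqref{DtXiIsAntiholRemainder}, whose right side is controlled by \eqref{bMinusTildebBound}, Corollary \ref{DiffHilbertBound} and the approximate antiholomorphicity of $\tilde{D}_t\tilde{\zeta}$ established in \S 3, while the commutator factor is estimated with Proposition \ref{SingIntSobolevEstimates}, peeling off the dominant piece $\tilde{D}_t\tilde{\zeta}$ of $D_t\zeta$ so that it enters with a factor $O(\epsilon)$.

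The heart of the matter is the block $(G - \tilde{G})$, $\ 2[D_t\zeta, \nht\frac{1}{\zeta_\alpha} + \nhtb\frac{1}{\overline{\zeta}_\alpha}]\partial_\alpha D_t\tilde{\xi}$, $\ 2[D_t\zeta, \nhtb]\frac{b_\alpha}{\overline{\zeta}_\alpha}$, and $-\frac{1}{\pi i}\int(\frac{D_t\zeta(\alpha) - D_t\zeta(\beta)}{\zeta(\alpha) - \zeta(\beta)})^2\tilde{\xi}_\beta(\beta)\,d\beta$. I would recombine the second and third of these with the already-treated $2[D_t\zeta, \nhtb]\frac{\partial_\alpha D_t r}{\overline{\zeta}_\alpha}$ to recover $2[D_t\zeta, \nht]\frac{\partial_\alpha D_t\tilde{\xi}}{\zeta_\alpha}$, then substitute the decompositions above in $2[D_t\zeta, \nht]\frac{\partial_\alpha D_t\tilde{\xi}}{\zeta_\alpha} - \frac{1}{\pi i}\int(\cdots)^2\tilde{\xi}_\beta$ and in $G$ (expanded from \eqref{GFormula}), using the product rule. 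Every cross-term containing a factor of $r_\alpha, D_t r, b - \tilde{b}$ or $\nht - \tilde{\nht}$ is estimated term-by-term with Proposition \ref{SingIntSobolevEstimates} (for commutators $[\,\cdot\,, \nht - \tilde{\nht}]\,\cdot\,$ one writes out $a(\nht - \tilde{\nht})g - (\nht - \tilde{\nht})(ag)$ and applies Corollary \ref{DiffHilbertBound} twice), and lands in $C(E_s^{3/2} + \epsilon E_s + \epsilon^2 E_s^{1/2} + \epsilon^{7/2})$. The purely approximate residue of $2[D_t\zeta, \nht]\frac{\partial_\alpha D_t\tilde{\xi}}{\zeta_\alpha} - \frac{1}{\pi i}\int(\cdots)^2\tilde{\xi}_\beta$ is exactly $2[\tilde{D}_t\tilde{\zeta}, \tilde{\nht}]\frac{\partial_\alpha \tilde{D}_t\tilde{\xi}}{\tilde{\zeta}_\alpha} - \frac{1}{\pi i}\int(\frac{\tilde{D}_t\tilde{\zeta}(\alpha) - \tilde{D}_t\tilde{\zeta}(\beta)}{\tilde{\zeta}(\alpha) - \tilde{\zeta}(\beta)})^2\tilde{\xi}_\beta = [\tilde{\mathcal{P}}, \tilde{\nht}]\tilde{\xi}$, which cancels the separate term $-[\tilde{\mathcal{P}}, \tilde{\nht}]\tilde{\xi}$ in \eqref{NewEulerRemainder}; and the purely approximate residue of $G - \tilde{G}$ is the result of substituting $\tilde{\zeta}, \tilde{D}_t\tilde{\zeta}, \tilde{\nht}$ for $\zeta, D_t\zeta, \nht$ in \eqref{GFormula} minus $\tilde{G} = \epsilon^3 G_3$, which by the multiscale computation of \S 3 (of which $G_3$ is precisely the order-$\epsilon^3$ term) is $O(\epsilon^4)$, hence $\mathcal{O}(\epsilon^{7/2})$ by the derivative count. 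Summing all these bounds yields the proposition.

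The step I expect to be the main obstacle is exactly this last bookkeeping: verifying that the only ``dangerous'' contributions — those built from $\tilde{\zeta}$ alone, of physical size $O(\epsilon^3)$ and hence merely $\mathcal{O}(\epsilon^{5/2})$ in $H^s$ — indeed reassemble into $[\tilde{\mathcal{P}}, \tilde{\nht}]\tilde{\xi}$ and the $G$-residual, both $O(\epsilon^4)$ by the multiscale identities; and checking that every surviving cross-term carries at least one $L^2$ factor that is $\mathcal{O}(E_s^{1/2})$ or $\mathcal{O}(\epsilon^{3/2})$ but never $\mathcal{O}(\epsilon^{1/2})$, so that no half-power of $\epsilon$ is lost and no term worse than $\epsilon^{7/2}$ (rather than $\epsilon^3$) or $\epsilon E_s$ (rather than $\epsilon^{1/2}E_s$) appears. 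Keeping straight which of the two bounds in Corollary \ref{DiffHilbertBound} to use — the one with $\|f\|_{H^s}$ versus the one with $\|f\|_{W^{s,\infty}}$ — and always separating the dominant $\tilde{D}_t\tilde{\zeta}$ piece of $D_t\zeta$ inside each commutator so it enters with a factor $O(\epsilon)$ rather than $O(\epsilon^{1/2})$, are the two essential points.
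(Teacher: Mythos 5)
Your overall architecture mirrors the paper's proof, and most individual bounds (the $\epsilon^4 R$, $[\tilde{\mathcal{P}},\tilde{\nht}]\tilde{\xi}$, $(\nht-\tilde{\nht})\tilde{\mathcal{P}}\tilde{\xi}$, $(I-\nht)(\mathcal{P}-\tilde{\mathcal{P}})\tilde{\xi}$ terms, the use of \eqref{4.2} with \eqref{DtXiIsAntiholRemainder}, and the cancellation of the leading part of $G$ against $\tilde{G}$) coincide with the paper's Steps 1--3. The pivotal step, however, fails as stated. You recombine the three commutator terms of \eqref{NewEulerRemainder} into $2[D_t\zeta,\nht]\tfrac{\partial_\alpha D_t\tilde{\xi}}{\zeta_\alpha}$ and claim that the purely approximate residue $2[\tilde{D}_t\tilde{\zeta},\tilde{\nht}]\tfrac{\partial_\alpha\tilde{D}_t\tilde{\xi}}{\tilde{\zeta}_\alpha}-\tfrac{1}{\pi i}\int\bigl(\tfrac{\tilde{D}_t\tilde{\zeta}(\alpha)-\tilde{D}_t\tilde{\zeta}(\beta)}{\tilde{\zeta}(\alpha)-\tilde{\zeta}(\beta)}\bigr)^2\tilde{\xi}_\beta\,d\beta$ is \emph{exactly} $[\tilde{\mathcal{P}},\tilde{\nht}]\tilde{\xi}$, so that it cancels the term $-[\tilde{\mathcal{P}},\tilde{\nht}]\tilde{\xi}$. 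That identity is false: the identity of Proposition \ref{HilbertCommutatorIdentities} which produces this pair from $[\mathcal{P},\nht]\tilde{\xi}$ uses both that $\nht$ is the Hilbert transform of the actual curve and that $\mathcal{P}\zeta=-i$, so that $[D_t^2\zeta-i\mathcal{A}\zeta_\alpha,\nht]\tfrac{\tilde{\xi}_\alpha}{\zeta_\alpha}$ drops out. Neither transfers to the tilde quantities: $\tilde{\nht}\neq\nht_{\tilde{\zeta}}$ (an $O(\epsilon^3)$ discrepancy, Lemma \ref{DiffHilbertPart1Formula}), and $\tilde{\mathcal{P}}\tilde{\zeta}+i=\tilde{\mathcal{P}}\tilde{\xi}+\tilde{D}_t\tilde{b}=O(\epsilon^3)\neq 0$, which leaves an extra commutator $[\tilde{\mathcal{P}}\tilde{\xi}+\tilde{D}_t\tilde{b},\nht_{\tilde{\zeta}}]\tfrac{\tilde{\xi}_\alpha}{\tilde{\zeta}_\alpha}$. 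Moreover the recombined residue begins at \emph{quadratic} order, $2\epsilon^2[\zeta^{(1)}_{t_0},\nht_0]\zeta^{(1)}_{t_0\alpha_0}$, which is a priori only $\mathcal{O}(\epsilon^{3/2})$ in $H^s$, not $\mathcal{O}(\epsilon^{5/2})$. What is actually true --- and what the paper proves --- is that this residue is small on its own: its low-order multiscale terms are commutators of wave packets whose phases have the same sign, hence $\mathcal{O}(\epsilon^{7/2})$ by Corollary \ref{CommutatorPhase} (the same computation that made $I_1$ vanish in \S 3.3), not because they equal the corresponding terms of $[\tilde{\mathcal{P}},\tilde{\nht}]\tilde{\xi}$.

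There is a second, related problem: undoing the paper's split discards a cancellation you cannot do without. The paper splits $2[D_t\zeta,\nht]\tfrac{\partial_\alpha D_t\tilde{\xi}}{\zeta_\alpha}$ precisely so that its quadratic content is absorbed into $[D_t\zeta,\nhtb]\tfrac{\partial_\alpha D_t\zeta}{\overline{\zeta}_\alpha}$, which vanishes identically by Proposition \ref{HoloProperties} and \eqref{DtXiIsAntihol}, leaving only the cubic real-kernel term, the $b_\alpha$ term and the $(I-\nhtb)D_t r$ term. In your recombined expansion the cross term $2[D_t r,\nht]\tfrac{\partial_\alpha\tilde{D}_t\tilde{\xi}}{\tilde{\zeta}_\alpha}$ survives, and Proposition \ref{SingIntSobolevEstimates} alone gives only $C\|D_t r\|_{H^s}\|\partial_\alpha\tilde{D}_t\tilde{\xi}\|_{W^{s,\infty}}\leq C\epsilon E_s^{1/2}\sim\epsilon^{5/2}$ --- a full power of $\epsilon$ worse than the allowed $\epsilon^2 E_s^{1/2}$, which would destroy the $O(\epsilon^{-2})$ energy argument downstream. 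Its true smallness comes from the near-antiholomorphicity of both $D_t r$ (via \eqref{DtXiIsAntiholRemainder}) and $\tilde{D}_t\tilde{\xi}$, i.e.\ exactly the $\nht$/$\nhtb$ manipulations you undid. So either retain the paper's decomposition, or supplement your recombined route with (i) the multiscale/phase computation showing the purely approximate residue itself is $\mathcal{O}(\epsilon^{7/2})$ and (ii) a holomorphicity argument for the $D_t r$-in-the-difference-slot cross terms; as written, both are missing, and the exact cancellation you rely on does not exist.
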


\begin{proof}
By \eqref{NewEulerRemainder} we must estimate the terms
\begin{align*}
& (G - \tilde{G}) - (I - \nht)(\mathcal{P} - \tilde{\mathcal{P}})\tilde{\xi} + (\nht - \tilde{\nht})\tilde{\mathcal{P}}\tilde{\xi} - [\tilde{\mathcal{P}}, \tilde{\nht}]\tilde{\xi} \notag\\
& + 2\left[D_t\zeta, \nht\frac{1}{\zeta_\alpha} + \nhtb\frac{1}{\overline{\zeta}_\alpha}\right]\partial_\alpha D_t \tilde{\xi} + 2[D_t\zeta, \nhtb]\frac{b_\alpha}{\overline{\zeta}_\alpha} + 2[D_t\zeta, \nhtb]\frac{\partial_\alpha D_t r}{\overline{\zeta}_\alpha} \notag\\
& - \frac{1}{\pi i}\int \left(\frac{D_t\zeta(\alpha) - D_t\zeta(\beta)}{\zeta(\alpha) - \zeta(\beta)}\right)^2 \tilde{\xi}_\beta(\beta) \, d\beta
\end{align*}
We estimate these terms in steps.  We make the blanket assumption that all constants $C$ may depend on $\mathfrak{S}(T_0)$ and $\|B\|_{H^{s + 7}}$.

\textbf{Step 1.}  We collect in this step terms with immediate bounds.  We have already seen though Proposition \ref{VariousMultiscaleIdentities} that $\|[\tilde{\mathcal{P}}, \tilde{\nht}]\tilde{\xi}\|_{H^s} \leq C\epsilon^{7/2}$.  We also have by Corollary \ref{DiffHilbertBound} that $$\|(\nht - \tilde{\nht})\tilde{\mathcal{P}}\tilde{\xi}\|_{H^s} \leq C(\epsilon^3 + E_s^{1/2})\|\tilde{\mathcal{P}}\tilde{\xi}\|_{H^s} \leq C\left(\epsilon^2 E_s^{1/2} + \epsilon^{7/2}\right)$$  By \eqref{DiffPFormula} and the estimates we obtained in Section~\ref{formulaforremainders}, we have
\begin{align*}
\|(I - \nht)(\mathcal{P} - \tilde{\mathcal{P}})\tilde{\xi}\|_{H^s} \leq C\left(\epsilon E_s + \epsilon^2 E_s^{1/2} + \epsilon^{7/2}\right)
\end{align*}  Next,
\begin{align*}
\left\|[D_t\zeta, \nhtb]\frac{b_\alpha}{\overline{\zeta}_\alpha}\right\|_{H^s} & = \left\|[D_t\zeta, \nhtb]\frac{\partial_\alpha(b - \tilde{b})}{\overline{\zeta}_\alpha}\right\|_{H^s} + \left\|[D_t\zeta, \nhtb]\frac{\tilde{b}_\alpha}{\overline{\zeta}_\alpha}\right\|_{H^s} \\
& \leq C\left(E_s^{1/2} + \epsilon\right)\left(E_s + \epsilon E_s^{1/2} + \epsilon^{5/2}\right) \\
& \leq C\left(E_s^{3/2} + \epsilon E_s + \epsilon^2 E_s^{1/2} + \epsilon^{7/2}\right) 
\end{align*} where as usual we estimated the former term with Proposition \ref{SingIntSobolevEstimates} and the latter term crudely in $H^s$.  By \eqref{DtXiIsAntiholRemainder}, \eqref{4.2} and Corollary \ref{DiffHilbertBound} we have
\begin{align*}
\left\|2[D_t\zeta, \nhtb]\frac{\partial_\alpha}{\overline{\zeta}_\alpha}D_t r\right\|_{H^s} & = \left\|[D_t\zeta, \nhtb]\frac{\partial_\alpha}{\overline{\zeta}_\alpha}(I - \nhtb)D_t r\right\|_{H^s} \\
& \leq C\left(E_s^{1/2} + \epsilon\right)\left(E_s + \epsilon E_s^{1/2} + \epsilon^{5/2}\right)\\
& \leq C\left(E_s^{3/2} + \epsilon E_s + \epsilon^2 E_s^{1/2} + \epsilon^{7/2}\right)
\end{align*}

\textbf{Step 2.}  Next we consider the integral $$\frac{1}{\pi i}\int \left(\frac{D_t\zeta(\alpha) - D_t\zeta(\beta)}{\zeta(\alpha) - \zeta(\beta)}\right)^2 \tilde{\xi}_\beta(\beta) \, d\beta$$  Since this integral is cubic, the only way it will contribute a term larger than $\mathcal{O}(\epsilon^{7/2})$ is if it contributes a term independent of $r$ of order $O(\epsilon^3)$.  To see that this does not occur, we decompose the integral in the same way as in Step 2 of \S 4.3.

Decomposing the differences in the numerator of the integrand by writing $$D_t\zeta = D_t r + (b - \tilde{b})\tilde\zeta_\alpha + \tilde{D}_t \tilde{\zeta}$$ yields a sum of integrals depending on $r$ or $b - \tilde{b}$ which are controlled in $H^s$ by $$C\left(\epsilon E_s + \epsilon^2 E_s^{1/2}\right),$$ as well as the following integral: $$\frac{1}{\pi i}\int \left(\frac{\tilde{D}_t\tilde{\zeta}(\alpha) - \tilde{D}_t\tilde{\zeta}(\beta)}{\zeta(\alpha) - \zeta(\beta)}\right)^2 \tilde{\xi}_\beta(\beta) \, d\beta$$  Next, decomposing the differences in the denominator of this integral via the identity $$\frac{1}{\zeta(\alpha) - \zeta(\beta)} = \frac{1}{\alpha - \beta} - \frac{\xi(\alpha) - \xi(\beta)}{\left(\zeta(\alpha) - \zeta(\beta)\right)\left(\alpha - \beta\right)}$$ yields a sum of integrals controlled in $H^s$ by $$C\epsilon^3\left(E_s^{1/2} + \epsilon^{1/2}\right)$$ along with the integral $$\frac{1}{\pi i}\int \left(\frac{\tilde{D}_t\tilde{\zeta}(\alpha) - \tilde{D}_t\tilde{\zeta}(\beta)}{\alpha - \beta}\right)^2 \tilde{\xi}_\beta(\beta) \, d\beta$$  Expanding $\tilde{D}_t \tilde{\zeta}$ and $\tilde{\xi}_\alpha$ in powers of $\epsilon$ and collecting like powers yields a sum of integrals controlled by $C\epsilon^{7/2}$ except for the leading term of size $O(\epsilon^3)$ given by the integral 
$$\frac{\epsilon^3}{\pi i}\int \left(\frac{\zeta^{(1)}_{t_0}(\alpha) - \zeta^{(1)}_{t_0}(\beta)}{\alpha - \beta}\right)^2 \zeta^{(1)}_{\beta_0}(\beta) \, d\beta = 2\epsilon^3[\zeta^{(1)}_{t_0}, \nht_0](\zeta^{(1)}_{t_0 \alpha_0} \zeta^{(1)}_{\alpha_0}) - \epsilon^3[\zeta^{(1)}_{t_0}, [\zeta^{(1)}_{t_0}, \nht_0]]\zeta^{(1)}_{\alpha_0 \alpha_0},$$ which is also controlled by $C\epsilon^{7/2}$ by Corollary \ref{CommutatorPhase}.

\textbf{Step 3.}  We turn to the term $$\left[D_t\zeta, \nht\frac{1}{\zeta_\alpha} + \nhtb\frac{1}{\overline{\zeta}_\alpha}\right]\partial_\alpha D_t \tilde{\xi} = -\frac{2}{\pi}\int \frac{\left(D_t\zeta(\alpha) - D_t\zeta(\beta)\right)\left(\Im \zeta(\alpha) - \Im \zeta(\beta)\right)}{|\zeta(\alpha) - \zeta(\beta)|^2} \partial_\beta D_t \tilde{\xi}(\beta) \, d\beta$$  Decomposing the differences in the numerator of the integral as in Step 2 yields a sum of singular integrals.  All but one of these singular integrals depends on $r$ and are controlled in $H^s$ by $$C\left(\epsilon E_s + \epsilon^2 E_s^{1/2} + \epsilon^{7/2}\right)$$  The remaining singular integral is given by $$\frac{2}{\pi} \int \frac{(\tilde{D}_t \tilde{\zeta}(\alpha) - \tilde{D}_t \tilde{\zeta}(\beta))(\Im \tilde{\xi}(\alpha) - \Im \tilde{\xi}(\beta))}{(\alpha - \beta)^2} \partial_\beta \tilde{D}_t \tilde{\xi} \, d\beta,$$ of which the leading term is isolated by expanding $\tilde{\zeta} = \alpha + \epsilon \zeta^{(1)} + \epsilon^2 \zeta^{(2)} + \epsilon^2 \zeta^{(3)}$, yielding $$\frac{2}{\pi}\epsilon^3 \int \frac{(\zeta^{(1)}_{t_0}(\alpha) -  \zeta^{(1)}_{t_0}(\beta))(\Im \zeta^{(1)}(\alpha) - \Im \zeta^{(1)}(\beta))}{(\alpha - \beta)^2} \zeta^{(1)}_{t_0 \beta_0} (\beta) \, d\beta$$  By the same calculation in \S 3.3 showing that the $I_1$ term of $G_3$ vanished, we see that this leading term  is actually $O(\epsilon^4)$ by Corollary \ref{CommutatorPhase}.  Therefore only terms of size $O(\epsilon^4)$ appear, and so we have that $$\left\|\left[D_t\zeta, \nht\frac{1}{\zeta_\alpha} + \nhtb\frac{1}{\overline{\zeta}_\alpha}\right]\partial_\alpha D_t \tilde{\xi}\,\right\|_{H^s} \leq C(\epsilon E_s + \epsilon^2 E_s^{1/2} + \epsilon^{7/2})$$

Similarly, the same method of decomposition allows us to expand $G$ in the same way, until the leading term of the part of the decomposition that is independent of $r$ is apparent.  However, this leading term of size $O(\epsilon^3)$ is by construction equal to $\tilde{G}$, with which it cancels.  Therefore $G - \tilde{G}$ and hence the whole right hand side of \eqref{NewEulerRemainder} is bounded in $H^s$ by $C(E_s^{3/2} + \epsilon E_s + \epsilon^2 E_s^{1/2} + \epsilon^{7/2})$.
\end{proof}

Next we consider the right hand side of \eqref{DtNewEulerRemainder}.

\begin{proposition}\label{DtNewEulerRemainderIsCubic}
Let $s \geq 4$ be given.  Then there exist $\epsilon_0 > 0$ and $\delta > 0$ so that if \eqref{ZetaLocalAPrioriBound} holds, then for all $\epsilon < \epsilon_0$, $$\|\mathcal{P}\sigma\|_{H^s} \leq C\left(E_s^{3/2} + \epsilon E_s + \epsilon^2 E_s^{1/2} + \epsilon^{7/2}\right),$$ where the constant $C = C(\mathfrak{S}(T_0), \|B\|_{H^{s + 7}})$.
\end{proposition}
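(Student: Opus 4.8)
The plan is to follow the template of the proof of Proposition~\ref{NewEulerRemainderIsCubic} and bound, in $H^s$, each of the seven terms on the right-hand side of \eqref{DtNewEulerRemainder}, using the auxiliary estimates assembled at the end of \S4.3 (in particular $\|b-\tilde b\|_{H^s}$, $\|\mathcal{A}-\tilde{\mathcal{A}}\|_{H^s}$, $\|D_t(b-\tilde b)\|_{H^s}$, $\|D_tb\|_{H^s}\le C(E_s+\epsilon E_s^{1/2}+\epsilon^{5/2})$, together with \eqref{balphaBound}, \eqref{DtSquaredrBound} and \eqref{EnergyEsEquivalence}), the difference bounds of \S4.2 (Corollaries~\ref{DiffHilbertBound} and \ref{DtHilbertDiffBound}), Proposition~\ref{SingIntSobolevEstimates}, Proposition~\ref{VariousMultiscaleIdentities}, and the phase-resonance identities \eqref{WavePacketIsAlmostHol}--\eqref{CommutatorPhaseIdentity} and Corollary~\ref{CommutatorPhase}; all constants may depend on $\mathfrak{S}(T_0)$ and $\|B\|_{H^{s+7}}$. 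Several terms are immediate. The residual $(I-\nht)\epsilon^4\tilde D_tR$ is $\mathcal{O}(\epsilon^{7/2})$ since $\tilde D_tR\in H^s$ for $B\in H^{s+7}$. The term $i(I-\nht)\tilde b_\alpha\partial_\alpha(I-\tilde{\nht})\tilde\xi$ is $\mathcal{O}(\epsilon^{7/2})$ because $\tilde b_\alpha$ is $\mathcal{O}(\epsilon^{5/2})$ (the leading part $\epsilon^2 b_2$ carries no fast oscillation) while $\partial_\alpha(I-\tilde{\nht})\tilde\xi$ is $O(\epsilon)$ in $W^{s,\infty}$. The term $(I-\nht)(\mathcal{P}-\tilde{\mathcal{P}})\tilde D_t(I-\tilde{\nht})\tilde\xi$ is expanded via \eqref{DiffPFormula}, whose coefficients are $\le C(E_s+\epsilon E_s^{1/2}+\epsilon^{5/2})$ in $H^s$, and applied to $\tilde D_t(I-\tilde{\nht})\tilde\xi$ and its $D_t$- and $\partial_\alpha$-derivatives, all of size $O(\epsilon)$ in $W^{s,\infty}$, giving $\le C(\epsilon E_s+\epsilon^2E_s^{1/2}+\epsilon^{7/2})$. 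Finally $(I-\nht)iU_{\kappa^{-1}}(\mathfrak{a}_t/\mathfrak{a})\partial_\alpha(I-\nht)\xi$ is handled via Lemma~\ref{DoubleLayerPotentialArgument} and \eqref{atOveraFormula}: decomposing the cubic right side of \eqref{atOveraFormula} in the usual way, its $r$-free leading $O(\epsilon^3)$ part cancels by a multiscale calculation paralleling the one giving $\mathcal{A}_2=0$ in \S3.3, whence $\|U_{\kappa^{-1}}(\mathfrak{a}_t/\mathfrak{a})\|_{H^s}\le C(E_s+\epsilon E_s^{1/2}+\epsilon^{5/2})$, which multiplied by $\partial_\alpha(I-\nht)(\tilde\xi+r)$ yields the claimed size.

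Next the two terms carrying $\sigma$. The second, $\frac{4}{\pi i}\int\bigl(\frac{D_t\zeta(\alpha)-D_t\zeta(\beta)}{\zeta(\alpha)-\zeta(\beta)}\bigr)^2\sigma_\beta\,d\beta$, is quadratic in $D_t\zeta$, hence carries two factors of size $O(\epsilon+E_s^{1/2})$, so Proposition~\ref{SingIntSobolevEstimates} and $\|\sigma\|_{H^s}\le C(E_s^{1/2}+\epsilon^{5/2})$ from \eqref{EnergyEsEquivalence} bound it by $C(E_s^{3/2}+\epsilon E_s+\epsilon^2E_s^{1/2}+\epsilon^{7/2})$. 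The first, $-8[D_t\zeta,\nht]\frac{\partial_\alpha D_t\sigma}{\zeta_\alpha}$, is only linear in $D_t\zeta$, so a crude bound would leave an inadmissible $\epsilon E_s^{1/2}$; here one uses that $\sigma$ lies in the range of $I-\nht$, i.e. $(I+\nht)\sigma=0$. Writing $g:=\frac{\partial_\alpha D_t\sigma}{\zeta_\alpha}$, the identities $[\nht,\partial_\alpha/\zeta_\alpha]=0$ and $[D_t,\nht]\sigma=[D_t\zeta,\nht]\frac{\sigma_\alpha}{\zeta_\alpha}$ show $(I+\nht)g$ is a cubic quantity, so the $\tfrac12(I+\nht)g$ part of $g$ makes the commutator cubic-times-small, more than enough; and for the $\tfrac12(I-\nht)g$ part, Proposition~\ref{HoloProperties}(2) gives $[D_t\zeta,\nht](I-\nht)g=[(I+\nht)D_t\zeta,\nht]g$, where $(I+\nht)D_t\zeta$ is small — decomposing $D_t\zeta=\tilde D_t\tilde\zeta+(D_t\zeta-\tilde D_t\tilde\zeta)$, the difference is $\mathcal{O}(E_s^{1/2}+\epsilon^{5/2})$ by \S4.3, while $(I+\nht)\tilde D_t\tilde\zeta=(I+\tilde{\nht})\tilde D_t\tilde\zeta+(\nht-\tilde{\nht})\tilde D_t\tilde\zeta$, the first summand being $\mathcal{O}(\epsilon^{7/2})$ since the leading part of $\tilde D_t\tilde\zeta$ is a wave packet of positive wave number (so \eqref{WavePacketIsAlmostHol} and \eqref{CommutatorPhaseIdentity} make $I+\tilde{\nht}$ annihilate it to order $O(\epsilon^4)$) and the second $\mathcal{O}(\epsilon^{7/2}+\epsilon E_s^{1/2})$ by Corollary~\ref{DiffHilbertBound}. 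Hence this term too is $\le C(E_s^{3/2}+\epsilon E_s+\epsilon^2E_s^{1/2}+\epsilon^{7/2})$.

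The remaining and most laborious term is $(I-\nht)(D_tG-\tilde D_t\tilde G)$. Writing $\tilde G=\epsilon^3G_3$ and $D_t\tilde G=\tilde D_t\tilde G+(b-\tilde b)\tilde G_\alpha$, one expands $D_tG$ through the explicit formula \eqref{DtGFormula} and treats each of its six singular integrals exactly as $G$ was treated in Step~3 of the proof of Proposition~\ref{NewEulerRemainderIsCubic}: decompose the numerators via $D_t\zeta=D_tr+(b-\tilde b)\tilde\zeta_\alpha+\tilde D_t\tilde\zeta$ and $D_t^2\zeta=D_t^2r+\cdots$ (using \eqref{DiffDtSquaredFormula}), and peel $\xi$-differences off the singular kernels; the pieces depending on $r$, $b-\tilde b$, or $D_t^2r$ are bounded by $C(E_s^{3/2}+\epsilon E_s+\epsilon^2E_s^{1/2}+\epsilon^{7/2})$ by Proposition~\ref{SingIntSobolevEstimates} and the \S4.3 estimates (notably \eqref{DtSquaredrBound}), while the pure-$\tilde\zeta$ pieces are Taylor-expanded in $\epsilon$: their leading $O(\epsilon^3)$ contribution is, by the very construction of $G_3$ (and of $b_2$, $b_3$) in \S3, precisely $\tilde D_t\tilde G$, with which it cancels, and the remaining pure-$\tilde\zeta$ terms are $O(\epsilon^4)$ once the apparently-$O(\epsilon^3)$ resonant commutators are killed by \eqref{CommutatorPhaseIdentity}/Corollary~\ref{CommutatorPhase}, just as the $I_1$ term of $G_3$ vanished in \S3.3. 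Summing the seven estimates proves the proposition. I expect this sixth term to be the main obstacle: the bookkeeping needed to expand \eqref{DtGFormula}, isolate the $r$-free part, and verify term by term that its $O(\epsilon^3)$ residue is exactly $\tilde D_t\tilde G$ — so that nothing of size $\epsilon^3$, i.e. $\mathcal{O}(\epsilon^{5/2})$, survives — is considerable; the careful use of antiholomorphy for the first $\sigma$-term is a smaller but essential point, and keeping the derivative count consistent is what forces $B\in H^{s+7}$ (the most demanding contribution being the one built from $D_t^2\zeta$, which carries five derivatives of $B$ through $\zeta^{(3)}$ plus one further $D_t$).
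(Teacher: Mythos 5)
Most of your plan tracks the paper's proof: the treatment of the residual, of $i(I-\nht)\tilde b_\alpha\partial_\alpha(I-\tilde\nht)\tilde\xi$, of $(I-\nht)(\mathcal{P}-\tilde{\mathcal{P}})\tilde D_t(I-\tilde\nht)\tilde\xi$, of the $\sigma_\beta$ integral, and of $(I-\nht)(D_tG-\tilde D_t\tilde G)$ (the paper decomposes $D_tG$ via Lemma \ref{SingIntCommuteWithDt} rather than the explicit formula \eqref{DtGFormula}, but the cancellation-against-$\tilde D_t\tilde G$ argument is the same) all match. A small slip in your $\mathfrak{a}_t/\mathfrak{a}$ discussion: the right-hand side of \eqref{atOveraFormula} is quadratic at leading order, and what must (and does) vanish is its $O(\epsilon^2)$ coefficient, $2i\bigl([\zeta^{(1)}_{t_0t_0},\nht_0]\overline\zeta^{(1)}_{\alpha_0t_0}+[\zeta^{(1)}_{t_0},\nht_0]\overline\zeta^{(1)}_{\alpha_0t_0t_0}\bigr)$, a perfect $\partial_{t_0}$-derivative — not an $O(\epsilon^3)$ part; your stated conclusion $\|U_{\kappa^{-1}}(\mathfrak{a}_t/\mathfrak{a})\|_{H^s}\le C(E_s+\epsilon E_s^{1/2}+\epsilon^{5/2})$ is nevertheless the right one, provided the $\tilde\xi$ factor is then taken in $W^{s,\infty}$.

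The genuine gap is in your treatment of $I_1=-8[D_t\zeta,\nht]\frac{\partial_\alpha D_t\sigma}{\zeta_\alpha}$. Your reduction $[D_t\zeta,\nht](I-\nht)g=[(I+\nht)D_t\zeta,\nht]g$ is correct, but $(I+\nht)D_t\zeta$ is not small enough to close the estimate. The exact identity available is $(I-\nhtb)D_t\zeta=0$, not anything for $I+\nht$: the remainder part of your decomposition contributes $(I+\nht)D_tr\approx 2D_tr$, of size $E_s^{1/2}$ only, and even the tilde part fails your claim, since $\tilde D_t\tilde\zeta$ contains the zero-wavenumber term $\epsilon^2b_2\tilde\zeta_\alpha$ (and the $e^{-i\phi}$ component coming from $\zeta^{(3)}$), which $(I+\nht_0)$ does not annihilate, so $(I+\tilde\nht)\tilde D_t\tilde\zeta$ is only $\mathcal O(\epsilon^{3/2})$, not $\mathcal O(\epsilon^{7/2})$. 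Pairing these with $\|D_t\sigma\|_{H^s}\le C(E_s^{1/2}+\epsilon^{5/2})$ produces terms of size $E_s$ and $\epsilon^{3/2}E_s^{1/2}$, which are \emph{not} dominated by $E_s^{3/2}+\epsilon E_s+\epsilon^2E_s^{1/2}+\epsilon^{7/2}$ (with $E_s^{1/2}\sim\epsilon^{3/2}$ they are of order $\epsilon^3$), and a right-hand side of that size destroys the $O(\epsilon^{-2})$ energy argument. The paper avoids this by first replacing $D_t\sigma$ with $D_t^2r$ (Claim 2 of Step 5), then writing $2[D_t\zeta,\nht]\frac{\partial_\alpha D_t^2r}{\zeta_\alpha}=2\bigl[D_t\zeta,\nht\frac1{\zeta_\alpha}+\nhtb\frac1{\overline\zeta_\alpha}\bigr]\partial_\alpha D_t^2r-2[D_t\zeta,\nhtb]\frac{\partial_\alpha D_t^2r}{\overline\zeta_\alpha}$: the double-commutator carries two small factors automatically, and for the last term the relation $D_t\zeta=\nhtb D_t\zeta$ together with Proposition \ref{HoloProperties}(2) moves the projection onto the argument, giving $[D_t\zeta,\nhtb]\frac{\partial_\alpha}{\overline\zeta_\alpha}(I-\nhtb)D_t^2r$, where $(I-\nhtb)D_t^2r$ is quadratically small by applying $D_t$ to \eqref{DtXiIsAntiholRemainder}. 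In other words, the smallness must be placed on $(I-\nhtb)D_t^2r$, where it actually exists, rather than on $(I+\nht)D_t\zeta$, where it does not; as written your estimate of this term does not close.
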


\begin{proof}
It suffices to show that the following terms are $\mathcal{O}(\epsilon^{7/2})$:
\begin{align*}
& -8[D_t\zeta, \nht]\frac{\partial_\alpha D_t \sigma}{\zeta_\alpha} + \frac{4}{\pi i} \int \left(\frac{D_t\zeta(\alpha) - D_t\zeta(\beta)}{\zeta(\alpha) - \zeta(\beta)}\right)^2 \sigma_\beta(\beta) d\beta \notag\\
& \quad + (I - \nht)iU_{\kappa^{-1}}\left(\frac{\mathfrak{a}_t}{\mathfrak{a}}\right) \partial_\alpha(I - \nht)\xi \notag\\
& \quad - (I - \nht)(\mathcal{P} - \tilde{\mathcal{P}})\tilde{D}_t(I - \tilde{\nht})\tilde{\xi} \notag\\
& \quad +i (I - \nht)\tilde{b}_\alpha\partial_\alpha(I - \tilde{\nht})\tilde{\xi} \notag\\
& \quad + (I - \nht)(D_t G - \tilde{D}_t\tilde{G}) - (I - \nht)\epsilon^4 (\tilde{D}_t R) \notag\\
& := I_1 + I_2 + I_3 +I_4 + I_5 + I_6 +I_7.
\end{align*}
Clearly $\|I_7\|_{H^s} \leq C\epsilon^{7/2}$ and $\|I_5\|_{H^s} \leq C\epsilon^{7/2}$.  By \eqref{DiffPFormula}, and the estimates of \S 4.3 we have that 
\begin{align*}
\|I_4\|_{H^s} & \leq C(E_s + \epsilon E_s^{1/2} + \epsilon^{5/2}) \epsilon \\
& \leq C(E_s^{3/2} + \epsilon E_s + \epsilon^2 E_s^{1/2} + \epsilon^{7/2})
\end{align*}
Using Lemma \ref{SingIntCommuteWithDt} along with Proposition \ref{SingIntSobolevEstimates}, we can decompose $D_t G$ into a sum of singular integrals as in Step 2 of \S 4.3.  Each of these integrals can be bounded by $C(E_s^{3/2} + \epsilon E_s + \epsilon^2 E_s^{1/2})$ except for $\tilde{D}_t \tilde{G}$, which has leading term of size $O(\epsilon^3)$; but then $I_6$ is $\mathcal{O}(\epsilon^{7/2})$.  Similarly, if we effect the usual decomposition on the right hand side of the formula \eqref{atOveraFormula}, we see that the only term not of size $\mathcal{O}(\epsilon^{5/2})$ is the term
$$\epsilon^2 2i\left([\zeta^{(1)}_{t_0 t_0}, \nht_0]\overline{\zeta}^{(1)}_{\alpha_0 t_0} + [\zeta^{(1)}_{t_0}, \nht_0]\overline{\zeta}^{(1)}_{\alpha_0 t_0 t_0}\right) = 0$$ and hence that $\|I_3\|_{H^s} \leq C(E_s^{3/2} + \epsilon E_s + \epsilon^2 E_s^{1/2} + \epsilon^{7/2})$.  By Step 5 of \S 4.3 and Proposition \ref{SingIntSobolevEstimates} we estimate $I_2$ as
\begin{align*}
\|I_2\|_{H^s} & \leq C(E_s^{1/2} + \epsilon)^2\|\sigma\|_{H^s} \\
& \leq C(E_s^{1/2} + \epsilon)^2(E_s^{1/2}+ \epsilon^{5/2}) \\
& \leq C(E_s^{3/2} + \epsilon E_s + \epsilon^2 E_s^{1/2} + \epsilon^{7/2})
\end{align*}  The only term left to estimate is $I_1$.  We first write
$$2[D_t\zeta, \nht]\frac{\partial_\alpha D_t \sigma}{\zeta_\alpha} = 2[D_t\zeta, \nht]\frac{\partial_\alpha D_t^2 r}{\zeta_\alpha} + 2[D_t\zeta, \nht]\frac{\partial_\alpha (D_t \sigma - D_t^2 r)}{\zeta_\alpha},$$ and by Step 5 of \S 4.3 we have that the latter term is bounded by $C(E_s^{3/2} + \epsilon E_s + \epsilon^2 E_s^{1/2} + \epsilon^{7/2})$ in $H^s$.  Next we have
$$2[D_t\zeta, \nht]\frac{\partial_\alpha D_t^2 r}{\zeta_\alpha} = 2\left[D_t\zeta, \nht\frac{1}{\zeta_\alpha} + \nhtb\frac{1}{\overline{\zeta}_\alpha}\right]\partial_\alpha D_t^2 r - 2[D_t\zeta, \nhtb]\frac{\partial_\alpha D_t^2 r}{\overline{\zeta}_\alpha},$$ and the former term is bounded in $H^s$ by $C(E_s^{3/2} + \epsilon E_s + \epsilon^2 E_s^{1/2})$.  Of the latter term we write using Proposition \ref{HilbertCommutatorIdentities} that
$$2[D_t\zeta, \nhtb]\frac{\partial_\alpha D_t^2 r}{\overline{\zeta}_\alpha} = [(I + \nhtb)D_t\zeta, \nhtb]\frac{\partial_\alpha D_t^2 r}{\overline{\zeta}_\alpha} = [D_t\zeta, \nhtb]\frac{\partial_\alpha }{\overline{\zeta}_\alpha}(I - \nhtb)D_t^2 r$$  Finally, we have by \eqref{DtXiIsAntiholRemainder} that
\begin{align*}
(I - \nhtb)D_t^2 r & = [D_t \zeta, \nhtb]\frac{\partial_\alpha D_t r}{\bar \zeta_\alpha} + D_t\left(-(I - \tilde{\nht})\tilde{D}_t\overline{\tilde{\zeta}} - (I - \tilde{\nht})(D_t - \tilde{D}_t)\overline{\tilde{\zeta}} + (\nht - \tilde{\nht}){D}_t\overline{\tilde{\zeta}}\right)
\end{align*}
Therefore $\|(I - \nhtb)D_t^2 r\|_{H^s} \leq C(E_s + \epsilon E_s^{1/2} + \epsilon^{5/2})$, from which the Proposition follows.
\end{proof}

\subsection{Construction of the Energy for the Remainder}

In this section we construct the energy corresponding to the equations \eqref{NewEulerRemainder} and \eqref{DtNewEulerRemainder}.  We then show that this energy obeys a differential inequality which yields a priori bounds on a $O(\epsilon^{-2})$ time scale.  The energy so constructed will control the quantity $\|D_t r\|_{H^s}^2 + \|D_t^2 r\|_{H^s}^2$, and hence by \eqref{DtSquaredrEnergyBound} it follows that for sufficiently small energies also yields suitable bounds on $E_s$.

\subsubsection*{Bounds on the Equations for the Derivatives}

We must first show that the nonlinearities in the corresponding equations for the derivatives are appropriately bounded in $L^2$.
\begin{proposition}\label{NewEulerRemainderDerivBound}
Let $s \geq 4$ and $1\le n\le s$ be given.  Then there exist $\epsilon_0 > 0$ and $\delta > 0$ so that if \eqref{ZetaLocalAPrioriBound} holds, then for all $\epsilon < \epsilon_0$, if $\Theta = \rho, \sigma$, then
$$\|\mathcal{P}\partial_\alpha^n\Theta\|_{L^2} \leq C(E_s^{3/2} + \epsilon E_s + \epsilon^2 E_s^{1/2} + \epsilon^{7/2})$$ where $C$ depends only on $\mathfrak{S}(T_0)$ and $\|B\|_{H^{s + 7}}$.
\end{proposition}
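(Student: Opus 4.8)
The plan is to commute the $n$ spatial derivatives through $\mathcal{P}$ and reduce everything to quantities already controlled. Write
$$\mathcal{P}\partial_\alpha^n\Theta = \partial_\alpha^n(\mathcal{P}\Theta) + [\mathcal{P},\partial_\alpha^n]\Theta .$$
Since $n\le s$, the first term is bounded in $L^2$ by $\|\mathcal{P}\Theta\|_{H^n}\le\|\mathcal{P}\Theta\|_{H^s}$, and this is $\le C(E_s^{3/2}+\epsilon E_s+\epsilon^2E_s^{1/2}+\epsilon^{7/2})$ by Proposition \ref{NewEulerRemainderIsCubic} when $\Theta=\rho$ and by Proposition \ref{DtNewEulerRemainderIsCubic} when $\Theta=\sigma$. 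So the entire task is to show that the commutator $[\mathcal{P},\partial_\alpha^n]\Theta$ obeys the same $L^2$ bound; because $n\le s$ no loss of derivatives occurs.

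To estimate the commutator I split $\mathcal{P}=D_t^2-i\mathcal{A}\partial_\alpha$. For the $-i\mathcal{A}\partial_\alpha$ part, $[\mathcal{A}\partial_\alpha,\partial_\alpha^n]\Theta=[\mathcal{A}-1,\partial_\alpha^n]\partial_\alpha\Theta=-\sum_{j=1}^n\binom{n}{j}(\partial_\alpha^j(\mathcal{A}-1))\,\partial_\alpha^{n-j+1}\Theta$ since $\tilde{\mathcal{A}}=1$; here $\|\partial_\alpha^j(\mathcal{A}-1)\|_{H^{s-j}}\le\|\mathcal{A}-\tilde{\mathcal{A}}\|_{H^s}\le C(E_s+\epsilon E_s^{1/2}+\epsilon^{5/2})$ by \eqref{AMinusTildeABound}, while the factors $\partial_\alpha^{n-j+1}\Theta$ (all with $n-j+1\ge1$) are controlled in $H^{s}$ by $C\|r_\alpha\|_{H^s}$ when $\Theta=\rho$ and by $\|\sigma\|_{H^s}\le CE_s^{1/2}+C\epsilon^{5/2}$ when $\Theta=\sigma$ (from \eqref{EnergyEsEquivalence}), using that $H^s$ is an algebra for $s\ge1$. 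For the $D_t^2$ part, write $[D_t^2,\partial_\alpha^n]=D_t[D_t,\partial_\alpha^n]+[D_t,\partial_\alpha^n]D_t$ with $[D_t,\partial_\alpha^n]g=-\sum_{j=1}^n\binom{n}{j}(\partial_\alpha^j b)\,\partial_\alpha^{n-j+1}g$. Distributing the outer $D_t$ by the product rule, it either lands on $\partial_\alpha^j b$, producing $D_t\partial_\alpha^j b$ — bounded in $L^2$ by $\|D_t b\|_{H^s}$ plus $\|[D_t,\partial_\alpha^j]b\|_{L^2}$, hence $\mathcal{O}(E_s+\epsilon E_s^{1/2}+\epsilon^{5/2})$ by \eqref{DtbBound}, \eqref{balphaBound} — or on $\partial_\alpha^{n-j+1}\Theta$, producing $D_t\partial_\alpha^{n-j+1}\Theta=\partial_\alpha^{n-j+1}D_t\Theta+[D_t,\partial_\alpha^{n-j+1}]\Theta$, where $\|D_t\rho\|_{H^s}\le C\|D_t r\|_{H^s}+C(E_s^{1/2}+\epsilon^{3/2})\|r_\alpha\|_{H^s}$ (obtained by writing $D_t(I-\nht)r=(I-\nht)D_t r-[D_t\zeta,\nht]\tfrac{r_\alpha}{\zeta_\alpha}$) and $\|D_t\sigma\|_{H^s}\le CE_s^{1/2}+C\epsilon^{5/2}$ by \eqref{EnergyEsEquivalence}. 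The remaining inner commutator $[D_t,\partial_\alpha^{n-j+1}]\Theta$ is again a sum of products of a $\partial_\alpha^{\ge1}b$ factor with a derivative of $\Theta$ of order $\le s$.

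Collecting the outcome, every term of $[\mathcal{P},\partial_\alpha^n]\Theta$ is a product in which at least one factor is of ``small-coefficient'' type — a $\partial_\alpha^{\ge1}b$, $\mathcal{A}-1$, $D_t b$, or $D_t\partial_\alpha^j b$, each $O(E_s+\epsilon E_s^{1/2}+\epsilon^{5/2})$ in the relevant norm — times a factor bounded by $C(E_s^{1/2}+\epsilon^{3/2})$ (one of $\|r_\alpha\|_{H^s}$, $\|D_t r\|_{H^s}$, $\|D_t^2 r\|_{H^s}$ via \eqref{DtSquaredrBound}, $\|\sigma\|_{H^s}$, $\|D_t\sigma\|_{H^s}$, $\|b\|_{H^s}$) or by $C\epsilon^{5/2}$; or else it is a product of two or more small-coefficient factors. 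In each case the product is $\le C(E_s^{3/2}+\epsilon E_s+\epsilon^{5/2}E_s^{1/2}+\epsilon^5)$, and since $\epsilon<\epsilon_0\le1$ the cross terms absorb: $\epsilon^{5/2}E_s^{1/2}\le\epsilon^2E_s^{1/2}$ and $\epsilon^5\le\epsilon^{7/2}$, giving the asserted bound. The constant has the stated dependence on $\mathfrak{S}(T_0)$ and $\|B\|_{H^{s+7}}$ because this is already the dependence in Propositions \ref{NewEulerRemainderIsCubic}, \ref{DtNewEulerRemainderIsCubic} and in the estimates of \S 4.3 invoked above.

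The step I expect to be the main obstacle is ensuring that every term of the commutator genuinely carries ``quadratic'' smallness (so that nothing of size merely $O(\epsilon^{3/2})$ survives): the delicate case is the one in which the outer $D_t$ in $[D_t^2,\partial_\alpha^n]$ falls on the top-order derivative of $\Theta$, and there one must use precisely the already-established control of $\|D_t\rho\|_{H^s}$ and $\|D_t\sigma\|_{H^s}$ by $E_s^{1/2}$ (Step 5 of \S 4.3 together with \eqref{EnergyEsEquivalence}) rather than a bound on $\Theta$ alone.
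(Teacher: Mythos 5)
Your proposal is correct and follows essentially the same route as the paper: commute $\partial_\alpha^n$ through $\mathcal{P}$ (the paper organizes this via the identity $[\partial_\alpha,\mathcal{P}]=\{\partial_\alpha(D_tb-i(\mathcal{A}-1))\}\partial_\alpha+2b_\alpha D_t\partial_\alpha$ and a telescoping sum, while you expand $[\mathcal{P},\partial_\alpha^n]$ directly via Leibniz and $[D_t^2,\partial_\alpha^n]=D_t[D_t,\partial_\alpha^n]+[D_t,\partial_\alpha^n]D_t$, which is the same computation), bound the leading term by Propositions \ref{NewEulerRemainderIsCubic}--\ref{DtNewEulerRemainderIsCubic}, and control the commutator terms by the \S 4.3 estimates on $\mathcal{A}-1$, $b_\alpha$, $D_tb$ together with the Step 5 bounds on $D_t\rho$, $\sigma$, $D_t\sigma$. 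The only differences are organizational, not mathematical.
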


\begin{proof}
Let $\Theta = \rho, \sigma$ as above.  Observe that for any $n \geq 1$ we can write $$\mathcal{P}\partial_\alpha^n\Theta = \partial_\alpha^n\mathcal{P}\Theta - \sum_{j = 1}^n \partial_\alpha^{n - j}[\partial_\alpha, \mathcal{P}]\partial_\alpha^{j - 1}\Theta$$  Using the identity 
\begin{equation}\label{PartialAlphaCommuteP}
[\partial_\alpha, \mathcal{P}] = \left\{\partial_\alpha\bigl(D_t b - i(\mathcal{A} - 1)\bigr)\right\}\partial_\alpha + 2b_\alpha D_t \partial_\alpha
\end{equation} we rewrite as
\begin{align*}
\mathcal{P}\partial_\alpha^n\Theta = \partial_\alpha^n\mathcal{P}\Theta & - \sum_{j = 1}^n \partial_\alpha^{n - j}\Bigl(\partial_\alpha\bigl(D_t b - i(\mathcal{A} - 1)\bigr)\partial_\alpha^j\Theta\Bigr) \\
& - 2\sum_{j = 1}^n \partial_\alpha^{n - j}\bigl(b_\alpha D_t \partial_\alpha^j\Theta\bigr)
\end{align*}  Now using the identity
\begin{align}\label{CommuteDtPastPartialAlpha}
D_t \partial_\alpha^j & = \partial_\alpha^j D_t - \sum_{l = 1}^j \partial_\alpha^{j - l}[\partial_\alpha, D_t]\partial_\alpha^{l - 1} \notag\\
& = \partial_\alpha^j D_t - \sum_{l = 1}^j \partial_\alpha^{j - l}( b_\alpha\partial_\alpha^l) 
\end{align}
we have by the product rule, Steps 2 and 3 of \S 4.3, \eqref{balphaBound} and Proposition \ref{NewEulerRemainderIsCubic} that for all $1 \leq n \leq s$,
\begin{align*}
\|\mathcal{P}\partial_\alpha^n\Theta\|_{L^2} & \leq C\|\mathcal{P}\Theta\|_{H^s} \\
& \quad + C\|D_t b - i(\mathcal{A} - 1)\|_{H^s}\|\partial_\alpha\Theta\|_{H^{s - 1}} \\
& \quad + C\|b_\alpha\|_{H^{s - 1}}(\|D_t\Theta\|_{H^s}+\|\partial_\alpha\Theta\|_{H^{s - 1}} ) \\
& \leq C\left(E_s^{3/2} + \epsilon E_s + \epsilon^2 E_s^{1/2} + \epsilon^{7/2}\right)
\end{align*} where the last inequality follows from Step 5 of \S 4.3.
\end{proof}

\subsubsection*{Construction of the Energy and the Energy Inequality}

Now that we have shown that the equations for the derivatives of the quantities in \eqref{NewEulerRemainder} and \eqref{DtNewEulerRemainder} also have $\mathcal{O}(\epsilon^{7/2})$ nonlinearities, we can construct the energies corresponding to these equations.  Since the principal operator of \eqref{NewEulerRemainder} and \eqref{DtNewEulerRemainder} is $\mathcal{P}$, we can use the same construction given by Lemma 4.1 of \cite{WuAlmostGlobal2D} to construct our energy; we record this lemma here for convenience.

\begin{proposition}[c.f. Lemma 4.1 of \cite{WuAlmostGlobal2D}]\label{BasicEnergyInequality}
Suppose that a function $\Theta \in C^0([0, T]; \dot{H}^{1/2}) \cap C^1([0, T]; L^2)$ is given satisfying $\mathcal{P}\Theta = \mathscr{G}$.  Define $$\mathfrak{E}(t) := \int \frac{1}{\mathcal{A}}|D_t\Theta(\alpha, t)|^2 + i\Theta(\alpha, t)\overline{\Theta}_\alpha(\alpha, t) d\alpha $$  Then
$$\frac{d\mathfrak{E}}{dt} = \int \frac{2}{\mathcal{A}}\Re\left(\mathscr{G}D_t\overline{\Theta}\right) - \frac{1}{\mathcal{A}}U_\kappa^{-1} \left(\frac{\mathfrak{a}_t}{\mathfrak{a}}\right)|D_t\Theta|^2 d\alpha$$  Moreover if $\Theta$ is the trace of a holomorphic function on $\Omega(t)^c$, i.e., if $\Theta = \frac{1}{2}(I - \nht)\Theta$, then $$\int i\Theta\overline{\Theta}_\alpha d\alpha = -\int i\overline{\Theta}\Theta_\alpha d\alpha \geq 0$$
\end{proposition}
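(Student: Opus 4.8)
The plan is to run the standard energy computation for the ``good'' wave operator $\mathcal{P} = D_t^2 - i\mathcal{A}\partial_\alpha$ directly in the $\kappa$-coordinate. Write $\mathfrak{E}(t) = \int \mathcal{A}^{-1}|D_t\Theta|^2\,d\alpha + \int i\Theta\overline{\Theta}_\alpha\,d\alpha =: \mathfrak{E}_1 + \mathfrak{E}_2$ and differentiate each piece in $t$, using repeatedly the elementary identity $\frac{d}{dt}\int g\,d\alpha = \int \partial_t g\,d\alpha = \int (D_t g + b_\alpha g)\,d\alpha$ (from $\partial_t = D_t - b\partial_\alpha$ and integration by parts in $\alpha$), which is legitimate thanks to the decay forced by $\Theta\in C^0([0,T];\dot{H}^{1/2})\cap C^1([0,T];L^2)$. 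For $\mathfrak{E}_1$, the product rule with $D_t(\mathcal{A}^{-1}) = -\mathcal{A}^{-2}D_t\mathcal{A}$ produces the terms $-\mathcal{A}^{-2}(D_t\mathcal{A})|D_t\Theta|^2$, $b_\alpha\mathcal{A}^{-1}|D_t\Theta|^2$, and $2\mathcal{A}^{-1}\Re\big((D_t^2\Theta)\overline{D_t\Theta}\big)$; substituting $D_t^2\Theta = \mathcal{P}\Theta + i\mathcal{A}\Theta_\alpha = \mathscr{G} + i\mathcal{A}\Theta_\alpha$ converts the last into $\frac{2}{\mathcal{A}}\Re(\mathscr{G}\overline{D_t\Theta})$ plus a ``cross term'' $i\Theta_\alpha\overline{D_t\Theta} - i\overline{\Theta}_\alpha D_t\Theta$. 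For $\mathfrak{E}_2$, one integration by parts leaves $\int i(\Theta_t\overline{\Theta}_\alpha - \Theta_\alpha\overline{\Theta}_t)\,d\alpha$; writing $\Theta_t = D_t\Theta - b\Theta_\alpha$ (with $b$ real) the $b|\Theta_\alpha|^2$ contributions cancel, giving $\int i(D_t\Theta\,\overline{\Theta}_\alpha - \Theta_\alpha\overline{D_t\Theta})\,d\alpha$.

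Adding the two cross contributions, $i\Theta_\alpha\overline{D_t\Theta} - i\overline{\Theta}_\alpha D_t\Theta$ and $iD_t\Theta\,\overline{\Theta}_\alpha - i\Theta_\alpha\overline{D_t\Theta}$, everything cancels but $iD_t\Theta\,\overline{\Theta}_\alpha - i\overline{\Theta}_\alpha D_t\Theta = 0$ — equivalently, these terms assemble into $i\partial_\alpha(\Theta\overline{D_t\Theta})$, a perfect derivative that integrates to zero. This leaves
$$\frac{d\mathfrak{E}}{dt} = \int \frac{2}{\mathcal{A}}\Re(\mathscr{G}\overline{D_t\Theta})\,d\alpha + \int\Big(\frac{b_\alpha}{\mathcal{A}} - \frac{D_t\mathcal{A}}{\mathcal{A}^2}\Big)|D_t\Theta|^2\,d\alpha,$$
so it remains to identify $\mathcal{A}^{-1}(b_\alpha - \mathcal{A}^{-1}D_t\mathcal{A})$ with $-\mathcal{A}^{-1}U_\kappa^{-1}(\mathfrak{a}_t/\mathfrak{a})$, i.e. to prove $U_\kappa^{-1}(\mathfrak{a}_t/\mathfrak{a}) = \mathcal{A}^{-1}D_t\mathcal{A} - b_\alpha$. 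This is the transformation law for $\mathfrak{a}$ under the time-dependent change of variables $\kappa$: the defining relation $U_\kappa^{-1}\mathcal{A}\partial_\alpha = \mathfrak{a}\partial_\alpha U_\kappa^{-1}$ says pointwise that $\mathfrak{a}$ equals $\mathcal{A}$ times the Jacobian of $\kappa$, and since $b = \kappa_t\circ\kappa^{-1}$ one has $D_t$ of the log-Jacobian equal to $b_\alpha$, whence $U_\kappa^{-1}\partial_t\log\mathfrak{a} = D_t\log\mathcal{A} - D_t\log(\text{Jacobian}) = \mathcal{A}^{-1}D_t\mathcal{A} - b_\alpha$. This is exactly the identity used implicitly in \cite{WuAlmostGlobal2D} (cf. the discussion around \eqref{atOveraFormula}); inserting it gives the asserted formula for $d\mathfrak{E}/dt$.

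For the final assertion, the equality $\int i\Theta\overline{\Theta}_\alpha\,d\alpha = -\int i\overline{\Theta}\Theta_\alpha\,d\alpha$ is just integration by parts and needs only decay. For the sign, suppose $\Theta = \tfrac{1}{2}(I-\nht)\Theta$, so $\Theta$ is the boundary value along the curve $\zeta$ of a function $\Theta(z)$ holomorphic on $\Omega(t)^c$. Then $\overline{\partial_\alpha\Theta(\zeta(\alpha))} = \overline{\Theta'(\zeta(\alpha))}\,\overline{\zeta_\alpha(\alpha)}$, hence $i\Theta\overline{\Theta}_\alpha\,d\alpha = i\Theta(z)\overline{\Theta'(z)}\,d\bar{z}$ along the contour $z = \zeta(\alpha)$; traversing it in the direction of increasing $\alpha$ (the positive orientation of $\partial(\Omega(t)^c)$), the complex Green's theorem gives $\int i\Theta\overline{\Theta}_\alpha\,d\alpha = -2i\iint_{\Omega(t)^c}\partial_z\big(i\Theta\overline{\Theta'}\big)\,dA$. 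Since $\Theta'$ is holomorphic, $\overline{\Theta'}$ is anti-holomorphic, so $\partial_z(i\Theta\overline{\Theta'}) = i\Theta'\overline{\Theta'} = i|\Theta'|^2$, whence $\int i\Theta\overline{\Theta}_\alpha\,d\alpha = 2\iint_{\Omega(t)^c}|\Theta'|^2\,dA \ge 0$. (This is also the classical identification of the boundary $\dot{H}^{1/2}$ quantity with the interior Dirichlet energy, so the regularity class $\dot{H}^{1/2}$ is exactly what makes everything finite; the decay needed to discard the contribution at infinity and to justify Green's theorem is obtained by first treating smooth, rapidly decaying data and passing to the limit.)

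I expect the main difficulty to be bookkeeping rather than conceptual: tracking which derivatives are $\partial_t$ versus $D_t$, checking that the cross terms assemble into a perfect $\partial_\alpha$-derivative, and — the one genuinely non-computational point — pinning down the transformation identity $U_\kappa^{-1}(\mathfrak{a}_t/\mathfrak{a}) = \mathcal{A}^{-1}D_t\mathcal{A} - b_\alpha$ with the correct sign of the $b_\alpha$ term, together with the positivity of $\int i\Theta\overline{\Theta}_\alpha\,d\alpha$ via the contour argument, which uses precisely that $\Theta$ is the trace of a function holomorphic in $\Omega(t)^c$.
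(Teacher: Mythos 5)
Your computation is correct, and it is worth noting that the paper itself offers no proof of this proposition: it is quoted verbatim as Lemma 4.1 of \cite{WuAlmostGlobal2D}, where the identity is obtained by working in the Lagrangian frame, in which $\mathcal{P}$ becomes $\partial_t^2 - i\mathfrak{a}\partial_\alpha$ and the two integrals in $\mathfrak{E}$ are invariant under the change of variables (the Jacobian $\kappa_\alpha$ cancels against $\mathfrak{a}\kappa_\alpha = \mathcal{A}\circ\kappa$), so that $\mathfrak{a}_t/\mathfrak{a}$ appears directly. You instead differentiate in the $\kappa$-coordinates using $\frac{d}{dt}\int g\,d\alpha = \int (D_t g + b_\alpha g)\,d\alpha$, and then the price is exactly the transformation identity $U_\kappa^{-1}(\mathfrak{a}_t/\mathfrak{a}) = \mathcal{A}^{-1}D_t\mathcal{A} - b_\alpha$, which you correctly identify and which is equivalent to $\mathcal{A}\circ\kappa = \mathfrak{a}\,\kappa_\alpha$ together with $\partial_t\log\kappa_\alpha = b_\alpha\circ\kappa$; the two routes are the same computation seen from either side of $U_\kappa$. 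Your Green's-theorem argument for $\int i\Theta\overline{\Theta}_\alpha\,d\alpha = 2\iint_{\Omega(t)^c}|\Theta'|^2\,dA \geq 0$, with the orientation and the $d\bar z$ bookkeeping, is also the standard proof of the positivity statement.

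Two small blemishes, neither a gap. First, your verbal gloss ``$\mathfrak{a}$ equals $\mathcal{A}$ times the Jacobian of $\kappa$'' has the Jacobian on the wrong side: from $U_\kappa^{-1}\mathcal{A}\partial_\alpha = \mathfrak{a}\partial_\alpha U_\kappa^{-1}$ one gets $\mathfrak{a} = (\mathcal{A}\circ\kappa)/\kappa_\alpha$; your subsequent log-derivative formula, with the minus sign on the Jacobian term, is the correct one, so only the sentence needs fixing. Second, the parenthetical claim that the leftover cross terms ``assemble into $i\partial_\alpha(\Theta\overline{D_t\Theta})$'' is not accurate and is unnecessary: as you also observe, $i\Theta_\alpha\overline{D_t\Theta} - i\overline{\Theta}_\alpha D_t\Theta$ from the first integral and $iD_t\Theta\,\overline{\Theta}_\alpha - i\Theta_\alpha\overline{D_t\Theta}$ from the second cancel pointwise, with no further integration by parts.
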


For brevity, we introduce the quantities
\begin{equation}\label{DefinitionRhoNSigmaN}
\rho^{(n)} := \partial_\alpha^n\rho \qquad \text{and} \qquad \sigma^{(n)} := \partial_\alpha^n\sigma
\end{equation}
We cannot use the second part of Proposition \ref{BasicEnergyInequality} directly for $n > 0$ since $\rho^{(n)}$ and $\sigma^{(n)}$ need not be the trace of a holomorphic function on $\Omega(t)^c$.  Hence we further introduce the notation
\begin{align}\label{RhoSigmaHoloPart}
\rho^{(n)} = \frac{1}{2}(I - \nht)\rho^{(n)} + \frac{1}{2}(I + \nht)\rho^{(n)} := \phi^{(n)} + \mathcal{R}^{(n)} \notag\\
\sigma^{(n)} = \frac{1}{2}(I - \nht)\sigma^{(n)} + \frac{1}{2}(I + \nht)\sigma^{(n)} := \psi^{(n)} + \mathcal{S}^{(n)}
\end{align}
Consider now the case $0 \leq n \leq s$.  Define
\begin{equation}\label{RemainderEnergyFormulaE}
\mathcal{E}_n(t) = \int \frac{1}{\mathcal{A}}|D_t\rho^{(n)}|^2 + i \phi^{(n)} \overline{\phi}^{(n)}_\alpha \,d\alpha
\end{equation} and
\begin{equation}\label{RemainderEnergyFormulaF}
\mathcal{F}_n(t) = \int \frac{1}{\mathcal{A}}|D_t\sigma^{(n)}|^2 + i \sigma^{(n)} \overline{\sigma}^{(n)}_\alpha \,d\alpha
\end{equation}
We must show that the parts contributed by $\frac{d\mathcal{E}_n}{dt}$ to the energy inequality by the parts of these terms that are antiholomorphic in $\Omega(t)^c$ are at most of size $\mathcal{O}(\epsilon^5)$.

Observe first that we can write
\begin{align}\label{SmallAntiholPart}
\mathcal{R}^{(n)} & = \frac{1}{2}(I + \nht)\partial_\alpha^n\rho \notag\\
& = \frac{1}{4}\partial_\alpha^n(I + \nht)(I - \nht)r - \frac{1}{2}\sum_{j = 1}^n \partial_\alpha^{n - j}[\partial_\alpha, \nht]\partial_\alpha^{j - 1}\rho \notag\\
& = - \frac{1}{2}\sum_{j = 1}^n \partial_\alpha^{n - j}[\zeta_\alpha - 1, \nht]\frac{\partial_\alpha^j\rho}{\zeta_\alpha}
\end{align} and so $\mathcal{R}^{(n)}$ is bounded by $C(E_s + \epsilon E_s^{1/2} + \epsilon^{5/2})$ in $L^2$.  Writing $\phi^{(n)} = \rho^{(n)} - \mathcal{R}^{(n)}$ in  $\mathcal{E}_n$ yields
\begin{align*}
\mathcal{E}_n & = \int \frac{1}{\mathcal{A}}|D_t \rho^{(n)}|^2 + i \rho^{(n)} \overline{\rho}_\alpha^{(n)} \, d\alpha - i \int \phi^{(n)}\overline{\mathcal{R}}^{(n)}_\alpha + \mathcal{R}^{(n)} \overline{\phi}^{(n)}_\alpha + \mathcal{R}^{(n)}\overline{\mathcal{R}}^{(n)}_\alpha \, d\alpha
\end{align*}
Differentiating this with respect to $t$ and integrating by parts yields
\begin{align}\label{EnergyInequalityV1}
\frac{d\mathcal{E}_n}{dt} & = \int \frac{2}{\mathcal{A}}\Re\left(D_t \overline{\rho}^{(n)}\mathcal{P}\rho^{(n)}\right) - \frac{1}{\mathcal{A}} U_\kappa^{-1} \left(\frac{\mathfrak{a}_t}{\mathfrak{a}}\right)|D_t \rho^{(n)}|^2 \, d\alpha \notag\\
& \qquad + 2\Im \int \mathcal{R}^{(n)}_t\overline{\phi}^{(n)}_\alpha + \phi^{(n)}_t \overline{\mathcal{R}}^{(n)}_\alpha + \mathcal{R}^{(n)}_t\overline{\mathcal{R}}^{(n)}_\alpha \, d\alpha
\end{align}  We want to show that the right hand side of this inequality is $\mathcal{O}(\epsilon^5)$.  By Proposition~\ref{NewEulerRemainderDerivBound} and \eqref{atOveraFormula} it is clear that the first integral is $\mathcal{O}(\epsilon^5)$, and so it suffices to show that the second integral is of size $\mathcal{O}(\epsilon^{5})$.

The arguments for handling the first two terms rely on the fact that $\phi^{(n)}$ and $\mathcal{R}^{(n)}$ are almost orthogonal in $L^2$, and so the arguments showing these terms are small are similar to each other, so we will only consider the term $\mathcal{R}^{(n)}_t\overline{\phi}^{(n)}_\alpha$.  We have
\begin{equation*}
\mathcal{R}^{(n)}_t = \frac{1}{2}\partial_t(I + \nht) \mathcal{R}^{(n)} = \frac{1}{2}(I + \nht)\mathcal{R}^{(n)}_t + [\zeta_t, \nht]\frac{\partial_\alpha \mathcal{R}^{(n)}}{\zeta_\alpha}
\end{equation*} and since the latter term is $\mathcal{O}(\epsilon^{5/2})$, it suffices to consider only the former term.  Likewise, recalling that the adjoint\footnote{The adjoint $T^*$ of a linear operator $T : L^2 \to L^2$ is defined by $\int f \, T^*(g) \, d\alpha = \int g \, T(f) \, d\alpha$ for all $f, g \in L^2$.} $\nht^*$ of the Hilbert transform satisfies the identity $\nht^*f = -\zeta_\alpha\nht(f/\zeta_\alpha)$, the identity $[\nht, \partial_\alpha/\zeta_\alpha] = 0$ of Proposition \ref{HilbertCommutatorIdentities} implies that $\partial_\alpha \nht = -\nht^* \partial_\alpha$, and so we can write $\overline{\phi}_\alpha^{(n)}$ as
\begin{equation*}
\overline{\phi}_\alpha^{(n)} = \frac{1}{2}\partial_\alpha(I - \nhtb)\partial_\alpha^n\overline{\rho} = \frac{1}{2}(I + \nhtb^*)\partial_\alpha^{n + 1}\overline{\rho}
\end{equation*}
But now, using the usual $L^2$ pairing\footnote{Here we use the real inner product $\langle f, g\rangle = \int f\,g\, d\alpha$ for $f, g \in L^2$.} $\langle, \rangle$, we have
\begin{align*}
\frac{1}{4}\left\langle (I + \nht)\mathcal{R}^{(n)}_t, (I + \nhtb^*)\partial_\alpha^{n + 1}\overline{\rho} \right\rangle & = \frac{1}{4}\left\langle \nht\mathcal{R}^{(n)}_t, (I + \nhtb^*)\partial_\alpha^{n + 1}\overline{\rho} \right\rangle \\
& \; + \frac{1}{4}\left\langle \mathcal{R}^{(n)}_t, \nhtb^*(I + \nhtb^*)\partial_\alpha^{n + 1}\overline{\rho} \right\rangle \\
& = \frac{1}{2}\left\langle (\nht + \nhtb)\mathcal R^{(n)}_t, \overline{\phi}_\alpha^{(n)} \right\rangle
\end{align*} Therefore
\begin{align*}
\int \mathcal{R}^{(n)}_t\overline{\phi}^{(n)}_\alpha d\alpha = \int \frac{1}{2}\bar \phi_\alpha^{(n)}(\nht + \nhtb)\mathcal{R}^{(n)}_t d\alpha + \int \overline{\phi}^{(n)}_\alpha[\zeta_t, \nht]\frac{\partial_\alpha \mathcal{R}^{(n)}}{\zeta_\alpha} d\alpha,
\end{align*} and so these integrals are bounded by $$CE_s^{1/2}(E_s^{1/2} + \epsilon)(E_s + \epsilon E_s^{1/2} + \epsilon^{5/2}) \leq C(E_s^2 + \epsilon E_s^{3/2} + \epsilon^2 E_s + \epsilon^{7/2})$$  From \eqref{SmallAntiholPart}, estimating as usual gives bound of $\mathcal{R}^{(n)}_\alpha$ and $\mathcal{R}_t^{(n)}$ in $L^2$ of $C(E_s + \epsilon E_s^{1/2})$ and $C(E_s + \epsilon E_s^{1/2} + \epsilon^{5/2})$, respectively.  Summing these bounds, we have that \eqref{EnergyInequalityV1} reads
\begin{equation*}
\frac{d\mathcal{E}_n}{dt} \leq C(E_s^2 + \epsilon E_s^{3/2} + \epsilon^2 E_s + \epsilon^{7/2}E_s^{1/2})
\end{equation*}  If we try to apply the same argument to $\mathcal{F}_n$ as we just did to $\mathcal{E}_n$, we find that $\frac{d\mathcal{F}_n}{dt}$ has an extra half-derivative than can be controlled by the energy, since $\mathcal{F}_n$ consists of quantities with one time derivative more than the quantities comprising $\mathcal{E}_n$.  Now since
$$\frac{d\mathcal{F}_n}{dt} = \int \frac{2}{\mathcal{A}}\Re\left(D_t \overline{\sigma}^{(n)}\mathcal{P}\sigma^{(n)}\right) - \frac{1}{\mathcal{A}} U_\kappa^{-1} \left(\frac{\mathfrak{a}_t}{\mathfrak{a}}\right)|D_t \sigma^{(n)}|^2 \, d\alpha,$$ which by Step 5 of \S 4.3 and Proposition \ref{NewEulerRemainderDerivBound} implies
$$\frac{d\mathcal{F}_n}{dt} \leq C(E_s^2 + \epsilon E_s^{3/2} + \epsilon^2 E_s + \epsilon^{7/2} E_s^{1/2} + \epsilon^6).$$ Hence we need only show that the quantity $\mathcal{F}_n$ itself is bounded below by $\|D_t\sigma^{(n)}\|_{L^2}^2$ up to a term of size $\mathcal{O}(\epsilon^5)$, for $n = 0, \ldots, s$.  By writing $\sigma^{(n)} = \psi^{(n)} + \mathcal{S}^{(n)}$ we can use Proposition \ref{BasicEnergyInequality} to estimate
\begin{align*}
\mathcal{F}_n & = \int \frac{1}{\mathcal{A}}|D_t\sigma^{(n)}|^2 + i\sigma^{(n)}\overline{\sigma}^{(n)}_\alpha d\alpha \\
& \geq \int \frac{1}{\mathcal{A}}|D_t\sigma^{(n)}|^2 d\alpha - \left|\int \psi^{(n)}\overline{\mathcal{S}}^{(n)}_\alpha + \mathcal{S}^{(n)} \overline{\psi}^{(n)}_\alpha + \mathcal{S}^{(n)}\overline{\mathcal{S}}^{(n)}_\alpha d\alpha \right|
\end{align*}  Now, as with $\mathcal{R}^{(n)}$, we can rewrite
$$\mathcal{S}^{(n)} = -\frac{1}{2}(I + \nht)\sum_{j = 1}^n \partial_\alpha^{n - j}[\zeta_\alpha - 1, \nht]\frac{\partial_\alpha}{\zeta_\alpha}\partial_\alpha^{j - 1}\sigma$$
$$\mathcal{S}^{(n)}_\alpha = -\frac{1}{2}(I - \nht^*)\sum_{j = 1}^n \partial_\alpha^{n - j + 1}[\zeta_\alpha - 1, \nht]\frac{\partial_\alpha}{\zeta_\alpha}\partial_\alpha^{j - 1}\sigma$$  From the above formula for $\psi^{(n)}$ we see that $\psi^{(n)}_\alpha$ has one more spatial derivative than the energy provides.  However, if we integrate by parts and use Step 5 of \S 4.3, we can estimate
\begin{align*}
\mathcal{F}_n & \geq \int \frac{1}{\mathcal{A}}|D_t\sigma^{(n)}|^2 d\alpha - \left|\int \psi^{(n)}\overline{\mathcal{S}}^{(n)}_\alpha - \mathcal{S}^{(n)}_\alpha \overline{\psi}^{(n)} + \mathcal{S}^{(n)}\overline{\mathcal{S}}^{(n)}_\alpha d\alpha \right| \\
& \geq \int \frac{1}{\mathcal{A}}|D_t\sigma^{(n)}|^2 d\alpha - C\delta( E_s^{1/2} + \epsilon^{5/2})^2 \\
& \geq \int \frac{1}{\mathcal{A}}|D_t\sigma^{(n)}|^2 d\alpha - C\delta(E_s + \epsilon^5)
\end{align*}
If we set
\begin{equation}\label{FullEnergyDefinition}
\mathcal{E} = \sum_{n = 0}^s (\mathcal{E}_n + \mathcal{F}_n)
\end{equation} 
% \begin{editing}=================
and if we choose  $\delta$ sufficiently small, then we have by \eqref{EnergyEsEquivalence} that
\begin{align*}
E_s^{1/2} 
& \leq C\mathcal{E}^{1/2} + C\epsilon^{5/2}.
\end{align*}  Thus if we choose $\epsilon_0$ and $\delta$ still smaller, we have from the inequality $$\sum_{n = 0}^s \left(\frac{d\mathcal{E}_n}{dt} + \frac{d\mathcal{F}_n}{dt}\right) \leq C(E_s^2 + \epsilon E_s^{3/2} + \epsilon^2 E_s + \epsilon^{7/2} E_s^{1/2}+\epsilon^6)$$ that the following lemma is demonstrated:
%\end{editing}==========================
\begin{lemma}\label{EnergyInequality}
Let $\mathcal{E}$ be defined as in \eqref{FullEnergyDefinition}.  Then there exists an $\epsilon_0 > 0$ and a $\delta > 0$ so that if \eqref{ZetaLocalAPrioriBound} holds, then there is a constant $C = C(\epsilon_0, \delta)$ so that for all $\epsilon < \epsilon_0$, 
\begin{enumerate}
\item[(1)]{$E_s^{1/2} \leq C(\mathcal{E}^{1/2} + \epsilon^{5/2})$}
\item[(2)]{$\frac{d\mathcal{E}}{dt} \leq C(\mathcal{E}^2 + \epsilon \mathcal{E}^{3/2} + \epsilon^2 \mathcal{E} + \epsilon^{7/2}\mathcal{E}^{1/2} + \epsilon^6)$}
\end{enumerate} where the constants $C$ depend only on $\epsilon_0$ and $\delta$.
\end{lemma}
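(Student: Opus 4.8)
The proof is a synthesis of the estimates already assembled: Proposition~\ref{BasicEnergyInequality} for the energy identities of each $\mathcal{E}_n,\mathcal{F}_n$, the cubic bounds on $\mathcal{P}\rho^{(n)}$ and $\mathcal{P}\sigma^{(n)}$ from Propositions~\ref{NewEulerRemainderIsCubic}, \ref{DtNewEulerRemainderIsCubic}, \ref{NewEulerRemainderDerivBound}, the control of $U_\kappa^{-1}(\mathfrak{a}_t/\mathfrak{a})$ via \eqref{atOveraFormula} and Lemma~\ref{DoubleLayerPotentialArgument}, and the coercivity \eqref{EnergyEsEquivalence} from Step~5 of \S4.3. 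The plan is: (i) show $\mathcal{E}$ bounds $E_s$ from below up to an $O(\epsilon^5)$ error; (ii) differentiate $\mathcal{E}$ and feed the cubic $\mathcal{P}$-bounds through Proposition~\ref{BasicEnergyInequality}; (iii) use (i) to replace every $E_s$ on the right-hand side by $\mathcal{E}$.

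For part (1) I would fix $n$, write $\rho^{(n)}=\phi^{(n)}+\mathcal{R}^{(n)}$ and $\sigma^{(n)}=\psi^{(n)}+\mathcal{S}^{(n)}$ as in \eqref{RhoSigmaHoloPart}, and note that by \eqref{SmallAntiholPart} and its analogue for $\mathcal{S}^{(n)}$ the antiholomorphic parts are $O(E_s+\epsilon E_s^{1/2}+\epsilon^{5/2})$ in $L^2$. Since $\int i\phi^{(n)}\overline{\phi}^{(n)}_\alpha\,d\alpha\ge 0$ and $\int i\psi^{(n)}\overline{\psi}^{(n)}_\alpha\,d\alpha\ge 0$ by Proposition~\ref{BasicEnergyInequality}, integrating by parts in the cross terms (so the derivative never lands on $\psi^{(n)}$, which the energy cannot absorb) yields $\mathcal{E}_n\ge\int\mathcal{A}^{-1}|D_t\rho^{(n)}|^2\,d\alpha-C\delta(E_s+\epsilon^5)$ and $\mathcal{F}_n\ge\int\mathcal{A}^{-1}|D_t\sigma^{(n)}|^2\,d\alpha-C\delta(E_s+\epsilon^5)$. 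Summing over $0\le n\le s$, using $\mathcal{A}\ge 1/2$ and \eqref{EnergyEsEquivalence}, gives $E_s\le C\mathcal{E}+C\delta E_s+C\epsilon^5$, and choosing $\delta$ small enough to absorb the middle term proves (1).

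For part (2) I would differentiate $\mathcal{E}=\sum_{n=0}^s(\mathcal{E}_n+\mathcal{F}_n)$. For $\mathcal{F}_n$, Proposition~\ref{BasicEnergyInequality} gives $d\mathcal{F}_n/dt=\int 2\mathcal{A}^{-1}\Re(D_t\overline{\sigma}^{(n)}\mathcal{P}\sigma^{(n)})-\mathcal{A}^{-1}U_\kappa^{-1}(\mathfrak{a}_t/\mathfrak{a})|D_t\sigma^{(n)}|^2\,d\alpha$; for $\mathcal{E}_n$ with $n\ge1$ there are extra boundary terms $2\Im\int(\mathcal{R}^{(n)}_t\overline{\phi}^{(n)}_\alpha+\phi^{(n)}_t\overline{\mathcal{R}}^{(n)}_\alpha+\mathcal{R}^{(n)}_t\overline{\mathcal{R}}^{(n)}_\alpha)\,d\alpha$, which I would handle by rewriting the first two using $\partial_\alpha\nht=-\nht^*\partial_\alpha$ so that only the small factor $(\nht+\nhtb)\mathcal{R}^{(n)}_t$ survives, combined with the $O(E_s+\epsilon E_s^{1/2}+\epsilon^{5/2})$ bounds on $\mathcal{R}^{(n)}_\alpha,\mathcal{R}^{(n)}_t$; these are all $O(E_s^2+\epsilon E_s^{3/2}+\epsilon^2 E_s+\epsilon^{7/2}E_s^{1/2})$. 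The main term is controlled by Propositions~\ref{NewEulerRemainderIsCubic}, \ref{DtNewEulerRemainderIsCubic}, \ref{NewEulerRemainderDerivBound}, which give $\|\mathcal{P}\rho^{(n)}\|_{L^2},\|\mathcal{P}\sigma^{(n)}\|_{L^2}\le C(E_s^{3/2}+\epsilon E_s+\epsilon^2 E_s^{1/2}+\epsilon^{7/2})$, while Step~5 of \S4.3 gives $\|D_t\rho^{(n)}\|_{L^2},\|D_t\sigma^{(n)}\|_{L^2}\le C(E_s^{1/2}+\epsilon^{5/2})$; the product is of size $O(E_s^2+\epsilon E_s^{3/2}+\epsilon^2 E_s+\epsilon^{7/2}E_s^{1/2}+\epsilon^6)$. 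The $\mathfrak{a}_t/\mathfrak{a}$ term is bounded via \eqref{atOveraFormula}, Lemma~\ref{DoubleLayerPotentialArgument}, and the \S4.3 bounds on $b,b-\tilde b,\mathcal{A}-\tilde{\mathcal{A}}$, again of the same form. Summing over $n$ and substituting (1) (so $E_s\le C(\mathcal{E}+\epsilon^5)$, hence $E_s^2\le C(\mathcal{E}^2+\epsilon^{10})$, $\epsilon E_s^{3/2}\le C(\epsilon\mathcal{E}^{3/2}+\epsilon^{17/2})$, etc., every extra power being $\ge\epsilon^6$) yields (2).

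The hard part will be the $\mathcal{F}_n$ piece. Since $\sigma^{(n)}$ carries one more time derivative than $\rho^{(n)}$, one cannot afford to replace it by its holomorphic projection $\psi^{(n)}$ inside the term $i\Theta\overline{\Theta}_\alpha$, as that would put a full extra spatial derivative on $\psi^{(n)}$ beyond what $\mathcal{E}$ controls. The way around is to keep $i\sigma^{(n)}\overline{\sigma}^{(n)}_\alpha$ in $\mathcal{F}_n$ unchanged, to prove coercivity of $\mathcal{F}_n$ directly through the splitting $\sigma^{(n)}=\psi^{(n)}+\mathcal{S}^{(n)}$ and the explicit commutator form of $\mathcal{S}^{(n)}$ (which makes it $O(\epsilon^{5/2})$ with the gain of a factor $\delta$), and to integrate by parts in the cross terms so the dangerous derivative always lands on the small smooth factor. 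This is precisely why $\mathcal{F}_n$, rather than any $\psi^{(n)}$-projected quantity, must be the object in the energy, and it accounts for the shape of the final inequality $d\mathcal{E}/dt\lesssim\mathcal{E}^2+\epsilon\mathcal{E}^{3/2}+\epsilon^2\mathcal{E}+\epsilon^{7/2}\mathcal{E}^{1/2}+\epsilon^6$.
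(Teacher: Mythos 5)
Your proposal is correct and follows essentially the same route as the paper: coercivity of $\mathcal{F}_n$ via the splitting $\sigma^{(n)}=\psi^{(n)}+\mathcal{S}^{(n)}$ with integration by parts in the cross terms, the almost-orthogonality trick $\partial_\alpha\nht=-\nht^*\partial_\alpha$ for the $\mathcal{R}^{(n)}$ correction terms in $d\mathcal{E}_n/dt$, the cubic bounds of Proposition~\ref{NewEulerRemainderDerivBound} together with \eqref{atOveraFormula} for the main terms, and \eqref{EnergyEsEquivalence} plus substitution of (1) to close. The only (harmless) deviation is that you attach an error term to the coercivity of $\mathcal{E}_n$, which is unnecessary since its quadratic part is already built from the holomorphic projection $\phi^{(n)}$ and is nonnegative outright; in the paper the corresponding corrections appear only in $d\mathcal{E}_n/dt$, exactly as you also handle them.
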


\subsubsection*{A Priori Bounds on the Remainder Energy}

Now we can derive a priori bounds from the energy inequality derived in the last section.

\begin{proposition}\label{RemainderEnergyAPrioriBound}
Let $s \geq 4$, $\mathscr{T}$, $B_0\in H^{s+7}$ be given, let $\epsilon_0$, $\delta$  be given.  Let $T_0$ be a time so that \eqref{ZetaLocalAPrioriBound} hold.  Suppose further that $\mathcal{E}(0) = M_0^2\epsilon^3$.  Then there is a possibly smaller $\epsilon_0 = \epsilon_0(\mathscr{T}, M_0, \delta, \|B_0\|_{H^{s + 7}})$ so that for all $0 < \epsilon < \epsilon_0$ and $0 \leq t \leq \min(T_0, \epsilon^{-2}\mathscr{T})$ we have $\mathcal{E}(t) \leq C\epsilon^3$, where the constant $C = C(\mathscr{T}, M_0, \delta, \|B_0\|_{H^{s + 7}})$.
\end{proposition}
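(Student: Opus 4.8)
The plan is to run a continuity (bootstrap) argument on the energy inequality of Lemma \ref{EnergyInequality}, organized so that the Gronwall step is carried out on the slow time scale. First I record the absorption mechanism. Suppose that on some subinterval of $[0,\min(T_0,\epsilon^{-2}\mathscr{T})]$ a bootstrap bound $\mathcal{E}(t)\le K\epsilon^3$ holds, where $K$ is a constant to be fixed below. If $\epsilon_0$ is chosen small enough that $K\epsilon_0\le 1$ and $K^{1/2}\epsilon_0^{1/2}\le 1$, then on that subinterval $\mathcal{E}^2\le K\epsilon^3\,\mathcal{E}\le\epsilon^2\mathcal{E}$, $\epsilon\mathcal{E}^{3/2}\le K^{1/2}\epsilon^{5/2}\mathcal{E}\le\epsilon^2\mathcal{E}$, and $\epsilon^{7/2}\mathcal{E}^{1/2}\le\tfrac12\epsilon^2\mathcal{E}+\tfrac12\epsilon^5$ by Young's inequality, while $\epsilon^6\le\epsilon^5$. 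Substituting into Lemma \ref{EnergyInequality}(2) therefore gives
\begin{equation*}
\frac{d\mathcal{E}}{dt}\le C_\ast\bigl(\epsilon^2\mathcal{E}+\epsilon^5\bigr)
\end{equation*}
on that subinterval, where $C_\ast$ is a multiple of the constant of Lemma \ref{EnergyInequality} and hence depends only on $\epsilon_0,\delta$ (and, through $\mathfrak{S}(T_0)$ and $\|B\|_{H^{s+7}}\le C(\|B_0\|_{H^{s+7}},\mathscr{T})$, on $\mathscr{T}$ and $\|B_0\|_{H^{s+7}}$), but \emph{not} on $K$. Multiplying by the integrating factor $e^{-C_\ast\epsilon^2 t}$ and integrating from $0$ to $t\le\epsilon^{-2}\mathscr{T}$ yields
\begin{equation*}
\mathcal{E}(t)\le e^{C_\ast\epsilon^2 t}\Bigl(\mathcal{E}(0)+C_\ast\epsilon^5\!\int_0^t e^{-C_\ast\epsilon^2 s}\,ds\Bigr)\le e^{C_\ast\mathscr{T}}\bigl(M_0^2\epsilon^3+\epsilon^3\bigr).
\end{equation*}
I now fix $K:=2e^{C_\ast\mathscr{T}}(M_0^2+1)$ and then choose $\epsilon_0=\epsilon_0(\mathscr{T},M_0,\delta,\|B_0\|_{H^{s+7}})$ small enough (in addition to being $\le$ the threshold of Lemma \ref{EnergyInequality}) that $K\epsilon_0\le1$ and $K^{1/2}\epsilon_0^{1/2}\le1$; with this $K$, the displayed bound reads $\mathcal{E}(t)\le\tfrac12 K\epsilon^3$ whenever the bootstrap hypothesis holds.

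To close the argument, let $T_\ast$ be the supremum of all $t\in[0,\min(T_0,\epsilon^{-2}\mathscr{T})]$ such that $\mathcal{E}(s)\le K\epsilon^3$ for all $s\le t$. This set is nonempty because $\mathcal{E}(0)=M_0^2\epsilon^3<K\epsilon^3$ by the hypothesis $\mathcal{E}(0)=M_0^2\epsilon^3$ together with $M_0^2<K$, and it is closed by continuity of $t\mapsto\mathcal{E}(t)$. On $[0,T_\ast)$ the absorption step above applies and gives $\mathcal{E}(t)\le\tfrac12 K\epsilon^3$, so by continuity $\mathcal{E}(T_\ast)\le\tfrac12 K\epsilon^3<K\epsilon^3$; were $T_\ast$ strictly less than $\min(T_0,\epsilon^{-2}\mathscr{T})$, this strict inequality would allow extension of the interval, contradicting the definition of $T_\ast$. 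Hence $T_\ast=\min(T_0,\epsilon^{-2}\mathscr{T})$ and $\mathcal{E}(t)\le\tfrac12 K\epsilon^3$ throughout, which is the assertion with $C=\tfrac12 K=e^{C_\ast\mathscr{T}}(M_0^2+1)$.

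The point to be careful about — and the only real obstacle — is that a crude bound $d\mathcal{E}/dt\lesssim(K+1)\epsilon^5$ followed by direct integration over the $O(\epsilon^{-2})$ interval produces only $\mathcal{E}\lesssim\bigl(M_0^2+C\mathscr{T}(K+1)\bigr)\epsilon^3$, and the closure requirement $M_0^2+C\mathscr{T}(K+1)\le K$ fails once $\mathscr{T}$ is large. The resolution is to retain the genuinely linear term $\epsilon^2\mathcal{E}$ on the right and invoke Gronwall rather than brute integration, so that the growth over the long time is the harmless, $K$-independent factor $e^{C_\ast\mathscr{T}}$; equivalently, in the slow variable $\tau=\epsilon^2 t\in[0,\mathscr{T}]$ the inequality becomes $d\mathcal{E}/d\tau\lesssim\mathcal{E}+\epsilon^3$. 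One must also verify, as done above, that the smallness conditions used to absorb the super-quadratic terms $\mathcal{E}^2$ and $\epsilon\mathcal{E}^{3/2}$ only constrain $\epsilon_0$ in terms of $K$ and never feed back into $C_\ast$, so that the circular dependence between $K$ and $\epsilon_0$ is broken.
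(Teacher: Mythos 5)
Your proof is correct and follows essentially the same route as the paper: a Gronwall-type integration of the energy inequality of Lemma \ref{EnergyInequality} combined with a continuity (bootstrap) argument to close the bound on the $O(\epsilon^{-2})$ time scale, with the final smallness of $\epsilon_0$ depending on $\mathscr{T}$, $M_0$, $\delta$, $\|B_0\|_{H^{s+7}}$ exactly as in the statement. The only difference is organizational: you absorb the superlinear terms via the bootstrap bound and Young's inequality before integrating and fix the bootstrap constant $K$ explicitly, whereas the paper keeps $\mathcal{S}(T)=\sup_{0\le t\le T}\mathcal{E}(t)$ in the coefficients of the differential inequality and defines the threshold $M_1$ implicitly as the positive root of $\tfrac12 e^{-3C_0\mathscr{T}}M_1=M_0+\sqrt{M_1}+1$; both devices produce the same $K$-independent exponential factor $e^{C\mathscr{T}}$ and the same conclusion.
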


\begin{proof}
Let $C_0$ be the constant appearing in Lemma~\ref{EnergyInequality}.  Define $\mathcal{S}(T) = \sup_{0 \leq t \leq T} \mathcal{E}(t)$.  Then for any $T \in [0, \min(T_0, \epsilon^{-2}\mathscr{T})]$ we have for all $t \in [0, T]$ that
\begin{align*}
\frac{d\mathcal{E}}{dt}(t) & \leq C_0\left(\mathcal{E}^2(t) + \epsilon \mathcal{E}^{3/2}(t) + \epsilon^2 \mathcal{E}(t) + \epsilon^{7/2} \mathcal{E}^{1/2}(t) + \epsilon^6\right) \\
& \leq C_0\left(\mathcal{S}(T) + \epsilon \mathcal{S}(T)^{1/2} + \epsilon^2\right)\mathcal{E}(t) + C_0(\epsilon^{7/2} \mathcal{S}(T)^{1/2} + \epsilon^6)
\end{align*}  Solving this differential inequality for $0 \leq t \leq T$ gives 
$$\mathcal{E}(t) \leq \left(\mathcal{E}(0) + \frac{\epsilon^{7/2}\mathcal{S}(T)^{1/2} + \epsilon^6}{\mathcal{S}(T) + \epsilon \mathcal{S}(T)^{1/2} + \epsilon^2}\right)e^{C_0(\mathcal{S}(T) + \epsilon \mathcal{S}(T)^{1/2} + \epsilon^2)t}$$ and so taking the supremum over $[0, T]$ gives for all $T \leq \min(T_0, \epsilon^{-2}\mathscr{T})$ that \begin{equation}\label{EnergySupBound}
\mathcal{S}(T) \leq \left(\mathcal{E}(0) + \frac{\epsilon^{7/2}\mathcal{S}(T)^{1/2} + \epsilon^6}{\mathcal{S}(T) + \epsilon \mathcal{S}(T)^{1/2} + \epsilon^2}\right)e^{C_0(\mathcal{S}(T) + \epsilon \mathcal{S}(T)^{1/2} + \epsilon^2)T}
\end{equation}

We now begin a continuity argument.  Let $M_1$ be the positive root of the equation $\frac{1}{2}e^{-3C_0\mathscr{T}}M_1 = M_0 + \sqrt{M_1} + 1$.  If $\mathcal{S}(\min(T_0, \epsilon^{-2}\mathscr{T})) \leq M_1 \epsilon^3$ then we are done.  If not, let $T^* < \min(T_0, \epsilon^{-2}\mathscr{T})$ be the first time at which $S(T^*) = M_1\epsilon^3$.  Choose $\epsilon_0$ so that $\epsilon_0 M_1 \leq 1$.  Then we have from \eqref{EnergySupBound} that 
\begin{align*}
S(T^*) & \leq \left(\mathcal E(0) + \frac{\epsilon^5 \sqrt{M_1} + \epsilon^6}{\epsilon^2}\right)e^{C_0(M_1\epsilon^3 + \sqrt{M_1}\epsilon^{5/2} + \epsilon^2)\epsilon^{-2}\mathscr{T}} \\
& \leq (M_0 + \sqrt{M_1} + 1)e^{3C_0\mathscr{T}}\epsilon^3 \\
& \leq \frac{1}{2}M_1\epsilon^3,
\end{align*} which contradicts the definition of $T^*$.
\end{proof}

\section{Long time existence of wave packet-like solutions}

We would like to show that for wave packet-like data, the solution of the water wave system \eqref{OldEuler}-\eqref{ztIsAntihol} exists on the $O(\epsilon^{-2})$ time scale, and is well approximated by the wave packet whose modulation evolves according to NLS.  Thus far we have found a globally existing approximation $\tilde{\zeta}$, as well as a suitable a priori bound on the energy of the remainder $r$ for $O(\epsilon^{-2})$ time scales.  Since $\tilde \zeta$ does not in general satisfy the water wave system, the wave packet data $(\tilde\zeta(0), \tilde D_t\tilde \zeta(0), \tilde D_t^2\tilde \zeta(0))$ cannot be taken as the initial data for the water wave system \eqref{OldEuler}-\eqref{ztIsAntihol}. 

In what follows, we will show that there is data for the water wave system that is within $\mathcal O(\epsilon^{3/2})$ to the wave packet $(\tilde\zeta(0), \tilde D_t\tilde \zeta(0), \tilde D_t^2\tilde \zeta(0))$. Moreover for all such data, the solution of the system \eqref{OldEuler}-\eqref{ztIsAntihol} exists on the $O(\epsilon^{-2})$ time scale. The a priori bound on $r$ gives the estimate of the error between $\zeta$ and the wave packet $\tilde\zeta$ on the order $\mathcal O(\epsilon^{3/2})$ for time on the $O(\epsilon^{-2})$ scale.  The appropriate wave packet approximation to $z$ is then  obtained upon changing coordinates back to the Lagrangian variable.

\subsection{Construction of Appropriate Initial Data}

Notice that we can parametrize the initial interface $z = z(\cdot,0)$ arbitrarily, and that we are only concerned with such data that $z_\alpha(\cdot,0) - 1$ is $\mathcal{O}(\epsilon^{1/2})$.  For any initial interface that is a small perturbation of the $x$-axis in this sense, $\kappa(\cdot,0): \mathbb R\to \mathbb R$ is a diffeomorphism (c.f. Lemma~\ref{KappaDerivativeBound}).  Hence we may without loss of generality assume that $z = z(\cdot, 0)$ is initially parametrized so that $\kappa(\alpha,0) = \alpha$, 
and hence that $z(\cdot,0) = \zeta(\cdot,0)$.  

In order for $(\zeta_0, v_0, w_0) = (\zeta(0), D_t\zeta(0), D^2_{t}\zeta(0)) = (z(0), z_t(0), z_{tt}(0))$ to be data for a solution $z$ of the water wave system \eqref{OldEuler}-\eqref{ztIsAntihol}, we must enforce the compatibility conditions $(I - \nht_{\zeta_0})\overline{v}_0 = 0$ and $w_0 := i \mathcal{A}_0 \partial_\alpha \zeta_0 - i$, with the formula for $\mathcal{A}_0$ given through \eqref{AFormula} by
\begin{equation}\label{a0}
(I - \nht_{\zeta_0})(\mathcal{A}_0 - 1) = i[w_0, \nht_{\zeta_0}]\frac{\partial_\alpha\overline{\xi}_0}{\partial_\alpha \zeta_0} + i[v_0, \nht_{\zeta_0}]\frac{\partial_\alpha \overline{v}_0}{\partial_\alpha \zeta_0},\end{equation} 
where $\zeta_0 := \xi_0+\alpha$.
We therefore define the manifold of initial data for \eqref{OldEuler}-\eqref{ztIsAntihol} or for \eqref{NewEuler}-\eqref{DtXiIsAntihol} by
\begin{equation*}
\begin{aligned}
\mathscr{A}^s = \{(\xi_0, v_0, w_0): (|D_\alpha|^{1/2}\xi_0,v_0, w_0)\in H^{s+1/2} \times H^{s + 1}\times H^{s+1/2},\; \\\xi_0= \nhtb_{\xi_0+ \alpha} \xi_0, \;(I - \nhtb_{\xi_0 + \alpha})v_0 = 0,\;
 w_0=i\mathcal A_0(\partial_\alpha \xi_0+1) - i\}
 \end{aligned}
 \end{equation*}
 with $\mathcal A_0$  defined by \eqref{a0}.

In the remainder of this section let $s \geq 6$ and $k > 0$ be fixed, and let an arbitrary initial envelope $B_0 \in H^{s + 7}$ be given.  By Theorem \ref{NLSWellPosedness}, for any $\mathscr{T} > 0$ there is a $B \in C([0, \mathscr{T}]; H^{s + 7})$ which solves \eqref{NLS} with initial data $B(0) = B_0$.  Using \eqref{TildeZetaFormula} we can construct, using this $B$, an approximate profile $\tilde{\zeta} \in C([0, \mathscr{T}\epsilon^{-2}]; H^{s + 6})$ satisfying \eqref{NLSGlobalBound} which solves the equations \eqref{NewEuler}-\eqref{XiIsAntihol}-\eqref{DtNewEuler}-\eqref{DtXiIsAntihol} up to a residual of size $O(\epsilon^4)$, provided the initial profile $\tilde{\zeta}(0)$ is calculated through $B_0$.

As we observed above, we cannot simply take $(\tilde{\xi}(0), \tilde{D}_t \tilde{\zeta}(0), \tilde{D}_t^2 \tilde{\zeta}(0))$ as our initial data for \eqref{NewEuler}-\eqref{DtXiIsAntihol}, as these may not be in the manifold $\mathscr A^{s}$.
 Since we found in Proposition \ref{RemainderEnergyAPrioriBound} that a $\mathcal{O}(\epsilon^{3/2})$ error is acceptable, we construct data for $(\zeta - \alpha, D_t\zeta, D_t^2 \zeta)$ which lie in the manifold $\mathscr A^s$, and which are also $\mathcal{O}(\epsilon^{3/2})$ away from $(\tilde{\xi}(0), \tilde{D}_t\tilde{\zeta}(0), \tilde{D}_t^2 \tilde{\zeta}(0))$.

\begin{lemma}\label{InitialDataConstruction}
For sufficiently small $\epsilon_0(\|B_0\|_{H^{s + 7}}) > 0$, there exist functions $\xi_0 \in H^{s + 6}$ and $v_0 \in H^{s + 4}$ with $\zeta_0 := \alpha + \xi_0$ such that for all $\epsilon < \epsilon_0$ the following properties hold:
\begin{enumerate}
\item{$\xi_0 = \frac{1}{2}(1 + \nhtb_{\zeta_0})\tilde{\xi}(0)$,}
\item{$\|\xi_0 - \tilde{\xi}(0)\|_{H^{s + 6}} \leq C(\|B_0\|_{H^{s + 7}})\epsilon^{3/2}$.}
\item{$v_0 := \frac12(I + \nhtb_{\zeta_0})\tilde{D}_t \tilde{\zeta}(0)$ satisfies $\|v_0 - \tilde{D}_t\tilde{\zeta}(0)\|_{H^{s + 4}} \leq C(\|B_0\|_{H^{s + 7}})\epsilon^{3/2}$.}
\item{for $(\xi_0, v_0)$ as constructed  in parts (1) - (3),  $w_0 := i\mathcal{A}_0\partial_\alpha \zeta_0 - i$, with $\mathcal A_0$ calculated by \eqref{a0}
 satisfies $\|w_0 - \epsilon(i\omega)^2\zeta^{(1)}(0)\|_{H^{s+4}} \leq C\epsilon^{3/2}$.}
\end{enumerate}
\end{lemma}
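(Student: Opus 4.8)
The plan is to verify the four claims essentially in the order stated, with the bulk of the work being the error estimates in parts (2)--(4); these all boil down to showing that the operators $\tfrac12(I+\nhtb_{\zeta_0})$ and $\nht_{\zeta_0}-\nhtb_0$ pick up a factor of size $O(\epsilon^{3/2})$ when applied to the approximate quantities. First I would address part (1): it is simply a \emph{definition}, so there is nothing to prove except that $\xi_0$ so defined is real-valued and lies in $H^{s+6}$. Reality follows because $\tilde\xi(0)$ is constructed from the real curve $\tilde\zeta(0)$ via \eqref{TildeZetaFormula} and the operator $\tfrac12(I+\nhtb_{\zeta_0})$ maps (the trace of) a real function to the trace of a real holomorphic function in the appropriate half-plane; membership in $H^{s+6}$ follows from \eqref{NLSGlobalBound} together with boundedness of $\nhtb_{\zeta_0}$ on $H^{s+6}$, which holds once $\epsilon_0$ is small enough that $\zeta_0$ satisfies the chord-arc condition \eqref{ChordArcCondition}. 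Note that this last point is slightly circular: $\zeta_0 = \alpha + \xi_0$ depends on $\xi_0$, which depends on $\nhtb_{\zeta_0}$. I would resolve this by a fixed-point/contraction argument in a small ball of $H^{s+6}$ around $\tilde\xi(0)$ (or, more cheaply, by observing $\xi_0 = \tfrac12(I+\nhtb_0)\tilde\xi(0) + \tfrac12(\nhtb_{\zeta_0} - \nhtb_0)\tilde\xi(0)$ and treating the second piece perturbatively), which is where the smallness of $\epsilon_0(\|B_0\|_{H^{s+7}})$ is first fixed.

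For part (2), I would write $\xi_0 - \tilde\xi(0) = -\tfrac12(I - \nhtb_{\zeta_0})\tilde\xi(0)$ and then exploit the fact, established in the multiscale calculation of \S3 (see \eqref{XiIsAntihol} and the construction leading to \eqref{TildeZetaFormula}), that $\tilde\xi$ is antiholomorphic up to an $O(\epsilon^4)$ residual; more precisely $(I - \tilde\nht)\tilde\xi = O(\epsilon^4)$, so $(I - \nhtb_0)\tilde\xi(0)$ is controlled by Proposition \ref{WavePacketAntiholProp} and Corollary \ref{CommutatorPhase} applied to the wave-packet structure $\epsilon B e^{i\phi} + \epsilon^2(\cdots) + \epsilon^3(\cdots)$. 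The leading term $\epsilon Be^{i\phi}$ is \emph{antiholomorphic} (for $k>0$, $\nhtb_0(Be^{i\phi}) = Be^{i\phi} + O(\epsilon^{N})$), so $(I-\nhtb_0)$ annihilates it up to high order; the genuinely non-antiholomorphic contribution comes from the $\epsilon^2$ and $\epsilon^3$ terms built from $|B|^2$ and $\overline B B_X$ via $(I-\nhtb_0)$, but those enter $(I-\nhtb_0)\xi_0$ at order $\epsilon^2$ times a Proposition \ref{WavePacketAntiholProp}-type factor $\epsilon^{-1/2}$, yielding $O(\epsilon^{3/2})$. One also needs $(\nhtb_{\zeta_0} - \nhtb_0)\tilde\xi(0) = O(\epsilon^{3/2})$ in $H^{s+6}$, which follows from Lemma \ref{DiffHilbertBoundPart2}-type estimates (the difference of Hilbert transforms is controlled by $\|(\zeta_0 - \alpha)_\alpha\|_{H^{s+5}} \lesssim \epsilon^{1/2}$ times $\|\tilde\xi(0)\|_{W^{s+6,\infty}} \lesssim \epsilon$). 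Part (3) is identical in structure, using \eqref{DtXiIsAntihol}: $\tilde D_t\tilde\zeta$ is also antiholomorphic up to $O(\epsilon^4)$, its leading term $\epsilon i\omega Be^{i\phi}$ is antiholomorphic, and the correction is $O(\epsilon^{3/2})$ in $H^{s+4}$ by the same two ingredients.

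Part (4) is the main obstacle and requires the most care. I would first show, using \eqref{a0} and Lemma \ref{DoubleLayerPotentialArgument}(1) applied with the curve $\zeta_0$, that $\mathcal A_0 - 1$ is $O(\epsilon)$ in $H^{s+4}$ (indeed $\mathcal A_0 - 1$ is quadratic in $(v_0, w_0)$, each of which is $O(\epsilon^{1/2})$), hence $w_0 = i\mathcal A_0\partial_\alpha\zeta_0 - i = i\partial_\alpha\xi_0 + i(\mathcal A_0 - 1)(1 + \partial_\alpha\xi_0)$. The target $\epsilon(i\omega)^2\zeta^{(1)}(0) = -\epsilon k B_0 e^{ik\alpha}$ (using $\omega^2 = k$) should match $i\partial_\alpha\xi_0$ to leading order: $i\partial_\alpha(\epsilon B_0 e^{ik\alpha}) = i(ik)\epsilon B_0 e^{ik\alpha} = -\epsilon k B_0 e^{ik\alpha}$, confirming the leading terms agree. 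The remaining discrepancy has three sources, each of which I must show is $O(\epsilon^{3/2})$ in $H^{s+4}$: (i) the difference $i\partial_\alpha(\xi_0 - \epsilon\zeta^{(1)}(0))$, controlled by part (2) after noting $\xi_0 - \tilde\xi(0)$ and $\tilde\xi(0) - \epsilon\zeta^{(1)}(0) = \epsilon^2\zeta^{(2)}(0) + \epsilon^3\zeta^{(3)}(0)$ are each $O(\epsilon^{3/2})$; (ii) the quadratic term $i(\mathcal A_0 - 1)(1 + \partial_\alpha\xi_0)$, which is $O(\epsilon^2)$ (hence acceptable) provided I can bound $\mathcal A_0 - 1$ in $H^{s+4}$ by $O(\epsilon)$ --- here I must be careful that the formula \eqref{a0} involves $w_0$ itself, so this is an implicit estimate: I would close it by treating \eqref{a0} as a fixed point for $\mathcal A_0$ (equivalently for $w_0$) in a ball around $1$ (around $-\epsilon k B_0 e^{ik\alpha} + i\partial_\alpha(\text{corrections})$), using Lemma \ref{DoubleLayerPotentialArgument}(1) and smallness of $\epsilon_0$; and (iii) the fact that the commutator terms in \eqref{a0}, with $v_0 \approx \epsilon i\omega B_0 e^{ik\alpha}$ and $w_0 \approx -\epsilon k B_0 e^{ik\alpha}$, produce via $(I - \nht_{\zeta_0})^{-1}$ a contribution whose leading $O(\epsilon^2)$ part I do not even need to compute --- it suffices that it is $O(\epsilon^2) \subset O(\epsilon^{3/2})$, using Corollary \ref{CommutatorPhase} to see the commutators $[v_0, \nhtb_0]\partial_\alpha\overline v_0$ etc.\ are not anomalously large. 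Assembling (i)--(iii) gives $\|w_0 - \epsilon(i\omega)^2\zeta^{(1)}(0)\|_{H^{s+4}} \leq C\epsilon^{3/2}$. The one place demanding genuine vigilance is the simultaneous solvability of the implicit relations defining $\xi_0$ (through $\nhtb_{\zeta_0}$) and $\mathcal A_0/w_0$ (through \eqref{a0}); I would handle this by a single contraction mapping argument on the triple $(\xi_0, v_0, w_0)$ in a product of small balls in $H^{s+6}\times H^{s+4}\times H^{s+4}$, with the contraction constant made $<1$ by shrinking $\epsilon_0$ depending only on $\|B_0\|_{H^{s+7}}$.
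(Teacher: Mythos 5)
Your plan for parts (1)--(3) is essentially the paper's argument: the implicit relation defining $\xi_0$ is resolved by an iteration/contraction in a small ball of $H^{s+6}$ (the paper iterates $g_{n+1} = \tfrac12(I+\nhtb_{\gamma_n})\tilde{\xi}(0)$ with $\gamma_n = \alpha + g_n$), and the $\epsilon^{3/2}$ bounds in (2)--(3) come from splitting the Hilbert transform and using the near-antiholomorphy of $\tilde{\xi}(0)$ and $\tilde{D}_t\tilde{\zeta}(0)$; your split through $\nhtb_0$ rather than through $\nhtb_{\tilde{\zeta}(0)}$ is harmless at the claimed accuracy (note only that $\epsilon Be^{i\phi}$ is antiholomorphic up to $C\epsilon^{m-1/2}$ with $m$ limited by the regularity of $B_0$, so one gets $\epsilon^{3/2}$, not $O(\epsilon^N)$ --- which is exactly enough).

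The genuine gap is in part (4), item (ii). You propose to bound $\mathcal{A}_0 - 1$ by $O(\epsilon)$ in $H^{s+4}$ (estimating the quadratic commutators in \eqref{a0} with both factors in Sobolev norms of size $\epsilon^{1/2}$) and then assert that $i(\mathcal{A}_0-1)(1+\partial_\alpha\xi_0)$ is ``$O(\epsilon^2)$, hence acceptable.'' That inference fails: multiplying by $1+\partial_\alpha\xi_0 = 1 + O(\epsilon^{1/2})$ gains nothing, so under your hypothesis this term is only $O(\epsilon)$ in $H^{s+4}$, which is far larger than the target $C\epsilon^{3/2}$, and the conclusion of part (4) (hence $B_0$-admissibility and \eqref{energyzero}) would not follow. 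What is needed --- and what the paper does --- is the sharper mixed-norm estimate: split $w_0$, $v_0$, $\partial_\alpha\xi_0$ around their wave-packet leading terms, estimate each commutator in \eqref{a0} with one factor in an $L^\infty$-type norm (physical size $\epsilon$) and the other in $H^{s+4}$ (size $\epsilon^{1/2}$), and use Lemma \ref{DoubleLayerPotentialArgument}(1); this yields $\|\mathcal{A}_0-1\|_{H^{s+4}} \leq C\epsilon^{3/2} + C\epsilon\|w_0 - \epsilon(i\omega)^2\zeta^{(1)}(0)\|_{H^{s+4}}$. Combining this with the identity $w_0 - \epsilon(i\omega)^2\zeta^{(1)}(0) = i(\mathcal{A}_0-1)\partial_\alpha\zeta_0 + i\bigl(\partial_\alpha\xi_0 - \epsilon(ik)\zeta^{(1)}(0)\bigr)$ and part (2), the unknown is absorbed for $\epsilon_0$ small, which is precisely the self-consistent closure. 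Your item (iii) gestures at the correct count ($O(\epsilon^2)$ physical size corresponds to $\mathcal{O}(\epsilon^{3/2})$ in Sobolev norm), but as organized your (ii)--(iii) never produce a bound on $\mathcal{A}_0-1$ better than $O(\epsilon)$, so the argument as written does not close at the claimed accuracy; the proposed contraction on the triple $(\xi_0,v_0,w_0)$ is a reasonable way to get existence, but it does not substitute for this estimate.
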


\begin{proof}
We prove Part 1 by an iteration argument.  Define a sequence of functions $g_n(\alpha, t), n = -1, 0, 1, \ldots$ along with $\gamma_n(\alpha, t) := \alpha + g_n(\alpha, t)$ by setting $g_{-1} = 0$ and for $n \geq -1$, 
\begin{equation}\label{RecursiveDefnInitialData}g_{n + 1} = \frac{1}{2}(1 + \nhtb_{\gamma_n})\tilde{\xi}(0)
\end{equation}  Observe first that $g_0 = \frac{1}{2}(I + \nhtb_0)\tilde{\xi}(0)$ and so $\|g_0\|_{H^{s + 6}} \leq C(\|B_0\|_{H^{s + 7}})\epsilon^{1/2}$.  Next, as in the proof of Lemma \ref{DiffHilbertBoundPart2}, we can write
\begin{equation*}
(\nht_{\gamma_n} - \nht_{\gamma_{n - 1}})f = \frac{1}{\pi i}\int \log\left(1 + \frac{(g_n - g_{n - 1})(\alpha) - (g_n - g_{n - 1})(\beta))}{\gamma_{n - 1}(\alpha) - \gamma_{n - 1}(\beta)}\right) f_\beta(\beta) \, d\beta
\end{equation*}
\begin{equation*}
= \frac{1}{\pi i} \int \biggl(\frac{(g_n^\prime(\beta) - g_{n - 1}^\prime(\beta))}{\gamma_n(\alpha) - \gamma_n(\beta)} - \frac{\gamma_{n - 1}^\prime(\beta)\left((g_n - g_{n - 1})(\alpha) - (g_n - g_{n - 1})(\beta)\right)}{(\gamma_n(\alpha) - \gamma_n(\beta))(\gamma_{n - 1}(\alpha) - \gamma_{n - 1}(\beta))}\biggr) f(\beta) \, d\beta \\
\end{equation*} From this formula and Proposition \ref{SingIntSobolevEstimates} we have the estimate $$\|(\nht_{\gamma_n} - \nht_{\gamma_{n - 1}})f\|_{H^{s + 6}} \leq C\left(\|g_n\|_{H^{s + 6}}, \|g_{n - 1}\|_{H^{s + 6}}\right)\|g_n - g_{n - 1}\|_{H^{s + 6}} \|f\|_{H^{s + 6}}$$ if we can show that $\gamma_n$ and $\gamma_{n - 1}$ obey the chord-arc condition.  However, there indeed exists some $\delta \in (0, \frac{1}{2}]$ so that if $\|g_n\|_{H^{s + 6}}, \|g_{n - 1}\|_{H^{s + 6}} \leq \delta$, then $\gamma_n$ and $\gamma_{n - 1}$ satisfy the chord-arc condition and the operator norm $\|\nht_{\gamma_n} - \nht_{\gamma_{n - 1}}\|_{H^{s + 6} \to H^{s + 6}} \leq C_1\|g_n - g_{n - 1}\|_{H^{s + 6}}$, where $C_1$ is a universal constant.  Choose $\epsilon_0$ so small so that $C_1\|\tilde{\xi}(0)\|_{H^{s + 6}} \leq \delta$ and $\|g_0\|_{H^{s + 6}} \leq \frac{1}{2}\delta$.

It now suffices to prove the following statement by induction:  For every $n \geq 0$, $$\|g_{n + 1} - g_n\|_{H^{s + 6}} \leq \frac{1}{2}\delta\|g_n - g_{n - 1}\|_{H^{s + 6}} \qquad \text{and} \qquad \|g_n\|_{H^{s + 6}} \leq \delta$$  By our choice of $\epsilon_0$ we have already shown the case $n = 0$.  If we assume the above statement is true for all integers $k = 0, 1, \ldots, n$, note that 
\begin{align*}
\|g_{n + 1} - g_n\|_{H^{s + 6}} & = \frac{1}{2}\|(\nhtb_{\gamma_n} - \nhtb_{\gamma_{n - 1}})\tilde{\xi}(0)\|_{H^{s + 6}} \\
& \leq \frac{1}{2}\|\nhtb_{\gamma_n} - \nhtb_{\gamma_{n - 1}}\|_{H^{s + 6} \to H^{s + 6}} \cdot \|\tilde{\xi}(0)\|_{H^{s + 6}} \\
& \leq \frac{1}{2}\delta\|g_n - g_{n - 1}\|_{H^{s + 6}},
\end{align*} from which the induction statement follows immediately.

To prove Part 2, we note that since $\|\xi_0\|_{H^{s + 6}}$ and $\|\tilde{\xi}(0)\|_{H^{s + 6}}$ do not exceed $\delta$, we can estimate that
\begin{align*}
\|\xi_0 - \tilde{\xi}(0)\|_{H^{s + 6}} & = \frac{1}{2}\|(1 - \nhtb_{\zeta_0})\tilde{\xi}(0)\|_{H^{s + 6}} \\
& \leq \frac{1}{2}\|(\nhtb_{\zeta_0} - \nhtb_{\tilde{\zeta}(0)})\tilde{\xi}(0)\|_{H^{s + 6}} + \|(I- \nhtb_{\tilde{\zeta}(0)})\tilde{\xi}(0)\|_{H^{s + 6}} \\
& \leq \delta\|\xi_0 - \tilde{\xi}(0)\|_{H^{s + 6}} + C\epsilon^{3/2}
\end{align*} from which Part 2 follows.  Since the construction of $v_0$ is determined by $\zeta_0$, Part 3 is now shown in the same way as was Part 2 once we observe that $\tilde{D}_t\tilde{\zeta}(0) \in H^{s + 4}$ and $\nhtb_{\zeta_0}$ is bounded from $H^{s + 4}$ to $H^{s + 4}$.

We now prove Part 4.
By the definition of $w_0$ we have
\begin{align*}
w_0 - \epsilon(i\omega)^2\zeta^{(1)}(0) & = i\mathcal{A}_0\partial_\alpha \zeta_0 - i - \epsilon(i\omega)^2\zeta^{(1)}(0) \\
& = i(\mathcal A_0 - 1)\partial_\alpha \zeta_0 + i\left(\partial_\alpha \xi_0 - \epsilon (i k)\zeta^{(1)}(0)\right)
\end{align*}  Since we are assuming $\xi_0$ and $v_0$ are constructed as above, we can write $v_0 = (v_0 - \epsilon(i\omega)\zeta^{(1)}(0)) + \epsilon(i\omega)\zeta^{(1)}(0) \in H^{s + 4}$ and $\partial_\alpha \xi_0 = (\partial_\alpha \xi_0 - \epsilon(ik)\zeta^{(1)}(0)) + \epsilon(ik)\zeta^{(1)} (0)\in H^{s + 5}$ in the above formula for $\mathcal{A}_0 - 1$.  As usual, we can isolate the $O(\epsilon^2)$ leading term and see that it vanishes by a multiscale calculation, and what remains gives us the estimate $$\|\mathcal{A}_0 - 1\|_{H^{s + 4}} \leq C\epsilon^{3/2} + C\epsilon\|w_0 -\epsilon (i\omega)^2\zeta^{(1)}(0)\|_{H^{s + 4}}$$  But then we have by the above that $\|w_0 - \epsilon(i\omega)^2\zeta^{(1)}(0)\|_{H^{s + 4}} \leq C\epsilon^{3/2}$ for a sufficiently small choice of $\epsilon_0$.
\end{proof}

\begin{definition}  We  call $(\xi_0, v_0, w_0)$ a  $B_0$-\textbf{admissible} initial data  
 if $(\xi_0, v_0, w_0) \in \mathscr{A}^s$ and there is a constant $C$ depending only on $\|B_0\|_{H^{s + 7}}$ so that $$\left\|(|D_\alpha|^{1/2}\xi_0, v_0, w_0) - (\epsilon |D_\alpha|^{1/2}\zeta^{(1)}(0), \epsilon \zeta^{(1)}_{t_0}(0), \epsilon \zeta^{(1)}_{t_0 t_0}(0))\right\|_{H^{s + 1/2} \times  H^{s + 1} \times H^{s + 1/2}} \leq C\epsilon^{3/2}$$  \end{definition}

Recall from \eqref{FullEnergyDefinition} that
\begin{align*}
\mathcal{E} & = \sum_{n = 0}^s (\mathcal{E}_n + \mathcal{F}_n) \\
& \leq C\sum_{n = 0}^s(\|D_t\partial_\alpha^n \rho\|_{L^2}^2 + \|D_t\partial_\alpha^n \sigma\|_{L^2}^2) + \||D|^{1/2}\rho\|_{H^{s + 1/2}}^2 + \|\sigma\|_{H^{s + 1}}^2.
\end{align*}
It is clear that for $B_0$-\textbf{admissible} initial data, we have
\begin{equation}\label{energyzero}
\mathcal E(0)\le C \epsilon^{3}.
\end{equation}

\subsection{Long-Time Existence of $\zeta$ and $z$}

In this section we will make rigorous the existence and uniqueness of the solutions $z$ and $\zeta$ on the appropriate $O(\epsilon^{-2})$ time scales.  We begin with the following local well-posedness (c.f.,  \cite{WuLocal2DWellPosed}, \cite{WuAlmostGlobal2D}):

\begin{theorem}\label{zLocalWellPosed}
Let $n \geq 5$ be given.  Suppose that initial data $\xi_0$, $v_0$, $w_0$ are given so that $\partial_\alpha z(0) - 1 = \partial_\alpha \xi_{0}$ is in $H^{n - 1/2}$, $z_t(0) = v_0$ is in $H^{n + 1/2}$, $ z_{tt}(0)=w_0 $ is in $H^n$; $\xi_0$, $v_0$, $w_0$ satisfy the  water wave system: i.e. $\bar v_0={\mathcal H}_{z(0)}\bar v_0$, and $w_0 = i\mathfrak{a}_0\partial_\alpha z(0) - i$ for some real valued function $\mathfrak{a}_0$.  Suppose further that $z(0) = \alpha + \xi_0(\alpha)$, $\alpha\in \mathbb{R}$ defines a chord-arc curve: i.e. there exists $\nu>0$, such that 
$$|\alpha + \xi_0(\alpha) - \beta - \xi_0(\beta)| \geq \nu |\alpha - \beta|,\qquad\text{for all } \alpha,\beta\in \mathbb R;$$

Then there exists a $T_0> 0$ so that the system \eqref{OldEuler}-\eqref{ztIsAntihol} with initial data $z(0) = \xi_0 + \alpha$, $z_t(0) = v_0$, $z_{tt}(0)=w_0 $ has a unique solution $z(\alpha, t)$ for $t\in [0, T_0]$ with the property that there exist constants $C = C(T, \|\partial_\alpha\xi_{0}\|_{H^{n - 1/2}}, \|v_0\|_{H^{n + 1/2}}, \|w_0\|_{H^{n }}, \nu)$ and $\mu>0$,  $$\|(z_\alpha - 1, z_t, z_{tt})\|_{C([0, T_0]; H^{n - 1/2} \times H^{n + 1/2} \times H^n)} \leq C\left(\|\partial_\alpha\xi_{0}\|_{H^{n - 1/2}} + \|v_0\|_{H^{n + 1/2}} + \|w_0\|_{H^{n }}\right),$$
and  $|z(\alpha,t)-z(\beta,t)|\ge \mu |\alpha-\beta|$ for all $ \alpha,\beta\in \mathbb R, \ t\in [0, T_0]$.

Moreover,  if $T^*$ is the supremum over all such $T_0$, then either $T^* = \infty$ or 
\begin{equation}\label{zBlowUpQuantity}
\lim_{t \nearrow T^{*}}\left( \|(z_t, z_{tt})\|_{C([0, t], H^{n} \times H^{n})} + \sup_{\alpha \neq \beta} \left|\frac{\alpha - \beta}{z(\alpha, t) - z(\beta, t)}\right| \right)=\infty
\end{equation}
\end{theorem}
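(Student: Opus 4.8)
The plan is to establish this by the energy method of \cite{WuLocal2DWellPosed} and \cite{WuAlmostGlobal2D}, exploiting the favorable structure of the operator $\partial_t^2 - i\mathfrak{a}\partial_\alpha$ together with the positivity of the Taylor coefficient $\mathfrak{a}$. The first point is that, because the fluid occupies an infinitely deep region, the Taylor sign condition holds: one shows $\mathfrak{a}|z_\alpha| = -\partial p/\partial\mathbf{n} \geq c > 0$ as long as $z(\cdot,t)$ is a chord-arc curve and $z_t, z_\alpha - 1$ are bounded in the relevant Sobolev norms, and it is precisely here that the hypothesis $|z(\alpha) - z(\beta)| \geq \nu|\alpha-\beta|$ is used, together with the Coifman--McIntosh--Meyer $L^2$ bound \cite{CoifmanMeyerMcintoshL2Bounds} for $\oht$ on the chord-arc curve $z(\cdot,t)$. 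With $\mathfrak{a}$ bounded above and below under control, one differentiates the system: applying $\partial_\alpha^j$ (and $\partial_t$) to \eqref{OldEuler} and using $(I - \ohtb)z_t = 0$ from \eqref{ztIsAntihol}, the commutator identities of Proposition \ref{HilbertCommutatorIdentities} and the algebraic identities of Proposition \ref{HoloProperties} produce evolution equations of the form $\mathcal{P}\Phi_j = \mathscr{G}_j$, where $\Phi_j$ is essentially the holomorphic part of $\partial_\alpha^j z_t$ and $\mathscr{G}_j$ loses no derivatives --- the crucial structural gain being that $[\partial_t^2 - i\mathfrak{a}\partial_\alpha, \oht]$ applied to a function costs only one derivative, not the two a naive count would predict.

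Next I would build the energy $\mathfrak{E}(t) = \sum_{j} \int \frac{1}{\mathfrak{a}}|\partial_t\Phi_j|^2 + i\Phi_j\overline{(\Phi_j)_\alpha}\,d\alpha$ exactly as in the Lagrangian analogue of Proposition \ref{BasicEnergyInequality} (with $\mathfrak{a}$ in place of $\mathcal{A}$). Positivity of $\mathfrak{a}$ and of $\int i\Phi\overline{\Phi}_\alpha$ for holomorphic traces make $\mathfrak{E}$ comparable, modulo lower-order terms, to $\|z_{tt}\|_{H^n}^2 + \|z_t\|_{H^{n+1/2}}^2$, and $z_\alpha - 1 \in H^{n-1/2}$ is recovered algebraically from $z_{tt} - i\mathfrak{a}z_\alpha = -i$ once $\mathfrak{a}$ is controlled. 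Differentiating $\mathfrak{E}$ and invoking the energy identity of Proposition \ref{BasicEnergyInequality} reduces the estimate to bounding the $\mathscr{G}_j$ in $L^2$ and $\mathfrak{a}_t/\mathfrak{a}$ in $L^\infty$; these are handled by the singular-integral estimates together with a double-layer-potential argument of the type used in Lemma \ref{DoubleLayerPotentialArgument}, yielding $d\mathfrak{E}/dt \leq C(\mathfrak{E})$ with $C$ a continuous function of $\mathfrak{E}$ and $\nu$. This gives the asserted uniform bound on an interval $[0, T_0]$ with $T_0$ depending only on $\|\partial_\alpha\xi_0\|_{H^{n-1/2}}$, $\|v_0\|_{H^{n+1/2}}$, $\|w_0\|_{H^n}$ and $\nu$.

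To produce an actual solution I would run a standard regularization (or quasilinear iteration) scheme: mollify the equations so the approximate systems are locally solvable by ODE theory in Sobolev space, derive the above energy bounds uniformly in the regularization parameter, and pass to the limit by Aubin--Lions compactness; the chord-arc condition persists on a possibly shorter interval because the chord-arc constant depends continuously on $t$. Uniqueness follows by applying the same energy estimate to the difference of two solutions, which solves a linear inhomogeneous $\mathcal{P}$-equation. Finally, the continuation criterion \eqref{zBlowUpQuantity} follows because $T_0$ depends only on $\|z_t\|_{H^n}$, $\|z_{tt}\|_{H^n}$ and $\sup_{\alpha\neq\beta}|\alpha-\beta|/|z(\alpha)-z(\beta)|$: if these remain bounded as $t \nearrow T^*$, a Gronwall bootstrap keeps $\|z_\alpha - 1\|_{H^{n-1/2}}$, $\|z_t\|_{H^{n+1/2}}$, $\|z_{tt}\|_{H^n}$ bounded on $[0, T^*)$, so the solution can be restarted from a time $t_0 < T^*$ and extended by a fixed amount past $T^*$, contradicting maximality.

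The main obstacle is the derivative-counting in the a priori estimate: one must verify that every commutator --- between $\partial_t^2 - i\mathfrak{a}\partial_\alpha$ and $\oht$, between the quasilinear coefficient $\mathfrak{a}$ and $\partial_\alpha$, and in the various integral formulas for $\mathfrak{a}$ and $\mathfrak{a}_t$ --- genuinely recovers the derivative it appears to cost, so that the energy closes at the stated regularity. This is exactly the structural phenomenon isolated by Proposition \ref{HilbertCommutatorIdentities}, and it is the reason the energy is built from $\mathcal{P}$ rather than from $\partial_t$ alone; a secondary technical point is propagating the chord-arc bound, which must be folded into the continuity argument.
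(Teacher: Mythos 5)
The paper does not actually prove Theorem \ref{zLocalWellPosed}: it is imported wholesale from \cite{WuLocal2DWellPosed} and \cite{WuAlmostGlobal2D} (note the ``c.f.'' preceding the statement), so there is no in-paper argument to compare yours against line by line. Your outline does follow the same route as those references --- positivity of the Taylor coefficient $\mathfrak{a}$ in infinite depth under the chord-arc hypothesis, quasilinearization of \eqref{OldEuler}--\eqref{ztIsAntihol} through the commutator identities of Proposition \ref{HilbertCommutatorIdentities} so that $\mathcal{P}$ acting on holomorphic quantities produces nonlinearities with no derivative loss, an energy of the form $\int \frac{1}{\mathfrak{a}}|\partial_t\Phi_j|^2 + i\Phi_j\overline{\partial_\alpha\Phi_j}\,d\alpha$ whose positivity for holomorphic traces gives the half-derivative control behind the $H^{n+1/2}$ norm of $z_t$, and a regularization plus difference-estimate scheme for existence and uniqueness --- so as a roadmap it is faithful to how the cited result is obtained.

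That said, be aware of what your sketch leaves unreconstructed, since none of it can be leaned on from this paper. First, the assertion that the energy closes at exactly the stated indices $H^{n-1/2}\times H^{n+1/2}\times H^n$, and that $z_\alpha-1$ is recovered ``algebraically'' from $z_{tt}-i\mathfrak{a}z_\alpha=-i$ at the $H^{n-1/2}$ level, requires the detailed bounds on $\mathfrak{a}-1$ and $\mathfrak{a}_t/\mathfrak{a}$ via the double-layer-potential inversion; this is the bulk of the work in \cite{WuLocal2DWellPosed}, \cite{WuAlmostGlobal2D} and is only named, not carried out, in your proposal. Second, the continuation criterion \eqref{zBlowUpQuantity} is stated in terms of $\|(z_t,z_{tt})\|_{C([0,t];H^n\times H^n)}$ and the chord-arc constant alone; your Gronwall bootstrap claim that these two quantities control the full solution norm (in particular $\|z_t\|_{H^{n+1/2}}$ and $\|z_\alpha-1\|_{H^{n-1/2}}$) is exactly the nontrivial point and needs an argument, not just the remark that $T_0$ ``depends only on'' those quantities. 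These are gaps of omission rather than wrong turns: the strategy is the right one, but a complete proof at this level of generality is the content of the cited papers, which is precisely why the present paper quotes the theorem rather than proving it.
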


Given this result, we take any $B_0$-admissible initial data $(\xi_0, v_0, w_0) \in \mathscr A^{s}$ and use Theorem \ref{zLocalWellPosed} to construct a solution $z = z(\alpha, t)$ on the time interval $[0, T_0]$ with $(z_\alpha(t) - 1, z_t(t), z_{tt}(t)) \in H^s \times H^{s + 1} \times H^{s + 1/2}$.  Using this solution we construct the change of variables $$\kappa = \overline{z} + \frac{1}{2}(I + \oht)(I + \mathfrak{K})^{-1}(z - \overline{z})$$ on $[0, T_0]$ as in \S 2.  %Since the function $$\overline{\xi}_0 + \frac{1}{2}(I + \oht_{\zeta_0})(I + \mathfrak{K}_{\zeta_0})^{-1}(\xi_0 - \overline{\xi}_0)$$ is real-valued, decays at infinity, and is the trace of a holomorphic function in the initial fluid domain $\Omega(0)$, it vanishes identically, and so initially the change of variables is given by $$\kappa(\alpha, 0) = \overline{\zeta}_0 + \frac{1}{2}(I + \oht_{\zeta_0})(I + \mathfrak{K}_{\zeta_0})^{-1}(\xi_0 - \overline{\xi}_0) = \alpha$$
In order to use this change of variables to control $\zeta$ in terms of $z$, we need the following elementary calculus lemma.

\begin{lemma}\label{ChangeVarBounds}
Let $n \geq 3$, let $f \in H^n$, and let $\gamma \in H^n$ be given with $\gamma^\prime(\alpha) \geq c_0 > 0$ for all $\alpha\in \mathbb{R}$ and $\|\gamma^\prime - 1\|_{H^{n - 1}} \leq M$.  Then
\begin{enumerate}
\item{$\|f \circ \gamma\|_{L^2} \leq C(c_0)\|f\|_{L^2}$.}
\item{$\|f \circ \gamma\|_{H^n} \leq C(M, c_0)\|f\|_{H^n}$.}
\end{enumerate}
\end{lemma}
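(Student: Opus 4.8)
The plan is to establish both parts by direct estimation, reducing $L^2$ and $H^n$ norms of $f\circ\gamma$ to those of $f$ via the change of variables $\beta = \gamma(\alpha)$. For Part (1), first I would note that since $\gamma$ is a diffeomorphism of $\mathbb{R}$ (being $C^1$ with $\gamma' \geq c_0 > 0$ and $\gamma' - 1 \to 0$ at infinity, as follows from $\gamma' - 1 \in H^{n-1} \subset L^2$), the substitution $\beta = \gamma(\alpha)$, $d\beta = \gamma'(\alpha)\,d\alpha$ is legitimate and gives
$$\|f\circ\gamma\|_{L^2}^2 = \int |f(\gamma(\alpha))|^2\,d\alpha = \int |f(\beta)|^2 \frac{1}{\gamma'(\gamma^{-1}(\beta))}\,d\beta \leq \frac{1}{c_0}\|f\|_{L^2}^2,$$
which is the claimed bound with $C(c_0) = c_0^{-1/2}$.

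For Part (2), the approach is to bound $\|\partial_\alpha^j (f\circ\gamma)\|_{L^2}$ for each $0 \leq j \leq n$ and sum. By the Fa\`a di Bruno formula, $\partial_\alpha^j(f\circ\gamma)$ is a finite sum of terms of the form $(\partial_\beta^m f)\circ\gamma$ multiplied by a product of derivatives $\partial_\alpha^{l_1}\gamma \cdots \partial_\alpha^{l_p}\gamma$ with $l_1 + \cdots + l_p = j$, $1 \leq m \leq j$, $l_i \geq 1$. Since each $\partial_\alpha^{l_i}\gamma = \partial_\alpha^{l_i - 1}(\gamma' - 1)$ for $l_i \geq 1$ (the constant drops out for $l_i \geq 2$, and $\gamma' = (\gamma'-1)+1$ for $l_i = 1$), and since $H^{n-1}$ is an algebra that embeds in $W^{n-2,\infty}$, each product of such derivative factors is controlled in $L^\infty$ (when at most one factor is the full $\gamma'$, i.e. $L^\infty$ but not decaying) or in $L^2$ by a constant depending on $M$; one routinely checks that in each Fa\`a di Bruno term at most one factor needs to be placed in $L^\infty$ via the non-decaying piece, and the remaining factors lie in $H^{n-1}$, so the whole coefficient is bounded in $L^\infty$ by $C(M)$. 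Then
$$\|(\partial_\beta^m f)\circ\gamma \cdot (\text{coefficient})\|_{L^2} \leq C(M)\|(\partial_\beta^m f)\circ\gamma\|_{L^2} \leq C(M, c_0)\|\partial_\beta^m f\|_{L^2}$$
by Part (1), and summing over all terms and over $0 \leq j \leq n$ gives $\|f\circ\gamma\|_{H^n} \leq C(M, c_0)\|f\|_{H^n}$.

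The only mildly delicate point — the main obstacle, such as it is — is the bookkeeping in the Fa\`a di Bruno step: one must verify that in every term of $\partial_\alpha^j(f\circ\gamma)$ the product of derivative-of-$\gamma$ factors can be estimated with at most one factor taken in $L^\infty$ through the non-decaying constant $1$ in $\gamma' = (\gamma'-1)+1$, while all genuinely higher-order factors $\partial_\alpha^{l}\gamma$ with $l \geq 2$ equal $\partial_\alpha^{l-1}(\gamma'-1) \in H^{n-l} \hookrightarrow L^\infty$ (using $n \geq 3$ and $l \leq j \leq n$, so $n - l \geq 0$, with the borderline $l = n$ handled by noting the corresponding term has $m = 1$ and a single factor). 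Since we only need the final constant to depend on $M$ and $c_0$, no sharp tracking is required; the embedding $H^{n-1} \hookrightarrow W^{n-2,\infty}$ and the algebra property of $H^{n-1}$ (valid as $n - 1 \geq 2 > 1/2$) suffice to absorb all such products into a single constant $C(M)$.
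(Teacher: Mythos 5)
Your proof is correct and takes essentially the same route as the paper: Part (1) by the change of variables $\beta=\gamma(\alpha)$, and Part (2) by the chain/product (Fa\`a di Bruno) expansion, placing the derivative-of-$\gamma$ coefficients in $L^\infty$ via $H^{n-1}\hookrightarrow W^{n-2,\infty}$ and $(\partial_\beta^m f)\circ\gamma$ in $L^2$ by Part (1). Do make the borderline term explicit, since your displayed estimate does not literally cover it: when all $j=n$ derivatives fall on $\gamma$ the single coefficient $\gamma^{(n)}=\partial_\alpha^{n-1}(\gamma'-1)$ lies only in $L^2$, so for that term one must instead put $f'\circ\gamma$ in $L^\infty$ (using $f'\in H^{n-1}\hookrightarrow L^\infty$) and the coefficient in $L^2$ --- which is exactly how the paper handles it and is evidently what your parenthetical about $m=1$ intends.
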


\begin{proof}
First we have $$\|f \circ \gamma\|_{L^2} = \left(\int |f \circ \gamma|^2 d\alpha\right)^{1/2} = \left(\int |f|^2 \frac{d\alpha}{\gamma^\prime} \right)^{1/2} \leq \frac{1}{\sqrt{c_0}}\|f\|_{L^2}$$ which proves (1).  To prove (2), first observe that 
\begin{align*}
\|\partial_\alpha(f \circ \gamma)\|_{L^2} & = \|(f^\prime \circ \gamma)\gamma^\prime\|_{L^2} \\
& \leq C(c_0)\|\gamma^\prime\|_{L^\infty}\|f'\|_{L^2} \\
& \leq C(c_0)(1 + \|\gamma^\prime - 1\|_{H^2})\|f'\|_{L^2}
\end{align*} Now let $n \geq 3$ and let $2 \leq j \leq n$ be an integer.  By the chain and product rules there exist polynomials $p_{l, j}( \gamma^\prime, \ldots, \gamma^{(j - 1)})$ of total degree\footnote{This is meant to include both algebraic multiplicity and the number of differentiations.  For instance, the term $f^{\prime \prime}(f^\prime)^2$ has total order $4$.} at most $j$ such that
$$\partial_\alpha^j(f \circ \gamma) = (f' \circ \gamma)\partial_\alpha^{j - 1}(\gamma^\prime - 1) + \sum_{l = 2}^j (f^{(l)} \circ \gamma) \, p_{l, j}( \gamma^\prime, \ldots, \gamma^{(j - 1)})$$  The lemma follows upon estimating the first term with $f' \circ \gamma$ in $L^\infty$ and $\partial_\alpha^{(j - 1)}(\gamma^\prime - 1)$ in $L^2$, and the remaining terms with $f^{(l)} \circ \gamma$ in $L^2$ by (1) and $p_{l, j}$ in $L^\infty$.
\end{proof}

To use this Lemma to change from the $\zeta$ quantites back to the $z$ quantities, we need control of $\kappa_\alpha - 1$ in terms of $z_\alpha - 1$ in $H^s$.

\begin{lemma}\label{KappaDerivativeBound}
For $n \geq 3$, if $\|z_\alpha - 1\|_{C([0, T_0]; H^n)}$ is sufficiently small, then $\|\kappa_\alpha - 1\|_{C([0, T_0]; H^n)} \leq C\|z_\alpha - 1\|_{C([0, T_0]; H^n)}$.
\end{lemma}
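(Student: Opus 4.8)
The plan is to read off $\kappa_\alpha-1$ directly from the formula $\kappa=\overline{z}+\tfrac12(I+\oht)(I+\mathfrak{K})^{-1}(z-\overline{z})$, working at each fixed $t\in[0,T_0]$ with constants uniform in $t$ and then taking the supremum. The first task is to invert $I+\mathfrak{K}$ on $H^n$. Since $\nht_0$ sends real-valued functions to purely imaginary ones, $\Re\nht_0$ vanishes on real functions, so on real-valued densities $\mathfrak{K}=\Re\oht=\Re(\oht-\nht_0)$; by the chord-arc bound (Theorem \ref{CMMWEstimates}) and the singular-integral estimate of Proposition \ref{SingIntSobolevEstimates} one has $\|(\oht-\nht_0)f\|_{H^n}\le C\|z_\alpha-1\|_{H^n}\|f\|_{H^n}$, hence $\|\mathfrak{K}\|_{H^n\to H^n}\le C\|z_\alpha-1\|_{H^n}$. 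Thus for $\|z_\alpha-1\|_{C([0,T_0];H^n)}$ small enough, $I+\mathfrak{K}$ is invertible on $H^n$ with $\|(I+\mathfrak{K})^{-1}\|_{H^n\to H^n}\le 2$ uniformly in $t$ by a Neumann series, and likewise $\|\oht\|_{H^n\to H^n}\le 1+C\|z_\alpha-1\|_{H^n}$.

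Next I would control $p_\alpha$, where $p:=(I+\mathfrak{K})^{-1}(z-\overline z)$. Note that $z-\alpha$ itself is not controlled in $L^2$, only its derivative, so everything must be phrased through the differentiated relation. Differentiating $(I+\mathfrak{K})p=z-\overline z$ and using the identity $[\partial_\alpha,\oht]g=[z_\alpha-1,\oht]\tfrac{g_\alpha}{z_\alpha}$, which follows from $[\oht,\partial_\alpha/z_\alpha]=0$ of Proposition \ref{HilbertCommutatorIdentities}, together with $\partial_\alpha(z-\overline z)=z_\alpha-\overline z_\alpha=2i\,\Im(z_\alpha-1)\in H^n$, gives
$$(I+\mathfrak{K})p_\alpha=(z_\alpha-\overline z_\alpha)-\Re\Big([z_\alpha-1,\oht]\tfrac{p_\alpha}{z_\alpha}\Big).$$
By Proposition \ref{SingIntSobolevEstimates} the commutator $[z_\alpha-1,\oht]$ gains a derivative and carries the small factor, so $\big\|[z_\alpha-1,\oht]\tfrac{p_\alpha}{z_\alpha}\big\|_{H^n}\le C\|z_\alpha-1\|_{H^n}\|p_\alpha/z_\alpha\|_{H^{n-1}}\le C\|z_\alpha-1\|_{H^n}\|p_\alpha\|_{H^n}$, using that $H^{n-1}$ is an algebra for $n\ge 3$ to absorb $1/z_\alpha$. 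Applying $(I+\mathfrak{K})^{-1}$ and absorbing the $\|p_\alpha\|_{H^n}$-term for $\|z_\alpha-1\|_{H^n}$ small yields $\|p_\alpha\|_{H^n}\le C\|z_\alpha-1\|_{H^n}$, uniformly in $t$; the a priori finiteness of $\|p_\alpha\|_{H^n}$ needed for the absorption comes from $z_\alpha-1\in H^s$ with $s\ge n$ for the local solution and boundedness of the operators involved.

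To conclude, I would write $\kappa_\alpha-1=(\overline z_\alpha-1)+\tfrac12(I+\oht)p_\alpha+\tfrac12[z_\alpha-1,\oht]\tfrac{p_\alpha}{z_\alpha}$, again using $[\partial_\alpha,\oht]p=[z_\alpha-1,\oht]\tfrac{p_\alpha}{z_\alpha}$ and observing that only $\overline z_\alpha-1$, not $\overline z-\alpha$, enters. Estimating in $H^n$ with the bounds above gives $\|\kappa_\alpha-1\|_{H^n}\le\|z_\alpha-1\|_{H^n}+C\|p_\alpha\|_{H^n}+C\|z_\alpha-1\|_{H^n}\|p_\alpha\|_{H^n}\le C\|z_\alpha-1\|_{H^n}$; taking the supremum over $t\in[0,T_0]$ finishes the proof since every smallness threshold and operator bound above depends only on $\|z_\alpha-1\|_{C([0,T_0];H^n)}$.

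The main obstacle, and essentially the only nontrivial point, is the commutator estimate $\|[z_\alpha-1,\oht]h\|_{H^n}\le C(\|z_\alpha-1\|_{H^n})\,\|z_\alpha-1\|_{H^n}\,\|h\|_{H^{n-1}}$: one must verify that it genuinely gains one derivative, so that in the bootstrap $h=p_\alpha/z_\alpha$ need only be estimated in $H^{n-1}$, and that the constant depends only on the chord-arc character of $z$, which is in turn controlled by $\|z_\alpha-1\|_{H^n}$. Everything else is a soft Neumann-series and absorption argument; the only care needed elsewhere is to keep all estimates expressed in terms of $z_\alpha-1$ and never in terms of the unbounded quantity $z-\alpha$.
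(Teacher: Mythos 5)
Your argument is correct, and its skeleton is exactly the identity the paper's proof starts from: differentiating \eqref{ChangeOfVariables} gives $\kappa_\alpha-1=(\overline z_\alpha-1)+\tfrac12(I+\oht)p_\alpha+\tfrac12[z_\alpha-1,\oht]\tfrac{p_\alpha}{z_\alpha}$ with $p=(I+\mathfrak{K})^{-1}(z-\overline z)$, which is precisely the displayed formula in the paper. Where you genuinely differ is in how $p_\alpha$ is controlled: the paper simply invokes Lemma \ref{DoubleLayerPotentialArgument} (the double layer potential estimates imported from \cite{WuAlmostGlobal2D}) together with $H^n$-boundedness of $\oht$, whereas you give a self-contained perturbative argument --- $\mathfrak{K}$ agrees with $\Re(\oht-\nht_0)$ on real densities and so has $H^n$ operator norm $O(\|z_\alpha-1\|_{H^n})$, $(I+\mathfrak{K})^{-1}$ comes from a Neumann series, and $p_\alpha$ is bounded by differentiating $(I+\mathfrak{K})p=z-\overline z$, commuting $\partial_\alpha$ past $\mathfrak{K}$ via $[\partial_\alpha,\oht]g=[z_\alpha-1,\oht]\tfrac{g_\alpha}{z_\alpha}$, and absorbing. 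This buys independence from the cited lemma (legitimate, since the statement only asserts anything in the small-$\|z_\alpha-1\|$ regime where your Neumann series converges) and it makes explicit the commutation of $\partial_\alpha$ with $(I+\mathfrak{K})^{-1}$, which the paper's two-line proof glosses over. Three small remarks: (i) $z-\overline z=2i\Im z$ is purely imaginary, not real, so your commutator identity should be applied to $q=p/i$ (equivalently, read $i\Im(\cdot)$ in place of $\Re(\cdot)$ in your displayed equation); since $\mathfrak{K}$ has a real kernel and commutes with multiplication by $i$, the estimates are unchanged. (ii) The derivative gain $\|[z_\alpha-1,\oht]h\|_{H^n}\le C\|z_\alpha-1\|_{H^n}\|h\|_{H^{n-1}}$ that you single out as the crux does hold (it follows from Theorem \ref{CMMWEstimates} and the scheme of Proposition \ref{SingIntSobolevEstimates} after one integration by parts), but it is not actually needed to close the argument: the commutator already carries the small prefactor $\|z_\alpha-1\|_{H^n}$, so even the cruder bound with $\|h\|_{H^n}$ can be absorbed. (iii) The a priori finiteness needed for absorption, which you treat somewhat loosely, can be avoided altogether by regarding $p_\alpha$ as the Neumann-series solution of $(I+\mathfrak{K}+L)p_\alpha=z_\alpha-\overline z_\alpha$, where $L$ is the (small, bounded) operator sending $p_\alpha$ to the commutator term.
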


\begin{proof}
Differentiating \eqref{ChangeOfVariables} with respect to $\alpha$ we get
\begin{align*}
\kappa_\alpha - 1 & = (\overline{z}_\alpha - 1) + \frac{1}{2}(I + \oht)\partial_\alpha(I + \mathfrak{K})^{-1}(z - \overline{z}) + \frac{1}{2}[z_\alpha - 1, \oht]\frac{\partial_\alpha(I + \mathfrak{K})^{-1}(z - \overline{z})}{z_\alpha}
\end{align*}  
Now the Lemma follows from Lemma~\ref{DoubleLayerPotentialArgument} and the $H^n$ boundedness of the operator $\oht$. 
\end{proof}

Since we have chosen initial data  that is $\mathcal{O}(\epsilon^{1/2})$, by Theorem \ref{zLocalWellPosed} there is an interval $[0, T_0]$, such that for all $t\in [0, T_0]$, both  $\|z_\alpha(t) - 1\|_{H^{s}}$ and $\|z_t(t)\|_{H^{s}}$ are of order $\mathcal{O}(\epsilon^{1/2})$.  Also by Lemma \ref{KappaDerivativeBound} we have that $\|\kappa_\alpha - 1\|_{H^{s}}$ is of order $\mathcal{O}(\epsilon^{1/2})$. Then for $\epsilon<\epsilon_0$ we can choose $\epsilon_0>0$ so small that $\|\kappa_\alpha - 1\|_{L^\infty} \leq \frac{1}{2}$ and $\|\kappa_\alpha - 1\|_{H^s} \leq 1$.  Applying Lemma \ref{ChangeVarBounds}, we can choose $\epsilon_0$ sufficiently small so that  
\begin{align*}
\|\zeta_\alpha(t) - 1\|_{H^{s}} & \leq C\left\|\frac{z_\alpha}{\kappa_\alpha} - 1\right\|_{H^{s}} \\
& \leq C\|z_\alpha - 1\|_{H^{s }} + C\|\kappa_\alpha - 1\|_{H^{s}} \\
& \leq \frac{1}{2}\delta
\end{align*}
and 
$$\|D_t \zeta(t)\|_{H^{s + 1}} = \|z_t \circ \kappa^{-1}(t)\|_{H^{s + 1}} \leq \frac{1}{2}\delta$$
 for all times $t\in [0, T_0]$, where $\delta$ is the quantity required by \eqref{ZetaLocalAPrioriBound}.  
 This now justifies the a priori bound \eqref{ZetaLocalAPrioriBound} on $[0, T_0]$.  Since we now legitimately have such a bound, all of the work  through Proposition \ref{RemainderEnergyAPrioriBound} now holds on $[0, T_0]$ for $\delta$ and $\epsilon_0$ chosen sufficiently small.  We are now ready to prove the main

\begin{theorem}\label{MainResult}
Let $s \geq 6$ and $k > 0$ be given.  Let $B_0 \in H^{s + 7}$, and $\mathscr{T} > 0$ be given.  Denote by $B(X, T)$ the solution of \eqref{NLS} with initial data $B(0) = B_0$, and let $\zeta^{(1)}$ be defined as in \eqref{Zeta1Formula}.  Then there exists an $\epsilon_0 = \epsilon_0(\|B_0\|_{H^{s + 7}}) > 0$ so that for all $\epsilon < \epsilon_0$ the following holds: there exists initial data $(\xi_0, v_0, w_0) \in \mathscr{A}^s$ for the system  \eqref{OldEuler}-\eqref{ztIsAntihol} satisfying $$\|(|D_\alpha|^{1/2}\xi_0, v_0, w_0) - (\epsilon |D_\alpha|^{1/2} \zeta^{(1)}(0), \epsilon \zeta^{(1)}_{t}(0), \epsilon \zeta^{(1)}_{tt}(0))\|_{H^{s + 1/2} \times H^{s + 1} \times H^{s + 1/2}} \leq M_0\epsilon^{3/2},$$ and for all such initial data, there exists a possibly smaller $\epsilon_0 = \epsilon_0(\|B_0\|_{H^{s + 7}}, \mathscr{T}, M_0) > 0$ so that the system \eqref{OldEuler}-\eqref{ztIsAntihol} has a unique solution $z(\alpha, t)$ with $\left(|D_\alpha|^{1/2}(z - \alpha), z_t, z_{tt}\right)$ in the space $C([0, \mathscr{T}\epsilon^{-2}]; H^{s + 1/2} \times H^{s + 1} \times H^{s + 1/2})$ satisfying
\begin{equation}\label{errorestimate}
\begin{aligned}
\|(\zeta_\alpha(t) - 1, D_t\zeta(t),  D_t^2\zeta(t)) &- (\epsilon \zeta^{(1)}_{\alpha}(t), \epsilon \zeta^{(1)}_{t}(t), \epsilon \zeta^{(1)}_{tt}(t))\|_{H^{s} \times H^{s} \times H^s}
\\& \le C(\|B_0\|_{H^{s + 7}}, \mathscr{T}, M_0)\epsilon^{3/2}
\end{aligned}
\end{equation}
for all $0 \leq t \leq \epsilon^{-2}\mathscr{T}$.
\end{theorem}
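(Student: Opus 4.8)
The plan is to assemble the machinery of \S\S 3--4 into a continuity argument on the $O(\epsilon^{-2})$ time scale. First I would use Lemma~\ref{InitialDataConstruction} to produce $B_0$-admissible initial data $(\xi_0,v_0,w_0)\in\mathscr{A}^s$: combining parts (2)--(4) of that lemma with the elementary observations that $\tilde{\xi}(0)-\epsilon\zeta^{(1)}(0)=\epsilon^2\zeta^{(2)}(0)+\epsilon^3\zeta^{(3)}(0)$, and likewise for $\tilde{D}_t\tilde{\zeta}(0)$ and $\tilde{D}_t^2\tilde{\zeta}(0)$, where the correction terms consist of $\epsilon^2$ or higher powers multiplied by functions of the slow variables (possibly carrying the phase $e^{i\phi}$) whose Sobolev norms are $O(\epsilon^{-1/2})$, one sees the data lies within $C(\|B_0\|_{H^{s+7}})\epsilon^{3/2}$ of $(\epsilon|D_\alpha|^{1/2}\zeta^{(1)}(0),\epsilon\zeta^{(1)}_t(0),\epsilon\zeta^{(1)}_{tt}(0))$; in particular \eqref{energyzero} holds, i.e.\ $\mathcal{E}(0)\le C\epsilon^3$, and the same bound holds for \emph{any} admissible data with $\epsilon_0$ additionally allowed to depend on $M_0$. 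Then I would invoke the local well-posedness Theorem~\ref{zLocalWellPosed} with $n=s+1/2\ge 5$ to obtain a unique solution $z$ of \eqref{OldEuler}--\eqref{ztIsAntihol} on a maximal interval $[0,T^*)$ with $(z_\alpha-1,z_t,z_{tt})\in C([0,T^*);H^s\times H^{s+1}\times H^{s+1/2})$, normalize $z(\cdot,0)=\zeta(\cdot,0)$ so that $\kappa(\cdot,0)=\alpha$, and pass to the solution $\zeta$ of \eqref{NewEuler}--\eqref{DtXiIsAntihol} via \eqref{ChangeOfVariables}.

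Next I would run the bootstrap. Set $T^{**}:=\sup\{T<\min(T^*,\epsilon^{-2}\mathscr{T}):\mathfrak{S}(T)\le\delta\}$, with $\mathfrak{S}$ as in \eqref{ZetaLocalAPrioriBound} and $\delta>0$ the small constant required throughout \S 4. Since the data is $\mathcal{O}(\epsilon^{1/2})$, Lemmas~\ref{ChangeVarBounds} and \ref{KappaDerivativeBound} give $\mathfrak{S}(0)=\mathcal{O}(\epsilon^{1/2})<\delta/2$, so $T^{**}>0$. On $[0,T^{**}]$ the a priori assumption \eqref{ZetaLocalAPrioriBound} holds, so Proposition~\ref{RemainderEnergyAPrioriBound} (whose hypothesis $\mathcal{E}(0)\le M_0^2\epsilon^3$ is met) gives $\mathcal{E}(t)\le C\epsilon^3$ there with $C$ independent of $t$; Lemma~\ref{EnergyInequality}(1) then yields $E_s^{1/2}=\|r_\alpha\|_{H^s}+\|D_t r\|_{H^s}\le C\epsilon^{3/2}$, and together with the global bound \eqref{NLSGlobalBound} on $\tilde{\zeta}$ this forces $\|\zeta_\alpha-1\|_{H^s}+\|D_t\zeta\|_{H^s}\le C\epsilon^{1/2}$. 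For $\epsilon<\epsilon_0$ small this last quantity is $<\delta/2$, strictly better than the defining threshold, so a standard open-and-closed argument gives $\mathfrak{S}(T)\le C\epsilon^{1/2}$ for all $T<\min(T^*,\epsilon^{-2}\mathscr{T})$. If $T^*\le\epsilon^{-2}\mathscr{T}$, I would transfer these bounds to the Lagrangian side: \eqref{DtSquaredrBound} and \eqref{DiffDtSquaredFormula} give $\|D_t^2\zeta\|_{H^s}\le C\epsilon^{1/2}$ on $[0,T^*)$, and Lemmas~\ref{ChangeVarBounds}--\ref{KappaDerivativeBound} then bound $\|z_t\|_{H^s}$, $\|z_{tt}\|_{H^s}$ and the chord-arc ratio $\sup_{\alpha\ne\beta}|(\alpha-\beta)/(z(\alpha,t)-z(\beta,t))|$ uniformly on $[0,T^*)$ --- contradicting the blow-up alternative \eqref{zBlowUpQuantity}. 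Hence $T^*>\epsilon^{-2}\mathscr{T}$, uniqueness on $[0,\epsilon^{-2}\mathscr{T}]$ follows from the local uniqueness in Theorem~\ref{zLocalWellPosed} together with the usual continuation, and the claimed regularity of $(|D_\alpha|^{1/2}(z-\alpha),z_t,z_{tt})$ persists on $[0,\epsilon^{-2}\mathscr{T}]$ by the same theorem.

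Finally I would read off \eqref{errorestimate}. Writing $\zeta=\tilde{\zeta}+r$, decompose $\zeta_\alpha-1-\epsilon\zeta^{(1)}_\alpha=r_\alpha+\epsilon^2\zeta^{(2)}_\alpha+\epsilon^3\zeta^{(3)}_\alpha$, $D_t\zeta-\epsilon\zeta^{(1)}_t=D_t r+(b-\tilde{b})\tilde{\zeta}_\alpha+(\tilde{D}_t\tilde{\zeta}-\epsilon\zeta^{(1)}_t)$, and $D_t^2\zeta-\epsilon\zeta^{(1)}_{tt}=D_t^2 r+(D_t^2-\tilde{D}_t^2)\tilde{\zeta}+(\tilde{D}_t^2\tilde{\zeta}-\epsilon\zeta^{(1)}_{tt})$. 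The $r$-terms are $\mathcal{O}(\epsilon^{3/2})$ in $H^s$ by the just-established bound $E_s^{1/2}\le C\epsilon^{3/2}$ and \eqref{DtSquaredrBound}; the $(b-\tilde b)$ and $(D_t^2-\tilde D_t^2)$ terms are $\mathcal{O}(\epsilon^{5/2})$ by \eqref{bMinusTildebBound}, \eqref{dtbminusb}, \eqref{DiffDtSquaredFormula}; and the purely approximate remainders are $\mathcal{O}(\epsilon^{3/2})$ since every correction to $\epsilon\zeta^{(1)}$ hidden in $\tilde{\zeta}$, $\tilde{D}_t\tilde{\zeta}$, $\tilde{D}_t^2\tilde{\zeta}$ carries at least two extra powers of $\epsilon$ against functions whose $H^s$-norms are $O(\epsilon^{-1/2})$. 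Summing gives \eqref{errorestimate}. The main obstacle is the coupled nature of the continuity argument: the $O(\epsilon^{-2})$ energy bound of Proposition~\ref{RemainderEnergyAPrioriBound} presupposes \eqref{ZetaLocalAPrioriBound}, which itself must be recovered from that bound only after passing through the $t$-dependent coordinate change $\kappa$ --- so the bootstrap must be arranged so that the recovered smallness of $\mathfrak{S}$ beats its threshold with room to spare, uniformly in $t\in[0,\epsilon^{-2}\mathscr{T}]$ and uniformly as $\epsilon\to 0$.
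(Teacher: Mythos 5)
Your overall architecture (admissible data via Lemma~\ref{InitialDataConstruction} with $\mathcal{E}(0)\le C\epsilon^3$, local existence from Theorem~\ref{zLocalWellPosed}, a continuity argument in which the a priori assumption \eqref{ZetaLocalAPrioriBound} feeds Proposition~\ref{RemainderEnergyAPrioriBound} and is then recovered with room to spare, continuation past $T^*$ via the blow-up alternative, and the final decomposition $\zeta=\tilde\zeta+r$ for \eqref{errorestimate}) is the same as the paper's. But there is a genuine gap at the step where you ``transfer these bounds to the Lagrangian side.'' To convert the uniform $\zeta$-estimates into bounds on $z_t$, $z_{tt}$ and the chord-arc ratio of $z$ (the quantities in \eqref{zBlowUpQuantity}), you must first control $\kappa_\alpha-1$ uniformly on the whole interval up to $T^*\sim\epsilon^{-2}$, since $z=\zeta\circ\kappa$ and Lemma~\ref{ChangeVarBounds} requires $\kappa_\alpha$ bounded below and $\|\kappa_\alpha-1\|_{H^{n-1}}$ bounded. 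You propose to get this from Lemma~\ref{KappaDerivativeBound}, but that lemma bounds $\kappa_\alpha-1$ by $z_\alpha-1$ \emph{assuming $z_\alpha-1$ is already small}, while $z_\alpha-1=(\zeta_\alpha\circ\kappa-1)\kappa_\alpha+(\kappa_\alpha-1)$ cannot be made small without the very $\kappa$-bound you are trying to produce; the argument is circular and does not close (nor can it be closed by simply adding $\|\kappa_\alpha-1\|$ to the bootstrap, since Lemma~\ref{KappaDerivativeBound} gives no contraction). This is precisely why your bootstrap quantity, which consists of $\mathfrak{S}$ alone, is too small: the paper's maximal interval is defined by the $\zeta$ a priori bound \emph{together with} $\|\kappa_\alpha-1\|_{L^\infty}\le\tfrac12$, $\|\kappa_\alpha-1\|_{H^s}\le 1$.

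The missing ingredient is the paper's direct estimate of the change of variables over the long time scale: since $\kappa$ solves \eqref{KappaIVP}, $\kappa_t=b(\kappa,t)$, $\kappa(\cdot,0)=\alpha$, one differentiates the integral form in $\alpha$ and uses $\|b_\alpha\|_{H^{s-1}}\le C(E_s+\epsilon E_s^{1/2}+\epsilon^{5/2})\le C\epsilon^{5/2}$ from \eqref{balphaBound}; integrating over times $t\le\mathscr{T}\epsilon^{-2}$ gives
\begin{equation*}
\|\kappa_\alpha(t)-1\|_{H^{s-1}}\le \int_0^t\|b_\alpha(\kappa(\tau),\tau)\|_{H^{s-1}}\bigl(1+\|\kappa_\alpha(\tau)-1\|_{H^{s-1}}\bigr)\,d\tau
\le C\epsilon^{1/2}\bigl(1+\|\kappa_\alpha-1\|_{C([0,T^*];H^{s-1})}\bigr),
\end{equation*}
whence $\|\kappa_\alpha-1\|_{C([0,T^*];H^{s-1})}\le C\epsilon^{1/2}$ with $C$ depending on $\mathscr{T}$ but not on $T^*$ (this is \eqref{kappaestimate}). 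Only with this in hand do Lemma~\ref{ChangeVarBounds} and the identity $z=\zeta\circ\kappa$ yield the uniform bounds on $(z_\alpha-1,z_t,z_{tt})$ and on $\sup_{\alpha\ne\beta}|(\alpha-\beta)/(z(\alpha,t)-z(\beta,t))|$ needed to contradict \eqref{zBlowUpQuantity} and to restore both the a priori bound and the $\kappa$-bounds slightly beyond $T^*$. Note also that the exponent bookkeeping is delicate here: it is exactly the $\epsilon^{5/2}$ smallness of $b_\alpha$ (not merely $b=O(\epsilon^{3/2})$) that survives multiplication by the $\epsilon^{-2}$ length of the time interval; your proposal never uses this fact, and without it the continuation step fails.
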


\begin{proof}
Given our initial data, we have shown that there is some time interval $[0, T_0]$ on which a solution to \eqref{OldEuler}-\eqref{ztIsAntihol} exists with that initial data.  We have also shown that for sufficiently small $\epsilon_0$ the a priori bound \eqref{ZetaLocalAPrioriBound} on $\zeta$ holds and  $\kappa$ satisfies  $\|\kappa_\alpha - 1\|_{L^\infty} \leq \frac{1}{2}$ and $\|\kappa_\alpha - 1\|_{H^s} \leq 1$ on $[0, T_0]$. Now assume that $[0, T^*]$ is the maximum of such intervals contained in $ [0, \mathscr T\epsilon^{-2}]$. We will show in what follows that $T^* = \mathscr T\epsilon^{-2}$. 
 We assume now $T^* < \mathscr T\epsilon^{-2}$ for otherwise we are done. 

First we have by \eqref{NLSGlobalBound}, \eqref{energyzero}, the estimates in Section 4 and Proposition \ref{RemainderEnergyAPrioriBound} that for all $t \in [0, T^*]$,
\begin{equation}\label{zetaestimate}
\begin{aligned}
\|D_t \zeta(t)\|_{H^{s}} + \|\zeta_\alpha(t) - 1\|_{H^{s}} +& \|D_t^2 \zeta(t)\|_{H^{s}}  \leq \|\tilde{D_t}\tilde{\zeta}(t)\|_{H^{s}} + \|\tilde{\xi}_\alpha(t)\|_{H^{s}} + \|\tilde{D}_t^2 \tilde{\zeta}(t)\|_{H^{s}} \\
& \quad + \|(D_t - \tilde{D}_t)\tilde{\zeta}(t)\|_{H^s} + \|(D_t^2 - \tilde{D}_t^2)\tilde{\zeta}(t)\|_{H^s} \\
& \quad + C(\mathcal{E}^{1/2} + \epsilon^{5/2}) \\
& \leq C\epsilon^{1/2}.
\end{aligned}
\end{equation}  In particular,  this estimate holds with a constant $C$ independent of $T^*$. 

In order to use this bound on $\zeta$ to in turn control $z$, we would like to show that the change of variables $\kappa$ can be constructed in terms of $\zeta$ so that it is controlled independently of $T^*$.  This will imply that there are similar a priori estimates for $z$, and so the long-time existence with appropriate regularity will then follow from the blow-up criterion of Theorem \ref{zLocalWellPosed}.

We know $\kappa(\alpha,t)$ satisfies   
\begin{equation}\label{KappaIVP}
\begin{cases} \kappa_t(\alpha, t) = b(\kappa(\alpha,t), t) \\ \kappa(\alpha,0) = \alpha\end{cases}
\end{equation} with $b$  determined through \eqref{bFormula}.   
Writing \eqref{KappaIVP} in integral form, differentiating with respect to $\alpha$, and using Lemma \ref{ChangeVarBounds} then gives the bound
\begin{align*}
\|\kappa_\alpha(t) - 1\|_{H^{s - 1}} & \leq \int_0^t \|b_\alpha(\kappa(\tau), \tau)\|_{H^{s - 1}}\left(1 + \|\kappa_\alpha(\tau) - 1\|_{H^{s - 1}}\right)  d\tau \\
& \leq C\epsilon^{1/2}\left(1 + \|\kappa_\alpha(t) - 1\|_{C([0, T^*]; H^{s - 1})}\right)
\end{align*}  Taking the supremum over $0 \leq t \leq T^*$ and choosing $\epsilon_0$ sufficiently small then yield 
 \begin{equation}\label{kappaestimate}
 \|\kappa_\alpha - 1\|_{C([0, T^*]; H^{s - 1})}\leq C\epsilon^{1/2}
 \end{equation}
  where the constant $C$ depends on $\mathscr{T}$, and is independent of $T^*$.

Now on $[0, T^*]$, we have that 
$\zeta(\kappa(\alpha,t), t) = z(\alpha, t)$.  
  Hence if we apply Lemma \ref{ChangeVarBounds} we have for $t\in [0, T^*]$, 
\begin{align*}
\|z_\alpha(t)-1\|_{H^{s-1}}&+\|z_t(t)\|_{H^s} + \|z_{tt}(t)\|_{H^{s}} \\
& \leq C\left(\|\zeta_\alpha(t)-1\|_{H^{s-1}}+\|\kappa_\alpha(t)-1\|_{H^{s-1}}+\|D_t\zeta(t)\|_{H^s} + \|D_t^2\zeta(t)\|_{H^s}\right) \\
& \leq C\epsilon^{1/2}
\end{align*}
and that
\begin{equation*}
\sup_{\alpha \neq \beta} \left|\frac{\alpha - \beta}{z(\alpha) - z(\beta)}\right| \leq \frac{1}{(1 - \|\zeta_\alpha - 1\|_{L^\infty})(1 - \|\kappa_\alpha - 1\|_{L^\infty})},
\end{equation*} where the constants $C$ are independent of $T^*$.  Thus it follows by the blow-up criteria given in Theorem \ref{zLocalWellPosed} that we can continue the solution $z$ to  $t\in [0, T_1]$ for some $T_1 > T^*$.
On the other hand, we can choose $\epsilon_0$ so small that for $\epsilon < \epsilon_0$, the bounds $C\epsilon^{1/2}$ in \eqref{zetaestimate} and \eqref{kappaestimate} are  small enough that there exist $T^* < T_2 < T_1$, so that on $[0, T_2]$,  $\|\kappa_\alpha - 1\|_{L^\infty} \leq \frac{1}{2}$, $\|\kappa_\alpha - 1\|_{H^s} \leq 1$  and the a priori estimate \eqref{ZetaLocalAPrioriBound} holds.
This contradicts the maximality of $T^*$. Therefore we must have $T^*=\mathscr T\epsilon^{-2}$ and the long time existence of $z$ follows. The error estimate \eqref{errorestimate} then follows from \eqref{energyzero} and Proposition~\ref{RemainderEnergyAPrioriBound}.
\end{proof}

There is still the matter of interpreting this result in more familiar coordinates.  Changing variables by $\kappa$, we can convert the estimates of the above theorem into estimates in Lagrangian coordinates:  
\begin{equation}\label{lag}
\|(z_\alpha - \kappa_\alpha, z_t, z_{tt}) - (\epsilon \zeta^{(1)}_\alpha \circ \kappa, \epsilon \zeta^{(1)}_t \circ \kappa, \epsilon \zeta^{(1)}_{tt} \circ \kappa)\|_{H^s \times H^s \times H^s} \leq C\epsilon^{3/2}
\end{equation}  Calculating the asymptotic expansion of $z_\alpha - 1, z_t, z_{tt}$ now depends on understanding $\kappa - \alpha$.  From \eqref{KappaIVP} we have that 
\begin{align*}
\kappa(\alpha, t) - \alpha & = \int_0^t b(\kappa(\alpha, \tau), \tau) d\tau
\end{align*}  
Using our estimate of $\|\kappa_\alpha - 1\|_{H^s} \leq C\epsilon^{1/2}$ and 
writing the integrand as $b = (b - \tilde{b}) + \epsilon^2 b_2 + \epsilon^3 b_3$ yields the following leading order expression:
\begin{align}\label{kexp}
\kappa(\alpha, t) - \alpha & = -k\omega\epsilon^2 \int_0^t |B|^2(\epsilon\alpha + \epsilon \omega^\prime\tau, \epsilon^2\tau) d\tau + \mathcal{O}(\epsilon^{1/2})
\end{align} 
From \eqref{kexp}, we can obtain and justify asymptotics for $\partial_\alpha \Im z$, $z_t$ and $z_{tt}$ without any additional restriction on the initial data. 
However, justifying the asymptotics for $\Re z_\alpha - 1$ requires an understanding of the asymptotic for $\kappa_\alpha$ up to order $\mathcal O(\epsilon^{3/2})$, which is not available merely from the estimates given in Theorem~\ref{MainResult}.  We therefore leave open the justification of the modulation approximations for 
$\Re z_\alpha-1$. 
 Note that the leading term of the right hand side of \eqref{kexp} can be as large as $O(1)$ on times of order $O(\epsilon^{-2})$, and so would contribute corrections to the asymptotic formula for $\Re z_\alpha  -1$.

\section{Justification of an Eulerian Version}

By imposing some additional mild restrictions on the initial data, we are able to obtain justifications of the derivative in the space variable of the interface and the trace of the velocity field on the interface in Eulerian coordinates. With further restrictions on the initial data, we are able to justify the asymptotics for the profile itself. All these reduce to obtaining an appropriate bound and, in the latter case, asymptotics for $\Re \zeta(\alpha,t)-\alpha$ in $C([0, \mathscr T\epsilon^{-2}]; L^2)$, which can be achieved by introducing another quantity as follows.
 
Following the proof of Proposition 2.3 of \cite{WuAlmostGlobal2D}, we introduce the velocity potential $\Phi(x, t)$ of the fluid in the domain $\Omega(t)$ that satisfies $\nabla \Phi = {\mathbf{v}}$.  Let $\psi(\alpha, t) = \Phi(z(\alpha, t), t)$ be the trace of $\Phi$ on the interface $\Sigma(t)$.  If we write $\Psi = \psi \circ \kappa^{-1}$, then the time derivative of the quantity $\lambda := (I - \nht)\Psi$ is comparable to the imaginary part of $\zeta$ through the identity (c.f. (2.46) of \cite{WuAlmostGlobal2D}):
\begin{equation}\label{DtLambdaFormula}
D_t\lambda = -(I - \nht)\Im(\zeta) - \frac{1}{2}[D_t\zeta, \nht]\frac{\overline{\zeta}_\alpha D_t\zeta}{\zeta_\alpha}
\end{equation}  We also know by Proposition 2.3 of \cite{WuAlmostGlobal2D} that $\lambda$ satisfies an evolution equation of the form
\begin{align}\label{LambdaEvolutionEquation}
\mathcal{P}\lambda & = -\left[D_t\zeta,\nht\frac{1}{\zeta_\alpha} + \nhtb\frac{1}{\overline{\zeta}_\alpha}\right](\overline{\zeta}_\alpha D_t^2\zeta) + [D_t\zeta, \nhtb]\left(D_t\overline{\zeta}\frac{\partial_\alpha D_t\zeta}{\overline{\zeta}_\alpha}\right) + D_t\zeta[D_t\zeta, \nht]\frac{\partial_\alpha D_t\overline{\zeta}}{\zeta_\alpha} \notag \\
& \qquad - 2[D_t\zeta, \nht]\frac{D_t\zeta \cdot \partial_\alpha D_t \zeta}{\zeta_\alpha} + \frac{1}{\pi i} \int \left(\frac{D_t\zeta(\alpha) - D_t\zeta(\beta)}{\zeta(\alpha) - \zeta(\beta)}\right)^2(D_t\zeta(\beta) \cdot \zeta_\beta(\beta)) d\beta \notag \\
& := G_\lambda
\end{align}  Since $G_\lambda$ is of third order and depends only on $\zeta_\alpha - 1, D_t\zeta, D_t^2 \zeta$, we expect that we can construct an energy from this equation that allows us to bound $D_t \lambda$ by $C\epsilon^{1/2}$, provided the initial energy is bounded by $C\epsilon$.  This is enough to control $\|\Re \zeta(\cdot,t)- \alpha\|_{L^2}$ and justify an Eulerian version of Theorem \ref{MainResult}.  The details are given in Section 6.1 below.

However, with further restrictions on the initial data we can justify asymptotics for the profile itself, and we will devote the remainder of Section 6 to this task.  Specifically, we will develop an approximate solution $\tilde{\lambda}$ to \eqref{LambdaEvolutionEquation} to the desired order $O(\epsilon^4)$ and thereby construct an energy for the remainder $l = \lambda - \tilde{\lambda}$.  As was the case with the quantities $D_t \rho$ and $D_t \sigma$, such an energy will bound the $L^2$ norm of $D_t l$ for $O(\epsilon^{-2})$ times.  This will allow us to justify asymptotics for the profile under reasonable restrictions on the initial profile and the initial velocity potential restricted to the initial interface.

\subsection{Justifying Eulerian Asymptotics for Derivatives of the Profile}

Our first task is to prove the 

\begin{lemma}\label{RealPartProfileControl}
Suppose that the hypotheses of Theorem \ref{MainResult} hold.  Suppose further that $\|\xi_0\|_{L^2} \leq C\epsilon^{1/2}$ and $\|\mathbf{v}_0\|_{L^2(\Omega(0))} \leq C\epsilon^{1/2}$, where $\mathbf{v}_0$ is the initial velocity field.  Then $\|\Re \zeta(\cdot, t) - \alpha\|_{L^2} \leq C\epsilon^{1/2}$ for all $t \leq \mathscr{T}\epsilon^{-2}$.
\end{lemma}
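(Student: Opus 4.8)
The plan is to control $\|D_t\lambda\|_{L^2}$ on $[0,\mathscr T\epsilon^{-2}]$ by an energy estimate for the equation \eqref{LambdaEvolutionEquation}, and then to pass from $D_t\lambda$ back to $\Re\zeta-\alpha$ using the structural identities \eqref{DtLambdaFormula} and \eqref{XiIsAntihol}. First observe that $\lambda=(I-\nht)\Psi$ satisfies $(I+\nht)\lambda=0$, hence $\lambda=\tfrac12(I-\nht)\lambda$ is the trace of a holomorphic function on $\Omega(t)^c$; thus Proposition \ref{BasicEnergyInequality} applies to $\Theta=\lambda$ with $\mathscr G=G_\lambda$, the energy $\mathfrak E(t):=\int\frac1{\mathcal A}|D_t\lambda|^2+i\lambda\overline{\lambda}_\alpha\,d\alpha$ obeys $\mathfrak E\ge c\|D_t\lambda\|_{L^2}^2$, and $\frac{d\mathfrak E}{dt}=\int\frac2{\mathcal A}\Re\!\left(G_\lambda D_t\overline\lambda\right)-\frac1{\mathcal A}U_\kappa^{-1}\!\left(\frac{\mathfrak a_t}{\mathfrak a}\right)|D_t\lambda|^2\,d\alpha$. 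At $t=0$ we have $\kappa(\cdot,0)=\alpha$, so $\lambda(0)=(I-\nht_{\zeta_0})\psi(0)$; the Dirichlet energy identity $\|\psi(0)\|_{\dot H^{1/2}}^2\approx\|\mathbf v_0\|_{L^2(\Omega(0))}^2$ together with $\dot H^{1/2}$-boundedness of $\nht_{\zeta_0}$ gives $\int i\lambda(0)\overline\lambda(0)_\alpha\,d\alpha\le C\|\lambda(0)\|_{\dot H^{1/2}}^2\le C\epsilon$, while evaluating \eqref{DtLambdaFormula} at $t=0$ and using $\|\Im\zeta_0\|_{L^2}\le\|\xi_0\|_{L^2}\le C\epsilon^{1/2}$ (this is where the extra hypothesis on $\xi_0$ enters, since Theorem \ref{MainResult} only controls $|D_\alpha|^{1/2}\xi_0$) gives $\|D_t\lambda(0)\|_{L^2}\le C\epsilon^{1/2}$. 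Hence $\mathfrak E(0)\le C\epsilon$.

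Next I would estimate the two driving terms. By Theorem \ref{MainResult} the quantities $\zeta_\alpha-1,\ D_t\zeta,\ D_t^2\zeta$ and their $\alpha$-derivatives are $O(\epsilon^{1/2})$ in $H^s$, and in fact $O(\epsilon)$ in $L^\infty$ — the latter because $\zeta=\tilde\zeta+r$ with $\tilde\zeta-\alpha-\epsilon\zeta^{(1)}=O(\epsilon^2)$ and $E_s^{1/2}\le C(\mathcal E^{1/2}+\epsilon^{5/2})\le C\epsilon^{3/2}$ by Proposition \ref{RemainderEnergyAPrioriBound}. Since each of the five cubic terms in $G_\lambda$ is built from differences of these quantities (so only their derivatives enter the kernels), one can estimate each in $L^2$ by placing two factors in $L^\infty$ at size $\epsilon$ and one in $L^2$ at size $\epsilon^{1/2}$, using Proposition \ref{SingIntSobolevEstimates} and the Calderón commutator bound $\|[A,\nht]g\|_{L^2}\le C\|A_\alpha\|_{L^\infty}\|g\|_{L^2}$; this yields $\|G_\lambda\|_{L^2}\le C\epsilon^{5/2}$. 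For the coefficient $U_\kappa^{-1}(\mathfrak a_t/\mathfrak a)$ the crude bound $\|U_\kappa^{-1}(\mathfrak a_t/\mathfrak a)\|_{H^s}\le C\epsilon$ coming directly from \eqref{atOveraFormula} and Lemma \ref{DoubleLayerPotentialArgument} is not good enough on the $\epsilon^{-2}$ time scale; instead I would exploit that the leading $O(\epsilon^2)$ contribution to $(I-\nht)\!\left(\mathcal A\overline\zeta_\alpha U_\kappa^{-1}(\mathfrak a_t/\mathfrak a)\right)$, namely $2i\big([\zeta^{(1)}_{t_0t_0},\nht_0]\overline{\zeta}^{(1)}_{\alpha_0 t_0}+[\zeta^{(1)}_{t_0},\nht_0]\overline{\zeta}^{(1)}_{\alpha_0 t_0 t_0}\big)$, vanishes by \eqref{CommutatorPhaseIdentity} — exactly as in the treatment of $I_3$ in Proposition \ref{DtNewEulerRemainderIsCubic} — so that what remains is bounded in $H^s$ by $C\epsilon^2$ (the $O(\epsilon^3)$ physical pure-$\tilde\zeta$ terms contribute $O(\epsilon^{5/2})$ and the $r$-dependent corrections contribute $E_s^{1/2}\cdot O(\epsilon^{1/2})=O(\epsilon^2)$), whence $\|U_\kappa^{-1}(\mathfrak a_t/\mathfrak a)\|_{L^\infty}\le C\epsilon^2$.

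Combining, $\frac{d\mathfrak E}{dt}\le C\epsilon^{5/2}\mathfrak E^{1/2}+C\epsilon^2\mathfrak E$, so $y:=\mathfrak E^{1/2}$ satisfies $y'\le C\epsilon^{5/2}+C\epsilon^2 y$ and therefore $y(t)\le e^{C\epsilon^2 t}\big(y(0)+C\epsilon^{1/2}\big)\le C(\mathscr T)\epsilon^{1/2}$ for $0\le t\le\mathscr T\epsilon^{-2}$; in particular $\|D_t\lambda(t)\|_{L^2}\le C\epsilon^{1/2}$ throughout. Finally, from \eqref{XiIsAntihol} we have $(I-\nht)\overline\xi=0$, and applying $(I-\nht)$ to $\xi\pm\overline\xi$ yields the identity $(I-\nht)\Re\xi=i(I-\nht)\Im\xi=i(I-\nht)\Im\zeta$; combined with \eqref{DtLambdaFormula}, which expresses $(I-\nht)\Im\zeta$ as $-D_t\lambda$ plus the commutator $-\tfrac12[D_t\zeta,\nht]\frac{\overline\zeta_\alpha D_t\zeta}{\zeta_\alpha}$ of $L^2$-size $O(\epsilon^{3/2})$, and with the $L^2$ analogue of Lemma \ref{DoubleLayerPotentialArgument}(1) applied to the real-valued function $\Re\xi$, this gives $\|\Re\zeta(\cdot,t)-\alpha\|_{L^2}=\|\Re\xi\|_{L^2}\le C\|(I-\nht)\Re\xi\|_{L^2}\le C\epsilon^{1/2}$. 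The main obstacle is sharpness of powers: the Gronwall argument over times of order $\epsilon^{-2}$ survives only with $\|G_\lambda\|_{L^2}=O(\epsilon^{5/2})$ and $\|U_\kappa^{-1}(\mathfrak a_t/\mathfrak a)\|_{L^\infty}=O(\epsilon^2)$ rather than the naive $O(\epsilon^{3/2})$ and $O(\epsilon)$, which is precisely what forces the careful bookkeeping of genuine $L^\infty$ sizes of the field quantities and the use of the oscillatory cancellation of the leading term of $\mathfrak a_t/\mathfrak a$.
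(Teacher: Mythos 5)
Your argument is correct and follows the paper's proof in all essentials: the same energy $\mathcal{L}(t)=\int\frac{1}{\mathcal{A}}|D_t\lambda|^2+i\lambda\overline{\lambda}_\alpha\,d\alpha$ built on \eqref{LambdaEvolutionEquation} via Proposition \ref{BasicEnergyInequality}, the same quantitative inputs $\|G_\lambda\|_{L^2}\leq C\epsilon^{5/2}$ and $\|U_\kappa^{-1}(\mathfrak{a}_t/\mathfrak{a})\|_{L^\infty}\leq C\epsilon^2$ (the paper asserts these citing Theorem \ref{MainResult}; your explicit $L^\infty$/$L^2$ bookkeeping and the cancellation of the leading $O(\epsilon^2)$ term of \eqref{atOveraFormula} are exactly what make them true), the same Gronwall argument on $[0,\mathscr{T}\epsilon^{-2}]$, and the same passage back to $\Re\xi$ through \eqref{DtLambdaFormula}, \eqref{XiIsAntihol} and the $L^2$ version of Lemma \ref{DoubleLayerPotentialArgument}. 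The one genuine divergence is your treatment of the potential part of $\mathcal{L}(0)$: you bound $\int i\lambda_0\overline{\lambda}_{0\alpha}\,d\alpha$ by $\|\lambda_0\|_{\dot{H}^{1/2}}^2$, use $\dot{H}^{1/2}$-boundedness of $\nht_{\zeta_0}$ (a fact the paper itself invokes later, in the proof of Theorem \ref{StrongEulerianResult}), and then quote the trace comparability $\|\psi_0\|_{\dot{H}^{1/2}}^2\lesssim\|\nabla\Phi_0\|_{L^2(\Omega(0))}^2$, whereas the paper proceeds self-containedly: it introduces the holomorphic extension $\Xi_0$ with $\Re\Xi_0\circ\zeta_0=\Psi_0$, controls $\lambda_0-\overline{\Xi}_0\circ\zeta_0$ in $L^2$ via the operator identity $(I-\nht)-(I+\nhtb)(I+\mathcal{K})^{-1}=-(I+\nht)(I+\mathcal{K})^{-1}\mathcal{K}$ and a fundamental-theorem-of-calculus representation of $\mathcal{K}_{\zeta_0}$, and then converts the holomorphic pairing into exactly $\iint_{\Omega(0)}|\mathbf{v}_0|^2$ by Green's theorem. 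Your route is shorter but leans on a trace estimate for the chord-arc graph domain that is standard (and uniform here because $\|\partial_\alpha\xi_0\|$ is small) yet not established in the paper; the paper's route buys a self-contained argument with the Dirichlet energy appearing with an explicit constant, at the cost of the extra machinery around $\Xi_0$ and $\mathcal{K}_{\zeta_0}$. Either way the conclusion $\mathcal{L}(0)\leq C\epsilon$ and hence the lemma follow.
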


\begin{proof}
We begin by deriving conditions under which $\Re(\zeta) - \alpha$ is controlled in $L^2$.  We can construct the energy corresponding to \eqref{LambdaEvolutionEquation}:  $$\mathcal{L}(t) = \int \frac{1}{\mathcal{A}}|D_t \lambda|^2 + i \lambda \overline{\lambda}_\alpha$$  Since $\lambda$ is the trace of a holomorphic function on $\Omega(t)^c$, we have by Proposition \ref{BasicEnergyInequality} that $\|D_t \lambda\|_{L^2}^2 \leq C\mathcal{L}(t)$.  Formula \eqref{DtLambdaFormula} provides the estimate
\begin{align*}
\Bigl| \|D_t\lambda\|_{L^2} - \|(I - \nht)\Im\zeta\|_{L^2} \Bigr|  & \leq \|D_t \lambda + (I - \nht)\Im(\zeta)\|_{L^2} \\
& \leq C\epsilon^{5/2}
\end{align*}  Clearly  we also have $\|(I - \nht)\Im \zeta\|_{L^2} \leq C\|\xi\|_{L^2}$.  However, by \eqref{XiIsAntihol} and Lemma \ref{DoubleLayerPotentialArgument}, we conversely have that
\begin{align*}
\|\xi\|_{L^2} & \leq \|(I - \nht)\Re\xi\|_{L^2} + \|\Im\xi\|_{L^2} \\
& = \|(I - \nht)\Im\xi\|_{L^2} + \|\Im\xi\|_{L^2} \\
& \leq C\|(I - \nht)\Im\xi\|_{L^2} \\
& \leq C\mathcal{L}^{1/2} + C\epsilon^{5/2}
\end{align*}  Hence it suffices to show that $\mathcal{L}(t)$ is $O(\epsilon)$ whenever $t \leq \mathscr{T}\epsilon^{-2}$.  Now by Proposition \ref{BasicEnergyInequality} and Theorem \ref{MainResult} the energy $\mathcal{L}$ satisfies
$$\frac{d\mathcal{L}}{dt} \leq C\epsilon^{5/2}\mathcal{L}^{1/2} + C\epsilon^{2}\mathcal{L}$$  
therefore
$$\frac{d\mathcal{L}^{1/2}}{dt} \leq C\epsilon^{5/2} + C\epsilon^{2}\mathcal{L}^{1/2}.$$ 
Solving this inequality  gives us that $$\sup_{0 \leq t \leq \mathscr{T}\epsilon^{-2}} \mathcal{L}(t)^{1/2} \leq C\mathcal{L}(0)^{1/2} + C\epsilon^{1/2}$$

Hence the question now reduces to asking which conditions on the initial data imply that $\mathcal{L}(0)$ is $O(\epsilon)$.  We first have that $$\int \frac{1}{\mathcal{A}_0} |D_t\lambda_0|^2 d\alpha \leq C\|D_t \lambda_0\|_{L^2}^2 \leq (\|\xi_0\|_{L^2} + C\epsilon^{5/2})^2,$$ and so to control this part of $\mathcal{L}(0)$ it suffices to take $\|\xi_0\|_{L^2} \leq C\epsilon^{1/2}$.

The second part of $\mathcal{L}(0)$ takes more work.  Recall that our parametrization for the initial data was chosen so that $\zeta(0) = z(0)$.  Let $\psi_0$, $\lambda_0$, etc., be the initial values of $\psi$, $\lambda$, etc., respectively.  To estimate the second part of $\mathcal{L}(0)$, we follow the discussion of initial data in section 5.1 of \cite{WuAlmostGlobal2D}.  Observe that we can choose a function $\Xi_0$ holomorphic in $\Omega(0)$ for which $\Re(\Xi_0) \circ \zeta_0 = \Psi_0$, specifically $\Xi_0 \circ \zeta_0 = (I + \nht_{\zeta_0})(I + \mathcal{K}_{\zeta_0})^{-1}\Psi_0$; such a function will satisfy $\partial_z \Xi_0 = \overline{\mathbf{v}}_0$.  Since we have the operator identity $$(I - \nht) - (I + \nhtb)(I + \mathcal{K})^{-1} = -(I + \nht)(I + \mathcal{K})^{-1}\mathcal{K}$$ it follows that
\begin{equation*}
\lambda_0 - \overline{\Xi}_0 \circ \zeta_0 = -(I + \nht_{\zeta_0})(I + \mathcal{K}_{\zeta_0})^{-1}\mathcal{K}_{\zeta_0}\Psi_0
\end{equation*}  Observe that we can control derivatives of $\Psi_0$ but not $\Psi_0$ itself; however, the expression for $\mathcal{K}_{\zeta_0}\Psi_0$ contains an extra derivative.  Write $z^\tau_0 = (1 - \tau)\zeta_0 + \tau \overline{\zeta}_0$, so $z^1_0 = \bar \zeta_0$ and $z^0_0 = \zeta_0$.  Then by the Fundamental Theorem of Calculus we have
\begin{align*}
\mathcal{K}_{\zeta_0} & =\frac12(\mathcal H_0+\overline{\mathcal H}_0)= -\frac{1}{2}(\nht_{z_0^1} - \nht_{z_0^0}) \\
& = -\frac{1}{2} \int_0^1 \partial_\tau \nht_{z_0^\tau} d\tau \\
& = -\frac{1}{2} \int_0^1 [\overline{\xi}_0 - \xi_0, \nht_{z_0^\tau}]\frac{\partial_\alpha}{z_\alpha^\tau} \, d\tau
\end{align*} and so estimating this expression crudely gives the bound $\|\mathcal{K}_{\zeta_0} \Psi_0\|_{L^2} \leq C\|\xi_0\|_{L^\infty}\|v_0\|_{L^2} \leq C\epsilon$, and so $\|\lambda_0 - \overline{\Xi}_0 \circ \zeta_0\|_{L^2} \leq C\epsilon$ as well.  But then we can by Green's Theorem write
\begin{align*}
\left|\int i\lambda_0 \partial_\alpha \overline{\lambda}_0 d\alpha \right| & \leq 2 \left| \int \partial_\alpha \lambda_0 (\overline{\lambda}_0 - \Xi \circ \zeta_0) d\alpha \right| + \left| \int i(\overline{\Xi} \circ \zeta_0)\partial_\alpha(\Xi \circ \zeta_0) d\alpha \right| \\
& \leq C\epsilon^{3/2} + \iint_{\Omega(0)} |\mathbf{v}_0(x)|^2 dx
\end{align*}  Hence if we choose $\|\mathbf{v}_0\|_{L^2(\Omega(0))} \leq C\epsilon^{1/2}$, the lemma follows.
\end{proof}

We can now prove the
\begin{theorem}\label{WeakEulerianResult}
Let $s \geq 6$ and $k > 0$ be given.  Let $B_0 \in H^{s + 7}$, and $\mathscr{T} > 0$ be given.  Denote by $B(X, T)$ the solution of \eqref{NLS} with initial data $B(0) = B_0$, and let $\zeta^{(1)}$ be defined as in \eqref{Zeta1Formula}.  Suppose that the initial interface $\Sigma(0)$ is given by a graph $\{(x, \eta_0(x)) : x \in \mathbb{R}\}$, the initial velocity is $\mathbf{v}_0$,  the trace of the initial velocity, acceleration on $\{(x, \eta_0(x)) : x \in \mathbb{R}\}$ are $\mathfrak{v}_0$, $\mathfrak{w}_0$,  which satisfy the compatibility conditions as stated in Theorem~\ref{zLocalWellPosed}, and $(\eta_0, \mathfrak{v}_0,\mathfrak{w}_0) \in H^{s + 1} \times H^{s + 1} \times H^{s + 1/2}$ with the remainder estimates
\begin{equation}\label{EulerianRemainderConditions}
\|(|D_x|^{1/2}\eta_0, \mathfrak{v}_0,\mathfrak{w}_0) - \epsilon(\Im |D_x|^{1/2}\zeta^{(1)}(0), \zeta_t^{(1)}(0),\zeta_{tt}^{(1)}(0))\|_{H^{s + 1/2} \times H^{s + 1} \times H^{s + 1/2}} \leq C_1\epsilon^{3/2}
\end{equation} along with 
\begin{equation}\label{WeakConditions}
\|\eta_0\|_{L^2} \leq C_1\epsilon^{1/2} \qquad \text{and} \qquad \|\mathbf{v}_0\|_{L^2(\Omega(0))} \leq C_2\epsilon^{1/2}
\end{equation}
Then there exists an $\epsilon_0 = \epsilon_0(\|B_0\|_{H^{s + 7}}, \mathscr{T}, C_1, C_2)$ so that for all $\epsilon < \epsilon_0$ the following holds:  There exists a solution to \eqref{EulerVelocityField} for times $0 \leq t \leq \mathscr{T}\epsilon^{-2}$ for which $\Sigma(t)$ is given by a graph $\{(x, \eta(x, t)) : x \in \mathbb{R}, t \geq 0\}$, the trace of the velocity field on $\{(x, \eta(x, t)) : x \in \mathbb{R}, t \geq 0\}$ is given by $\mathfrak{v}(x,t)$, 
and which satisfies $$\|(\eta_x(t), \mathfrak{v}(t)) - \epsilon(k\Re \zeta^{(1)}(t),  \zeta_t^{(1)}(t) )\|_{H^{s} \times H^{s}} \leq C(\|B_0\|_{H^{s + 7}}, \mathscr{T}, C_1, C_2)\epsilon^{3/2}$$ for all $0 \leq t \leq \mathscr{T}\epsilon^{-2}$.
\end{theorem}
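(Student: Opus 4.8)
The plan is to combine Theorem \ref{MainResult} with Lemma \ref{RealPartProfileControl} and then translate the Lagrangian/arc-length estimates into Eulerian coordinates. First I would observe that the hypotheses \eqref{EulerianRemainderConditions}–\eqref{WeakConditions} on the graph data $(\eta_0, \mathfrak{v}_0, \mathfrak{w}_0)$ can be converted into the hypotheses of Theorem \ref{MainResult}: parametrize the initial interface $\Sigma(0)=\{(x,\eta_0(x))\}$ by arc-length-type Lagrangian coordinate so that $\kappa(\alpha,0)=\alpha$ and $z(\cdot,0)=\zeta(\cdot,0)$, so that $\Im \zeta_0 = \eta_0\circ(\text{graph parametrization})$ and $\Re\zeta_0 - \alpha$ is determined (up to the expected $\mathcal O(\epsilon^{3/2})$ error) by the antiholomorphicity condition $\xi_0 = \nhtb_{\zeta_0}\xi_0$ together with the known imaginary part; one checks the resulting $(\xi_0, v_0, w_0)$ lies in $\mathscr A^s$ and is $B_0$-admissible, so \eqref{EulerianRemainderConditions} gives the remainder bound $M_0\epsilon^{3/2}$ needed to invoke Theorem \ref{MainResult}. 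The extra conditions \eqref{WeakConditions} are precisely the hypotheses $\|\xi_0\|_{L^2}\le C\epsilon^{1/2}$, $\|\mathbf v_0\|_{L^2(\Omega(0))}\le C\epsilon^{1/2}$ of Lemma \ref{RealPartProfileControl}.

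Next I would run Theorem \ref{MainResult} to obtain, for $\epsilon<\epsilon_0$, the solution $z$ on $[0,\mathscr T\epsilon^{-2}]$ together with the bounds $\|(\zeta_\alpha - 1, D_t\zeta, D_t^2\zeta) - \epsilon(\zeta^{(1)}_\alpha, \zeta^{(1)}_t, \zeta^{(1)}_{tt})\|_{H^s\times H^s\times H^s}\le C\epsilon^{3/2}$ and $\|\kappa_\alpha - 1\|_{H^s}\le C\epsilon^{1/2}$, and then apply Lemma \ref{RealPartProfileControl} to get $\|\Re\zeta(\cdot,t)-\alpha\|_{L^2}\le C\epsilon^{1/2}$ uniformly on $[0,\mathscr T\epsilon^{-2}]$. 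The latter, combined with the $H^s$ control of $\zeta_\alpha - 1$, upgrades to $\|\zeta - \alpha\|_{H^s}\le C\epsilon^{1/2}$ (interpolating the $L^2$ bound on the real part with the $H^{s}$ bound on its derivative, and noting $\Im\zeta = \xi$ already lies in $H^{s+1/2}$ via \eqref{errorestimate} and the $|D_\alpha|^{1/2}$-regularity in $\mathscr A^s$). The point of getting $\zeta$ (not just $\zeta_\alpha$) under control is that one then controls $\kappa - \alpha$ in $L^\infty$, hence can invert the change of variables $z = \zeta\circ\kappa$ and pass estimates between the two coordinate systems using Lemma \ref{ChangeVarBounds}.

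Then I would carry out the geometric translation. Since $\zeta_\alpha - 1$ is $O(\epsilon^{1/2})$ in $L^\infty$, the curve $\zeta(\cdot,t)$ (equivalently $z(\cdot,t)$) stays a small Lipschitz graph over the $x$-axis, so $\Sigma(t)=\{(x,\eta(x,t))\}$ with $\eta$ obtained by composing $\Im z$ with the inverse of $x\mapsto \Re z(\alpha,t)$; differentiating, $\eta_x = (\Im z_\alpha)/(\Re z_\alpha)$, and since $\Re z_\alpha = 1 + O(\epsilon^{1/2})$ this is $\Im z_\alpha + O(\epsilon)$ corrections, i.e.\ $\eta_x(x,t) = \Im\zeta_\alpha\circ\kappa^{-1}\circ(\text{graph inverse}) + \text{l.o.t.}$, which by \eqref{errorestimate} and \eqref{lag} equals $\epsilon\,\Im\zeta^{(1)}_\alpha + \mathcal O(\epsilon^{3/2})$; since $\zeta^{(1)} = Be^{i\phi}$ one has $\Im\zeta^{(1)}_\alpha = \Re(\,i k \zeta^{(1)}) = k\,\Re\zeta^{(1)}$ up to an $O(\epsilon)$ term from $B_X$, giving the stated $\eta_x \approx \epsilon k\,\Re\zeta^{(1)}$. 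The trace of the velocity $\mathfrak v(x,t)$ is just $z_t$ re-expressed in the $x$ variable, so the bound on $\mathfrak v - \epsilon\zeta^{(1)}_t$ follows directly from the $D_t\zeta$ estimate of Theorem \ref{MainResult} after composing with $\kappa$ and the graph inverse, using Lemma \ref{ChangeVarBounds} to absorb the change-of-variables cost. Throughout, one uses that all the composing diffeomorphisms ($\kappa$, the graph reparametrization) differ from the identity by $\mathcal O(\epsilon^{1/2})$ in $H^s$, so composition costs only constants and never worse powers of $\epsilon$.

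\textbf{The main obstacle} I expect is the bookkeeping in the last step: keeping track of how the $\mathcal O(\epsilon^{3/2})$ error survives composition with the graph-inverse map and with $\kappa$, and in particular verifying that the difference between $\Im\zeta^{(1)}_\alpha$ and $k\Re\zeta^{(1)}$ (which involves a derivative $B_X$, hence is genuinely $O(\epsilon)$ after the overall $\epsilon$ prefactor, i.e.\ $O(\epsilon^2)$ and thus harmless, but must be checked to land inside the allowed $\epsilon^{3/2}$) does not spoil the stated asymptotic — this is exactly the sort of leading-order multiscale identity already used repeatedly in \S 3–\S 5, so it reduces to an explicit computation with $\zeta^{(1)}=Be^{i\phi}$. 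A secondary subtlety is confirming $\Re\zeta_0-\alpha$ can be chosen so that the constructed data genuinely lies in $\mathscr A^s$ with the claimed $\epsilon^{3/2}$ closeness; this mirrors Lemma \ref{InitialDataConstruction} and is handled by the same $(I\pm\nhtb)$ projection construction, now starting from $\Im\zeta_0$ prescribed by $\eta_0$ rather than from $\tilde\xi(0)$.
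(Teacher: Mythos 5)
Your overall route is the same as the paper's: reparametrize the graph data via the map $\kappa_0$ so that it becomes $B_0$-admissible (the same $(I+\nhtb_{\zeta_0})(I+\mathcal{K}_{\zeta_0})^{-1}$ projection argument as in Lemma \ref{InitialDataConstruction}), invoke Theorem \ref{MainResult}, use Lemma \ref{RealPartProfileControl} (this is exactly what \eqref{WeakConditions} is for), and then convert to Eulerian variables by inverting the graph map and writing $\eta_x = \Im z_\alpha/\Re z_\alpha$. Two details in your last step need correction, however. First, your claim that ``one then controls $\kappa-\alpha$ in $L^\infty$'' and that ``all the composing diffeomorphisms ($\kappa$, the graph reparametrization) differ from the identity by $\mathcal{O}(\epsilon^{1/2})$'' is false for $\kappa$: only $\kappa_\alpha - 1$ is small, while $\kappa - \alpha$ can grow to size $O(1)$ on the $O(\epsilon^{-2})$ time scale (see the discussion following \eqref{kexp}). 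The repair is the one the paper uses: since $z$ and $\zeta$ parametrize the same curve, bypass $\kappa$ entirely and work with $\zeta = x+iy$, $x = \alpha+\Re\xi$, $y=\Im\xi$; then the only reparametrization appearing is the graph inverse $\alpha(x)$ of $x=\alpha+\Re\xi(\alpha,t)$, and $\alpha(x)-x = -\Re\xi(\alpha(x),t)$ is $O(\epsilon^{1/2})$ in $H^s_x$ precisely by Lemma \ref{RealPartProfileControl}. (Your route through $z$ and $\kappa$ collapses to this, since the graph inverse of $\Re z$ composed with $\kappa$ equals the graph inverse of $\Re\zeta$, but neither factor is individually close to the identity, so the bookkeeping as you stated it would fail.)

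Second, ``composition costs only constants'' is not the estimate that spends the $\epsilon^{3/2}$ budget. The delicate point is replacing $\zeta^{(1)}(\alpha(x),t)$ by $\zeta^{(1)}(x,t)$ in the statement of the asymptotics: by the Mean Value Theorem the rapidly oscillating phase contributes $k\,\|B\|_{W^{s+1,\infty}}\|\alpha(x)-x\|_{H^s_x} \le C\epsilon^{1/2}$ (no gain from slow variation here), which is acceptable only because of the overall prefactor $\epsilon$, giving exactly $C\epsilon^{3/2}$; this is the real reason the $L^2$ control of $\Re\zeta-\alpha$, hence the hypotheses \eqref{WeakConditions}, is needed. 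The discrepancy you single out as the main obstacle, namely $\Im\zeta^{(1)}_\alpha = k\Re\zeta^{(1)} + O(\epsilon)$ from the $B_X$ term, is indeed harmless ($O(\epsilon^2)$ after the prefactor) and is the easy part.
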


\begin{proof}
First, we will show that the initial data after being reparametrized by $\kappa^{-1}$, is $B_0$-admissible.  Once we do, a solution $z(\alpha, t)$ exists as in Theorem \ref{MainResult}.  We must then show that this solution can be, for possibly smaller $\epsilon_0$, written as a graph, and we must give remainder estimates for this graph corresponding to the remainder estimates of $\zeta$ in Theorem \ref{MainResult}.

We begin by showing that the reparametrized data satisfies the hypotheses of Theorem \ref{MainResult}.  Let $\gamma_0(\alpha, t) = \alpha + i\eta_0(\alpha, t)$.  Let $\zeta_0(\alpha) = (\gamma_0 \circ \kappa_0^{-1})(\alpha)$, where as in \eqref{ChangeOfVariables} we define $$\kappa_0(\alpha) = \overline{\gamma}_0(\alpha) + \frac{1}{2}(I + \nht_{\gamma_0})(I + \mathcal{K}_{\gamma_0})^{-1}(\gamma_0(\alpha) - \overline{\gamma}_0(\alpha))$$ Then if we denote $\xi_0 := \zeta_0 - \alpha$ as usual, we have $(I - \nhtb_{\zeta_0})\xi_0 = 0$.  This implies that $\xi_0 = i(I + \nhtb_{\zeta_0})(I+\mathcal K_{\zeta_0})^{-1}\Im \xi_0$.  By Proposition \ref{WavePacketAntiholProp} we have $\zeta^{(1)} = i(I + \nhtb_0)\Im \zeta^{(1)} + \mathcal{O}(\epsilon^{3/2})$.  For brevity, temporarily denote $\|\cdot\| := \||D_\alpha|^{1/2} \cdot\|_{H^{s + 1/2}}$.  Then by Lemma \ref{ChangeVarBounds} and interpolation we have
\begin{align*}
\|\xi_0 - \epsilon \zeta^{(1)}(0)\| & \leq \|i(I + \nhtb_{\zeta_0})(I+\mathcal K_{\zeta_0})^{-1}\Im \xi_0 - i(I + \nhtb_0)\Im \epsilon\zeta^{(1)}(0)\| + C\epsilon^{3/2} \\
& \leq \|\Im \xi_0 - \Im \epsilon\zeta^{(1)}(0)\| + \|(\nhtb_{\zeta_0} - \nhtb_0)\Im\epsilon\zeta^{(1)}(0)\| + C\epsilon^{3/2} \\
& \leq C\|\eta_0 - \Im \zeta^{(1)}(0)\| + C\epsilon\|\zeta^{(1)}(0) \circ \kappa_0 - \zeta^{(1)}(0)\| + C\epsilon^{3/2}
\end{align*}  Since $\|\eta_0\|_{H^{s+1}} \leq C\epsilon^{1/2}$ by hypothesis, $\|\kappa_0 - \alpha\|_{H^{s+1}} \leq C\epsilon^{1/2}$, and so by the Mean Value Theorem $\|\zeta^{(1)}(0) \circ \kappa_0 - \zeta^{(1)}(0)\| \leq C\epsilon^{1/2}$.  But then $\|\xi_0 - \epsilon \zeta^{(1)}(0)\| \leq C\epsilon^{3/2}$ follows from $\|\eta_0 - \Im \epsilon \zeta^{(1)}(0)\| \leq C\epsilon^{3/2}$.

Let $v_0 = \mathfrak{v}_0 \circ \kappa^{-1}_0$, $w_0 = \mathfrak{w}_0 \circ \kappa_0^{-1}$. By Lemma~\ref{ChangeVarBounds}, we also have $$\| v_0 - \epsilon i\omega \zeta^{(1)}(0)\|_{H^{s + 1}} \leq C\epsilon^{3/2}$$ $$\| w_0 + \epsilon k \zeta^{(1)}(0)\|_{H^{s + 1/2}} \leq C\epsilon^{3/2}$$  This gives $B_0$-admissible initial data, and so by Theorem \ref{MainResult} there exists a solution to the $\zeta$ system with justified asymptotics.

We must now show that we can give Eulerian estimates for the remainders of this solution.  Since $\zeta$ and $z$ parametrize the same interface $\Sigma(t)$, it suffices to write $\zeta = x + iy$, where
\begin{align}\label{ParametricZeta}
x =x(\alpha,t)& = \alpha + \Re \xi(\alpha,t) \notag \\
y=y(\alpha,t) & = \Im \xi(\alpha,t)
\end{align}  For sufficiently small $\epsilon_0$, $\Sigma(t)$ describes a graph by Lemma \ref{RealPartProfileControl}, and so we can invert to solve for $\alpha = \alpha(x, t)$.  Then we wish to justify asymptotics of $\eta(x,t) := y(\alpha(x,t),t)$.

The rigorous justifications of the asymptotics for $\zeta_\alpha - 1$ and $D_t \zeta$ give rise to rigorous justifications of the quantities $\eta_x$ and $\mathfrak{v}$.  The derivations of each are similar, and so we will focus on $\eta_x$.  By Theorem \ref{MainResult}, we have a solution $\zeta = x + iy$ satisfying $$\|y_\alpha(\cdot, t) - k\epsilon \Re \zeta^{(1)}(\cdot, t)\|_{H^s_\alpha} \leq C\epsilon^{3/2}$$  for sufficiently small $\epsilon_0$, and $\epsilon < \epsilon_0$.  Since $x = \alpha(x, t) + \Re \xi(\alpha(x, t), t)$, we have immediately that $\|\alpha_x - 1\|_{H^s_x} \leq C\epsilon^{1/2}$.  Changing variables then gives us $$\|y_\alpha(\alpha(\cdot,t), t) - k \epsilon \Re \zeta^{(1)}(\alpha(\cdot,t), t)\|_{H^s_x} \leq C\epsilon^{3/2}$$    Moreover, since we would like to take advatage of asymptotics for $\alpha_x(x) - 1$, we write
\begin{align*}
\alpha_x(x) - 1 & = -\Re\xi_\alpha(\alpha(x))\alpha_x(x) \\
& = -\Re\xi_\alpha(\alpha(x)) - \Re\xi_\alpha(\alpha(x))(\alpha_x(x) - 1) \\
& = -\Re\tilde{\xi}_\alpha(\alpha(x)) + \left( \Re(\xi_\alpha(\alpha(x))\Re\Tilde{\xi}_\alpha(\alpha(x)) - \Re r_\alpha(\alpha(x))\right) \\
& \quad - \Re \xi_\alpha(\alpha(x))\left(\alpha_x(x) + \Re\tilde{\xi}_\alpha(\alpha(x)) - 1\right)
\end{align*} from which we have the estimate $$\|\alpha_x(\cdot) + \Re\tilde{\xi}_\alpha(\alpha(\cdot)) - 1\|_{H^s_x} \leq C\epsilon^{3/2}$$
Next, we estimate the derivative of the graph $\eta_x$:
\begin{align*}
\|y_\alpha(\alpha(\cdot), t) - \eta_x(\cdot, t)\|_{H^s_x} & = \|y_\alpha(\alpha(\cdot), t)(\alpha_x(\cdot, t) - 1)\|_{H^s_x} \\
& \leq \|y_\alpha(\alpha(\cdot))\|_{H^s_x}\|\alpha_x(\cdot) + \Re\tilde{\xi}_\alpha(\alpha(\cdot)) - 1\|_{H^s_x} \\
& \qquad + C\|y_\alpha(\alpha(\cdot))\|_{H^s}\|\tilde{\xi}_\alpha\|_{W^{s, \infty}_\alpha} \\
& \leq C\epsilon^{3/2}
\end{align*}
By the Mean Value Theorem and Lemma~\ref{RealPartProfileControl}, we have that
\begin{align*}
& \quad\; \|\Re \zeta^{(1)}(\alpha(x), t) - \Re \zeta^{(1)}(x, t)\|_{H^s_x} \\
& \leq \|B(\epsilon\alpha(x) + \epsilon \omega^\prime t, \epsilon^2 t) - B(\epsilon x + \epsilon \omega^\prime t, \epsilon^2 t)\|_{H^s_x} \\
& \qquad + \|B(\epsilon x + \epsilon \omega^\prime t, \epsilon^2 t)\|_{W^{s, \infty}} \|e^{i(k\alpha(x) + \omega t)} - e^{i(k x + \omega t)}\|_{H^s_x} \\
& \leq \|B\|_{W^{s + 1, \infty}}\|\alpha(x) - x\|_{H^s} \\
& \leq C\epsilon^{1/2}
\end{align*}  Thus we have
\begin{align*}
\|\eta_x(\cdot,t) - k \epsilon \Re \zeta^{(1)}(\cdot, t)\|_{H^s_x} & \leq \|\eta_x(\cdot, t) - y_\alpha(\alpha(\cdot), t)\|_{H^s_x} \\
& \quad + \|y_\alpha(\alpha(\cdot), t) - k\epsilon \Re \zeta^{(1)}(\alpha(\cdot), t)\|_{H^s_x} \\
& \quad + \|\epsilon \zeta^{(1)}(\alpha(\cdot), t) - \epsilon \zeta^{(1)}(\cdot, t)\|_{H^s_x} \\
& \leq C\epsilon^{3/2}
\end{align*} and with a similar argument we also have
\begin{equation*}
\|\mathfrak{v}(\cdot,t) - \epsilon  \zeta_t^{(1)}(\cdot, t)\|_{H^{s + 1}_x} \leq C\epsilon^{3/2}
\end{equation*}
\end{proof}

\subsection{The Multiscale Calculation for $\tilde{\Psi}$ and $\tilde{\lambda}$.}

We have two formal calculations to complete.  The first is to derive an expansion for the quantity $\Psi = \psi \circ \kappa^{-1}$ of the form $\tilde\Psi = \epsilon \Psi^{(1)} + \epsilon^2 \Psi^{(2)} + \epsilon^3 \Psi^{(3)}$  so that it satisfies the transformed version of Bernoulli's principle (c.f. (2.14) of \cite{WuAlmostGlobal2D}): \begin{equation}\label{DtPsi}D_t\Psi = -\Im(\zeta) + \frac{1}{2}|D_t\zeta|^2\end{equation}
up to the order $O(\epsilon^4)$. 
The second is to check whether $\tilde\lambda =(I-\tilde {\mathcal H})\tilde\Psi$ satisfies \eqref{LambdaEvolutionEquation} up to the order $O(\epsilon^4)$.  We will repeatedly use the formula \eqref{TildeZetaFormula} for $\tilde{\zeta}$ in the sequel.

\subsubsection{Deriving the expansion of $\Psi$}

The $O(\epsilon)$ terms of \eqref{DtPsi} yield\footnote{Here $\cc$ represents the complex conjugate of the term immediately preceding it.}
\begin{align*}
\Psi^{(1)}_{t_0} & = -\Im(\zeta^{(1)}) \\
& = -\frac{1}{2i}Be^{i\phi} + \cc
\end{align*} from which we have
\begin{equation*}
\Psi^{(1)} = \frac{1}{2\omega}Be^{i\phi} + \cc + C^{(1)}(\alpha_0, \alpha_1, t_1, t_2)
\end{equation*}  Equating the $O(\epsilon^2)$ terms of \eqref{DtPsi} gives
\begin{align*}
\Psi^{(2)}_{t_0} & = -\Psi^{(1)}_{t_1} - \Im(\zeta^{(2)}) + \frac{1}{2}|\zeta^{(1)}_{t_0}|^2 \\
& = -\omega^\prime\frac{1}{2\omega}B_X e^{i\phi} + \cc - C^{(1)}_{t_1} - \Im\left(\frac{1}{2}ik(I - \nhtb_0)|B|^2\right) + \frac{1}{2}k|B|^2 \\
& = -\frac{1}{4k}B_Xe^{i\phi} + \cc - C^{(1)}_{t_1} - \frac{1}{2}k|B|^2 + \frac{1}{2}k|B|^2 \\
& = -\frac{1}{4k}B_Xe^{i\phi} + \cc - C^{(1)}_{t_1}
\end{align*}  To avoid secular terms  we set $C^{(1)} = 0$ and so arrive at the solution 
\begin{equation}\label{Psi2Formula}
\Psi^{(2)} = -\frac{1}{4ik\omega}B_Xe^{i\phi} + \cc + C^{(2)}(\alpha_0, \alpha_1, t_1, t_2)
\end{equation} and hence determine $\Psi^{(1)}$ as
\begin{equation}\label{Psi1Formula}
\Psi^{(1)} = \frac{1}{2\omega}Be^{i\phi} + \cc
\end{equation}
Finally, we collect the $O(\epsilon^3)$ terms of \eqref{DtPsi} together to give the equation
\begin{align*}
\Psi^{(3)}_{t_0} & = -\Psi^{(2)}_{t_1} - \Psi^{(1)}_{t_2} - b_2 \Psi^{(1)}_{\alpha_0} - \Im(\zeta^{(3)}) + \Re\left(\overline{\zeta}^{(1)}_{t_0}(\zeta^{(1)}_{t_1} + \zeta^{(2)}_{t_0})\right) \\
& = -\Psi^{(2)}_{t_1} - \Psi^{(1)}_{t_2} - b_2 \Psi^{(1)}_{\alpha_0} - \Im(\zeta^{(3)}) + \Re(\overline{\zeta}^{(1)}_{t_0}\zeta^{(1)}_{t_1})
\end{align*}  We calculate that $\Psi^{(2)}_{t_1} = -\frac{1}{8ik^2}B_{XX}e^{i\phi} + \cc + C^{(2)}_{t_1}$ and $\Psi^{(1)}_{t_2} = \frac{1}{2\omega}B_T e^{i\phi} + \cc$.  Recalling from \eqref{TildeBFormula} that $b_2 = -k\omega|B|^2$ we also have $b_2\Psi^{(1)}_{\alpha_0} = -\frac{1}{2}ik^2B|B|^2 e^{i\phi} + \cc$  As for the remaining terms, we can write
\begin{align*}
\Im(\zeta^{(3)}) & = \Im\left(-\frac{1}{2}k^2\overline{B}|B|^2e^{-i\phi} + \frac{1}{2}(I - \nhtb_0)\left(\overline{B}B_X\right)\right) \\
& = \frac{1}{4i}k^2B|B|^2e^{i\phi} + \cc + \frac{1}{2}\Im(I - \nhtb_0)(\overline{B}B_X)
\end{align*} as well as $\Re(\overline{\zeta}^{(1)}_{t_0}\zeta^{(1)}_{t_1}) = \Re\left(-\frac{1}{2}i\overline{B}B_X\right) = \Im\left(\frac{1}{2}\overline{B}B_X\right)$, and so 
\begin{align*}
-\Im(\zeta^{(3)}) + Re(\overline{\zeta}^{(1)}_{t_0}\zeta^{(1)}_{t_1}) & = -\frac{1}{4i}k^2B|B|^2e^{i\phi} + \cc - \frac{1}{2}\Im(I - \nhtb_0)(\overline{B}B_X) + \frac{1}{2} \Im(\overline{B}B_X) \\
& =- \frac{1}{4i}k^2B|B|^2e^{i\phi} + \cc + \frac{1}{2}\Im \nhtb_0(\overline{B}B_X)
\end{align*}  Summing these terms now gives
\begin{align*}
\Psi^{(3)}_{t_0} & = -\Psi^{(2)}_{t_1} - \Psi^{(1)}_{t_2} - b_2 \Psi^{(1)}_{\alpha_0} - \Im(\zeta^{(3)}) + \Re(\overline{\zeta}^{(1)}_{t_0}\zeta^{(1)}_{t_1}) \\
& = \frac{1}{8ik^2}B_{XX}e^{i\phi} + \cc - C^{(2)}_{t_1} - \frac{1}{2\omega}B_Te^{i\phi} + \cc + \frac{1}{2}ik^2B|B|^2e^{i\phi} + \cc \\
& \qquad - \frac{1}{4i}k^2B|B|^2e^{i\phi} + \cc + \frac{1}{2}\Im \nhtb_0(\overline{B}B_X) \\
& = \left(-\frac{1}{2\omega}B_T + \frac{1}{8ik^2}B_{XX} + \frac{3}{4}ik^2B|B|^2\right)e^{i\phi} + \cc + \left(-C^{(2)}_{t_1} + \frac{1}{2}\Im \nhtb_0(\overline{B}B_X)\right) \\
& = -\frac{1}{4i\omega}\left(2iB_T - \frac{1}{2k\omega}B_{XX} + 3k^2\omega B|B|^2\right)e^{i\phi} + \cc + \left(-C^{(2)}_{t_1} + \frac{1}{2}\Im \nhtb_0(\overline{B}B_X)\right) \\
& = -\frac{1}{4i\omega}\left(2iB_T + 2\omega^{\prime\prime}B_{XX} +3 k^2\omega B|B|^2\right)e^{i\phi} + \cc + \left(-C^{(2)}_{t_1} + \frac{1}{2}\Im \nhtb_0(\overline{B}B_X)\right)
\end{align*}  We must choose $C^{(2)}$ so that $C^{(2)}_X = \omega \Im \nhtb_0(\overline{B}B_X)$. Therefore
$$C^{(2)}=\frac12\omega i\mathcal H_0(|B|^2).$$
  Since $B$ satisfies the NLS equation $2iB_T - \omega^{\prime \prime}B_{XX} + k^2\omega B|B|^2 = 0$, we have that $$\Psi^{(3)}_{t_0} = -\frac{3\omega^{\prime\prime}}{4i\omega}B_{XX}e^{i\phi}-\frac{k^2}{2i}B|B|^2e^{i\phi}+ \cc = \frac{3}{16ik^2}B_{XX}e^{i\phi} -\frac{k^2}{2i}B|B|^2e^{i\phi}
+ \cc$$ and so we can take as our solution
\begin{equation}\label{Psi3Formula}
\Psi^{(3)} = -\frac{3}{16k^2\omega}B_{XX}e^{i\phi}+\frac{k^2}{2\omega}B|B|^2e^{i\phi}
 + \cc 
\end{equation}

\subsubsection*{Checking the Evolution Equation for $\lambda$}

Now we would like to use our expansion of $\Psi$ to check to see whether \eqref{LambdaEvolutionEquation} is satisfied up to terms of order $O(\epsilon^4)$.  The $O(\epsilon)$ equation that we must verify is 
\begin{align*}
(\partial_{t_0}^2 - i\partial_{\alpha_0})(I - \nht_0)\Psi^{(1)} & = (\partial_{t_0}^2 - i\partial_{\alpha_0})(I - \nht_0)\left(\frac{1}{2\omega}Be^{i\phi} + \cc\right)\\
& = (\partial_{t_0}^2 - i\partial_{\alpha_0})\frac{1}{\omega}Be^{i\phi}\\
& = 0,
\end{align*} as desired.  Similarly, recalling that $\nht^{(1)} f = [\zeta^{(1)}, \nht_0]f_{\alpha_0}$, it is quick to see that the $O(\epsilon^2)$ terms vanish as well:
\begin{align*}
& \;\; (\partial_{t_0}^2 - i\partial_{\alpha_0})(I - \nht_0)\Psi^{(2)} \\
& + (\partial_{t_0}^2 - i\partial_{\alpha_0})(-\nht_1)\Psi^{(1)} \\
& + (2\partial_{t_0}\partial_{t_1} - i\partial_{\alpha_1})(I - \nht_0)\Psi^{(1)} = 0
\end{align*}  For the $O(\epsilon^3)$ terms, we must investigate the sum
\begin{align*}
& \;\; (\partial_{t_0}^2 - i\partial_{\alpha_0})(I - \nht_0)\Psi^{(3)} \\
& + (\partial_{t_0}^2 - i\partial_{\alpha_0})(-\nht_1)\Psi^{(2)} \\
& + (\partial_{t_0}^2 - i\partial_{\alpha_0})(-\nht_2)\Psi^{(1)} \\
& + (2\partial_{t_0}\partial_{t_1} - i\partial_{\alpha_1})(I - \nht_0)\Psi^{(2)} \\
& + (2\partial_{t_0}\partial_{t_1} - i\partial_{\alpha_1})(-\nht_1)\Psi^{(1)} \\
& + (2\partial_{t_0}\partial_{t_2} + \partial_{t_1}^2 + 2b_2\partial_{t_0}\partial_{\alpha_0})(I - \nht_0)\Psi^{(1)} \\
& - G_\lambda^{(3)} \\
& = I_1 + \cdots + I_6 - G_\lambda^{(3)}
\end{align*} where $G_\lambda^{(3)}$ is the third term in the formal expansion of the cubic term $G_\lambda$ in \eqref{LambdaEvolutionEquation}.  We have
\begin{align*}
I_1 & = (\partial_{t_0}^2 - i\partial_{\alpha_0}) \left(-\frac{3}{8k^2\omega}B_{XX}e^{i\phi}+\frac{k^2}{\omega}B|B|^2e^{i\phi}
\right)\\
& = 0
\end{align*} and
\begin{align*}
I_2 & = (\partial_{t_0}^2 - i\partial_{\alpha_0})(-\nht^{(1)})\left(-\frac{1}{4ik\omega}B_Xe^{i\phi} + \cc + C^{(2)}\right) \\
& = (\partial_{t_0}^2 - i\partial_{\alpha_0})\left(\frac{1}{4\omega}(I - \nht_0)B\overline{B}_X\right) \\
& = 0
\end{align*}  We also have
\begin{align*}
I_4 & = (2\partial_{t_0}\partial_{t_1} - i\partial_{\alpha_1})\left(-\frac{1}{2ik\omega}B_Xe^{i\phi} + (I - \nht_0)C^{(2)}\right) \\
& = -i\omega(I - \nht_0) \Im \nhtb_0 (\overline{B}B_X) \\
& = -i\omega(I - \nht_0)\frac{\nhtb_0(\overline{B}B_X) - \nht_0(B\overline{B}_X)}{2i} \\
& = -\frac{1}{2}\omega(I - \nht_0)(\overline{B}B_X + B\overline{B}_X)
\end{align*} and
\begin{align*}
I_5 & = (2\partial_{t_0}\partial_{t_1} - i\partial_{\alpha_1})(-\nht^{(1)})\left(\frac{1}{2\omega}Be^{i\phi} + \cc\right) \\
& = (2\partial_{t_0}\partial_{t_1} - i\partial_{\alpha_1})\frac{1}{2}i\omega(I - \nht_0)|B|^2 \\
& = \frac{1}{2}\omega(I - \nht_0)(B\overline{B}_X + \overline{B}B_X)
\end{align*}  Moreover, since $B$ satisfies the NLS equation \eqref{NLS},
\begin{align*}
I_6 & = (2\partial_{t_0}\partial_{t_2} + \partial_{t_1}^2 -2k\omega|B|^2\partial_{t_0}\partial_{\alpha_0}) \frac{1}{\omega}Be^{i\phi} \\
& = (2iB_t - \omega^{\prime\prime}B_{XX} + 2k^2\omega B|B|^2)e^{i\phi} \\
& = k^2\omega B|B|^2e^{i\phi}
\end{align*}  The remaining terms are more involved.  Recall the multiscale operator $$\nht^{(2)}f = [\zeta^{(1)}, \nht_0]\partial_{\alpha_1}f + [\zeta^{(2)}, \nht_0]\partial_{\alpha_0}f - [\zeta^{(1)}, \nht_0]\zeta^{(1)}_{\alpha_0}\partial_{\alpha_0}f + \frac{1}{2}[\zeta^{(1)}, [\zeta^{(1)}, \nht_0]]\partial_{\alpha_0}^2f$$  Thus we first have
\begin{align*}
\nht^{(2)} \Psi^{(1)} & = [Be^{i\phi}, \nht_0]\left(\frac{1}{2\omega}B_Xe^{i\phi} + \cc\right) \\
& \quad + \left[\frac{1}{2}ik(I - \nhtb_0)|B|^2, \nht_0\right]\left(\frac{1}{2}i\omega Be^{i\phi} + \cc\right) \\
& \quad - [Be^{i\phi}, \nht_0]\left(ikBe^{i\phi}\left(\frac{1}{2}i\omega Be^{i\phi} + \cc\right)\right) \\
& \quad + \frac{1}{2}[Be^{i\phi}, [Be^{i\phi}, \nht_0]]\left(-\frac{1}{2}k\omega Be^{i\phi} + \cc\right) \\
& = [Be^{i\phi}, \nht_0]\left(\frac{1}{2\omega}\overline{B}_Xe^{-i\phi}\right) \\
& \quad - \frac{1}{2}k\omega[Be^{i\phi}, \nht_0]|B|^2 \\
& \quad + \frac{1}{2}[Be^{i\phi}, [Be^{i\phi}, \nht_0]]\left(-\frac{1}{2}k\omega \overline{B}e^{-i\phi}\right) \\
& = \frac{1}{2\omega}(I - \nht_0)(B\overline{B}_X)\\
& \quad -\frac{1}{2}k\omega Be^{i\phi}(I + \nht_0)|B|^2 \\
& + \frac{1}{2}k\omega Be^{i\phi}\nht_0|B|^2 \\
& = -\frac{1}{2}k\omega B|B|^2e^{i\phi} + \frac{1}{2\omega}(I - \nht_0)(B\overline{B}_X)
\end{align*}  But then
\begin{align*}
I_3 & = (\partial_{t_0}^2 - i\partial_{\alpha_0})(-\nht^{(2)})\Psi^{(1)} \\
& = (\partial_{t_0}^2 - i\partial_{\alpha_0})\left(\frac{1}{2}k\omega B|B|^2e^{i\phi} - \frac{1}{2\omega}(I - \nht_0)(B\overline{B}_X)\right) \\
& = 0
\end{align*}  Finally, we turn to calculating $G_\lambda^{(3)}$.  We have by definition that
$$G_\lambda^{(3)} = -\left[D_t\zeta,\nht\frac{1}{\zeta_\alpha} + \nhtb\frac{1}{\overline{\zeta}_\alpha}\right](\overline{\zeta}_\alpha D_t^2\zeta) + [D_t\zeta, \nhtb]\left(D_t\overline{\zeta}\frac{\partial_\alpha D_t\zeta}{\overline{\zeta}_\alpha}\right) + D_t\zeta[D_t\zeta, \nht]\frac{\partial_\alpha D_t\overline{\zeta}}{\zeta_\alpha}$$
$$ - 2[D_t\zeta, \nht]\frac{D_t\zeta \cdot \partial_\alpha D_t \zeta}{\zeta_\alpha} + \frac{1}{\pi i} \int \left(\frac{D_t\zeta(\alpha) - D_t\zeta(\beta)}{\zeta(\alpha) - \zeta(\beta)}\right)^2(D_t\zeta(\beta) \cdot \zeta_\beta(\beta)) d\beta$$
We simplify the formal leading terms of the commutators first.  We have that
\begin{align*}
[\zeta^{(1)}_{t_0}, \nhtb_0](\overline{\zeta}^{(1)}_{t_0}\zeta^{(1)}_{t_0 \alpha_0}) & = k^2 \omega Be^{i\phi} (I - \nhtb_0)|B|^2
\end{align*} and
\begin{align*}
\zeta^{(1)}_{t_0}[\zeta^{(1)}_{t_0}, \nht_0]\overline{\zeta}^{(1)}_{t_0 \alpha_0} & = k^2\omega Be^{i\phi} (I - \nht_0)|B|^2
\end{align*}  Also, since $\zeta^{(1)}_{t_0} \cdot \zeta^{(1)}_{t_0 \alpha_0} = 0$, the third commutator vanishes.  We will write the leading orders of the remaining terms as singular integrals to which we can apply the following formula:
$$\frac{1}{\pi i}\int \frac{(g(\alpha) - g(\beta))(h(\alpha) - h(\beta))}{(\alpha - \beta)^2} f(\beta) d\beta = [g, \nht_0](h_\alpha f) + [h, \nht_0](g_\alpha f) - [g, [h, \nht_0]]f_\alpha$$  Since to leading order, $\zeta^{(1)}_{t_0} \cdot \zeta^{(1)}_{\alpha_0} = \zeta^{(1)}_{t_0} \cdot 1 + O(\epsilon^2) = \Re(\zeta^{(1)}_{t_0}) + O(\epsilon^2) = \frac{1}{2}(\zeta^{(1)}_{t_0} + \overline{\zeta}^{(1)}_{t_0}) + O(\epsilon^2)$, we can rewrite the second singular integral above as
\begin{align*}
\frac{1}{\pi i} \int \left(\frac{\zeta^{(1)}_{t_0}(\alpha) - \zeta^{(1)}_{t_0}(\beta)}{\alpha - \beta}\right)^2 \frac{1}{2}\overline{\zeta}^{(1)}_{t_0} d\beta & = [\zeta^{(1)}_{t_0}, \nht_0](\zeta^{(1)}_{t_0 \alpha_0} \overline{\zeta}^{(1)}_{t_0}) - \frac{1}{2}[\zeta^{(1)}_{t_0}, [\zeta^{(1)}_{t_0}, \nht_0]]\overline{\zeta}^{(1)}_{\alpha_0 t_0}
\end{align*}
Similarly, the leading order of the first singular integral is
\begin{align*}
& -\frac{1}{\pi i} \int \left( \frac{(\zeta^{(1)}_{t_0}(\alpha) - \zeta^{(1)}_{t_0}(\beta))(\overline{\zeta}^{(1)}(\alpha) - \overline{\zeta}^{(1)}(\beta))}{(\alpha - \beta)^2}\right) \zeta^{(1)}_{t_0 t_0}(\beta) d\beta \\
& = -[\zeta^{(1)}_{t_0}, \nht_0](\overline{\zeta}^{(1)}_{\alpha_0}\zeta^{(1)}_{t_0 t_0}) - [\overline{\zeta}^{(1)}, \nht_0](\zeta^{(1)}_{t_0 \alpha_0}\zeta^{(1)}_{t_0 t_0}) + [\zeta^{(1)}_{t_0}, [\overline{\zeta}^{(1)}, \nht_0]]\zeta^{(1)}_{t_0 t_0 \alpha_0}
\end{align*}  By extracting the coefficients resulting from differentiation, the first terms of these two expressions cancel each other.  Therefore we are left with the following expression as the sum of these singular integrals:
\begin{align*}
& - \frac{1}{2}[\zeta^{(1)}_{t_0}, [\zeta^{(1)}_{t_0}, \nht_0]]\overline{\zeta}^{(1)}_{\alpha_0 t_0} + [\zeta^{(1)}_{t_0}, [\overline{\zeta}^{(1)}, \nht_0]]\zeta^{(1)}_{t_0 t_0 \alpha_0} \\
& = k^2\omega Be^{i\phi} \nht_0|B|^2 - k^2 \omega Be^{i\phi} (I + \nht_0)|B|^2 \\
& = -k^2\omega B|B|^2e^{i\phi}
\end{align*}  Therefore, summing these calculations gives
\begin{align*}
G_\lambda^{(3)} & = k^2\omega Be^{i\phi} (I + \nht_0)|B|^2 + k^2 \omega Be^{i\phi}(I - \nht_0)|B|^2 - k^2 \omega B|B|^2 e^{i\phi} \\
& = k^2 \omega B|B|^2 e^{i\phi}
\end{align*}  Therefore we have at last that the $O(\epsilon^3)$ terms sum to
\begin{align*}
& -\frac{1}{2}\omega(I - \nht_0)(\overline{B}B_X + B\overline{B}_X) + \frac{1}{2}\omega(I - \nht_0)(B \overline{B}_X + \overline{B}B_X) + k^2 \omega B|B|^2e^{i\phi} - k^2\omega B|B|^2e^{i\phi}
\end{align*} which exactly cancels.   Thus the development of $\Psi$ indeed satisfies \eqref{LambdaEvolutionEquation} up to $O(\epsilon^4)$.  Define 
\begin{equation}\label{TildePsiFormula}
\tilde{\Psi} = \epsilon \Psi^{(1)} + \epsilon^2 \Psi^{(2)} + \epsilon^3\Psi^{(3)}
\end{equation} as well as
\begin{equation}\label{TildeLambdaFormula}
\tilde{\lambda} = (I - \tilde{\nht})\tilde{\Psi}
\end{equation} so that $\tilde{\mathcal{P}}\tilde{\lambda} - G_\lambda^{(3)} = O(\epsilon^4)$.

\subsection{Estimates of the Remainder of $\lambda$}

Our goal here is to construct an energy from an evolution equation for \begin{equation}\label{LFormula}
l = \lambda - \tilde{\lambda}
\end{equation}  This will enable us to show that the quantity $D_t l = D_t(\lambda - \tilde{\lambda})$ is bounded in $L^2$.  In turn, we will control $r$ in $L^2$ for $O(\epsilon^{-2})$ times.

\subsubsection{Showing that $D_t l$ and $r$ are comparable.}

Following the proof of Lemma \ref{RealPartProfileControl}, we first show that $r$ and $(I - \nht)\Im(r)$ are comparable in $L^2$.  First, since $(I - \nht)\overline{\xi} = 0$ by \eqref{XiIsAntihol}, we have by the multiscale calculation of Section 3.3 and Corollary \ref{DiffHilbertBound} that $$(I - \nht)\overline{r} = -(I - \nht)\overline{\tilde{\xi}} = -(\tilde{\nht} - \nht)\overline{\tilde{\xi}} - (I - \tilde{\nht})\overline{\tilde{\xi}} = \mathcal{O}(\epsilon^{5/2})$$  Hence we have
\begin{align*}
\|r\|_{L^2} & \leq C\|(I - \nht)(r + \overline{r})\|_{L^2} + C\|(I - \nht)\Im(r)\|_{L^2} \\
& \leq C\|(I - \nht)\Im(r)\|_{L^2} + C\epsilon^{5/2},
\end{align*} and so 
\begin{equation}\label{DtLAndRComparable}
\frac{1}{C} \|r\|_{L^2} - C\epsilon^{5/2} \leq \|(I - \nht)\Im(r)\|_{L^2} \leq C\|r\|_{L^2} + C\epsilon^{5/2}
\end{equation} Turning to $D_t l$ and $r$, we expand
\begin{align*}
D_t l & = D_t \lambda - D_t \tilde{\lambda} \\
& = D_t \lambda - \tilde{D}_t\tilde{\lambda} - (D_t - \tilde{D}_t)\tilde{\lambda} \\
& = D_t \lambda - \tilde{D}_t\tilde{\lambda} - (b - \tilde{b})\tilde{\lambda}_\alpha
\end{align*}  Another multiscale calculation confirms that the residual quantity $$\tilde{D}_t\tilde{\lambda} + (I - \tilde{\nht})\Im(\tilde{\zeta}) + \frac{1}{2}[\tilde{D}_t\tilde{\zeta}, \tilde{\nht}]\frac{\overline{\tilde{\zeta}}_\alpha \tilde{D}_t\tilde{\zeta}}{\tilde{\zeta}_\alpha}$$ is of size at most $C\epsilon^{3/2}$ in $L^2$.  Hence, using \eqref{DtLambdaFormula}, we have that $D_t l = -(I - \nht)\Im(r) + \mathcal{O}(\epsilon^{3/2})$.  But then this implies the bound 
\begin{align}\label{DtLandREquiv}
\frac{1}{C} \|r\|_{L^2} - C\epsilon^{3/2} \leq \|D_t l\|_{L^2} \leq C\|r\|_{L^2} + C\epsilon^{3/2}
\end{align}

\subsubsection{The Evolution Equation and Energy Estimates for $l$.}

We can write immediately that
\begin{align*}
\mathcal{P}l & = G_\lambda - (\mathcal{P} - \tilde{\mathcal{P}})\tilde{\lambda} - \tilde{\mathcal{P}}\tilde{\lambda} \\
& = (G_\lambda - G_\lambda^{(3)}) - (\mathcal{P} - \tilde{\mathcal{P}})\tilde{\lambda} - (\tilde{\mathcal{P}}\tilde{\lambda} - G_\lambda^{(3)})
\end{align*} from which we have by the usual decompositions and estimates that $\mathcal{P}l$ is controlled in $H^s$ by $C(E_s^{3/2} + \epsilon E_s + \epsilon^2 E_s^{1/2} + \epsilon^{7/2}) = O(\epsilon^{7/2})$.  We can now construct the energy $$\int \frac{1}{\mathcal{A}}|D_t l|^2 + il\overline{l}_\alpha d\alpha$$ corresponding to the above evolution equation for $l$.  Since $l$ need not be the trace of a holomorphic function in $\Omega(t)^c$, we cannot conclude that this quantity bounds $\|D_t l\|_{L^2}^2$ above.  Hence we decompose $l$ as $$l = \frac{1}{2}(I - \nht)l + \frac{1}{2}(I + \nht)l := l^- + l^+$$  The energy $$\mathscr{L}(t) = \int \frac{1}{\mathcal{A}}|D_t l|^2 + il^-\overline{l}^-_\alpha d\alpha$$ does bound $\|D_t l\|_{L^2}^2$ from above, by Lemma \ref{BasicEnergyInequality}.  We would like to show that $d\mathscr{L}/dt \leq C\epsilon^5$.  To do so, we write $$\mathscr{L}(t) = \int \frac{1}{\mathcal{A}}|D_t l|^2 + il\overline{l}_\alpha d\alpha - i\int l^-\overline{l}^+_\alpha + l^+\overline{l}^-_\alpha + l^+\overline{l}^+_\alpha d\alpha := \mathscr{L}_1(t) + \mathscr{L}_2(t)$$  By Lemma \ref{BasicEnergyInequality} and \eqref{DtLandREquiv}, the time derivative of the first integral is
\begin{equation*}
\frac{d\mathscr{L}_1}{dt} \leq C \epsilon^{7/2}\|D_t l\|_{L^2} + C \epsilon^{2}\|D_t l\|_{L^2}^2
\end{equation*}  We use the usual almost-orthogonality argument to address the terms of $\mathscr{L}_2(t)$.  Observe that with a change of variables we have
\begin{align*}
\frac{d\mathscr{L}_2}{dt} & = \frac{d}{dt} \left( -i \int l^-\overline{l}^+_\alpha + l^+\overline{l}^-_\alpha + l^+\overline{l}^+_\alpha d\alpha \right) \\
& = -i \int D_t l^-\overline{l}^+_\alpha + D_t l^+\overline{l}^-_\alpha + D_t l^+\overline{l}^+_\alpha + l^-\partial_\alpha D_t\overline{l}^+ + l^+\partial_\alpha D_t \overline{l}^- + l^+\partial_\alpha D_t \overline{l}^+ d\alpha \\
& = \frac{1}{i} \int D_t l^-\overline{l}^+_\alpha + D_t l^+\overline{l}^-_\alpha + D_t l^+\overline{l}^+_\alpha - l^-_\alpha D_t\overline{l}^+ - l^+_\alpha D_t \overline{l}^- - l^+_\alpha D_t \overline{l}^+ d\alpha \\
& = 2 \Im \int D_t l^-\overline{l}^+_\alpha + D_t l^+\overline{l}^-_\alpha + D_t l^+\overline{l}^+_\alpha d\alpha
\end{align*}  We calculate that
\begin{align*}
l^+ & = \frac{1}{2}(I + \nht)l \\
& = \frac{1}{2}(I + \nht)(\lambda - \tilde{\lambda}) \\
& = -\frac{1}{2}(I + \nht)(I - \tilde{\nht})\tilde{\Psi} \\
& =- \frac{1}{2}(I + \nht)(\nht - \tilde{\nht})\tilde{\Psi},
\end{align*} from which we have $\|l^+\|_{H^1} \leq C\epsilon^{5/2}$.  Via Corollary \ref{DtHilbertDiffBound} the same formula readily implies that $\|D_t l^+\|_{L^2} \leq C\epsilon^{5/2}$, and so we clearly have $$\int D_t l^+\overline{l}^+_\alpha d\alpha \leq C\epsilon^5$$  The other two terms are controlled by exploiting their almost-orthogonality.  Note that $D_t l^- = \frac{1}{2}(I - \nht)D_t l - \frac{1}{2}[D_t \zeta, \nht]\frac{l_\alpha}{\zeta_\alpha}$ and $\overline{l}_\alpha^+ = \frac{1}{2}(I - \nhtb^*)\overline{l}_\alpha$.  Since we have
\begin{align*}
l_\alpha & = \lambda_\alpha - \tilde{\lambda_\alpha} \\
& = (I - \nht)\Psi_\alpha - [\xi_\alpha, \nht]\frac{\Psi_\alpha}{\zeta_\alpha} - \tilde{\lambda}_\alpha \\
& = (I - \nht)\Re ( \overline{\zeta}_\alpha D_t\zeta) - [\xi_\alpha, \nht]\frac{\Re(\overline{\zeta}_\alpha D_t\zeta )}{\zeta_\alpha} - \tilde{\lambda}_\alpha
\end{align*} we see that the only $O(\epsilon)$ terms contributed are $(I - \nht_0)\Re(\overline{\zeta}^{(1)}_{t_0}) - \partial_{\alpha_0}(I - \nht_0)\Psi^{(1)} = 0$.  Hence $\|l_\alpha\|_{L^2} \leq C\epsilon^{3/2}$.  But then we can rewrite the commutator as a term of third order as follows:
\begin{align*}
[D_t \zeta, \nht]\frac{l_\alpha}{\zeta_\alpha}& = \left[D_t \zeta, \nht\frac{1}{\zeta_\alpha} + \nhtb\frac{1}{\overline{\zeta}_\alpha}\right]l_\alpha - [D_t \zeta, \nhtb]\frac{\partial_\alpha}{\overline{\zeta}_\alpha}l \\
& = \left[D_t \zeta, \nht\frac{1}{\zeta_\alpha} + \nhtb\frac{1}{\overline{\zeta}_\alpha}\right]l_\alpha - [D_t \zeta, \nhtb]\frac{\partial_\alpha}{\overline{\zeta}_\alpha}\left(l^+ - \frac{1}{2}(\nht + \nhtb)l\right)
\end{align*} and so $\|[D_t \zeta, \nht]\frac{l_\alpha}{\zeta_\alpha}\|_{L^2} \leq C\epsilon^{7/2}$.  Since $\|l^+_\alpha\|_{L^2} \leq C\epsilon^{5/2}$ as above, it suffices to estimate the inner product
\begin{align*}
\langle (I - \nht)D_t l, (I - \nhtb^*)\overline{l}_\alpha\rangle & = -\langle (\nht + \nhtb)D_t l, (I - \nhtb^*)\overline{l}_\alpha\rangle \\
& = -2\langle (\nht + \nhtb)D_t l, \overline{l}_\alpha^+\rangle \\
& \le C\epsilon^{7/2}\|D_t l\|_{L^2}
\end{align*}  For the second term, we have that $D_t l^+ = \frac{1}{2}(I + \nht) D_t l + \frac{1}{2}[D_t \zeta, \nht]\frac{l_\alpha}{\zeta_\alpha}$ and $\overline{l}_\alpha^- = \frac{1}{2}(I + \nhtb^*)\overline{l}_\alpha$.  The commutator is estimated by $\|[D_t \zeta, \nht]\frac{l_\alpha}{\zeta_\alpha}\|_{L^2} \leq C\epsilon^{7/2}$ as before.  Hence it suffices to consider
\begin{align*}
\langle (I + \nht)D_t l, (I + \nhtb^*)\overline{l}_\alpha \rangle & = \langle (I + \nht)D_t l, (\nht^* + \nhtb^*)\overline{l}_\alpha \rangle \\
& = \left\langle 2D_t l^+ - [D_t\zeta, \nht]\frac{l_\alpha}{\zeta_\alpha}, (\nht + \nhtb)^*\overline{l}_\alpha \right\rangle \\
&\le C(\epsilon^{7/2}\|D_t l\|_{L^2}+\epsilon^5)
\end{align*}  Summing these estimates, we finally have that
$$\frac{d\mathscr{L}}{dt}(t) \leq C(\epsilon^5 + \epsilon^{7/2}\mathscr L(t)^{1/2}+ \epsilon^{2}\mathscr L(t))\leq C\epsilon^2(\epsilon^3 + \mathscr L(t))$$ whenever $0 \leq t \leq \mathscr{T}\epsilon^{-2}$.  
Therefore
$$\sup_{0\le t\le \mathscr T \epsilon^{-2}}\mathscr L(t)\le C(\mathscr L(0) + \epsilon^3)$$
Consequently
$$\|r\|_{C([0, \mathscr{T}\epsilon^{-2}]: L^2)} \leq C(\mathscr{L}(0)^{1/2} + \epsilon^{3/2}).$$

\subsection{Justifying the Eulerian Asymptotics for the Profile.}

With these preliminaries, we can now prove the

\begin{theorem}\label{StrongEulerianResult}
Suppose the remainder hypotheses \eqref{EulerianRemainderConditions} hold, and moreover that the stronger conditions hold: 
\begin{equation}\label{StrongerConditions}
\|\eta_0 - \epsilon \Im \zeta^{(1)}\|_{L^2} \leq C\epsilon^{3/2} \qquad \text{and} \qquad \|\Phi_0(\alpha + i\eta_0(\alpha)) - \epsilon \omega^{-1} \Re \zeta^{(1)}\|_{\dot{H}^{1/2}} \leq C\epsilon^{3/2}
\end{equation} where $\Phi_0$ is the initial velocity potential.  Then there exists a possibly smaller $\epsilon_0$ so that in addition to the conclusions of Theorem \ref{WeakEulerianResult} holding, the profile $\eta$ satisfies $$\|\eta(t) - \epsilon\Im \zeta^{(1)}(t)\|_{H^{s + 1}} \leq C(\|B_0\|_{H^{s + 7}}, \mathscr{T}, C_1, C_2)\epsilon^{3/2}$$ for all $0 \leq t \leq \mathscr{T}\epsilon^{-2}$.
\end{theorem}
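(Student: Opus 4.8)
The plan is to exploit the energy $\mathscr L$ constructed in the previous subsection for the remainder $l=\lambda-\tilde\lambda$: once I show that the stronger hypotheses \eqref{StrongerConditions} force $\mathscr L(0)\le C\epsilon^3$, the energy estimate already established yields $\|r\|_{C([0,\mathscr T\epsilon^{-2}];L^2)}\le C(\mathscr L(0)^{1/2}+\epsilon^{3/2})\le C\epsilon^{3/2}$, and this $L^2$ bound on $r$, together with the derivative estimates already in hand, gives the asserted control of $\eta$ after transferring back to the graph variable. First I reparametrize the graph data $(\eta_0,\mathfrak v_0,\mathfrak w_0)$ by $\kappa_0^{-1}$ exactly as in the proof of Theorem \ref{WeakEulerianResult}, obtaining $B_0$-admissible data $(\xi_0,v_0,w_0)\in\mathscr A^s$ with $\zeta_0=\alpha+\xi_0$ and $\zeta(\cdot,0)=z(\cdot,0)$; Theorem \ref{MainResult}, Proposition \ref{RemainderEnergyAPrioriBound} and Lemma \ref{RealPartProfileControl} then furnish a solution on $[0,\mathscr T\epsilon^{-2}]$ with $\|r_\alpha\|_{C([0,\mathscr T\epsilon^{-2}];H^s)}\le C\epsilon^{3/2}$ and with $\Sigma(t)$ a graph.

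Next I translate \eqref{StrongerConditions} into the $\zeta$-coordinates. Using that $\dot H^{1/2}(\mathbb R)$ is scale invariant, that $\|\kappa_0-\alpha\|_{L^2}+\|\kappa_{0,\alpha}-1\|_{H^s}\le C\epsilon^{1/2}$, and the identity $\Psi^{(1)}=\omega^{-1}\Re\zeta^{(1)}$ from \eqref{Psi1Formula} together with \eqref{TildePsiFormula} (noting $\|\epsilon^2\Psi^{(2)}\|_{\dot H^{1/2}}+\|\epsilon^3\Psi^{(3)}\|_{\dot H^{1/2}}\le C\epsilon^{3/2}$), the condition on $\Phi_0$ becomes $\|\Psi_0-\tilde\Psi(0)\|_{\dot H^{1/2}}\le C\epsilon^{3/2}$; a similar but easier computation turns the condition on $\eta_0$ into $\|\Im r(0)\|_{L^2}\le C\epsilon^{3/2}$. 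Granting these, I bound the two pieces of $\mathscr L(0)=\int\frac1{\mathcal A}|D_t l(0)|^2+il^-(0)\overline{l^-(0)}_\alpha\,d\alpha$. The identity $D_t l=-(I-\nht)\Im(r)+\mathcal O(\epsilon^{3/2})$ used to derive \eqref{DtLandREquiv} reduces the kinetic part to $\|r(0)\|_{L^2}$, which by the argument behind \eqref{DtLAndRComparable} and Lemma \ref{DoubleLayerPotentialArgument} is $\le C\|\Im r(0)\|_{L^2}+C\epsilon^{5/2}\le C\epsilon^{3/2}$. For the potential part I write $l_0=(I-\nht_{\zeta_0})(\Psi_0-\tilde\Psi_0)+(\tilde\nht-\nht_{\zeta_0})\tilde\Psi_0$ and use the $\dot H^{1/2}$ boundedness of $\nht_{\zeta_0}$ for the chord-arc curve $\zeta_0$ together with Corollary \ref{DiffHilbertBound} (with $r(0)=\mathcal O(\epsilon^{3/2})$) to get $\|l_0\|_{\dot H^{1/2}}\le C\epsilon^{3/2}$; then, since $\int i f\overline f_\alpha\,d\alpha\le C\|f\|_{\dot H^{1/2}}^2$ and $\|l^+(0)\|_{H^1}\le C\epsilon^{5/2}$ from the previous subsection, the $\dot H^{\pm1/2}$ duality pairing gives $\int i l^-(0)\overline{l^-(0)}_\alpha\,d\alpha\le C\|l_0\|_{\dot H^{1/2}}^2+C\epsilon^4\le C\epsilon^3$. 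Thus $\mathscr L(0)\le C\epsilon^3$, and the inequality $\frac{d\mathscr L}{dt}\le C\epsilon^2(\epsilon^3+\mathscr L)$ yields $\sup_{[0,\mathscr T\epsilon^{-2}]}\mathscr L\le C\epsilon^3$ and hence $\|r\|_{C([0,\mathscr T\epsilon^{-2}];L^2)}\le C\epsilon^{3/2}$.

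Finally I pass to the graph. Writing $\zeta=x+iy$, $x=\alpha+\Re\xi$, $y=\Im\xi$, and $\alpha=\alpha(x,t)$ for the inverse of $x\mapsto\alpha+\Re\xi(\alpha,t)$, I have $\eta(x,t)=\Im\xi(\alpha(x,t),t)$, $\|\alpha(\cdot,t)-\mathrm{id}\|_{L^2}\le C\|\Re\xi\|_{L^2}\le C\epsilon^{1/2}$ and $\|\alpha_x-1\|_{H^s}\le C\epsilon^{1/2}$. Decomposing
$$\eta(x,t)-\epsilon\Im\zeta^{(1)}(x,t)=\Im r(\alpha(x),t)+\Im\bigl(\epsilon^2\zeta^{(2)}+\epsilon^3\zeta^{(3)}\bigr)(\alpha(x),t)+\epsilon\bigl(\Im\zeta^{(1)}(\alpha(x),t)-\Im\zeta^{(1)}(x,t)\bigr),$$
the first term is $\mathcal O(\epsilon^{3/2})$ in $L^2_x$ by the $L^2$ bound on $r$ and Lemma \ref{ChangeVarBounds}, the second by the $\epsilon$-counting for $\zeta^{(2)}$ and $\zeta^{(3)}$ together with Lemma \ref{ChangeVarBounds}, and the third by the mean value theorem, since the phase shift $e^{ik\alpha(x)}-e^{ikx}$ is $O(\epsilon^{1/2})$ in $L^2_x$ and carries the prefactor $\epsilon$. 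This gives $\|\eta(t)-\epsilon\Im\zeta^{(1)}(t)\|_{L^2}\le C\epsilon^{3/2}$, and since $\partial_x(\epsilon\Im\zeta^{(1)})=\epsilon k\Re\zeta^{(1)}+\mathcal O(\epsilon^{3/2})$ in $H^s$, the estimate $\|\eta_x-\epsilon k\Re\zeta^{(1)}\|_{H^s}\le C\epsilon^{3/2}$ already proved in Theorem \ref{WeakEulerianResult} upgrades this to $\|\eta(t)-\epsilon\Im\zeta^{(1)}(t)\|_{H^{s+1}}\le C\epsilon^{3/2}$ for $0\le t\le\mathscr T\epsilon^{-2}$. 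I expect the main obstacle to be the potential-energy estimate: one must carry out the $\dot H^{1/2}$ bookkeeping at borderline scaling through the $\kappa_0$-reparametrization, separating $L^\infty$ and $\dot H^{1/2}$ factors and using scale invariance, and must carefully compare $\int i l^-(0)\overline{l^-(0)}_\alpha\,d\alpha$ to the homogeneous Sobolev energy of $l_0$, in the spirit of the Green's-theorem argument of Lemma \ref{RealPartProfileControl} but now for the remainder $l$ rather than for $\lambda$ itself.
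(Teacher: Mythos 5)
Your proposal is correct and follows essentially the same route as the paper: reduce everything to showing $\mathscr L(0)\le C\epsilon^3$, control the kinetic part via $\|r_0\|_{L^2}$ (equivalently $\|\Im r(0)\|_{L^2}$ through \eqref{DtLAndRComparable}) and the potential part via the $\dot H^{1/2}$ smallness of $\Psi_0-\epsilon\Psi^{(1)}$ together with the $\dot H^{1/2}$ boundedness of $\nht$ and the $\mathcal O(\epsilon^{5/2})$ bound on $(\nht-\tilde\nht)\tilde\Psi$, then propagate with the already-established inequality $\frac{d\mathscr L}{dt}\le C\epsilon^2(\epsilon^3+\mathscr L)$ to get $\|r\|_{C([0,\mathscr T\epsilon^{-2}];L^2)}\le C\epsilon^{3/2}$ and transfer back to the graph. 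The only differences are cosmetic: you bound $\int i l^-\overline{l}^-_\alpha$ by $\|l_0\|_{\dot H^{1/2}}^2$ plus almost-orthogonality with $l^+$ where the paper uses a three-term expansion of $\int i l_0\partial_\alpha\overline{l}_0$, and you spell out the final $\kappa_0$-reparametrization and graph-coordinate steps that the paper leaves implicit.
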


\begin{proof}
As in the proof of Lemma \ref{RealPartProfileControl}, it suffices to derive conditions under which $\mathscr{L}(0) = O(\epsilon^3)$.  We will show that the quantity $$\mathscr{L}(0) = \int \frac{1}{\mathcal{A}_0} |D_t l_0|^2 + i l_0 \partial_\alpha \overline{l}_0 d\alpha$$ is controlled by $\|r_0\|_{L^2}$ and $\|\Phi_0 \circ z_0 - \epsilon \omega^{-1} \Re \zeta^{(1)}\|_{\dot{H}^{1/2}}$.\footnote{Ideally one would prefer, in keeping with the weaker conditions given in Theorem \ref{WeakEulerianResult}, to bound $\mathscr{L}(0)$ by some difference of the initial velocity fields of the true and approximate solution in the square-mean.  However, since these velocity fields are defined in different domains, we instead give this equivalent condition, which is more straightforward to state.}

We can control the first term $$\int \frac{1}{\mathcal{A}_0}|D_t l(0)|^2 \leq C\|D_t l_0\|_{L^2}^2 \leq C(\|r_0\|_{L^2} + \epsilon^{3/2})^2$$ by \eqref{DtLAndRComparable}.

To estimate the other term in $\mathscr{L}(0)$, observe that we can write $l$ in terms of $\Psi - \tilde{\Psi}$ as follows:
\begin{align*}
l & = (I - \nht)\Psi - (I - \tilde{\nht})\tilde{\Psi} \\
& = (I - \nht)(\Psi - \tilde{\Psi}) - (\nht - \tilde{\nht})\tilde{\Psi}
\end{align*} and the latter term is $\mathcal{O}(\epsilon^{5/2})$ by Corollary \ref{DiffHilbertBound}.  Hence we expand our integral as usual:
\begin{align*}
\int i l_0 \partial_\alpha \overline{l}_0 d\alpha & = i \int \left(l_0 - (I - \nht_{z_0})(\Psi_0 - \tilde{\Psi}_0)\right) \partial_\alpha \overline{l}_0 d\alpha \\
& \quad - i \int \partial_\alpha(I - \nht_{z_0})(\Psi_0 - \tilde{\Psi}_0) \overline{\left(l_0 - (I - \nht_{z_0})(\Psi_0 - \tilde{\Psi}_0)\right)} d\alpha \\
& \quad + i \int (I - \nht_{z_0})(\Psi_0 - \tilde{\Psi}_0) \partial_\alpha \overline{(I - \nht_{z_0})(\Psi_0 - \tilde{\Psi}_0)} d\alpha
\end{align*}  The first two of these integrals are $O(\epsilon^4)$, since $\partial_\alpha(\Psi - \tilde{\Psi}) = \Re(\overline{\zeta}_\alpha D_t\zeta) - \tilde{\Psi}_\alpha$ is $\mathcal{O}(\epsilon^{3/2})$.  Therefore since $\nht$ is bounded on $\dot{H}^{1/2}$,\footnote{Since $\nht$ is bounded on $L^2$, this can be checked by showing that $\nht$ is bounded on $\dot{H}^1$ using the identity $\partial_\alpha \nht f = \nht f_\alpha + [z_\alpha, \nht]\frac{f_\alpha}{z_\alpha}$ and then by using complex interpolation.} it follows that $$\left| \int i l_0 \partial_\alpha \overline{l}_0 d\alpha \right| \leq \|(\Phi_0 \circ z_0) - \epsilon \omega^{-1} \Re \zeta^{(1)} \|_{\dot{H}^{1/2}}^2 + C\epsilon^4$$  Hence, if we choose the initial profile and the initial velocity potential $\Phi_0$ to satisfy $$\|r_0\|_{L^2} \leq C\epsilon^{3/2} \qquad \text{and} \qquad \|(\Phi_0 \circ z_0) - \epsilon \omega^{-1} \Re \zeta^{(1)} \|_{\dot{H}^{1/2}} \leq C\epsilon^{3/2}$$ then $\mathscr{L}(0) \leq C\epsilon^3$, and so $\sup_{0\le t\le \mathscr T\epsilon^{-2}}\|r(t)\|_{L^2} \leq C\epsilon^{3/2}$ as well.
\end{proof}

\textit{Acknowledgement:} Part of the work in this paper was done while the authors were visiting at the IMA during the academic year 2009-10. S. Wu would like to thank the IMA for their hospitality, generous support and pleasant academic environment.  N. Totz would like to thank the IMA for its generous travel funding during 2009-10.

\newpage

\appendix

\section{Glossary of Symbols}

We collect the commonly used notations and symbols.  References such as (1.1) refer to the equation in which the symbol was introduced, whereas p. 1 refers to the page number in which the symbol is first used.

\begin{align*}
\mathbb{N} & \qquad \text{The set of nonnegative integers} \\
\mathbb{R} & \qquad \text{The set of real numbers} \\
\mathbb{C} & \qquad \text{The set of complex numbers} \\
\overline{w} & \qquad \text{The complex conjugate of } w \in \mathbb{C} \\
\Re(w) & \qquad \text{The real part of } w \in \mathbb{C} \\
\Im(w) & \qquad \text{The imaginary part of } w \in \mathbb{C} \\
[F, G] & \qquad \text{The commutator }FG - GF\text{ of the operators }F\text{ and }G \\
U_g & \qquad \text{Precomposition by }g\text{, p. 2}\\
\langle \cdot, \cdot\rangle & \qquad \text{The real inner product on }L^2\text{, p. 38} \\
T^* & \qquad \text{The formal real adjoint of a linear operator } T\text{, p. 38} \\
\Omega(t) & \qquad \text{The fluid region associated to }z\text{ at time }t\text{, p. 1} \\
\Sigma(t) & \qquad \text{The boundary of the fluid region associated to }z\text{ at time }t\text{, p. 1} \\
z(\alpha, t) & \qquad \text{The parametrization of }\Sigma(t)\text{ in Lagrangian coordinates }\alpha\text{, p. 1} \\
\mathfrak{a} & \qquad \text{see p. 1}\\
\pv \int & \qquad \text{The principal value integral, p. 1} \\
\oht & \qquad \text{The Hilbert transform associated to }z\text{, p. 1} \\
\mathfrak{K} & \qquad \text{The double layer potential operator associated to }z\text{, p. 4} \\
\kappa & \qquad \text{The change of variables taking }z\text{ to }\zeta\text{, \eqref{ChangeOfVariables}} \\
\zeta & \qquad \text{The new water wave interface variable, \eqref{NewVariableNotation}} \\
\xi & \qquad \text{The perturbation of }\zeta\text{ from the still water solution, p. 5} \\
\nht & \qquad \text{The Hilbert transform associated to }\zeta\text{, p. 3} \\
\mathcal{K} & \qquad \text{The double layer potential operator associated to the curve }\zeta\text{, p. 51} \\
\nht_\gamma & \qquad \text{The Hilbert transform associated to the curve }\gamma\text{, p. 3} \\
\mathcal{K}_\gamma & \qquad \text{The double layer potential operator associated to the curve }\gamma \\
\nht_0 & \qquad \text{The Hilbert transform associated to the curve }\alpha\text{, p. 3} \\
D_t & \qquad \text{The transformed time derivative, \eqref{NewVariableNotation}} \\
\mathcal{P} & \qquad \text{The transformed linear water wave operator, \eqref{NewVariableNotation}} \\
b & \qquad \text{see \eqref{NewVariableNotation}} \\
\mathcal{A} & \qquad \text{see \eqref{NewVariableNotation}} \\
\end{align*}
\begin{align*}
G & \qquad \text{The cubic nonlinearity of the transformed water wave equation, \eqref{GFormula}} \\
\hat{f} & \qquad \text{The Fourier transform of }f\text{, p. 6} \\
H^s & \qquad \text{The }L^2\text{ Sobolev space of index }s\text{, p. 6} \\
\dot{H}^s & \qquad \text{The }L^2\text{ homogeneous Sobolev space of index }s\text{, p. 6} \\
W^{s, \infty} & \qquad \text{The }L^\infty\text{ Sobolev space of index }s\text{, p. 6} \\
C([0, T]; X) & \qquad \text{The Banach space of functions }f \in X \times [0, T]\text{ with} \\
& \qquad \qquad \|f\|_X\text{ varying continuously in }[0, T]\text{, p. 6} \\
\mathfrak{S}(t) & \qquad \text{The supremum of a modified energy of }\zeta\text{, \eqref{ZetaLocalAPrioriBound}} \\
\zeta^{(n)} & \qquad \text{Terms of the formal power expansion of }\zeta\text{ in }\epsilon\text{, \eqref{TildeZetaFormula}} \\
\nht_n & \qquad n\text{th order term of the formal expansion of }\nht\text{, \eqref{HilbertFormulas}} \\
\nht^{(n)} & \qquad \text{Operator at the order }\epsilon^n\text{ of the formal expansion of }\nht \\
& \qquad \qquad \text{ acting on a multiscale function, \eqref{MultiscaleHilbertFormulas}} \\
\tilde{\zeta} & \qquad \text{Formal multiscale approximation of }\zeta\text{, see \eqref{TildeZetaFormula}} \\
k & \qquad \text{The wave number of the wave packet approximation to }\zeta\text{, p. 12} \\
\omega & \qquad \text{The wave frequency of the wave packet approximation to }\zeta\text{, p. 12} \\
\phi & \qquad \text{The phase of the wave packet approximation to }\zeta\text{, p. 12} \\
\omega^\prime & \qquad \text{The group velocity, p. 13} \\
\omega^{\prime \prime} & \qquad \text{The dispersion coefficent, p. 15} \\
B & \qquad \text{The slowly varying envelope of the leading order of }\tilde{\zeta}\text{, \eqref{Zeta1Formula}}\\
\tilde{\xi} & \qquad \text{Perturbation of }\tilde{\zeta}\text{ from the still water solution, p. 17} \\
\tilde{b} & \qquad \text{see \eqref{TildeBFormula}} \\
\tilde{\mathcal{A}} & \qquad \text{see \eqref{TildeAFormula}} \\
\tilde{D}_t & \qquad \text{see \eqref{TildeDtFormulas}} \\
\tilde{\mathcal{P}} & \qquad \text{see \eqref{TildeDtFormulas}} \\
r & \qquad \text{The difference between the true solution }\zeta\text{ and the} \\
& \qquad \qquad \text{approximate solution }\tilde{\zeta}\text{ of the water wave equations, p. 18} \\
\mathcal{O}(\epsilon^n) & \qquad \text{Landau notation for functions in }H^s\text{, p. 18} \\
E_s & \qquad \text{The modified energy of }r\text{, p. 18} \\
\mathcal{E} & \qquad \text{The energy of }r\text{, \eqref{FullEnergyDefinition}} \\
\mathscr{A}^s & \qquad \text{The manifold of admissible initial conditons for }(z, z_t)\text{, p. 41} \\
+\,\cc & \qquad \text{Adds the complex conjugate of the preceding term, p. 52}
\end{align*}

\section{Estimates of Singular Integrals in Sobolev Space}

The purpose of this appendix is to provide bounds for singular integrals of the form 
\begin{equation}\label{S1Formula}
S_1(A, f) = \int \prod_{j = 1}^m \frac{A_j(\alpha) - A_j(\beta)}{\gamma_j(\alpha) - \gamma_j(\beta)} \frac{f(\beta)}{\gamma_0(\alpha) - \gamma_0(\beta)} d\beta
\end{equation} and
\begin{equation}\label{S2Formula}
S_2(A, f) = \int \prod_{j = 1}^m \frac{A_j(\alpha) - A_j(\beta)}{\gamma_j(\alpha) - \gamma_j(\beta)} f_\beta(\beta) d\beta\end{equation} in Sobolev space.  For these singular integrals to be well-defined we insist that the $\gamma_j$ each obey the chord-arc condition \eqref{ChordArcCondition}.  Our starting point is the result of Coifman-Meyer-McIntosh, expanded upon by Wu, which bounds these singular integrals in $L^2$.

\begin{theorem}\label{CMMWEstimates}
(c.f. \cite{CoifmanMeyerMcintoshL2Bounds} and \cite{WuAlmostGlobal2D})  Both $\|S_1(A, f)\|_{L^2}$ and $\|S_2(A, f)\|_{L^2}$ are bounded by $$C \prod_{j = 1}^m \|A_j^\prime\|_{X_j} \|f\|_{X_0},$$ where one of the $X_0, X_1, \ldots, X_n$ is equal to $L^2$ and the rest are $L^\infty$.  The constant $C$ depends $\|\gamma_0^\prime\|_{L^\infty}, \|\gamma_1^\prime\|_{L^\infty}, \ldots, \|\gamma_m^\prime\|_{L^\infty}$.
\end{theorem}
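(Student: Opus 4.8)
The plan is to deduce Theorem~\ref{CMMWEstimates} from the classical multilinear Calder\'on commutator estimate of \cite{CoifmanMeyerMcintoshL2Bounds} on the flat line, namely that for Lipschitz functions $C_1,\dots,C_p$ on $\mathbb R$ one has
\begin{equation*}
\Bigl\|\int \frac{\prod_{i=1}^p\bigl(C_i(\alpha)-C_i(\beta)\bigr)}{(\alpha-\beta)^{p+1}}\,g(\beta)\,d\beta\Bigr\|_{L^2}\le C_0^{\,p}\,\prod_{i=1}^p\|C_i'\|_{L^\infty}\,\|g\|_{L^2},
\end{equation*}
together with two elementary reductions that bring $S_1$ and $S_2$ into this form. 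When the curves $\gamma_j$ are not close to the identity one would instead invoke the $L^2$-boundedness of the Cauchy integral along an arbitrary chord-arc curve; but in every application made in this paper the $\gamma_j$ are small $H^s$-perturbations of the identity by \eqref{ZetaLocalAPrioriBound} and \eqref{NLSGlobalBound}, so the flat estimate above suffices after expansion.

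First I would reduce $S_2$ to $S_1$. Integrating by parts in $\beta$ (note that the bound is asked in terms of $\|f\|_{X_0}$, not $\|f_\beta\|$, so moving the derivative off $f$ is the natural step) and applying the product rule to $\partial_\beta\prod_j\frac{A_j(\alpha)-A_j(\beta)}{\gamma_j(\alpha)-\gamma_j(\beta)}$, using $\partial_\beta\bigl(A_j(\alpha)-A_j(\beta)\bigr)=-A_j'(\beta)$ and $\partial_\beta\bigl(\gamma_j(\alpha)-\gamma_j(\beta)\bigr)^{-1}=\gamma_j'(\beta)\bigl(\gamma_j(\alpha)-\gamma_j(\beta)\bigr)^{-2}$, writes $S_2(A,f)$ as a finite sum of integrals each of the form $S_1(\widetilde A,\widetilde f)$ as in \eqref{S1Formula}, where $\widetilde f$ is $A_j'f$ or $\gamma_j'f$ for some $j$ (so its $L^2$ norm is controlled whenever one of the original norms is $L^2$, by H\"older) and where the number of numerator difference-factors is exactly one less than the number of denominator factors. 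It thus suffices to treat $S_1(A,f)$. For that, write $\gamma_j(\alpha)=\alpha+\delta_j(\alpha)$ with $\|\delta_j'\|_{L^\infty}=\|\gamma_j'-1\|_{L^\infty}<1$, and expand each factor by the geometrically convergent resolvent series
\begin{equation*}
\frac{1}{\gamma_j(\alpha)-\gamma_j(\beta)}=\sum_{n\ge 0}(-1)^n\,\frac{\bigl(\delta_j(\alpha)-\delta_j(\beta)\bigr)^n}{(\alpha-\beta)^{n+1}}.
\end{equation*}
Multiplying out turns $S_1(A,f)$ into a sum over multi-indices of flat-line integrals with denominator $(\alpha-\beta)^{\,N+(m+1)}$ and numerator $\bigl(\prod_j(\delta_j(\alpha)-\delta_j(\beta))^{n_j}\bigr)\prod_{j=1}^m(A_j(\alpha)-A_j(\beta))$, where $N=\sum_j n_j$, so the numerator difference-count is one below the denominator power, exactly the Coifman--Meyer homogeneity. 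Applying the displayed estimate termwise (carrying the designated factor in $L^2$, all others in $L^\infty$) and summing the geometric series in each $n_j$ yields the claimed bound with constant depending only on $\|\gamma_0'\|_{L^\infty},\dots,\|\gamma_m'\|_{L^\infty}$ and the chord-arc constants.

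The main obstacle is not any single estimate but the combinatorial bookkeeping: one must verify that every term produced by the integration by parts and by the multi-index expansion is a genuine Coifman--Meyer kernel (numerator difference-count equal to denominator power minus one, with no factor left uncancelled), that the single $L^2$ slot propagates correctly through all terms so H\"older distributes the norms as claimed, and that the multi-index sums of operator norms converge with a constant of the stated shape. A secondary technical point is that the resolvent expansion forces the curves to be close to the flat line; for the fully general chord-arc statement one replaces this step by David's theorem on the Cauchy integral along chord-arc curves together with a common Littlewood--Paley/Carleson-measure resolution to control products of Cauchy kernels along distinct curves, but this refinement is not needed here.
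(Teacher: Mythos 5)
The paper does not prove Theorem \ref{CMMWEstimates} at all: it is quoted as background from \cite{CoifmanMeyerMcintoshL2Bounds} and \cite{WuAlmostGlobal2D}, so your attempt is to reprove the cited black box rather than to reproduce an argument in the text. Your reduction of $S_2$ to $S_1$ by integration by parts is fine, and the distribution of the single $L^2$ slot through H\"older is handled correctly. The gap is in the second step. Expanding $\frac{1}{\gamma_j(\alpha)-\gamma_j(\beta)}$ in the resolvent series and applying the flat commutator bound termwise, the $N$-th term carries an operator-norm factor of the size of the constant in the $p$-linear Calder\'on commutator estimate with $p\sim N+m$. With the bound you quote, $C_0^{\,p}$, the series in each $n_j$ converges only when $C_0\|\gamma_j'-1\|_{L^\infty}<1$, not merely $\|\gamma_j'-1\|_{L^\infty}<1$ as you claim; to get convergence for all Lipschitz perturbations of size less than $1$ you must invoke the polynomial-in-$p$ growth of the commutator norms, which is precisely the hard content of Coifman--Meyer--McIntosh and cannot be taken as an elementary input. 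So as written your argument proves the estimate only for curves sufficiently close to the real line, with a threshold depending on an unspecified absolute constant, whereas the theorem is stated for arbitrary chord-arc curves with constants depending only on $\|\gamma_j'\|_{L^\infty}$ (and, implicitly, the chord-arc lower bounds, which any correct statement must retain).

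Your fallback for the general case is also not a proof: boundedness of the Cauchy integral on a single chord-arc curve (David's theorem) does not yield the genuinely multilinear kernel of $S_1$, in which distinct curves $\gamma_0,\dots,\gamma_m$ appear in different denominator factors and the numerators $A_j$ are unrelated to the curves; controlling exactly such products is the content of the cited CMM machinery (kernels of the form $\frac{1}{\alpha-\beta}\prod_i F_i\bigl(\frac{B_i(\alpha)-B_i(\beta)}{\alpha-\beta}\bigr)$ with the chord-arc condition guaranteeing that the difference quotients stay in a region where $F_i(u)=1/u$ is smooth), so appealing to it vaguely at that point is circular. Within this paper the perturbative regime would in fact suffice for every application (the a priori assumption \eqref{ZetaLocalAPrioriBound}, the bound \eqref{NLSGlobalBound}, and the smallness of $\delta$, $\epsilon_0$ keep all curves used near the identity), but then the smallness thresholds for $\delta$ and $\epsilon_0$ would have to be tied to the commutator constants, and the theorem as stated would still not be established; the clean route remains the citation the paper uses.
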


Observe that the kernels of the operators $S_1$ and $S_2$ are functions of differences of the form $F\left(f_1(\alpha) - f_1(\beta), \ldots, f_n(\alpha) - f_n(\beta)\right)$.  When the differential operator $(\partial_\alpha + \partial_\beta)$ acts on such differences of functions, it yields another function of the same kind, e.g., the Chain Rule becomes $$(\partial_\alpha + \partial_\beta)F\left(f_1(\alpha) - f_1(\beta), \ldots, f_n(\alpha) - f_n(\beta)\right) = \sum_{i = 1}^n (\partial_i F) (\partial_\alpha + \partial_\beta)(f_i(\alpha) - f_i(\beta))$$  The other rules of differential calculus hold as well.  Hence acting on kernels of $S_1$ or $S_2$ with $m$ factors by $(\partial_\alpha + \partial_\beta)$ yields another kernel  which is a sum  of the same type with $m + 1$ factors.  This allows us to cleanly prove the following

\begin{proposition}\label{SingIntSobolevEstimates}
Let $n \geq 3$ be given, and suppose that \eqref{ChordArcCondition} holds.  Then $$\|S_2(A, f)\|_{H^n} \leq C \prod_{j = 1}^m \|A'_j\|_{Y_j}\|f\|_Z,$$ where for all $j = 1, \ldots, m$ the Banach spaces $Y_j = H^{n-1} \text{ or } W^{n - 2, \infty}$, $Z= H^n \text{ or } W^{n - 1, \infty}$.
% and at least one of the $ Y_j, Z$ is $H^n$.  $X_j=H^{n-1} \text{ or } W^{n-2,\infty}$.
Moreover, the constant $C = C\left( \|\partial_\alpha \gamma_j - 1\|_{H^{n-1}}, j=1,\dots, m\right)$.
\end{proposition}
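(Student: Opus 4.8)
The plan is to prove Proposition~\ref{SingIntSobolevEstimates} by reducing everything, via the Leibniz rule for differences, to the $L^2$ bounds of Theorem~\ref{CMMWEstimates}. First I would recall the structural observation already highlighted in the text: when $(\partial_\alpha+\partial_\beta)$ acts on a kernel of the form $\prod_{j=1}^m \frac{A_j(\alpha)-A_j(\beta)}{\gamma_j(\alpha)-\gamma_j(\beta)}$, the result is again a finite sum of kernels of exactly the same type, but with $m+1$ difference-quotient factors; the two mechanisms are (i) a factor $\frac{A_j(\alpha)-A_j(\beta)}{\gamma_j(\alpha)-\gamma_j(\beta)}$ becoming $\frac{A_j'(\alpha)-A_j'(\beta)}{\gamma_j(\alpha)-\gamma_j(\beta)}$ (legitimate since $A_j'$ itself is of the right regularity class) plus a term with an extra $\frac{\gamma_j'(\alpha)-\gamma_j'(\beta)}{\gamma_j(\alpha)-\gamma_j(\beta)}$, and (ii) the denominator $\gamma_j(\alpha)-\gamma_j(\beta)$ producing, after the quotient rule, an extra factor $\frac{\gamma_j'(\alpha)-\gamma_j'(\beta)}{\gamma_j(\alpha)-\gamma_j(\beta)}$. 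Each new factor has numerator that is a derivative of some $A_j$ or $\gamma_j$.

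Next I would set up the actual computation of $\partial_\alpha^n S_2(A,f)$. Write $\partial_\alpha = \tfrac12\big((\partial_\alpha+\partial_\beta) + (\partial_\alpha-\partial_\beta)\big)$; since $S_2$ involves $f_\beta(\beta)\,d\beta$, the $(\partial_\alpha-\partial_\beta)$ part, when it hits the $\beta$-slot, can be integrated by parts in $\beta$ to move a derivative onto $f$ or onto one of the numerators, while the $(\partial_\alpha+\partial_\beta)$ part acts on the kernel as above. Carrying this out $n$ times produces a finite sum: each term is an integral of the form $\int \big(\prod_{i} \tfrac{P_i(\alpha)-P_i(\beta)}{\gamma_{j_i}(\alpha)-\gamma_{j_i}(\beta)}\big)\, g_\beta(\beta)\,d\beta$ or the $S_1$-type analogue $\int \big(\prod_i \tfrac{P_i(\alpha)-P_i(\beta)}{\gamma_{j_i}(\alpha)-\gamma_{j_i}(\beta)}\big)\tfrac{g(\beta)}{\gamma_{j_0}(\alpha)-\gamma_{j_0}(\beta)}\,d\beta$, where the $P_i$ are derivatives of the $A_j$ or $\gamma_j$ of total order $\le n$ distributed among the factors, and $g$ is a derivative of $f$ of order $\le n$. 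Thus it suffices to bound each such term in $L^2$ by Theorem~\ref{CMMWEstimates}, placing in $L^2$ exactly one of the numerators or $g$: if all the derivatives landed on one $A_j'$ one gets $\|A_j'\|_{H^{n-1}}$ with the rest in $L^\infty$; if at most $n-1$ derivatives land on $f$ one can take $\|f\|_{H^n}$ (or $\|f\|_{W^{n-1,\infty}}$) in $L^2$ and the numerators in $L^\infty$; the factors coming from $\gamma_j$ contribute only $\|\gamma_j'\|_{W^{n-2,\infty}}$ and the constant's dependence on $\|\partial_\alpha\gamma_j-1\|_{H^{n-1}}$. Summing the finitely many such terms, and invoking the Sobolev embedding $H^{n-1}\hookrightarrow W^{n-2,\infty}$ for $n\ge 3$ to absorb the $L^\infty$ norms, yields the stated bound with constant $C=C(\|\partial_\alpha\gamma_j-1\|_{H^{n-1}})$.

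The main bookkeeping obstacle is organizing the combinatorics so that in every resulting term exactly one factor is available to put in $L^2$ while all the others genuinely have enough regularity to sit in $L^\infty$; concretely, one must check that whenever a numerator receives, say, $\ell$ extra derivatives it was originally $A_j'\in H^{n-1}$ (or $\gamma_j'\in H^{n-1}$) so $\partial_\alpha^\ell A_j'\in H^{n-1-\ell}\hookrightarrow L^\infty$ as long as $\ell\le n-2$, and that the one factor carrying the remaining top-order derivatives is the one estimated in $L^2$. Since the total number of derivatives is $n$ and is spread over $m+(\text{extra factors})$ slots, a short counting argument (never more than $n$ derivatives total, at most $n-1$ landing anywhere other than the designated $L^2$-factor) closes this. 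I would also note that the borderline case---all $n$ derivatives on $f$ via repeated integration by parts---is exactly when one must instead pass to the $S_1$ form, which is why both $S_1$ and $S_2$ bounds in Theorem~\ref{CMMWEstimates} are needed. The only genuinely delicate point, already flagged in the paper's footnote, is that the operators $\nht_1,\nht_2$ arising elsewhere are of $S_2$-type and hence, by this proposition, do not lose derivatives; that is automatic here once the reduction above is in place.
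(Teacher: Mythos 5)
Your overall strategy is the same as the paper's: distribute $\partial_\alpha^n$ between $(\partial_\alpha+\partial_\beta)$ acting on the kernel (which reproduces kernels of the type \eqref{S2Formula} with more factors) and pure $\beta$-derivatives integrated by parts onto $f$, then estimate each resulting term in $L^2$ by Theorem \ref{CMMWEstimates} with exactly one factor placed in $L^2$ and the rest in $L^\infty$ via $H^{n-1}\hookrightarrow W^{n-2,\infty}$. The paper organizes the bookkeeping as the binomial expansion of $\bigl((\partial_\alpha+\partial_\beta)-\partial_\beta\bigr)^n$ applied to $K$, which is cleaner than your $\tfrac12\bigl((\partial_\alpha+\partial_\beta)+(\partial_\alpha-\partial_\beta)\bigr)$ splitting, but that difference is cosmetic.

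There is, however, a genuine gap at the extremal configuration, and your appeal to the $S_1$ bound is attached to the wrong case. In the term where no derivative is moved onto $f$, all $n$ derivatives can land on a single numerator, producing the factor $\frac{A_j^{(n)}(\alpha)-A_j^{(n)}(\beta)}{\gamma_j(\alpha)-\gamma_j(\beta)}$ (or the analogous factor with $\gamma_j^{(n)}$). Your counting rule "at most $n-1$ derivatives land anywhere other than the designated $L^2$-factor" explicitly permits this, but then Theorem \ref{CMMWEstimates}, applied with $A_j^{(n)}$ as the kernel function, demands a norm of $\partial\bigl(A_j^{(n)}\bigr)=\partial^{n}A_j'$, which is not controlled by $\|A_j'\|_{H^{n-1}}$ in any slot, $L^2$ or $L^\infty$. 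This is precisely where the paper invokes \eqref{S1Formula}: one splits the difference $A_j^{(n)}(\alpha)-A_j^{(n)}(\beta)$ and rewrites the term as a difference of integrals of type $S_1$, so that $A_j^{(n)}$ itself (rather than its derivative) is the function estimated, using $\|\partial^{n-1}A_j'\|_{L^2}\le\|A_j'\|_{H^{n-1}}$. By contrast, the case you single out as forcing the passage to $S_1$ --- all $n$ derivatives falling on $f$ --- requires no such device when $Z=H^n$: that term is still of type \eqref{S2Formula} with $f$-slot $\partial^n f$, and one simply places $\|\partial^n f\|_{L^2}\le\|f\|_{H^n}$ in the $L^2$ slot (only the combination of that term with $Z=W^{n-1,\infty}$ needs rearranging, a point the paper also treats only implicitly). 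Finally, note the simpler observation the paper uses in place of your counting: since $n\ge 3$, at most one factor of order $n-1$ or higher can occur in any given term, so all remaining factors automatically sit in $L^\infty$.
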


\begin{proof}
Write $S_2 f = \int K(\alpha, \beta) f_\beta(\beta) d\beta$.  To exploit the observations preceding the theorem, we expand $\partial_\alpha^n S_2 f$ using the Binomial Theorem applied to $((\partial_\alpha + \partial_\beta) - \partial_\beta)^n$:
\begin{align*}
\partial_\alpha^n S_2 f(\alpha) & = \sum_{j=0}^n \binom n j\int   (-1)^{j}(\partial_\alpha + \partial_\beta)^{n-j}\partial_\beta^{j} K(\alpha, \beta) f_\beta(\beta) d\beta \\
& = \sum_{j= 0}^n\binom n j \int (\partial_\alpha + \partial_\beta)^{n-j}K(\alpha, \beta) \partial_\beta^{j} f_\beta(\beta) d\beta
\end{align*}  After applying routine calculus identities, we see that $(\partial_\alpha + \partial_\beta)^{n-j}K(\alpha, \beta)$ yields a sum of terms, each of which is another kernel expressible in the form \eqref{S2Formula}.  Now we apply Theorem \ref{CMMWEstimates} to estimate each term in $L^2$.  

We proceed by cases.  Since $n \geq 3$, it suffices to consider the cases where $j = 0$ and $j = 1$; in all other cases one can estimate however one pleases using Theorem \ref{CMMWEstimates}.  If a difference of the form $A_j^{(n - 1)}(\alpha) - A_j^{(n - 1)}(\beta)$ or $\gamma_j^{(n-1)}(\alpha) - \gamma_j^{(n-1)}(\beta)$ occurs in some kernel, estimate this difference in $L^2$; observe that only one of these can occur in a given singular integral since $n \geq 3$.  If a difference of the form $A_j^{(n)}(\alpha) - A_j^{(n)}(\beta)$ or $\gamma_j^{(n)}(\alpha) - \gamma_j^{(n)}(\beta)$ occurs in some kernel, split the integral into a difference of singular integrals of the form $S_1$ and estimate using Theorem \ref{CMMWEstimates}.
\end{proof}

\bibliography{Mybib}{}
\bibliographystyle{plain}

\end{document}